\theoremstyle{definition}
\newtheorem{thm}{Theorem}[section]
\newtheorem{dfn}[thm]{Definition}
\newtheorem{cor}[thm]{Corollary}
\newtheorem{prop}[thm]{Proposition}
\newtheorem{lem}[thm]{Lemma}
\newtheorem{rem}[thm]{Remark}
\newtheorem*{nota}{Notations}
\newtheorem{axm}[thm]{Axiom}
\newtheorem*{ootp}{Outline of the proof}
\newtheorem*{oftp}{Organization of this paper}
\newtheorem*{ack}{Acknowledgment}
\numberwithin{thm}{section}
\newcommand{\Zpn}{\mathbb{Z}_{>0}}
\newcommand{\Znn}{\mathbb{Z}_{\geq 0}}
\newcommand{\Z}{\mathbb{Z}}
\newcommand{\Zp}{{\mathbb{Z}}_p}
\newcommand{\Zpt}{{\mathbb{Z}}_{p}^{\times}}
\newcommand{\Zps}{{\mathbb{Z}}_{p^2}}
\newcommand{\Zpb}{\breve{\Z}_p}
\newcommand{\Q}{\mathbb{Q}}
\newcommand{\Qp}{{\Q}_p}
\newcommand{\Qpt}{{\Q}_{p}^{\times}}
\newcommand{\Qps}{{\Q}_{p^2}}
\newcommand{\Qpb}{\breve{\Q}_p}
\newcommand{\Qpbt}{\breve{\Q}_p^{\times}}
\newcommand{\Ql}{{\mathbb{Q}}_{\ell}}
\renewcommand{\O}{\mathcal{O}}
\newcommand{\R}{\mathbb{R}}
\newcommand{\C}{\mathbb{C}}
\newcommand{\Fp}{\mathbb{F}_p}
\newcommand{\Fpbar}{\overline{\mathbb{F}}_p}
\newcommand{\A}{\mathbb{A}}
\newcommand{\Sbar}{\overline{S}}
\newcommand{\D}{\mathbb{D}}
\newcommand{\G}{\mathbb{G}}
\newcommand{\bC}{\mathbf{C}}
\newcommand{\bG}{\mathbf{G}}
\newcommand{\bI}{\mathbf{I}}
\newcommand{\bS}{\mathbb{S}}
\newcommand{\sS}{\mathscr{S}}
\newcommand{\sShat}{\widehat{\sS}}
\newcommand{\sSbar}{\overline{\sS}}
\newcommand{\bV}{\mathbf{V}}
\newcommand{\X}{\mathbb{X}}
\newcommand{\bX}{\mathbf{X}}
\newcommand{\T}{\mathbb{T}}
\newcommand{\gbar}{\overline{g}}
\renewcommand{\hbar}{\overline{h}}
\renewcommand{\H}{\mathbb{H}}
\renewcommand{\L}{\mathbf{L}}
\newcommand{\bL}{\mathbb{L}}
\newcommand{\bi}{\mathbf{i}}
\newcommand{\bx}{\mathbf{x}}
\newcommand{\cA}{\mathcal{A}}
\newcommand{\cB}{\mathcal{B}}
\newcommand{\cF}{\mathcal{F}}
\newcommand{\cG}{\mathcal{G}}
\newcommand{\cZ}{\mathcal{Z}}
\newcommand{\Kb}{\breve{K}}
\newcommand{\bLambda}{\mathbf{\Lambda}}
\DeclareMathOperator{\Hom}{Hom}
\DeclareMathOperator{\End}{End}
\DeclareMathOperator{\id}{id}
\DeclareMathOperator{\Lie}{Lie}
\DeclareMathOperator{\trd}{Trd}
\DeclareMathOperator{\ord}{ord}
\DeclareMathOperator{\Ad}{Ad}
\DeclareMathOperator{\GL}{GL}
\DeclareMathOperator{\GSp}{GSp}
\DeclareMathOperator{\GU}{GU}
\DeclareMathOperator{\GSpin}{GSpin}
\DeclareMathOperator{\SO}{SO}
\DeclareMathOperator{\nilp}{Nilp}
\DeclareMathOperator{\spf}{Spf}
\DeclareMathOperator{\loc}{loc}
\DeclareMathOperator{\sml}{sim}
\DeclareMathOperator{\red}{red}
\DeclareMathOperator{\length}{length}
\DeclareMathOperator{\OGr}{OGr}
\DeclareMathOperator{\si}{ss}
\DeclareMathOperator{\rk}{rank}
\DeclareMathOperator{\RZ}{RZ}
\DeclareMathOperator{\VL}{VL}
\DeclareMathOperator{\BT}{BT}
\DeclareMathOperator{\Irr}{Irr}
\DeclareMathOperator{\Sh}{Sh}
\DeclareMathOperator{\an}{an}
\DeclareMathOperator{\nsm}{nsm}
\DeclareMathOperator{\nfs}{nfs}
\DeclareMathOperator{\der}{der}
\DeclareMathOperator{\bs}{basic}
\DeclareMathOperator{\disc}{disc}
\DeclareMathOperator{\SR}{SR}
\DeclareMathOperator{\NSR}{NSR}
\DeclareMathOperator{\Adm}{Adm}
\DeclareMathOperator{\SpL}{SpL}
\DeclareMathOperator{\Def}{Def}
\DeclareMathOperator{\Fil}{Fil}
\title[Spinor Rapoport--Zink spaces]{Rapoport--Zink spaces for spinor groups with special maximal parahoric level structure}
\author[Y.Oki]{Yasuhiro Oki}
\address{Graduate School of Mathematical Sciences, 
the University of Tokyo, 3-8-1 Komaba, Meguro-ku, Tokyo 153-8914, Japan. }
\email{oki@ms.u-tokyo.ac.jp}
\begin{document}
\maketitle

\begin{abstract}
In this article, we give a concrete description of the underlying reduced subscheme of the Rapoport--Zink spaces for spinor similitude groups with special maximal parahoric (and non-hyperspecial) level structure. Moreover, we give two applications of the above result. One of which is describing the structure of the basic loci of mod $p$ reductions of Kisin--Pappas integral models of Shimura varieties for spinor similitude groups with special maximal parahoric level structure at $p$. The other is constructing a variant of the result of He, Li and Zhu, which gives a formula on the intersection multiplicity of the GGP cycles associated codimension $1$ embeddings of Rapoport--Zink spaces for spinor similitude groups. 
\end{abstract}

\tableofcontents

\section{Introduction}\label{intr}

This paper contributes the theory of Kisin--Pappas integral models of Shimura varieties and Rapoport--Zink spaces of Hodge type with parahoric level structure. More precisely, we study the structure of basic loci of Shimura varieties for spinor similitude groups $\GSpin(d,2)$ with certain non-hyperspecial level structure at $p>2$. We also consider the corresponding basic Rapoport--Zink spaces constructed by Hamacher and Kim (\cite{Hamacher2019}). Note that there are such known results in non-hyperspecial cases when $d\leq 4$, since we can rephrase $\GSpin(d,2)$ as certain groups of PEL type by exceptional isomorphisms. For example, the result \cite{Oki2019} for quaternionic unitary similitude groups of degree $2$ can be regarded as that for $\GSpin(3,2)$. See Section \ref{mtsh} for more details. However, there was no such a result for $d\geq 5$, in particular they are not of PEL type. 

Our study is motivated by the theory of arithmetic intersection on Shimura varieties and Rapoport--Zink spaces. Here we recall the theory of GGP cycles on Rapoport--Zink spaces (``GGP'' means Gan, Gross and Prasad). In \cite{Li2018}, Li and Zhu defined the GGP cycles for codimension $1$ embeddings of basic Rapoport--Zink spaces for $\GSpin(d,2)$ with hyperspecial level. Moreover, they computed the intersection multiplicities of the GGP cycles in a special case. This calculation was improved by He, Li and Zhu (\cite{He2019}), which successes in generalizing the result of \cite{Li2018}. Their results are expected to be useful for a similar consideration on Shimura varieties. In this paper, we also prove a variant of the main result of \cite{He2019}. See Section \ref{mtai} for more details. 

Note that the researches \cite{Li2018} and \cite{He2019} are inspired by the same study as the case of unitary similitude groups $\GU(1,n-1)$, in which the given imaginary quadratic field is inert at $p>2$. In \cite{Zhang2012}, Zhang conjectured a presentation of intersection multiplicities of the GGP cycles on the basic Rapoport--Zink spaces with hyperspecial level by means of the first derivatives of orbital integrals. Here the GGP cycles are defined as the same manner as the $\GSpin(d,2)$ case. He calls this conjecture as the arithmetic fundamental lemma. At the same time, he also proved that the arithmetic fundamental lemma gives us the similar result on Shimura varieties, which is known as the arithmetic Gan--Gross--Prasad conjecture. The arithmetic fundamental lemma was recently proved by He, Li and Zhu in the minuscule case (\cite{He2019}), and by Zhang when $p\geq n$ (\cite{Zhang2019b}). 

We explain the method of our study on basic loci and Rapoport--Zink spaces. We follow the strategy of the Vollaard's paper \cite{Vollaard2010}. In fact, we use the $p$-adic uniformization theorem which was proved in \cite{Oki2020b}, and reduce the study on basic loci to those on the basic Rapoport--Zink spaces. The Rapoport--Zink spaces are given by certain closed formal subschemes of deformation spaces of principally polarized $p$-divisible groups. See Section \ref{rzsp} for more details. To give concrete descriptions of the structure of them, we use the Dieudonn{\'e} theory, and construct the Bruhat--Tits stratification. It asserts that we can stratify the underlying topological spaces of the Rapoport--Zink spaces by families of classical Deligne--Lusztig varieties indexed by vertices of the Bruhat--Tits buildings of the groups which acts on them. For the unramified $\GU(1,n-1)$ with hyperspecial level, it is known by Vollaard for $n=3$ (\cite{Vollaard2010}), and by Vollaard and Wedhorn for arbitrary $n$ (\cite{Vollaard2011}). Moreover, Howard and Pappas gave the Bruhat--Tits stratification in the case for $\GSpin(d,2)$ with hyperspecial level (\cite{Howard2017}). For results on other groups, see the introduction of \cite{Oki2019} for example. 

\subsection{Rapoport--Zink spaces for spinor similitude groups}\label{mtlr}

Let $\Qp$ be the field of $p$-adic numbers where $p>2$, and $\Zp$ the integer ring of $\Qp$. Moreover, we write $\Qpb$ for the $p$-adic completion of the maximal unramified extension of $\Qp$, and for $\sigma$ the Frobenius of $\Qpb$. 

Let $(V,Q)$ be a quadratic space over $\Qp$ of dimension $n\geq 3$ which admits an almost self-dual lattice, that is, there is a $\Zp$-lattice $\bLambda$ in $V$ such that $\bLambda \subset \bLambda^{\vee}$ and $\length_{\Zp}(\bLambda^{\vee}/\bLambda)=1$. Here $\bLambda^{\vee}$ is the dual lattice of $\bLambda$ in $V$ with respect to $Q$. Let us fix such a lattice, and put $G:=\GSpin(V)$, the spinor similitude group of $V$, and denote by $K_p$ the stabilizer of $\bLambda$ in $G(\Qp)$. Then, for a minuscule cocharacter $\mu$ and a \emph{basic} $b\in G(\Qpb)$ whose $\sigma$-conjugacy class is contained in $B(G,\{\mu\})$ (and a certain embedding of $G$ into $\GSp(2^{n+1})$), we can consider the Rapoport--Zink space $\RZ_{K_p}$, a formal scheme which is locally formally of finite type over $\spf \Zpb$. See Section \ref{rzsp} for more details. Our first goal in this article is describing completely the singularity and the structure of the underlying topological space of $\RZ_{K_p}$. 

First, let us state the results on the structure of $\RZ_{K_p}^{\red}$, the underlying topological space of $\RZ_{K_p}$. There is a quadratic space $\L_{0}$ of dimension $n$ over $\Qp$ such that $\GSpin(\L_{0})(\Qp)$ acts on $\RZ_{K_p}$. We say that a lattice $\Lambda$ in $\L_{0}$ is a \emph{vertex lattice} if $p\Lambda \subset \Lambda^{\vee}\subset \Lambda$. We denote by $\VL(\L_{0})$ the set of vertex lattices in $\L_{0}$. For $\Lambda \in \VL(\L_0)$, we define the \emph{type} of $\Lambda$ as $t(\Lambda):=\length_{\Zp}(\Lambda^{\vee}/\Lambda)$. Then we can prove the equality
\begin{equation*}
\{t(\Lambda)\in \Znn \mid \Lambda \in \VL(\L_{0})\}=\{1,3,\ldots, t_{\max}\}, 
\end{equation*}
where
\begin{equation*}
t_{\max}:=
\begin{cases}
n-1&\text{if $n$ is even},\\
n-2&\text{if $n$ is odd and $\varepsilon(V)=(p,-1)_{p}^{(n-1)/2}$},\\
n&\text{if $n$ is odd and $\varepsilon(V)\neq (p,-1)_{p}^{(n-1)/2}$}. 
\end{cases}
\end{equation*}
Here $\varepsilon(V)$ is the Hasse invariant of $V$, and $(\,,\,)_{p}$ is the Hilbert symbol for $\Qp$. See Corollary \ref{vltn}. We denote by $\VL(\L_0,t)$ the set of vertex lattices in $\L_0$ of type $t$. 

For $\Lambda \in \VL(\L_0)$, we associate a locally closed subscheme $\BT_{K_p,\Lambda}$ of $\RZ_{K_p}$. See Definition \ref{btst}. Moreover, put $\BT_{K_p,\Lambda}^{(0)}:=\BT_{K_p,\Lambda}\cap \RZ_{K_p}^{(0)}$. 

\begin{thm}\label{thl1}
\begin{enumerate}
\item (Theorem \ref{jcis}) \emph{There is a decomposition into connected components
\begin{equation*}
\RZ_{K_p}=\coprod_{i\in \Z}\RZ_{K_p}^{(i)}. 
\end{equation*}
All $\RZ_{K_p}^{(i)}$ are isomorphic to each other. }
\item (Theorem \ref{btnh} (i)) \emph{There is a locally closed stratification
\begin{equation*}
\RZ_{K_p}^{(0),\red}=\coprod_{\Lambda \in \VL(\L_{0})}\BT_{K_p,\Lambda}^{(0)}. 
\end{equation*}}
\item (Theorem \ref{btnh} (ii)) \emph{For $\Lambda \in \VL(\L_{0})$, let $\RZ_{K_p,\Lambda}^{(0)}$ be the scheme-theoretic closure of $\BT_{K_p,\Lambda}^{(0)}$ in $\RZ_{K_{p}}^{(0)}$. Then there is an equality
\begin{equation*}
\RZ_{K_p,\Lambda}^{(0),\red}=\coprod_{\Lambda' \subset \Lambda}\BT_{K_p,\Lambda'}^{(0)}. 
\end{equation*}}
\item (Theorem \ref{dlnh}) \emph{For $\Lambda \in \VL(\L_{0})$, the $\Fpbar$-scheme $\BT_{K_p,\Lambda}^{(0)}$ is isomorphic to the Deligne--Lusztig variety for $\SO_{(t(\Lambda)-1)/2}$ associated with a Coxeter element. Moreover, it is of dimension $(t(\Lambda)-1)/2$. }
\end{enumerate}
\end{thm}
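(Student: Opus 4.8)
The plan is to reduce the statement, via the Dieudonn\'e theory recalled in Section~\ref{rzsp} and the definition of the Bruhat--Tits stratum (Definition~\ref{btst}), to a purely linear-algebraic description of $\BT_{K_p,\Lambda}^{(0)}(\Fpbar)$ attached to the vertex lattice $\Lambda$, and then to recognize that description as the set of $\Fpbar$-points of a Coxeter Deligne--Lusztig variety.

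First I would attach to $\Lambda\in\VL(\L_0)$ the $\Fp$-vector space $\Omega_\Lambda:=\Lambda/\Lambda^{\vee}$. Since $p\Lambda\subset\Lambda^{\vee}\subset\Lambda$, the form obtained by multiplying the quadratic form of $\L_0$ by $p$ is $\Zp$-valued on $\Lambda$ and reduces modulo $p$ to a quadratic form on $\Omega_\Lambda$; because $\Lambda^{\vee}$ is the full dual lattice this reduction is non-degenerate, so $\Omega_\Lambda$ is a non-degenerate quadratic space over $\Fp$ of dimension $t(\Lambda)$. As $t(\Lambda)$ is odd --- this oddness being exactly the trace of the condition $\length_{\Zp}(\bLambda^{\vee}/\bLambda)=1$, i.e.\ of the non-hyperspecial level, and the point where the present case departs from the self-dual one of \cite{Howard2017} --- the group $\SO(\Omega_\Lambda)$ is split of rank $(t(\Lambda)-1)/2$, and it is this group that the statement denotes $\SO_{(t(\Lambda)-1)/2}$. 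Using the lattice-theoretic description of $\RZ_{K_p}^{\red}$ by Dieudonn\'e lattices and the vertex condition cutting out $\BT_{K_p,\Lambda}$, I expect to obtain a bijection between $\BT_{K_p,\Lambda}^{(0)}(\Fpbar)$ and the set of maximal totally isotropic subspaces $\mathcal{L}\subset\Omega_\Lambda\otimes_{\Fp}\Fpbar$ (so $\dim\mathcal{L}=(t(\Lambda)-1)/2$) lying in generic relative position with respect to the $\sigma$-linear operator $\Phi$ induced on $\Omega_\Lambda\otimes_{\Fp}\Fpbar$ by the Frobenius of the isocrystal of $b$; the conditions imposed by the minuscule cocharacter $\mu$ are what pin down $\dim\mathcal{L}$ and produce this relative-position constraint, while the superscript $(0)$ merely selects one of the mutually isomorphic components $\RZ_{K_p}^{(i)}$, hence a normalization of $\Phi$, without affecting the resulting variety --- note that, since $\dim\Omega_\Lambda$ is odd, the ambient orthogonal Grassmannian is already connected.

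Next I would identify this locus with the Coxeter Deligne--Lusztig variety. One checks that $(\Omega_\Lambda\otimes\Fpbar,\Phi)$ is the split quadratic space with $\Phi$ acting without isotropic fixed lines --- this uses that $\Lambda$ corresponds to a vertex of the Bruhat--Tits building and that the associated Dieudonn\'e lattice is superspecial --- and then invokes the dictionary of \cite{Vollaard2011} and \cite{Howard2017} (compare also \cite{Oki2019}): the locus of maximal isotropic subspaces in generic relative position with their $\Phi$-translate is isomorphic to the Deligne--Lusztig variety $X(w)$ for $w$ a Coxeter element of the Weyl group of $\SO(\Omega_\Lambda)$, which one may equally view in the full flag variety. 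Since a Coxeter Deligne--Lusztig variety is smooth of dimension equal to the semisimple rank, this gives at once the asserted dimension $(t(\Lambda)-1)/2$.

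To upgrade the bijection on $\Fpbar$-points to an isomorphism of $\Fpbar$-schemes I would realize both sides as reduced locally closed subschemes of the orthogonal Grassmannian of maximal totally isotropic subspaces of $\Omega_\Lambda\otimes_{\Fp}\Fpbar$ --- the stratum because the display-theoretic description of $\RZ_{K_p}$ maps it algebraically into this Grassmannian, the Deligne--Lusztig variety through its standard embedding --- and check that both are cut out there by the same relative-position condition, hence coincide (their reducedness then following from the smoothness transported from the Deligne--Lusztig side). The main obstacle I anticipate is precisely the identification carried out in the second step: verifying that the vertex condition, together with the chosen basic element $b$ and the almost self-dual, non-hyperspecial level, really produce the Coxeter situation for $\SO(\Omega_\Lambda)$ --- in particular that the relevant Weyl group element is a Coxeter element and not a longer one --- while keeping track of the scheme structure, not merely the underlying set, throughout.
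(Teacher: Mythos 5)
Your proposal addresses only part (iv) of the four-part statement and leaves parts (i)--(iii) untouched. Part (i) concerns the set of connected components (proved in the paper via the Kottwitz homomorphism and the result of He--Zhou on $\pi_0$ of affine Deligne--Lusztig varieties, together with the explicit element $g_0=p^{-1}x_2x_1\in J(\Qp)$ that permutes the components); parts (ii) and (iii) concern the covering of $\RZ_{K_p}^{\red}$ by the $\RZ_{K_p,\Lambda}$ and the closure relations among the strata (proved via the special-lattice description and the intersection formula for the $\RZ_{K_p,\Lambda}$). None of this appears in your outline, so as a proof of Theorem~\ref{thl1} it is incomplete.

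For part (iv), where you do make a proposal, your route is genuinely different from the paper's and is, in spirit, plausible but substantially harder. The paper does \emph{not} work directly with $\Omega_\Lambda=\Lambda/\Lambda^{\vee}$; instead its central device is the closed immersion $\delta_{G,G^{\sharp}}\colon \RZ_{K_p}\hookrightarrow \RZ_{K_p^{\sharp}}$ into the hyperspecial Rapoport--Zink space, under which $\RZ_{K_p,\Lambda}$ is identified with $\RZ_{K_p^{\sharp},\Lambda^{\sharp}}$ for $\Lambda^{\sharp}=\Lambda\oplus\Zp(p^{-1}x_0)$ (Propositions~\ref{vtdl} and~\ref{btrs}). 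One then imports Howard--Pappas's isomorphism $r^{\sharp}_{\Lambda^{\sharp}}\colon p^{\Z}\backslash\RZ_{K_p^{\sharp},\Lambda^{\sharp}}\cong S^{\sharp}_{t^{\sharp}(\Lambda^{\sharp})/2}$ wholesale, and finally applies the odd/even orthogonal reduction $S^{\sharp,\pm}_d\cong S_d$ from \cite[Proposition 6.7]{Oki2019}, whose stratification by Coxeter Deligne--Lusztig varieties for odd $\SO$ is recorded in Proposition~\ref{oksf}. Your plan bypasses this detour and tries to build the isomorphism directly from a Dieudonn\'e-lattice description inside $\Omega_\Lambda$, which would require re-deriving, in the almost self-dual setting, the scheme-theoretic results that Howard--Pappas proved in the self-dual one (in particular the reducedness of the strata and the algebraicity of the map into the orthogonal Grassmannian). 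That is precisely the work the paper avoids by reduction to the $\sharp$-side, and you flag it yourself as the main anticipated obstacle without resolving it. So the strategy is a legitimate alternative but, as written, has a real gap at exactly the point where the paper's embedding argument does the heavy lifting; and the other three parts of the theorem still need separate arguments.
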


In particular, $\RZ_{K_{p}}^{(0),\red}$ is pure of dimension $(t_{\max}-1)/2$. Moreover, all irreducible components are smooth over $\Fpbar$, and isomorphic to each other. 

\begin{rem}
Theorem \ref{thl1} (ii)--(iv) are predicted in \cite[\S 5]{Gortz2020}. In the following, we give a correspondence between the conditions on $V$ and Enhanced Tits data. 
\begin{table}[h]
\begin{tabular}{c|c}
\hline
Conditions on $V$&Enhanced Tits datum\\ \hline
$2\mid n$&$(C\text{-}B_{(n-2)/2},\omega_{1}^{\vee},\widetilde{\bS}-\{s_0\})$\\
$2\nmid n$, $\varepsilon(V)=(p,-1)^{(n-1)/2}$&$(B_{(n-1)/2},\omega_1^{\vee},\widetilde{\bS}-\{s_n\})$\\
$2\nmid n$, $\varepsilon(V)\neq (p,-1)^{(n-1)/2}$&$({}^2B_{(n-1)/2},\omega_1^{\vee},\widetilde{\bS}-\{s_n\})$\\ \hline
\end{tabular}
\end{table}
\end{rem}

Second, let us state the result on the singularity of $\RZ_{K_{p}}$. For $y\in \RZ_{K_{p}}$, we denote by $\widehat{\RZ}_{K_p,y}$ the complete neighborhood of $y$ in $\RZ_{K_{p}}$. On the other hand, set
\begin{equation*}
R_n:=\Zpb[[t_1,\ldots,t_{n-1}]]/(p+\sum_{i=1}^{n-1}t_it_{n-i}). 
\end{equation*}

\begin{thm}\label{thl2}
\begin{enumerate}
\item (Theorem \ref{rzrg} (i)) \emph{The formal scheme $\RZ_{K_{p}}$ is regular of dimension $n-1$, and is flat over $\spf \Zpb$. }
\item (Theorem \ref{sgt1}) \emph{Let $\RZ_{K_{p}}^{\nfs}$ be the non-formally smooth locus of $\RZ_{K_{p}}$. Then there is an equality
\begin{equation*}
\RZ_{K_{p}}^{\nfs}=\coprod_{\Lambda \in \VL(\L_{0},1)}\RZ_{K_p,\Lambda}=\coprod_{\Lambda \in \VL(\L_{0},1)}\BT_{K_p,\Lambda}. 
\end{equation*}}
\item (Theorem \ref{rzrg} (ii)) \emph{For $y\in \RZ_{K_{p}}^{\nfs}$, there is an isomorphism $\widehat{\RZ}_{K_p,y}\cong \spf R_{n}$. }
\end{enumerate}
\end{thm}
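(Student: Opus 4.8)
The plan is to compute the complete local ring $\widehat{\RZ}_{K_p,y}$ at every point $y\in\RZ_{K_p}^{\red}$ explicitly and to deduce all three assertions from this. Since $\RZ_{K_p}$ is locally formally of finite type over $\spf\Zpb$, it suffices to treat closed points, and the target is to show that $\widehat{\RZ}_{K_p,y}$ is isomorphic either to a power series ring $\Zpb[[u_1,\dots,u_{n-2}]]$, in which case it is formally smooth over $\Zpb$, or to $R_n$; both rings are regular local of dimension $n-1$ and flat over $\Zpb$, and $R_n$ fails to be formally smooth over $\Zpb$ because for $n\geq 3$ its special fibre $\Fpbar[[t_1,\dots,t_{n-1}]]/(\sum_{i=1}^{n-1}t_it_{n-i})$ is a singular quadric cone. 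Granting this dichotomy together with the identification of the two cases, (i) and (iii) are immediate, and (ii) follows by matching ``$\widehat{\RZ}_{K_p,y}\cong R_n$'' with ``$y$ lies on a type-$1$ Bruhat--Tits stratum''.

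To get at $\widehat{\RZ}_{K_p,y}$ I would combine Grothendieck--Messing deformation theory with the Hodge-type tensor formalism of Hamacher--Kim, realizing $\RZ_{K_p}$ {\'e}tale-locally as the closed formal subscheme of the deformation space of the underlying $p$-divisible group where the de Rham realizations of the crystalline tensors cutting out $G=\GSpin(V)$ remain in their prescribed positions and where the lattice condition imposed by $K_p$ holds. Unwinding the spin representation and its tensors, as Howard--Pappas do in the hyperspecial case, the functor prorepresented by $\widehat{\RZ}_{K_p,y}$ becomes the functor of liftings, over $\Zpb$-algebras, of an isotropic line $\overline{\mathcal L}$ inside a quadratic space over $\Fpbar$ built from the special lattice $L=L_y$; here the almost self-duality of $\bLambda$ (the discriminant has length $1$) enters through the choice of integral structure on this quadratic space and is responsible for the term $p$ in $R_n$. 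Passing to an affine chart of the ambient $\P^{n-1}_{\Zpb}$ around $\overline{\mathcal L}$ produces coordinates $t_1,\dots,t_{n-1}$ and a single defining equation, so $\widehat{\RZ}_{K_p,y}\cong\Zpb[[t_1,\dots,t_{n-1}]]/(f)$ for an explicit power series $f$. (Alternatively, regularity, flatness and the dimension in (i) could be imported from the structure of the spinor local model for an almost self-dual lattice via the local model diagram, but the hands-on description is what is needed for (ii) and (iii).)

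The heart of the matter is then to read off the lowest-order part of $f$ from the combinatorics of $y$. Writing $\mathfrak m=(p,t_1,\dots,t_{n-1})$, I would use the special-lattice description of the Bruhat--Tits strata (Theorem \ref{btnh}) to show that $f\notin\mathfrak m^2$ precisely when $y$ does \emph{not} lie on a type-$1$ stratum; in that case $f$ is part of a regular system of parameters, $\Zpb[[t_1,\dots,t_{n-1}]]/(f)$ is formally smooth over $\Zpb$ of relative dimension $n-2$, and $y\notin\RZ_{K_p}^{\nfs}$. If instead $y\in\BT_{K_p,\Lambda}$ with $t(\Lambda)=1$, the degeneracy is maximal: $f\in\mathfrak m^2$, the almost self-duality forces $v_p(f(0))=1$, and the type-$1$ condition forces the quadratic part of $f$ modulo $p$ to be nondegenerate of rank $n-1$. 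Completing the square followed by a formal Morse lemma then transforms $f$, via an automorphism of $\Zpb[[t_1,\dots,t_{n-1}]]$, into $p+\sum_{i=1}^{n-1}t_it_{n-i}$, so $\widehat{\RZ}_{K_p,y}\cong R_n$ and $y\in\RZ_{K_p}^{\nfs}$. This proves (i) and (iii) and shows $\RZ_{K_p}^{\nfs}=\coprod_{\Lambda\in\VL(\L_0,1)}\BT_{K_p,\Lambda}$ as a set; since $t(\Lambda)=t(\Lambda')+2\length_{\Zp}(\Lambda/\Lambda')$ for vertex lattices $\Lambda'\subseteq\Lambda$, a type-$1$ lattice admits no proper vertex sublattice, so Theorem \ref{thl1}(iii) collapses to $\RZ_{K_p,\Lambda}^{\red}=\BT_{K_p,\Lambda}$ (these strata being $0$-dimensional and reduced by Theorem \ref{dlnh}), which gives the remaining equality in (ii).

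The principal obstacle is this middle computation: faithfully carrying the spinor tensors and the almost self-dual lattice chain $\bLambda\subseteq\bLambda^\vee$ through Grothendieck--Messing theory to arrive at the isotropic-line lifting functor, and especially establishing the precise dictionary between $t(\Lambda_y)$ and the $\mathfrak m$-adic order and leading term of $f$ --- in particular the nondegeneracy of the quadratic leading term at the type-$1$ points, which is exactly what pins down $R_n$ rather than a quadric cone of smaller rank. A secondary but genuine point is the converse inclusion in (ii), i.e.\ that $\RZ_{K_p}$ really is formally smooth over $\Zpb$ everywhere off the type-$1$ strata; this rests on the claim $f\notin\mathfrak m^2$ there, which again must be extracted from the special-lattice bookkeeping.
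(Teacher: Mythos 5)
Your overall strategy --- identify $\widehat{\RZ}_{K_p,y}$ with a completion of the spinor local model for the almost self-dual lattice $\bLambda$ via Grothendieck--Messing and the Hamacher--Kim tensor formalism, analyze that local model, and match its unique singular point with the type-$1$ Bruhat--Tits strata via special lattices --- is precisely the paper's strategy. The difference is one of economy: where you propose to re-derive the local model's regularity, flatness, dimension, and the isomorphism $\widehat{M}^{\loc}_{y_0}\cong\spf R_n$ from a coordinate computation and a formal Morse lemma, the paper imports these facts wholesale from He--Pappas--Rapoport (Theorem \ref{qtrc}, quoting \cite{He2020a}) together with the local model diagram from \cite[Theorem 5.2]{Oki2020b} (built into Theorem \ref{hkrz}(ii)); and for (ii) it shows directly that $y$ is non-formally-smooth exactly when $\iota_{X}(x_0)$ acts as zero on $\Lie(X_y)$ (Corollary \ref{sghd}), equivalently $F^{-1}(pM_y)=x_0(M_y)$ (Proposition \ref{sgl0}), which Proposition \ref{svnh}(ii) converts into $t(\Lambda(L_y))=1$. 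That chain is the precise form of the ``special-lattice bookkeeping'' step you correctly single out as the crux. Two small points worth untangling in your write-up: the rank-$(n-1)$ nondegeneracy of the quadratic leading term of $f$ is forced by almost self-duality of $\bLambda$ alone and is a uniform feature of the local model at its singular point --- the type-$1$ condition does not ``force'' that rank but only determines \emph{which} $y$ map to the singular point; and the converse implication $f\notin\mathfrak{m}^2$ for $y$ off the type-$1$ strata, which you rightly flag as needing care, is exactly what the $x_0$-criterion of Corollary \ref{sghd} delivers without further computation. Neither is a gap in your plan, but they are the places where the details must actually be filled in, and the paper's citations short-circuit both.
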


\subsection{Basic loci of Shimura varieties}\label{mtsh}

Let $\bV$ be a quadratic space over $\Q$ of signature $(d,2)$ where $d\in \Zpn$, and put $\bG:=\GSpin(\bV)$. Moreover, let $\bX$ be the moduli space of oriented negative planes in $\bV \otimes_{\Q}\R$. Then $(\bG,\bX)$ is a Shimura datum whose reflex field of $(\bG,\bX)$ is $\Q$. Moreover, $\bG_{\Qp}$ splits over a tamely ramified extension and $p\nmid \pi_1(\bG_{\Qp}^{\der})$. Hence we can consider Kisin--Pappas integral models of Shimura varieties for $(\bG,\bX)$. 

Here we assume that $V:=\bV\otimes_{\Q}\Qp$ satisfies the same assumption as in Section \ref{mtlr}. Let $K_p\subset \bG(\Qp)$ the subgroup as in Section \ref{mtlr}. Take a neat compact open subgroup $K^p$ of $\bG(\A_f^p)$, and put $K:=K^pK_p$. Consider the Kisin--Pappas integral model $\sS_{K}$ of the Shimura variety $\Sh_{K}(\bG,\bX)$, which is defined over $\Z_{(p)}$. Let $\sSbar_{K}$ be the geometric special fiber of $\sS_{K}$, and $\sSbar_{K}^{\bs}$ the basic locus of $\sSbar_{K}$. Note that $\sSbar_{K}^{\bs}$ is a non-empty closed subset of $\sSbar_{K}$ by \cite[Theorem 1.3.13]{Kisin}. 
We denote by $(\sShat_{K,\Zpb})_{/\sSbar_{K}^{\bs}}$ the formal completion of $\sS_{K,\Zpb}:=\sS_{K}\otimes_{\Z_{(p)}}\Zpb$ along $\sSbar_{K}^{\bs}$. 

Let $\bI$ be an inner form of $\Q$ which is anisotropic modulo center over $\R$ satisfying
\begin{equation*}
\bI \otimes_{\Q}\Ql \cong
\begin{cases}
\bG \otimes_{\Q}\Ql &\text{if }\ell \neq p,\\
\GSpin(\L_0) &\text{if }\ell=p. 
\end{cases}
\end{equation*}

\begin{thm}\label{thg1}(Theorem \ref{ccl1})
\emph{
\begin{enumerate}
\item The basic locus $\sSbar_{K}^{\bs}$ is pure of dimension $(t_{\max}-1)/2$. Every irreducible component of $\sSbar_{K}^{\bs}$ is birational to a smooth projective variety. 
\item Let $\pi_0(\sSbar_{K}^{\bs})$ be the set of connected components of $\sSbar_{K}^{\bs}$. Then there is a bijection
\begin{equation*}
\pi_0(\sSbar_{K}^{\bs})\cong \bI(\Q)\backslash (\Z \times \bG(\A_f^p)/K^p). 
\end{equation*}
\item Let $\Irr(\sSbar_{K}^{\bs})$ be the set of irreducible components of $\sSbar_{K}^{\bs}$. Then there is a bijection
\begin{equation*}
\Irr(\sSbar_{K}^{\bs})\cong \bI(\Q)\backslash (\VL(\L_0,t_{\max})\times \bG(\A_f^p)/K^p). 
\end{equation*}
\end{enumerate}}
\end{thm}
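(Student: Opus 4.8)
The plan is to deduce this from the $p$-adic uniformization theorem of \cite{Oki2020b} together with Theorem \ref{thl1}, following the strategy of \cite{Vollaard2010} and \cite{Vollaard2011}. First I would invoke $p$-adic uniformization to obtain an isomorphism of formal schemes
\begin{equation*}
(\sShat_{K,\Zpb})_{/\sSbar_{K}^{\bs}} \cong \bI(\Q)\backslash \bigl(\RZ_{K_p}\times \bG(\A_f^p)/K^p\bigr),
\end{equation*}
where $\bI$ is the inner form described in the excerpt; here $\bI(\Q)$ acts on $\RZ_{K_p}$ through $\bI(\Qp)\cong \GSpin(\L_0)(\Qp)$ (using the basic Kottwitz--Rapoport data) and on $\bG(\A_f^p)/K^p$ through $\bI(\A_f^p)\cong \bG(\A_f^p)$. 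Passing to underlying reduced schemes, the left side becomes $\sSbar_{K}^{\bs}$ (a scheme of finite type over $\Fpbar$), while the right side becomes the quotient of $\RZ_{K_p}^{\red}\times \bG(\A_f^p)/K^p$ by the discrete group $\bI(\Q)$. Since $\bI$ is anisotropic modulo center over $\R$, this action is such that the quotient is a scheme of finite type, and the stabilizers of points are finite; one checks as in \cite{Vollaard2010} that the action is in fact free on the relevant locus, or at worst passes to a finite étale cover, so that local geometric properties (dimension, smoothness up to birational modification) descend from $\RZ_{K_p}^{\red}$ to $\sSbar_{K}^{\bs}$.

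For part (i): by Theorem \ref{thl1} the space $\RZ_{K_p}^{(0),\red}$ is pure of dimension $(t_{\max}-1)/2$ with all irreducible components smooth over $\Fpbar$, hence $\RZ_{K_p}^{\red}=\coprod_{i}\RZ_{K_p}^{(i),\red}$ has the same purity, and dimension is preserved under the finite quotient by $\bI(\Q)$, giving purity of $\sSbar_{K}^{\bs}$ of dimension $(t_{\max}-1)/2$. Each irreducible component of $\sSbar_{K}^{\bs}$ is the image of an irreducible component of $\RZ_{K_p}^{\red}$, which by Theorem \ref{thl1}(iv) is (for $\Lambda$ of maximal type $t_{\max}$) a Deligne--Lusztig variety for $\SO_{(t_{\max}-1)/2}$ attached to a Coxeter element; such a variety is smooth and quasi-affine, and its closure inside $\RZ_{K_p,\Lambda}^{(0),\red}$ — described by Theorem \ref{thl1}(iii) — is projective. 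The component of $\sSbar_{K}^{\bs}$ is then the image of this projective closure under a morphism that is generically an isomorphism, hence birational to a smooth projective variety (its normalization, or the Deligne--Lusztig compactification).

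For parts (ii) and (iii): connected components and irreducible components of a quotient $\bI(\Q)\backslash(X\times \bG(\A_f^p)/K^p)$ are computed as the $\bI(\Q)$-orbits of the corresponding objects for $X\times \bG(\A_f^p)/K^p$. For (ii), by Theorem \ref{thl1}(i) the set $\pi_0(\RZ_{K_p})$ is in $\bI(\Qp)$-equivariant bijection with $\Z$ (the index $i$ of $\RZ_{K_p}^{(i)}$, with $\bI(\Qp)=\GSpin(\L_0)(\Qp)$ acting through the spinor norm / Kottwitz map), so $\pi_0\bigl(\RZ_{K_p}\times\bG(\A_f^p)/K^p\bigr)\cong \Z\times\bG(\A_f^p)/K^p$ and the quotient yields the claimed bijection $\pi_0(\sSbar_{K}^{\bs})\cong \bI(\Q)\backslash(\Z\times\bG(\A_f^p)/K^p)$. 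For (iii), by Theorem \ref{thl1}(iv) the irreducible components of each $\RZ_{K_p}^{(i),\red}$ are indexed by $\VL(\L_0,t_{\max})$ (the vertex lattices of top type give the top-dimensional strata, whose closures are exactly the irreducible components), $\bI(\Qp)$-equivariantly for the natural action on vertex lattices; combining over all $i\in\Z$ and over $\bG(\A_f^p)/K^p$ and dividing by $\bI(\Q)$ gives $\Irr(\sSbar_{K}^{\bs})\cong\bI(\Q)\backslash(\VL(\L_0,t_{\max})\times\bG(\A_f^p)/K^p)$, where I use that the $i$-components are all isomorphic compatibly with the $\VL$-indexing so that the $\Z$-factor can be absorbed. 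The main obstacle I anticipate is the careful bookkeeping of the $\bI(\Q)$-action: verifying that it acts without fixed points on the set of top-dimensional irreducible components (so that no component gets collapsed or acquires extra automorphisms), and matching the two adelic descriptions — this requires the precise form of the uniformization isomorphism from \cite{Oki2020b}, in particular the identification of the group acting on $\RZ_{K_p}$ with $\GSpin(\L_0)$ and the behavior of the connected-component index under this action. Finiteness of the component sets and properness of the quotient also use the neatness of $K^p$ and the anisotropy of $\bI$ modulo center.
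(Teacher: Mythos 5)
Your proposal follows essentially the same route as the paper: invoke the $p$-adic uniformization theorem (Theorem \ref{unif}) and then transfer the structural statements from Theorem \ref{thl1} / Theorem \ref{rzrd} across the isomorphism, taking $\pi_0$ and $\Irr$ of both sides for parts (ii) and (iii). For part (i), what you flag as ``the main obstacle'' — that the $\bI(\Q)$-action has no fixed points on the relevant strata — is exactly the one point the paper nails down explicitly: it chooses representatives $g_1,\ldots,g_m$ of $\bI(\Q)\backslash\bG(\A_f^p)/K^p$, forms $\Gamma_i:=\bI(\Q)\cap(J(\Qp)\times g_iK^pg_i^{-1})$, notes that $\Gamma_i$ is discrete, torsion-free (neatness) and cocompact modulo center, and then shows $g\bullet\Lambda\neq\Lambda$ for nontrivial $g\in\Gamma_i$ of unit similitude by pairing torsion-freeness and discreteness against the compactness of the stabilizer of $\Lambda$; this forces $\BT_{K_p,\Lambda}^{(0)}\rightarrow\Gamma_i\backslash\RZ_{K_p}$ to be an isomorphism onto its image, whence each irreducible component of $\sSbar_K^{\bs}$ is birational to the smooth projective $\RZ_{K_p,\Lambda}^{(0)}$, as you guessed.
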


\begin{rem}
We can regard some results on the supersingular loci of Shimura varieties and Rapoport--Zink spaces of PEL type as special cases of Theorems \ref{thl1} and \ref{thg1} respectively, by using exceptional isomorphisms. In the following, for some results on those of PEL type, we attach conditions on $V$ as explained above. 
\begin{table}[h]
\begin{tabular}{c|c}
\hline
Results on PEL type &Conditions on $V$\\ \hline
$Y_0(pN)^{\si}$ ($p\nmid N$) is a finite set&$n=3$, $\varepsilon(V)=(p,-1)_{p}$ \\
\cite[\S 2]{Kudla2000a}&$n=3$, $\varepsilon(V)\neq (p,-1)_{p}$ \\
\cite{Bachmat1999} in the ramified case&$n=4$\\
\cite{Yu2011}, \cite[\S 7]{Wang2020} &$n=5$, $\varepsilon(V)=1$ \\ 
\cite{Oki2019}, \cite[\S 5]{Wang2020}&$n=5$, $\varepsilon(V)\neq 1$ \\
\cite{Oki2020a}&$n=6$ \\ \hline
\end{tabular}
\end{table}

Here $n:=d+2$, and $Y_0(pN)^{\si}$ is the supersingular locus of the modular curve of $\Gamma_0(pN)$-level. 
\end{rem}

We give an explicit description of the non-smooth locus $\sS_{K,\Zpb}^{\nsm}$ of $\sS_{K}\otimes_{\Z_{(p)}}\Zpb$. It is known that $\sS_{K,\Zpb}^{\nsm}$ is a finite set contained in the special fiber by \cite[Proposition 12.6, Proposition 12.7]{He2020a}. 

\begin{thm}\label{thg2}
\begin{enumerate}
\item (Theorem \ref{sgbs} (iii)) \emph{The scheme $\sS_{K,\Zpb}^{\nsm}$ is contained in $\sSbar_{K}^{\bs}$. }
\item (Theorem \ref{sgrs}) \emph{There is a bijection
\begin{equation*}
\sS_{K,\Zpb}^{\nsm}\cong \bI(\Q)\backslash (\VL(\L_0,1)\times \bG(\A_f^p)/K^p). 
\end{equation*}}
\end{enumerate}
\end{thm}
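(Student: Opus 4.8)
The plan is to reduce both assertions to the corresponding local statements on the Rapoport--Zink side (Theorem \ref{thl2} (ii) and Theorem \ref{thg1}) via the $p$-adic uniformization theorem of \cite{Oki2020b}, exactly as in the proofs of Theorems \ref{thg1} and the regularity statements. The key input is that $(\sShat_{K,\Zpb})_{/\sSbar_{K}^{\bs}}$ is, by the uniformization isomorphism, of the form
\begin{equation*}
\bI(\Q)\backslash \bigl(\RZ_{K_p}\times \bG(\A_f^p)/K^p\bigr),
\end{equation*}
so that étale-locally around any point of the basic locus the formal completion of $\sS_{K,\Zpb}$ agrees with that of $\RZ_{K_p}$. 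Since being non-smooth (equivalently non-formally-smooth over the base) is an étale-local property that can be detected on complete local rings, this identifies $\sS_{K,\Zpb}^{\nsm}\cap \sSbar_{K}^{\bs}$ with the image of $\RZ_{K_p}^{\nfs}$ under the uniformization map.

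First I would establish (i). By \cite[Propositions 12.6, 12.7]{He2020a}, $\sS_{K,\Zpb}^{\nsm}$ is a finite set of points lying in the special fiber. I claim each such point is basic. Indeed, over the non-basic (i.e. non-supersingular in this $\GSpin$ setting) locus, the Kisin--Pappas model is smooth: this follows because for a non-basic $\sigma$-conjugacy class the associated local model diagram has smooth source, or more directly because the Newton stratification is compatible with the Kottwitz--Rapoport/local-model stratification and the only possibly singular local-model stratum in this almost-self-dual situation is the one forcing $b$ to be basic (it corresponds to the vertex lattices of type $1$, which by Theorem \ref{thl2} (ii) cut out precisely the non-formally-smooth locus of $\RZ_{K_p}$, a locus supported on the basic $\RZ$). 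So $\sS_{K,\Zpb}^{\nsm}\subset \sSbar_{K}^{\bs}$, which is assertion (i).

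Then (ii) follows by transporting Theorem \ref{thl2} (ii) through the uniformization. By that theorem $\RZ_{K_p}^{\nfs}=\coprod_{\Lambda\in\VL(\L_0,1)}\BT_{K_p,\Lambda}$, and each $\BT_{K_p,\Lambda}^{(0)}$ is a Deligne--Lusztig variety for $\SO_{0}$ by Theorem \ref{thl1} (iv) with $t(\Lambda)=1$ — i.e. a single point. Hence $\RZ_{K_p}^{\nfs}$ is a discrete set in bijection with $\Z\times\VL(\L_0,1)$ (the $\Z$ recording the connected component $\RZ_{K_p}^{(i)}$, using Theorem \ref{thl1} (i) and the fact that the $\Lambda$-indexed strata are permuted compatibly with the shift). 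Feeding this into $\bI(\Q)\backslash(\RZ_{K_p}^{\nfs}\times\bG(\A_f^p)/K^p)$ and using that $\bI(\Q)$ acts on the $\Z$-factor through the Kottwitz/similitude map in a way that matches the quotient appearing in Theorem \ref{thg1} (ii), the $\Z$-factor is absorbed and we obtain the stated bijection with $\bI(\Q)\backslash(\VL(\L_0,1)\times\bG(\A_f^p)/K^p)$. I would also invoke finiteness of $\VL(\L_0,1)$ modulo the relevant group action, consistent with the known finiteness of $\sS_{K,\Zpb}^{\nsm}$, as a sanity check.

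The main obstacle I expect is (i): cleanly showing that the non-smooth locus cannot meet a non-basic Newton stratum. The honest argument is to combine the local model diagram — $\sS_{K}$ is étale-locally isomorphic to the local model $\oM^{\loc}$ attached to $(\bG,\{\mu\},K_p)$ — with an explicit analysis of the singular locus of that almost-self-dual orthogonal local model, checking that its singular points all lie in the closed KR-stratum that is disjoint from every non-basic EKOR/Newton stratum. Once one grants that the singularities of $\oM^{\loc}$ are concentrated where Theorem \ref{thl2} (ii) says $\RZ^{\nfs}$ sits, assertions (i) and (ii) are formal consequences of the uniformization theorem and Theorem \ref{thl2}.
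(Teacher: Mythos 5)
For part (i) your strategy is circular where it needs to be computational. The observation that $\RZ_{K_p}^{\nfs}$ sits over the basic locus tells you nothing about $\sS_{K,\Zpb}^{\nsm}$, because $\RZ_{K_p}$ is \emph{by definition} the Rapoport--Zink space attached to the basic element $b_0$ — every point of it lies over the basic stratum. The uniformization theorem only applies to the completion along $\sSbar_K^{\bs}$, so it gives no information at an a priori non-basic point $z$. What you actually need (and what you correctly identify as the ``obstacle'' in your last paragraph, but do not supply) is a direct argument showing that if $z\in\sS_K(\Fpbar)$ is non-smooth, then the $p$-divisible group $\cA_z[p^\infty]$ is basic. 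The paper supplies exactly this as Proposition \ref{sgl0}: by the local model diagram (Proposition \ref{dfp1} and Theorem \ref{qtrc}) a non-smooth point of $\sS_{K,\Zpb}$ forces the Hodge filtration of $\cA_z[p^\infty]$ to sit at the unique singular point $y_0$ of $M^{\loc}(G,\sigma(\mu),\bx)$, i.e.\ $\iota_z(x_0)$ annihilates $\Lie(\cA_z[p^\infty])$. The short computation in Proposition \ref{sgl0} then shows $V(\D)=\iota(x_0)(\D)$, hence $F^2(\D)=p\D$, so $\cA_z[p^\infty]$ is isoclinic of slope $1/2$ and $[b_z]=[b_0]$ by Proposition \ref{bseq}. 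The clause ``for a non-basic $\sigma$-conjugacy class the associated local model diagram has smooth source'' is not a meaningful statement — the source of the local model diagram does not depend on the Newton stratum — and should be discarded.

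For part (ii) your argument is essentially the paper's: once (i) is established, $\sS_{K,\Zpb}^{\nsm}$ lies inside the basic locus, Theorem \ref{unif} identifies it with $\bI(\Q)\backslash(\RZ_{K_p}^{\nfs}\times\bG(\A_f^p)/K^p)$, and Theorem \ref{sgt1} describes $\RZ_{K_p}^{\nfs}$ as the union of the zero-dimensional strata $\BT_{K_p,\Lambda}$ over $\Lambda\in\VL(\L_0,1)$. The paper is just as terse as you are about exactly how the connected-component factor $\Z$ interacts with the set $\VL(\L_0,1)$ in passing from $\RZ_{K_p}^{\nfs}$ to the stated quotient; your discussion of $\Z$ being ``absorbed'' via the Kottwitz map is a reasonable reading of what is being suppressed, though it would need to be made precise (one must know how $\RZ_{K_p,\Lambda}$ distributes across connected components and how the $J(\Qp)$-stabilizer of $\Lambda$ acts on them). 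So (ii) is in the spirit of the paper; the genuine gap in your proposal is the missing Dieudonn\'e-theoretic computation for (i).
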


\subsection{Application to arithmetic intersection}\label{mtai}

Let $(V^{\sharp},Q^{\sharp})$ be a quadratic space of dimension $n+1$ satisfying the following: 
\begin{itemize}
\item $(V^{\sharp},Q^{\sharp})$ admits a self-dual $\Zp$-lattice,
\item there is an isomorphism $V^{\sharp}\cong V\oplus \Qp x_0$, where $x_0^2\in p\Zpt$. Here the right-hand side is the direct sum of the quadratic spaces. 
\end{itemize}
Fix a self-dual lattice $\bLambda^{\sharp}$ in $V^{\sharp}$, and let $K_p$ be the stabilizer of $L$. Note that $\bLambda:=\bLambda^{\sharp}\cap V$ is an almost self-dual lattice in $V$. Let $K_p^{\sharp}$ be the stabilizer of $L^{\sharp}$ in $G^{\sharp}(\Qp)$, and put $K_p=K_{p}^{\sharp}\cap G(\Qp)$. Then there is the basic Rapoport--Zink space $\RZ_{K_p^{\sharp}}$ for $G^{\sharp}$ with level $K_p^{\sharp}$. Moreover, if we consider the quadratic space $\L_0^{\sharp}:=\L_0\oplus \Qp x_0$ of dimension $n+1$ over $\Qp$, then $\GSpin(\L_0^{\sharp})$ acts on $\RZ_{K_p^{\sharp}}$. We define the notion of vertex lattices in $\L_0^{\sharp}$ as the same manner as the case for $\L_0$. We denote by $\VL(\L_0^{\sharp})$ the set of vertex lattices in $\L_0^{\sharp}$. 

On the other hand, we can construct a closed immersion
\begin{equation*}
\delta_{G,G^{\sharp}}\colon \RZ_{K_p}\hookrightarrow \RZ_{K_p^{\sharp}}, 
\end{equation*}
which is compatible with the actions of $\GSpin(\L_0)(\Qp)$ and $\GSpin(\L_0^{\sharp})(\Qp)$. See Proposition \ref{embd}. This induces a closed immersion
\begin{equation*}
p^{\Z}\backslash \RZ_{K_p}\hookrightarrow p^{\Z}\backslash (\RZ_{K_p}\times_{\spf \Zpb} \RZ_{K_p^{\sharp}}). 
\end{equation*}
Now we write $\Delta_{G,G^{\sharp}}$ for its image, which is said to be the \emph{GGP cycle}. 

Let $g\in \GSpin(\L_0^{\sharp})(\Qp)$. We consider the intersection multiplicity
\begin{equation*}
\langle \Delta_{G,G^{\sharp}},g\Delta_{G,G^{\sharp}}\rangle:=\chi(\O_{\Delta_{G,G^{\sharp}}}\otimes^{\bL}\O_{g\Delta_{G,G^{\sharp}}}). 
\end{equation*}

We say that $g\in \GSpin(\L_0^{\sharp})(\Qp)$ is \emph{regular semisimple minuscule} if the $\Zp$-submodule
\begin{equation*}
L_{x_0}(g):=\Zp x_0\oplus \cdots \Zp(g^{n}x_0)
\end{equation*}
of $\L_0^{\sharp}$ satisfies $L_{x_0}(g)\in \VL(\L_0^{\sharp})$. Here $L_{x_0}(g)^{\cup}$ denotes the dual lattice of $L_{x_0}(g)$ in $\L_0^{\sharp}$. In this case, $\langle \Delta_{G,G^{\sharp}},g\Delta_{G,G^{\sharp}}\rangle$ becomes an integer. See Proposition \ref{trvn} (ii). 

Under the condition that $g$ is regular semisimple minuscule, we prepare some notations which will be needed in the main result. Let $P_g \in \Fp[T]$ be the characteristic polynomial of the action of $g$ on the quadratic space $L_{x_0}(g)^{\cup}/L_{x_0}(g)$ over $\Fp$. We say that the irreducible monic factor $R$ of $P_g$ is \emph{self-reciprocal} if $R(T)=T^{\deg(R)}R(T^{-1})$. We denote by $\SR(P_g)$ and $\NSR(P_g)$ the sets of irreducible monic factors of $P_g$ that are self-reciprocal and not self-reciprocal respectively. Moreover, let $\NSR(P_g)/\!\sim$ be the quotient of $\NSR(P_g)$ under the equivalence relation $R(T)\sim T^{\deg(R)}R(T^{-1})$. We write $[R]\in \NSR(P_g)$ for the equivalence class containing $R\in \NSR(P_g)$. On the other hand, for $R\in \SR(P_g)\sqcup \NSR(P_g)$, we write the multiplicity of $R$ in $P_g$as $m_{R}(P_g)$. Note that $m_{R}(P_g)$ only depends on $[R]$ if $R\in \NSR(P_g)$. 

\begin{thm}\label{thi1} (Theorem \ref{itar} (i), Theorem \ref{thai})
\emph{Assume that $g\in \GSpin(\L_0^{\sharp})(\Qp)$ is regular semisimple minuscule and $\RZ_{K_{p}^{\sharp}}^{g}$ is non-empty. 
\begin{enumerate}
\item We have $\Delta_{G,G^{\sharp}} \cap g\Delta_{G,G^{\sharp}} \neq \emptyset$ if and only if there is a unique $Q_g\in \SR(P_g)$ such that $m_{Q_g}(P_g)$ is odd. 
\item If the equivalent conditions in (i) hold, then we have
\begin{equation*}
\langle \Delta_{G,G^{\sharp}},g\Delta_{G,G^{\sharp}} \rangle =\deg(Q_g)\cdot \frac{m_{Q_g}(P_g)+1}{2}\prod_{[R]\in \NSR(P_g)/\sim}(1+m_{R}(P_g)). 
\end{equation*}
\end{enumerate}}
\end{thm}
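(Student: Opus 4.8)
The plan is to reduce the computation of $\langle \Delta_{G,G^{\sharp}},g\Delta_{G,G^{\sharp}}\rangle$ to a purely combinatorial count on the Bruhat--Tits stratification of $\RZ_{K_p^{\sharp}}$, following the strategy of \cite{He2019} but adapted to the almost self-dual (non-hyperspecial) situation supplied by Theorems \ref{thl1} and \ref{thl2}. First I would analyze the fixed-point locus $\RZ_{K_p^{\sharp}}^{g}$: since $g$ is regular semisimple minuscule, the lattice $L_{x_0}(g)$ is a vertex lattice in $\L_0^{\sharp}$, and one shows (as in the self-dual case) that the vertex lattices $\Lambda \in \VL(\L_0^{\sharp})$ fixed by $g$ are exactly those sandwiched between $L_{x_0}(g)$ and its dual; concretely, $g$-stable vertex lattices correspond to $\Fp[T]/(P_g)$-submodules $M$ of the quadratic space $N:=L_{x_0}(g)^{\cup}/L_{x_0}(g)$ that are isotropic or coisotropic in a suitable sense. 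This translates the geometry into the factorization data $\SR(P_g)$, $\NSR(P_g)$ and the multiplicities $m_R(P_g)$. In parallel, $\delta_{G,G^{\sharp}}$ realizes $\Delta_{G,G^{\sharp}}$ as the image of $\RZ_{K_p}$, so $\Delta_{G,G^{\sharp}}\cap g\Delta_{G,G^{\sharp}}$ is governed by vertex lattices in $\L_0^{\sharp}$ that are simultaneously $g$-stable and contain (or are contained in) a vertex lattice coming from $\L_0 \subset \L_0^{\sharp}$; the constraint $V^{\sharp}\cong V\oplus \Qp x_0$ with $x_0^2\in p\Zpt$ forces a parity condition on the type, which is the source of the ``unique $Q_g$ with $m_{Q_g}(P_g)$ odd'' dichotomy in (i).

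For part (i), once the intersection is identified with a set of $g$-stable vertex lattices $\Lambda$ admitting a decomposition adapted to both $\L_0$ and $x_0$, a dimension/type count shows this set is non-empty precisely when the orthogonal complement of $x_0$ inside $N$ can be made $g$-stable of the required type, and this is equivalent to exactly one self-reciprocal factor occurring with odd multiplicity (the other self-reciprocal factors pairing off, the non-self-reciprocal ones contributing hyperbolic pieces that are automatically balanced). For part (ii), I would stratify $\Delta_{G,G^{\sharp}}\cap g\Delta_{G,G^{\sharp}}$ by the Bruhat--Tits strata $\BT_{K_p^{\sharp},\Lambda}^{(0)}$, use Theorem \ref{thl1}(iv) to identify each nonempty intersection with a fixed-point locus in a Deligne--Lusztig variety for $\SO_m$ attached to a Coxeter element, and invoke the Euler-characteristic computation for such Coxeter Deligne--Lusztig varieties (the Lang--Weil / Digne--Michel point count, as used in \cite{He2019} and \cite{Li2018}): the fixed points of a regular semisimple element on a Coxeter DL variety for $\SO_m$ contribute a product over the factors of the characteristic polynomial. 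Summing these local contributions over the poset of $g$-stable vertex lattices between $L_{x_0}(g)$ and $L_{x_0}(g)^{\cup}$, and using Theorem \ref{thl2}(iii) to control the (regular but not formally smooth) points where the derived tensor product has higher Tor, telescopes to the closed formula $\deg(Q_g)\cdot\frac{m_{Q_g}(P_g)+1}{2}\prod_{[R]}(1+m_R(P_g))$: the factor $\frac{m_{Q_g}(P_g)+1}{2}$ counts the chain of isotropic extensions inside the $Q_g$-isotypic part, and each $(1+m_R(P_g))$ counts the lattices in the $[R]$-isotypic hyperbolic part.

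The main obstacle I anticipate is the arithmetic-intersection bookkeeping in part (ii): unlike the hyperspecial case of \cite{He2019}, here $\RZ_{K_p^{\sharp}}$ is not smooth over $\spf\Zpb$, so $\chi(\O_{\Delta}\otimes^{\bL}\O_{g\Delta})$ is not simply a count of intersection points with multiplicity one, and one must show that the contribution of the non-formally-smooth locus $\RZ_{K_p^{\sharp}}^{\nfs}=\coprod_{\Lambda\in\VL(\L_0^{\sharp},1)}\BT_{K_p^{\sharp},\Lambda}$ (Theorem \ref{thl2}(ii)) is correctly captured. I would handle this by working in the explicit complete local ring $R_{n+1}$ of Theorem \ref{thl2}(iii): the two branches $\Delta$ and $g\Delta$ meet these points transversally in the sense that their local equations cut out a regular sequence, so the derived intersection is concentrated in degree zero and the length is computed by the same DL-variety fixed-point formula — but verifying this regular-sequence claim, and checking that the proper-intersection hypothesis needed for $\chi$ to be a finite integer (Proposition \ref{trvn}(ii)) propagates through every stratum, is where the real work lies. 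A secondary subtlety is keeping track of the $p^{\Z}$-quotient and the connected-component index $i\in\Z$ from Theorem \ref{thl1}(i) so that the intersection number is finite; this is routine once the support of $\Delta\cap g\Delta$ is shown to be contained in finitely many strata, which follows from the regular-semisimple-minuscule hypothesis bounding $\RZ_{K_p^{\sharp}}^{g}$.
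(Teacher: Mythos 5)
Your broad strategy --- reduce the intersection to a combinatorial count of $g$-stable vertex lattices between $L_{x_0}(g)$ and its dual, exploit the Deligne--Lusztig stratification, and import the arguments of Li--Zhu and He--Li--Zhu --- is the correct one and is what the paper does, via Propositions \ref{trvn}, \ref{isrd}, Lemma \ref{pgsr}, and citations of \cite[Theorem 3.6.4]{Li2018} and \cite[Theorems 4.3.3(3), 5.2.4(3)]{He2019}. However, the technical part of your proposal contains a directional error that derails the intersection-theoretic bookkeeping you are most worried about.

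You write that ``unlike the hyperspecial case of \cite{He2019}, here $\RZ_{K_p^{\sharp}}$ is not smooth over $\spf\Zpb$,'' and propose to compensate by examining an explicit complete local ring $R_{n+1}$ at non-formally-smooth points of $\RZ_{K_p^{\sharp}}$. This is backwards. It is $\RZ_{K_p}$, the non-hyperspecial space, that fails to be formally smooth; $\RZ_{K_p^{\sharp}}$ \emph{is} formally smooth over $\spf\Zpb$ (Remark \ref{fsmt}), and the non-formally-smooth locus $\RZ_{K_p}^{\nfs}$ is indexed by $\VL(\L_0,1)$, not by $\VL(\L_0^{\sharp},1)$ --- the latter set is empty, since types of vertex lattices in $\L_0^{\sharp}$ are even. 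The reason this matters is that you never invoke the paper's decisive reduction, Proposition \ref{trvn}(i): the \emph{second} projection identifies $\Delta_{G,G^{\sharp}}\cap g\Delta_{G,G^{\sharp}}$ with $\cZ_0(L_{x_0}(g))^{g}$ sitting inside $p^{\Z}\backslash\RZ_{K_p^{\sharp}}$, i.e.\ inside the formally smooth hyperspecial space, so the singularities of $\RZ_{K_p}$ and the explicit model $R_n$ of Theorem \ref{rzrg}(ii) play no role whatsoever in the computation. What Tor-vanishing (Proposition \ref{trvn}(ii)) actually requires is the \emph{regularity} of the ambient fiber product, which follows from Theorem \ref{rzrg}(i), combined with the fact that the intersection is zero-dimensional (Proposition \ref{isrd}); this is a regular-sequence argument in a regular, not smooth, ambient, and is not a ``transversality at the singular points'' claim of the kind you describe.

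A second, smaller, inaccuracy: you describe the factor $\frac{m_{Q_g}(P_g)+1}{2}$ as arising from an ``Euler-characteristic computation for Coxeter Deligne--Lusztig varieties (the Lang--Weil / Digne--Michel point count).'' The point count for the DL fixed-point locus is what yields the \emph{cardinality} $\deg(Q_g)\cdot\prod_{[R]}(1+m_R(P_g))$ in Theorem \ref{itar}(ii). The extra factor $\frac{m_{Q_g}(P_g)+1}{2}$ is the common local intersection \emph{multiplicity} at each of those points, and it is a length computation in the deformation ring supplied by \cite[Theorem 4.3.3(3)]{He2019}, not an Euler characteristic of the stratum. The paper's argument is precisely that, once the intersection is transported into $p^{\Z}\backslash\RZ_{K_p^{\sharp}}$ by Proposition \ref{trvn} and reducedness of $\RZ_{K_p^{\sharp},\Lambda}$ is known (Proposition \ref{rdhs}), both the cardinality formula of Li--Zhu and the multiplicity formula of He--Li--Zhu apply verbatim. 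Your proposal re-derives structure in $\RZ_{K_p}$ that the paper avoids by this transfer, and gets the smoothness hypotheses reversed in the process.
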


\begin{ootp}
We give an outline of the proofs of the main results. The proof of Theorem \ref{thl1} is an analogue of \cite{Oki2019}. More precisely, we construct a closed immersion $\delta_{G,G^{\sharp}}$ as appeared in Section \ref{mtai}, and regard $\RZ_{K_p}$ as a closed formal subscheme of $\RZ_{K_p^{\sharp}}$. Furthermore, the closed immersion $\delta_{G,G^{\sharp}}$ and the natural inclusion $\L_0 \hookrightarrow \L_0^{\sharp}$ induces an injection $\VL(\L_0)\hookrightarrow \VL(\L_0^{\sharp})$, which is denoted by $\Lambda \mapsto \Lambda^{\sharp}$. On the other hand, Howard and Pappas attached a closed formal subscheme $\RZ_{K_p^{\sharp},\Lambda'}$ of $\RZ_{K_p^{\sharp}}$ for any $\Lambda' \in \VL(\L_0)$, and proved that $\{\RZ_{K_p^{\sharp},\Lambda'}\}_{\Lambda' \in \VL(\L_0^{\sharp})}$ covers $\RZ_{K_p^{\sharp}}^{\red}$ (\cite{Howard2017}). Under these notations, let $\RZ_{K_p,\Lambda}:=\RZ_{K_p^{\sharp},\Lambda^{\sharp}}\cap \RZ_{K_p}$ for $\Lambda \in \VL(\L_0)$. Then we can prove $\RZ_{K_p,\Lambda}=\RZ_{K_p^{\sharp},\Lambda^{\sharp}}$ by an analysis of certain lattices in $\L_0^{\sharp}$ or its base change, and the locally closed stratification attached to $\{\RZ_{K_p,\Lambda}\}_{\Lambda \in \VL(\L_0)}$ satisfies the desired properties. 

The proof of Theorem \ref{thg1} is based on \cite{Vollaard2010} as explained above, that is, we apply the $p$-adic uniformization theorem on $\sSbar_{K}^{\bs}$ (Theorem \ref{unif}) and use Theorem \ref{thl1}. 

Theorems \ref{thl2} and \ref{thg2} are consequences of the study of local models by \cite[\S 12]{He2020a} and the local model diagram. The local model diagram is \cite[Theorem 4.2.7]{Kisin2018} in the case for $\sS_{K}$, and is \cite[Theorem 5.2]{Oki2020b} in the case for $\RZ_{K_p}$. 

Finally, we give a strategy to the proof of Theorem \ref{thi1}. We use the same arguments as \cite{Li2018} and \cite{He2019} to prove (i) and (ii) respectively. Note that both \cite{Li2018} and \cite{He2019} are studied on the same situation except that $K_p$ is also hyperspecial. However, we can see that their works can be applied to our case. 
\end{ootp}

\begin{oftp}
In Section \ref{prlm}, we recall the theory of local models and deformations of $p$-divisible groups with additional structure in the case of spinor similitude groups. In Section \ref{kpim} and \ref{rzsp}, we give definitions of Kisin--Pappas integral models and Rapoport--Zink spaces for spinor similitude groups respectively. In Section \ref{dlso}, we recall the results in \cite{Oki2019} on the Deligne--Lusztig varieties for odd special orthogonal groups. In Section \ref{rzgs}, we introduce some notions and collect the results on $\RZ_{K_p^{\sharp}}$ as in \cite{Howard2017}. In Section \ref{rznh}, we construct the Bruhat--Tits stratification of $\RZ_{K_p}$ by using the theory in Section \ref{rzgs}, and give proofs of the main results in Section \ref{mtlr}. In Section \ref{bslc}, we recall the $p$-adic uniformization theorem in our case, and prove the main throerms in Section \ref{mtsh}. Finally, in Section \ref{aris}, we introduce the notion of GGP cycles with respect to $\delta_{G,G^{\sharp}}$, and calculate their intersection multiplicities in the regular semisimple minuscule case. 
\end{oftp}

\begin{ack}
I would like to thank my advisor Yoichi Mieda for his constant support and encouragement.

This work was carried out with the support from the Program for Leading Graduate Schools, MEXT, Japan. This work was also supported by the JSPS Research Fellowship for Young Scientists and KAKENHI Grant Number 19J21728.
\end{ack}

\begin{nota}
\begin{itemize}
\item \textbf{$p$-adic fields and their extensions. } We denote by $\Gamma$ the absolute Galois group of $\Qp$, and by $I$ its inertia subgroup. We normalize the $p$-adic valuation $\ord_{p}\colon \Qpb^{\times}\rightarrow \Z$ on $\Qpb^{\times}$ so that $\ord_{p}(p)=1$. 
\item \textbf{Bruhat--Tits buildings and parahoric group schemes. }For a reductive connected group $G$ over $\Qp$, we write $\cB(G,\Qp)$ for the (extended) Bruhat--Tits building of $G$. For $\bx \in \cB(G,\Qp)$, let $\cG_{\bx}$ be the smooth affine group scheme over $\Zp$ corresponding to $\bx$, and denote by $\cG_{\bx}^{\circ}$ the identity component of $\cG_{\bx}$. 
\item \textbf{Total tensor algebra. }Let $R$ be a ring, and $M$ an $R$-module. We denote by $M^{*}$ the dual module of $M$. Moreover, we define $M^{\otimes}$ as the direct sum of all $R$-modules that are obtained by finite combinations of duals, tensor products, symmetric products and alternating products of $M$. 
\item \textbf{Schemes and formal schemes. }For a formal scheme $\mathscr{X}$ over $\spf \Zpb$, we denote by $\mathscr{X}^{\red}$ the underlying reduced subscheme of $\mathscr{X}$. Furthermore, for $x\in \mathscr{X}$, let $\widehat{\mathscr{X}}_x$ be the complete neighborhood of $x$. On the other hand, for a $\Fp$-scheme $X$, we write the perfection of $X$ as $X^{p^{-\infty}}$. 
\end{itemize}
\end{nota}

\section{Preliminalies}\label{prlm}

\subsection{Quadratic spaces and spinor similitude groups}\label{quad}

Here we recall the theory of quadratic spaces and related algebraic groups. Let $k$ be a field, and $(U,Q)$ a non-degenerate quadratic space of dimension $n$ over $k$. We define a symmetric bilinear form $[\,,\,]$ to be
\begin{equation*}
U\times U\rightarrow k;(v,v')\mapsto Q(v+v')-Q(v)-Q(v'). 
\end{equation*}

We say that $U$ is \emph{isotropic} if there is $v\in U\setminus \{0\}$ satisfying $Q(v)=0$. Otherwise, we say that $U$ is \emph{anisotropic}. 

In the following, we give some specific quadratic spaces. 
\begin{enumerate}
\item For $a\in k^{\times}$, let 
\begin{equation*}
Q_a\colon k \rightarrow k; u \mapsto au^2. 
\end{equation*}
Then $(k,Q_a)$ is an anisotropic quadratic space of dimension $1$ over $k$. We write $l_{a}$ for this quadratic space in the sequel. 
\item Consider the $k$-vector space $k^{\oplus 2}$ of dimensional $2$, and denote by $e_1,e_2$ the standard basis. Set 
\begin{equation*}
Q_0\colon k^{\oplus 2}\rightarrow k;a_1e_1+a_2e_2\mapsto a_1a_2. 
\end{equation*}
Then $(k^{\oplus 2},Q_0)$ is an isotropic quadratic space over $k$. We write this quadratic space as $\H_{k}$ in the sequel. 
\end{enumerate}

Let $C(U)$ be the Clifford algebra of $U$, that is, the quotient of the tensor algebra of $U$ by the ideal generated by $v\otimes v'+v'\otimes v-[v,v']$ for all $v,v'\in U$. Then $C(U)$ is a semisimple algebra over $k$ of dimension $2^{\dim_{k}(U)}$. The center $Z(U)$ of $C(U)$ is $k$ if $2\mid n$, and a quadratic {\'e}tale $k$-algebra if $2\nmid n$. 

The Clifford algebra $C(U)$ of $U$ has a $\Z/2$-grading
\begin{equation*}
C(U)=C^{+}(U)\oplus C^{-}(U),
\end{equation*}
where $C^{\pm}(U)$ is the $\pm 1$-eigenspace of the $k$-linear map induced by $v\mapsto -v$ for any $v\in U$. Note that $C^{+}(U)$ is a subalgebra of $C(U)$, and we have $\dim_{k}(C^{+}(V))=\dim_{k}(C^{-}(V))$. We regard $U$ as a subspace of $C^{-}(U)$ by the natural way. On the other hand, we have an involution $v\mapsto v^{\dagger}$ on $C(U)$ satisfying
\begin{equation*}
(v_{1}\cdots v_{e})^{\dagger}=v_{e}\cdots v_{1}
\end{equation*}
for $v_1,\ldots,v_{e}\in U$. 

We define an algebraic group $\GSpin(U)$ over $k$ as
\begin{equation*}
\GSpin(U)(R)=\{g\in (C^{+}(U)\otimes_{k}R)^{\times}\mid gU_{R}g^{-1}=U_{R},g^{\dagger}g\in \G_m\}
\end{equation*}
for any $k$-algebra $R$. Moreover, put
\begin{equation*}
\sml_{U}\colon \GSpin(U)\rightarrow \G_m;g\mapsto g^{\dagger}g. 
\end{equation*}
Then we have an exact sequence
\begin{equation*}
1\rightarrow \G_m\rightarrow \GSpin(U)\xrightarrow{g\mapsto g\bullet} \SO(U)\rightarrow 1,
\end{equation*}
where $g\bullet v:=gvg^{-1}$. 

\begin{prop}\label{gspi}(\cite[\S 2]{Asgari2002})
\emph{
\begin{enumerate}
\item The $k$-group $\GSpin(U)$ is reductive and connected. 
\item The derived group of $\GSpin(U)$ is the kernel of $\sml_{U}$, which is simply connected. 
\item Suppose that there is an isomorphism of quadratic spaces
\begin{equation*}
U\cong
\begin{cases}
\H_{k}^{\oplus \dim_{k}(U)/2}&\text{if }2\mid n,\\
\H_{k}^{\oplus (\dim_{k}(U)-1)/2}\oplus l_{1}&\text{if }2\nmid n. 
\end{cases}
\end{equation*}
Then $\GSpin(U)$ is split as $k$-group. 
\end{enumerate}}
\end{prop}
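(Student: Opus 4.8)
The plan is to derive all three assertions from the exact sequence $1\to \G_m\to \GSpin(U)\xrightarrow{g\mapsto g\bullet}\SO(U)\to 1$ recorded above, together with a single classical input: that $\ker(\sml_U)$ coincides with the spin group $\Spin(U)=\{g\in (C^{+}(U))^{\times}\mid gUg^{-1}=U,\ g^{\dagger}g=1\}$, and that $\Spin(U)\to \SO(U)$ exhibits $\Spin(U)$ as the simply connected central cover of $\SO(U)$ (a $\mu_2$-isogeny). Here $\SO(U)$ is smooth, connected and reductive by the classical theory of orthogonal groups. I will pass to $\overline{k}$ whenever the assertion at hand is insensitive to base change or may be checked geometrically. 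The only thing that has to be imported from outside is the classical fact just stated; granting it, everything below is a short diagram chase, so that fact (equivalently, the Clifford-algebra construction of $\Spin$ and its simple-connectedness) is really the only potential obstacle.

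For (i): since $\G_m$ and $\SO(U)$ are smooth, the exact sequence forces $\GSpin(U)$ to be smooth. As $\G_m$ is connected it lies in $\GSpin(U)^{\circ}$, so the finite \'etale component group $\GSpin(U)/\GSpin(U)^{\circ}$ is a quotient of the connected group $\GSpin(U)/\G_m=\SO(U)$, hence connected, hence trivial; thus $\GSpin(U)$ is connected. For reductivity, the image of $R_{u}(\GSpin(U)_{\overline{k}})$ in $\SO(U)_{\overline{k}}$ is a smooth connected normal unipotent subgroup of a reductive group, hence trivial, so $R_{u}(\GSpin(U)_{\overline{k}})\subseteq \G_m$; since $\G_m$ contains no nontrivial unipotent subgroup, this radical is trivial and $\GSpin(U)$ is reductive.

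For (ii): directly from the definitions of $\GSpin(U)$ and $\sml_U$ one has $\ker(\sml_U)(R)=\Spin(U)(R)$ functorially, hence $\ker(\sml_U)=\Spin(U)$. The character $\sml_U$ is surjective --- already on the central $\G_m$ of scalars, where it is the squaring map $t\mapsto t^{2}$, which is surjective --- so there is an exact sequence $1\to \Spin(U)\to \GSpin(U)\xrightarrow{\sml_U}\G_m\to 1$. Since the quotient $\G_m$ is commutative, $\GSpin(U)^{\der}\subseteq \Spin(U)$; since $\Spin(U)$ is semisimple it equals its own derived subgroup, giving the reverse inclusion, so $\GSpin(U)^{\der}=\Spin(U)$. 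Formation of the derived subgroup of a smooth connected group commutes with field extension, so this equality together with $\ker(\sml_U)=\Spin(U)$ descends to $k$; and $\Spin(U)$ is simply connected by the classical input.

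For (iii): under the stated hypothesis $\SO(U)$ is the $k$-split special orthogonal group of type $B_{(n-1)/2}$ (if $2\nmid n$) or $D_{n/2}$ (if $2\mid n$). Choose a split maximal torus $T'\subseteq \SO(U)$ and let $T\subseteq \GSpin(U)$ be its preimage, so that $1\to \G_m\to T\to T'\to 1$. Then $T$ is a torus with $\dim T=1+\dim T'$, and it is maximal in $\GSpin(U)$ since its image $T'$ is maximal in $\SO(U)$. Moreover $T$ is $k$-split: its character lattice sits in an exact sequence $0\to X^{*}(T')\to X^{*}(T)\to X^{*}(\G_m)\to 0$ of $\Gal(\overline{k}/k)$-modules whose outer terms carry the trivial action, and any extension of trivial finitely generated free $\Z$-modules is itself trivial as a Galois module (the associated cocycle is a homomorphism from a profinite group to a torsion-free abelian group, hence zero). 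Therefore $\GSpin(U)$ possesses a split maximal torus and is split, completing the proof. As noted above, no step here presents a genuine difficulty once the classical identification of $\ker(\sml_U)$ with the simply connected cover of $\SO(U)$ is taken as known.
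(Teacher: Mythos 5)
The paper does not prove Proposition~\ref{gspi}; it simply cites \cite[\S 2]{Asgari2002}. Your proof is a correct, self-contained deduction from the exact sequence $1\to\G_m\to\GSpin(U)\to\SO(U)\to 1$ recorded just before the proposition, together with the one classical input you explicitly flag, namely that $\ker(\sml_U)=\Spin(U)$ is the simply connected cover of $\SO(U)$. A few points worth noting for precision. In (i), the reduction of connectedness to the connectedness of $\SO(U)$ and the argument that $R_u(\GSpin(U)_{\overline{k}})$ maps into a normal smooth connected unipotent subgroup of the reductive group $\SO(U)_{\overline{k}}$ are both sound; the latter only uses that the image of a normal subgroup under a surjection is normal, not that the image equals the unipotent radical of the target. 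In (ii), the surjectivity of $\sml_U$ as a morphism of algebraic groups is correctly established by restriction to the central $\G_m$, where it is $t\mapsto t^2$ (faithfully flat even though not surjective on $k$-points); and $\Spin(U)\subseteq\GSpin(U)^{\der}$ does rely on $\Spin(U)$ being connected semisimple, which is part of the classical input you import, so no circularity arises. The remark that formation of the derived group commutes with base change is harmless but not actually needed, since the whole argument already takes place over $k$. In (iii), the claim that any extension of trivial finitely generated free $\Z[\Gamma]$-modules is trivial because a continuous homomorphism from the profinite group $\Gamma$ to a torsion-free discrete abelian group vanishes is correct, and this cleanly yields a $k$-split maximal torus in $\GSpin(U)$ from a $k$-split maximal torus of $\SO(U)$. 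In short: the proof is valid, and since the paper offers no proof of its own there is nothing to compare against beyond the reference to Asgari, whose treatment is in the same spirit.
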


We give a closed immersion of $G^{\sharp}$ into a symplectic similitude group. 

\begin{prop}\label{spsp}(\cite[4.1.4]{Howard2017})
\emph{Let $\delta_0 \in C(U)$ satisfying $\delta_0^{\dagger}=-\delta_0$, and set
\begin{equation*}
\psi_{\delta_0}\colon C(U)\times C(U)\rightarrow \Qp;(x,x')\mapsto \trd_{C(U)/\Qp}(x\delta_0 {x'}^{\dagger}). 
\end{equation*}
Then the following are true.  
\begin{enumerate}
\item The bilinear map $\psi_{\delta}$ is a non-degenerate symplectic form of $C(U)$. 
\item The homomorphism
\begin{equation*}
i_{\delta}\colon \GSpin(U)\rightarrow \GSp(C(U),\psi_{\delta});g\mapsto [x\mapsto gx]. 
\end{equation*}
is a closed immersion, and $\sml_{U}$ equals the composite of $i_{\delta}$ and the similitude character of $\GSp(C(U),\psi_{\delta_{0}})$. 
\end{enumerate}}
\end{prop}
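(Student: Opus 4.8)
The plan is to deduce all three assertions directly from the formal properties of the reduced trace $\trd_{C(U)/\Qp}$ and of the anti-involution $\dagger$ on $C(U)$; the structure theory of $C(U)$ and of $\GSpin(U)$ recalled above is the only input, and the argument is valid over any field of characteristic zero. For (i), bilinearity of $\psi_{\delta_0}$ is immediate. To see that $\psi_{\delta_0}$ is alternating I would use the standard identity $\trd_{C(U)/\Qp}(a^{\dagger})=\trd_{C(U)/\Qp}(a)$, the fact that $\dagger$ is an anti-automorphism with $(a^{\dagger})^{\dagger}=a$, and the hypothesis $\delta_0^{\dagger}=-\delta_0$, to compute
\[
\psi_{\delta_0}(x',x)=\trd_{C(U)/\Qp}(x'\delta_0 x^{\dagger})=\trd_{C(U)/\Qp}\bigl((x'\delta_0 x^{\dagger})^{\dagger}\bigr)=\trd_{C(U)/\Qp}\bigl(x\,\delta_0^{\dagger}\,{x'}^{\dagger}\bigr)=-\psi_{\delta_0}(x,x'),
\]
which in characteristic zero forces $\psi_{\delta_0}(x,x)=0$. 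For non-degeneracy I would use that $C(U)$ is a separable $\Qp$-algebra (its center $Z(U)$ being {\'e}tale), so that the pairing $(a,b)\mapsto \trd_{C(U)/\Qp}(ab)$ is non-degenerate; since $\psi_{\delta_0}(x,x')=\trd_{C(U)/\Qp}\bigl(x\cdot(\delta_0{x'}^{\dagger})\bigr)$ and $x'\mapsto \delta_0{x'}^{\dagger}$ is a $\Qp$-linear bijection of $C(U)$ (here one uses that the chosen $\delta_0$ is a unit of $C(U)$), non-degeneracy of $\psi_{\delta_0}$ follows.

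For (ii), the heart of the matter is that left multiplication by $g\in \GSpin(U)(R)$ is a symplectic similitude with multiplier $\sml_U(g)$. Indeed $(gx')^{\dagger}={x'}^{\dagger}g^{\dagger}$ and $g^{\dagger}=\sml_U(g)\,g^{-1}$ by the very definitions of $\GSpin(U)$ and $\sml_U$, so by the cyclicity $\trd_{C(U)/\Qp}(gag^{-1})=\trd_{C(U)/\Qp}(a)$ of the reduced trace one obtains
\[
\psi_{\delta_0}(gx,gx')=\trd_{C(U)/\Qp}\bigl(gx\,\delta_0\,{x'}^{\dagger}g^{\dagger}\bigr)=\sml_U(g)\,\trd_{C(U)/\Qp}\bigl(g\,(x\delta_0{x'}^{\dagger})\,g^{-1}\bigr)=\sml_U(g)\,\psi_{\delta_0}(x,x').
\]
Hence $i_{\delta_0}$ takes values in $\GSp(C(U),\psi_{\delta_0})$, and composing it with the similitude character of the latter returns $\sml_U$.

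It then remains to check that $i_{\delta_0}$ is a closed immersion. It is a monomorphism, since $g$ is recovered from $i_{\delta_0}(g)$ by evaluation at $1\in C(U)$. Moreover it is the composite of the closed immersion $\GSpin(U)\hookrightarrow (C^{+}(U))^{\times}$ ($\GSpin(U)$ being by definition the closed subgroup of $(C^{+}(U))^{\times}$ cut out by the conditions $gU_Rg^{-1}=U_R$ and $g^{\dagger}g\in \G_m$) with the left regular representation $(C^{+}(U))^{\times}\to \GL(C(U))$, and the latter is a closed immersion because the algebra homomorphism $C^{+}(U)\hookrightarrow \End_{\Qp}(C(U))$ is an injective $\Qp$-linear map, $C(U)$ being a faithful $C^{+}(U)$-module. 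A composite of closed immersions being a closed immersion, we conclude.

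The steps demanding some care are the two reduced-trace identities. The invariance $\trd_{C(U)/\Qp}\circ\dagger=\trd_{C(U)/\Qp}$ is classical for involutions of central simple algebras, since after base change to a splitting field $\dagger$ becomes, up to conjugation, matrix transposition; but when $n=\dim_{\Qp}(U)$ is odd and $\dagger$ acts nontrivially on the center $Z(U)$ one must run this over $Z(U)$ factor by factor and then apply $\tr_{Z(U)/\Qp}$, using that $\dagger$ induces an element of $\Gal(Z(U)/\Qp)$ on $Z(U)$. This, together with the fact that $\delta_0$ must be taken invertible for the non-degeneracy in (i), is the only genuine subtlety; everything else is formal.
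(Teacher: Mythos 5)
The paper gives no proof here; the proposition is stated with the citation to \cite[4.1.4]{Howard2017}, so there is no internal argument to compare against. Your proof is correct and is essentially the standard argument one finds in Howard--Pappas: alternation from $\trd\circ\dagger=\trd$ together with $\delta_0^{\dagger}=-\delta_0$; non-degeneracy from separability of $C(U)$; the similitude identity from $g^{\dagger}=\sml_U(g)\,g^{-1}$ and cyclicity of $\trd$; and the closed-immersion claim by factoring the left regular representation through $(C^{+}(U))^{\times}\hookrightarrow\GL(C(U))$ and using faithfulness of $C(U)$ as a $C^{+}(U)$-module. Two small remarks. First, you are right to flag that $\delta_0$ must be a unit of $C(U)$ for non-degeneracy; the paper's statement omits this (the source \cite{Howard2017} takes $\delta\in C(V)^{\times}$), and your argument correctly isolates where invertibility is used, namely in showing $x'\mapsto\delta_0{x'}^{\dagger}$ is bijective. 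Second, for the closed-immersion step it is worth saying explicitly that once $i_{\delta_0}$ is shown to be a closed immersion into $\GL(C(U))$ and to factor through the closed subgroup $\GSp(C(U),\psi_{\delta_0})$, the induced map to $\GSp$ is again a closed immersion by the usual cancellation for closed immersions; this is implicit in your write-up but deserves a sentence.
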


Next, we consider the case $k=\Qp$ where $p>2$. 

\begin{prop}\label{witt}
\emph{There is $d\in \Znn$, an anisotropic subspace $U_{\an}$ of $U$ and an isomorphism
\begin{equation*}
U\cong \H_{\Qp}^{\oplus d}\oplus U_{\an},
\end{equation*}
where $U_{\an}$ is an anisotropic subspace of $U$. Moreover, we have $\dim_{\Qp}U_{\an}\leq 4$. }
\end{prop}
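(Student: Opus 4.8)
The plan is to deduce this from the general Witt decomposition theorem together with the standard fact that quadratic forms over a $p$-adic field become isotropic once their dimension is large enough.

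First I would establish the decomposition $U\cong \H_{\Qp}^{\oplus d}\oplus U_{\an}$ by induction on $\dim_{\Qp}U$. If $U$ is anisotropic, take $d=0$. Otherwise pick $v\in U\setminus\{0\}$ with $Q(v)=0$; since $[\,,\,]$ is non-degenerate there is $w\in U$ with $[v,w]=1$, and after replacing $w$ by $w-Q(w)v$ we may assume $Q(w)=0$, so $\Qp v\oplus \Qp w$ is a hyperbolic plane isomorphic to $\H_{\Qp}$ and is an orthogonal direct summand of $U$. Its orthogonal complement $U'$ is non-degenerate of dimension $\dim_{\Qp}U-2$, and the inductive hypothesis applied to $U'$ finishes the construction. (Uniqueness of $d$ and of the isometry class of $U_{\an}$ follows from Witt cancellation, but is not needed here.)

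Second, the bound $\dim_{\Qp}U_{\an}\le 4$ reduces, via the inductive procedure just described, to the claim that every non-degenerate quadratic space over $\Qp$ of dimension $\ge 5$ is isotropic. To prove this I would use that $p>2$: diagonalize $Q$ as $\langle a_1,\dots,a_n\rangle$ with $a_i\in\Qpt$, write $a_i=p^{e_i}u_i$ with $u_i\in\Zpt$, and group the coordinates by the parity of $e_i$ (absorbing even powers of $p$ into squares) to get $U\cong V_0\oplus pV_1$, where $V_0$ and $V_1$ are quadratic spaces each of which admits a self-dual $\Zp$-lattice. Since $\dim_{\Qp}U\ge 5$, one of $V_0,V_1$, say $V_i$, has dimension $\ge 3$. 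Reducing a self-dual $\Zp$-lattice in $V_i$ modulo $p$ gives a non-degenerate quadratic form in $\ge 3>2$ variables over $\Fp$, which by the Chevalley--Warning theorem has a nontrivial zero; non-degeneracy of the reduction makes this a smooth point, so Hensel's lemma lifts it to an isotropic vector in $V_i$, hence in $U$. This proves the claim and with it the proposition.

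The only genuinely arithmetic ingredient is the last step, so that is where I expect any subtlety to lie; concretely, the point requiring a line of care is that the parity-of-valuation splitting really yields unimodular lattices on both summands, which is exactly where $p>2$ is used (so that $2$ is a unit and $Q$ diagonalizes). An alternative, if one prefers to cite the full classification of $p$-adic quadratic forms, is to invoke that the invariants $(\dim,\ \disc\in\Qpt/(\Qpt)^2,\ \varepsilon)$ show there is, up to isometry, exactly one anisotropic form of dimension $4$ over $\Qp$ (the reduced norm form of the quaternion division algebra) and none of dimension $\ge 5$; since $U_{\an}$ is anisotropic, $\dim_{\Qp}U_{\an}\le 4$ follows at once.
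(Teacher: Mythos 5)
Your proof is correct. The paper itself gives no argument here: it simply cites \cite[Lemma~22.3]{Shimura2010} for the Witt decomposition and \cite[\S 25]{Shimura2010} for the bound $\dim_{\Qp}U_{\an}\le 4$, so you have in effect supplied a self-contained proof where the paper relies on a reference. Your inductive construction of the hyperbolic summands is the standard one and is carried out correctly (the replacement $w\mapsto w-Q(w)v$ does make $w$ isotropic, and non-degeneracy passes to the orthogonal complement of the hyperbolic plane). For the bound, your reduction to ``dimension $\ge 5$ over $\Qp$ implies isotropic'' is right, and the parity-of-valuation splitting $U\cong V_0\oplus pV_1$ with both pieces admitting self-dual $\Zp$-lattices does use $p>2$ in exactly the way you flag: the diagonal Gram matrix has entries $2u_i$ with $u_i\in\Zpt$, and one needs $2\in\Zpt$ for these lattices to be unimodular. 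The Chevalley--Warning plus Hensel step is also fine: non-degeneracy mod $p$ guarantees the mod-$p$ isotropic vector is a smooth point, so it lifts. Your alternative via the $(\dim,\disc,\varepsilon)$ classification is closer in spirit to what \cite[\S 25]{Shimura2010} actually contains, but both routes are standard and valid; the only thing you do differently from the paper is to write the argument out rather than cite it.
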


\begin{proof}
The assertion on the decomposition of $U$ is \cite[Lemma 22.3]{Shimura2010}. The inequality $\dim_{\Qp}U_{\an}\leq 4$ is a consequence of \cite[\S 25]{Shimura2010}. 
\end{proof}

\begin{cor}\label{sptm}
\emph{The $\Qp$-group $\GSpin(U)$ splits over a tamely ramified extension. }
\end{cor}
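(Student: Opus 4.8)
The plan is to reduce the statement to a question about quadratic spaces and then handle it by an explicit square-class computation, using $p>2$ in an essential way. The first thing to record is that the formation of $C(U)$, of its even part $C^{+}(U)$, and hence of $\GSpin(U)$, commutes with base change of the ground field: for any finite extension $L/\Qp$ one has a canonical isomorphism $\GSpin(U)\otimes_{\Qp}L\cong \GSpin(U\otimes_{\Qp}L)$. Combining this with Proposition \ref{gspi}(iii), it suffices to exhibit a tamely ramified finite extension $L/\Qp$ such that $U_{L}:=U\otimes_{\Qp}L$ is isometric to $\H_{L}^{\oplus n/2}$ (if $n$ is even) or to $\H_{L}^{\oplus (n-1)/2}\oplus l_{1}$ (if $n$ is odd), i.e. such that $U_{L}$ is split. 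One could instead first invoke Proposition \ref{witt} to replace $U$ by its anisotropic kernel $U_{\an}$, of dimension $\le 4$; but the computation below splits all of $U$ at once, so this reduction is not needed.

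Next I would construct $L$ by hand. Fix a non-square unit $u\in \Zp^{\times}$ and set $L:=\Qp(\sqrt{-1},\sqrt{u},\sqrt{p})$. The subextension $\Qp(\sqrt{-1},\sqrt{u})/\Qp$ is unramified of degree $1$ or $2$, while $\Qp(\sqrt{p})/\Qp$ is totally ramified of degree dividing $2$ and is \emph{tame} precisely because $p\neq 2$; hence the compositum $L/\Qp$ is tamely ramified. Since $p$ is odd, $\Qp^{\times}/(\Qp^{\times})^{2}$ is represented by $\{1,u,p,up\}$, so every element of $\Qp^{\times}$ becomes a square in $L$, and in particular $-1\in (L^{\times})^{2}$.

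To conclude, diagonalize $U\cong \langle a_{1},\dots,a_{n}\rangle$ with $a_{i}\in \Qp^{\times}$ (possible as $U$ is non-degenerate and $p>2$). Over $L$ each $a_{i}$ is a square, so $U_{L}\cong \langle 1,\dots,1\rangle$ ($n$ copies). Because $-1\in (L^{\times})^{2}$, the binary form $\langle 1,1\rangle$ is isometric to $\langle 1,-1\rangle$, which in turn is isometric to $\H_{L}$; therefore $U_{L}$ is split in the sense required above, and Proposition \ref{gspi}(iii) gives that $\GSpin(U)\otimes_{\Qp}L\cong \GSpin(U_{L})$ is split. This proves the corollary. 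There is no genuine obstacle in this argument; the one point that uses the hypothesis $p>2$ — and hence the only place where care is needed — is the claim that $\Qp(\sqrt{p})/\Qp$ is tamely ramified (together with the attendant description of $\Qp^{\times}/(\Qp^{\times})^{2}$ that guides the choice of $L$).
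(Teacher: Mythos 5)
Your proposal is correct and takes essentially the same route as the paper: base change to a tamely ramified extension over which all square classes of $\Qpt$ collapse, then invoke Proposition \ref{gspi}(iii). In fact your field $L=\Qp(\sqrt{-1},\sqrt{u},\sqrt{p})$ coincides with the paper's choice $\Qps(\sqrt{p})$ (since $\sqrt{-1}\in\Qps$ for every odd $p$); the paper is merely terser, appealing to Proposition \ref{witt} to reduce to the anisotropic kernel where you carry out the diagonalization explicitly.
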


\begin{proof}
By Propositions \ref{witt}, the $\Qps(\sqrt{p})$-quadratic space $U\otimes_{\Qp}\Qps(\sqrt{p})$ satisfies the hypothesis of Proposition \ref{gspi} (ii). The field extension $\Qps(\sqrt{p})/\Qp$ is tamely ramified since $p>2$. 
\end{proof}

We give an explicit description of $\pi_{1}(G)$ and the Kottwitz map $\widetilde{\kappa}_{G}\colon G(\Qpb)\rightarrow \pi_{1}(G)$ defined in \cite[\S 7]{Kottwitz1997}. 

\begin{prop}\label{kott}
\emph{
\begin{enumerate}
\item There is an isomorphism $\pi_1(G)\cong \Z$, where the action of $\Gamma$ on $\Z$ is trivial. 
\item There is a commutative diagram
\begin{equation*}
\xymatrix{
G(\Qpb)\ar[d]\ar[r]^{\widetilde{\kappa}_{G}}&\pi_1(G)_{I}\ar[d]\\
\G_m(\Qpb)\ar[r]^{\quad \ord_{p}}& \Z. }
\end{equation*}
\end{enumerate}}
\end{prop}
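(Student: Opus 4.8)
The plan is to reduce both parts to explicit computations inside the Clifford algebra, using the structure theory of $\GSpin(U)$ recalled above together with the description of the Kottwitz map for reductive groups with simply connected derived group.

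For part (i): since $\GSpin(U)$ is connected reductive (Proposition \ref{gspi} (i)), $\pi_1(G)$ is the cocharacter lattice $X_*(T)$ modulo the coroot lattice, for a maximal torus $T$. By Proposition \ref{gspi} (ii) the derived group $G^{\der}=\Ker(\sml_U)$ is simply connected, so the coroot lattice of $G$ coincides with $X_*(T\cap G^{\der})$, and hence the exact sequence $1\to G^{\der}\to G\xrightarrow{\sml_U}\G_m\to 1$ identifies $\pi_1(G)$ with $X_*(\G_m)=\Z$ via the map induced by $\sml_U$ on cocharacters. More concretely, the central $\G_m\hookrightarrow \GSpin(U)$ from the exact sequence preceding Proposition \ref{gspi} has $\sml_U(z)=z^2$, so the cocharacter $z\mapsto z$ of this central $\G_m$ maps to $2\in X_*(\G_m)$, but there is also a "spinor norm one" piece that fills the gap to give $\pi_1(G)\cong\Z$ on the nose; I would pin this down by choosing the standard split form (Proposition \ref{gspi} (iii), which applies after a tame base change by Corollary \ref{sptm}, and $\pi_1$ is insensitive to base change) and writing out a maximal torus explicitly in terms of the hyperbolic basis, where the computation of $X_*(T)/(\text{coroots})$ is classical for $\mathrm{Spin}$/$\GSpin$. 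The $\Gamma$-action on $\pi_1(G)\cong\Z$ is trivial because $\pi_1(G)$ is identified with $X_*(\G_m)$ through the $\Qp$-rational homomorphism $\sml_U$, and $\Gamma$ acts trivially on $X_*(\G_m)$.

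For part (ii): the Kottwitz map $\widetilde\kappa_G\colon G(\Qpb)\to \pi_1(G)_I$ is functorial in $G$, so the $\Qp$-homomorphism $\sml_U\colon G\to\G_m$ induces a commutative square relating $\widetilde\kappa_G$ and $\widetilde\kappa_{\G_m}$. By the normalization in \cite[\S 7]{Kottwitz1997}, $\widetilde\kappa_{\G_m}\colon \G_m(\Qpb)=\Qpbt\to X_*(\G_m)=\Z$ is exactly $\ord_p$. Combining this with the identification of part (i) — under which $\pi_1(G)\xrightarrow{\sim}\Z$ is induced by $\sml_U$ on cocharacters, hence compatible with $\pi_1(\G_m)=\Z$ — gives precisely the claimed commutative diagram, where the left vertical map is $\sml_U$ on $\Qpb$-points and the right vertical map is the induced map $\pi_1(G)_I\to\pi_1(\G_m)_I=\Z$, which under the identifications of (i) is the identity. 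Since the $I$-action on $\pi_1(G)$ is trivial by (i), $\pi_1(G)_I=\pi_1(G)\cong\Z$, and the diagram takes the stated form.

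The main obstacle I expect is part (i): being careful about the exact normalization so that one genuinely gets $\pi_1(G)\cong\Z$ rather than $\Z$ with an index-$2$ ambiguity. The subtlety is that the central $\G_m$ together with $G^{\der}$ does not span $G$ with the "obvious" cocharacters, so one must exhibit an explicit cocharacter of $G$ (e.g.\ one of the torus cocharacters in the spinor description) whose spinor norm is a generator of $X_*(\G_m)$; this is where writing everything in the split model via the hyperbolic basis, as in \cite[\S 2]{Asgari2002}, does the job cleanly. Part (ii) is then essentially formal, given the functoriality of $\widetilde\kappa$ and the standard normalization for tori.
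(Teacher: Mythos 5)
Your proof is correct and takes essentially the same approach as the paper, which for part (i) simply cites Proposition \ref{gspi}(ii) (simple connectedness of $G^{\der}=\ker(\sml_U)$) and for part (ii) refers to the construction of $\widetilde{\kappa}_G$ in \cite[7.4]{Kottwitz1997}. The index-$2$ worry you raise is unfounded: since $G^{\der}$ is simply connected, the short exact sequence of tori $1\to T\cap G^{\der}\to T\to \G_m\to 1$ yields a short exact sequence on cocharacter lattices, so $\pi_1(G)=X_*(T)/X_*(T\cap G^{\der})\cong X_*(\G_m)=\Z$ on the nose, with no need for the explicit split-torus computation you propose as a fallback.
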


\begin{proof}
(i): This follows from Proposition \ref{gspi} (ii). 

(ii): This is a construction of $\widetilde{\kappa}$ as in \cite[7.4]{Kottwitz1997}. 
\end{proof}

We give two invariants of quadratic spaces. Let
\begin{equation*}
(\,,\,)_p\colon \Qpt \times \Qpt \rightarrow \{\pm 1\}
\end{equation*}
be the Hilbert symbol of $\Qp$. Since $p>2$, there is $a'_1,\ldots,a'_n\in \Qpt$ and a basis $v_1,\ldots,v_n$ of $U$ whose Gram matrix of $[\,,\,]$ with respect to $v_1,\ldots,v_n$ is 
\begin{equation*}
\begin{pmatrix}
a'_1&&\\
&\ddots&\\
&&a'_n
\end{pmatrix}. 
\end{equation*}
Then we define the discriminant $\disc(U)\in \Qpt/(\Qpt)^{2}$ and the Hasse invariant $\varepsilon(U)\in \{\pm 1\}$ as follow: 
\begin{equation*}
\disc(V):=2^na'_1\cdots a'_n\bmod (\Qpt)^2,\quad \varepsilon(U):=\prod_{i<j}(a'_i,a'_j)_{p}. 
\end{equation*}
It is known that both $\disc(U)$ and $\varepsilon(U)$ are independent of the choice of the basis as above. Moreover, Proposition \ref{witt} and \cite[\S 25]{Shimura2010} imply that quadratic spaces of dimension $n$ is uniquely determined by its discriminant and Hasse invariant. 

Let $(V,Q)$ be a quadratic space of dimension $n$ over $\Qp$. We assume the following: 
\begin{itemize}
\item $V$ admits an almost self-dual lattice, that is, there is a $\Zp$-lattice $\bLambda$ of $V$ satisfying $\bLambda \subset \bLambda^{\vee}\subset p^{-1}\bLambda$ and $\length_{\Zp}(\bLambda^{\vee}/\bLambda)=1$. 
\end{itemize}

\begin{prop}\label{alsd}
\emph{There is a $\Qp$-basis $x_1,\ldots,x_n$ of $V$ and $a_1,\ldots,a_n\in \Zpt$ satisfying the following: 
\begin{enumerate}
\item $\bLambda=\Zp x_1\oplus \cdots \oplus \Zp x_n$,
\item the Gram matrix of $[\,,\,]$ with respect to $x_1,\ldots,x_n$ equals 
\begin{equation*}
\begin{pmatrix}
pa_1&&&&\\
&a_2&&&\\
&&a_3&&\\
&&&\ddots&\\
&&&&a_n
\end{pmatrix}. 
\end{equation*}
\end{enumerate}}
\end{prop}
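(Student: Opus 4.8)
The plan is to reduce the statement modulo $p$, read off the structure of the induced symmetric bilinear form over $\Fp$, and then lift and orthogonally split $\bLambda$ one rank-one summand at a time. Throughout I use that $p>2$, so that $\bLambda\subseteq\bLambda^{\vee}$ is equivalent to $[\bLambda,\bLambda]\subseteq\Zp$, and the dual lattice of $\bLambda$ with respect to $Q$ coincides with $\{v\in V\mid [v,\bLambda]\subseteq\Zp\}$ (the factor of $2$ relating $Q$ and $[\,,\,]$ is a unit). In particular $[\,,\,]$ induces a symmetric $\Fp$-bilinear form $\overline{b}$ on the $n$-dimensional $\Fp$-vector space $\overline{\bLambda}:=\bLambda/p\bLambda$.

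First I would identify the radical of $\overline{b}$. A class $\overline{v}$ lies in this radical if and only if $[v,\bLambda]\subseteq p\Zp$, i.e. $p^{-1}v\in\bLambda^{\vee}$, i.e. $v\in p\bLambda^{\vee}$; hence the radical equals $p\bLambda^{\vee}/p\bLambda$. Multiplication by $p$ identifies $\bLambda^{\vee}/\bLambda$ with $p\bLambda^{\vee}/p\bLambda$, so the almost self-duality hypothesis $\length_{\Zp}(\bLambda^{\vee}/\bLambda)=1$ shows that the radical of $\overline{b}$ is exactly one-dimensional over $\Fp$.

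Next I would peel off rank-one unimodular summands. As long as $\bLambda$ has rank $\geq 2$, the quotient of $\overline{b}$ by its radical is a nonzero nondegenerate form over $\Fp$, and since $p$ is odd it represents a nonzero value; lifting a corresponding vector to $x\in\bLambda$ gives $[x,x]\in\Zpt$. The projection $w\mapsto w-[w,x][x,x]^{-1}x$ is defined over $\Zp$ (because $[\bLambda,\bLambda]\subseteq\Zp$ and $[x,x]\in\Zpt$), so it yields an orthogonal decomposition $\bLambda=\Zp x\perp(\bLambda\cap x^{\perp})$ with $\Zp x$ unimodular. Because $\Zp x$ is unimodular, one checks $(\bLambda\cap x^{\perp})^{\vee}=\Zp x^{\perp}\text{-part}$ splits off and $(\bLambda\cap x^{\perp})^{\vee}/(\bLambda\cap x^{\perp})\cong\bLambda^{\vee}/\bLambda$, so $\bLambda\cap x^{\perp}$ is again almost self-dual of one smaller rank. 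Iterating $n-1$ times produces pairwise orthogonal vectors $x_2,\ldots,x_n\in\bLambda$ with $a_i:=[x_i,x_i]\in\Zpt$, together with a rank-one lattice $\Zp x_1$ whose quotient by its own dual still has length one; since $V$ is nondegenerate, $[x_1,x_1]\neq 0$, and the rank-one computation $\length_{\Zp}\big(\Zp/[x_1,x_1]\Zp\big)=1$ forces $[x_1,x_1]=pa_1$ with $a_1\in\Zpt$. By Nakayama's lemma $x_1,\ldots,x_n$ is a $\Zp$-basis of $\bLambda$ (its reduction is an $\Fp$-basis of $\overline{\bLambda}$), and its Gram matrix is diagonal with the asserted entries after putting the defect term in the first slot.

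The whole argument is elementary once the mod-$p$ picture is in place; the only point demanding a little care is verifying at each inductive step that splitting off a unimodular rank-one summand leaves the length of $\bLambda^{\vee}/\bLambda$ unchanged, so that the single unit of defect is genuinely concentrated in the final rank-one piece $\Zp x_1$ rather than being spread among the others.
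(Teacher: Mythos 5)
Your proof is correct. The paper states Proposition~\ref{alsd} without proof --- it is a standard diagonalization fact for $\Zp$-lattices in quadratic spaces with $p>2$ --- and your argument is the natural one: identify the radical of the reduced form on $\bLambda/p\bLambda$ with $p\bLambda^{\vee}/p\bLambda$ so that almost self-duality makes it one-dimensional, then repeatedly split off unimodular rank-one orthogonal summands (possible while the rank exceeds $1$ because the nondegenerate quotient represents a unit), check at each step that $\length_{\Zp}(M^{\vee}/M)$ is preserved, and conclude that the unique unit of defect is concentrated in the final rank-one piece, forcing $[x_1,x_1]\in p\Zpt$. All the intermediate verifications (the projection being $\Zp$-defined, the dual splitting compatibly, the lift of the mod-$p$ vector having unit norm) are carried out correctly, so there is no gap.
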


We relate $V$ with another quadratic space. Define a quadratic space $(V^{\sharp},Q^{\sharp})$ over $\Qp$ by
\begin{equation*}
V^{\sharp}:=l_{-pa_1/2}\oplus V. 
\end{equation*}
Put $x_0:=(1,0)\in V^{\sharp}$ and
\begin{equation*}
x_i^{\sharp}:=
\begin{cases}
2^{-1}(x_1+x_0)&\text{if }i=0,\\
(pa_1)^{-1}(x_1-x_0)&\text{if }i=1,\\
x_{i}&\text{otherwise. }
\end{cases}
\end{equation*}

\begin{prop}\label{spsd}
\emph{
\begin{enumerate}
\item The Gram matrix of $[\,,\,]^{\sharp}$ with respect to $x_0^{\sharp},\ldots,x_{n}^{\sharp}$ is
\begin{equation*}
\begin{pmatrix}
0&1&&&&\\
1&0&&&&\\
&&a_2&&&\\
&&&a_3&&\\
&&&&\ddots&\\
&&&&&a_n
\end{pmatrix}. 
\end{equation*}
\item Let $\bLambda^{\sharp}:=\Zp x_0^{\sharp}\oplus \cdots \oplus \Zp x_{n}^{\sharp}$. Then $\bLambda^{\sharp}$ is a self-dual $\Zp$-lattice in $V^{\sharp}$ satisfying $\bLambda=\bLambda^{\sharp} \cap V$. 
\item If $n$ is odd, then we have $\disc(V^{\sharp})=(-1)^{(n+1)/2}$ if and only $\varepsilon(V)=(p,-1)_{p}^{(n-1)/2}$. 
\end{enumerate}}
\end{prop}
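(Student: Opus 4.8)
The plan is to dispatch the three parts in order, since part (i) produces the Gram matrix that both (ii) and (iii) rest on, and nothing beyond bilinear algebra over $\Zp$ and standard Hilbert-symbol identities (valid since $p>2$) is needed. For (i) I would expand $[\,,\,]^{\sharp}$ on the basis $x_0^{\sharp},\ldots,x_n^{\sharp}$ by bilinearity, using three inputs: the Gram matrix $\diag(pa_1,a_2,\ldots,a_n)$ of $[\,,\,]$ on $x_1,\ldots,x_n$ from Proposition \ref{alsd}; the orthogonality $[x_0,x_i]^{\sharp}=0$ for $i\ge 1$; and $[x_0,x_0]^{\sharp}=2Q^{\sharp}(x_0)=-pa_1$, the last two coming from $V^{\sharp}=l_{-pa_1/2}\oplus V$. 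Then $[x_0^{\sharp},x_0^{\sharp}]^{\sharp}=\frac14(pa_1-pa_1)=0$, similarly $[x_1^{\sharp},x_1^{\sharp}]^{\sharp}=0$ and $[x_0^{\sharp},x_1^{\sharp}]^{\sharp}=1$, the vectors $x_0^{\sharp},x_1^{\sharp}$ are orthogonal to $x_2,\ldots,x_n$, and the block on $x_2,\ldots,x_n$ is unchanged; this is exactly the asserted matrix.

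For (ii), self-duality of $\bLambda^{\sharp}$ is immediate from (i): the Gram matrix of $[\,,\,]^{\sharp}$ on the $\Zp$-basis $x_0^{\sharp},\ldots,x_n^{\sharp}$ has entries in $\Zp$ and determinant $-\prod_{i=2}^{n}a_i\in\Zpt$, hence lies in $\GL_{n+1}(\Zp)$, so the lattice is self-dual (and, $p$ being odd, it is immaterial whether ``dual with respect to $Q$'' refers to the associated bilinear form or to its half). To identify $\bLambda=\bLambda^{\sharp}\cap V$ I would write a general element $c_0x_0^{\sharp}+c_1x_1^{\sharp}+\sum_{i\ge 2}c_ix_i$ of $\bLambda^{\sharp}$ with $c_j\in\Zp$, re-expand it in the basis $x_0,x_1,\ldots,x_n$ of $V^{\sharp}=l_{-pa_1/2}\oplus V$, impose that the $x_0$-coordinate vanish (the condition for membership in $V$), solve the resulting relation $c_1=c_0pa_1/2$ (automatically in $\Zp$ since $p>2$), and observe that the surviving vectors run exactly over $\Zp x_1\oplus\cdots\oplus\Zp x_n=\bLambda$ as $c_0$ varies in $\Zp$.

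For (iii), assume $n$ odd. I would first diagonalize $[\,,\,]^{\sharp}$: the hyperbolic plane $\Qp x_0^{\sharp}\oplus\Qp x_1^{\sharp}$ has the orthogonal basis $x_0^{\sharp}\pm x_1^{\sharp}$ with norms $2$ and $-2$, so $\diag(2,-2,a_2,\ldots,a_n)$ is a diagonal Gram matrix for $V^{\sharp}$; hence $\disc(V^{\sharp})=2^{n+1}\cdot 2\cdot(-2)\cdot\prod_{i=2}^{n}a_i=-2^{n+3}\prod_{i=2}^{n}a_i\equiv-\prod_{i=2}^{n}a_i$ in $\Qpt/(\Qpt)^2$, as $n+3$ is even. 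On the other side, from the Gram matrix $\diag(pa_1,a_2,\ldots,a_n)$ of $V$ and the triviality of Hilbert symbols of pairs of units for $p>2$, bimultiplicativity gives $\varepsilon(V)=\prod_{j=2}^{n}(p,a_j)_p=(p,\prod_{j=2}^{n}a_j)_p$. Then $\disc(V^{\sharp})=(-1)^{(n+1)/2}$ in $\Qpt/(\Qpt)^2$ is equivalent, after moving the sign across and using $(n+3)/2\equiv(n-1)/2\pmod 2$, to $(-1)^{(n-1)/2}\prod_{i=2}^{n}a_i$ being a square in $\Qpt$; since this element is a unit and for a unit $c$ one has $c\in(\Qpt)^2\iff(p,c)_p=1$, this reads $(p,-1)_p^{(n-1)/2}(p,\prod_{i=2}^{n}a_i)_p=1$, i.e.\ $\varepsilon(V)=(p,-1)_p^{(n-1)/2}$, which is the claim.

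The proposition is essentially a chain of explicit computations; the only place that repays care is (iii), where one must keep the sign $(-1)^{(n+1)/2}$ built into the discriminant normalization, the symbol $(p,-1)_p$, and the exponent $(n-1)/2$ correctly aligned (getting the diagonalization of the hyperbolic plane to $\diag(2,-2)$ right, and tracking the parity shift between $(n+1)/2$ and $(n-1)/2$). Parts (i) and (ii) are routine $\Zp$-linear algebra.
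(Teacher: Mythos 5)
Your proof is correct and takes essentially the same approach as the paper: the paper declares (i) and (ii) clear and proves (iii) by citing the two identities for $\disc(V^{\sharp})$ and $\varepsilon(V)$, exactly the computations you spell out. One small remark worth noting: the identities printed in the paper's proof of (iii), namely $\disc(V^{\sharp})=(-1)\cdot(-a_2\cdots a_n)$ and $\varepsilon(V)=(p,-a_2\cdots a_n)_{p}$, each carry an extra factor of $-1$ (resp.\ $(p,-1)_{p}$) relative to the correct formulas you derive, $\disc(V^{\sharp})=-a_2\cdots a_n$ and $\varepsilon(V)=(p,a_2\cdots a_n)_{p}$; since the two discrepancies shift the exponent of $(p,-1)_p$ by $2$ when fed through the equivalence, the conclusion of (iii) is unaffected, but your version is the one that is literally correct.
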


\begin{proof}
The assertions (i) and (ii) are clear. On the other hand, (iii) follows from the equalities $\disc(V^{\sharp})=(-1)\cdot (-a_2\cdots a_n)$ and $\varepsilon(V)=(p,-a_2\cdots a_n)_{p}$. 
\end{proof}

Put $G:=\GSpin(V)$, and denote by $\bx \in \cB(G,\Qp)$ the vertex corresponding to $\bLambda$. On the other hand, let $G^{\sharp}:=\GSpin(V^{\sharp})$, and write $\bx^{\sharp}\in \cB(G^{\sharp},\Qp)$ for the vertex corresponding to $\bx^{\sharp}$. Then we have $\cG_{\bx}=\cG_{\bx}^{\circ}$ and $\cG_{\bx^{\sharp}}=\cG_{\bx^{\sharp}}^{\circ}$.  

\begin{prop}\label{hseb}
\emph{
\begin{enumerate}
\item The inclusion $V\subset V^{\sharp}$ induces a closed immersion $G \hookrightarrow G^{\sharp}$. 
\item The morphism $\cB(G,\Qp)\rightarrow \cB(G^{\sharp},\Qp)$ induced by $G \hookrightarrow G^{\sharp}$ maps $\bx$ to $\bx^{\sharp}$. 
\item The closed immersion $G \hookrightarrow G^{\sharp}$ induces a closed immersion $\cG_{\bx}\hookrightarrow \cG_{\bx^{\sharp}}$. An element $g\in \cG_{\bx^{\sharp}}$ lies in the image of $\cG_{\bx}\hookrightarrow \cG_{\bx^{\sharp}}$ if and only if $gx_0g^{-1}=x_0$. 
\end{enumerate}}
\end{prop}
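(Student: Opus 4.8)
The plan is to build everything out of the explicit bases $x_1,\dots,x_n$ of $V$ and $x_0^\sharp,\dots,x_n^\sharp$ of $V^\sharp$ furnished by Propositions \ref{alsd} and \ref{spsd}, together with the functorial properties of the Clifford algebra construction. For (i), I would first recall that the inclusion of quadratic spaces $V\subset V^\sharp$ induces an inclusion of Clifford algebras $C(V)\hookrightarrow C(V^\sharp)$ compatible with the $\Z/2$-gradings and with the involution $\dagger$, hence an inclusion $C^+(V)\hookrightarrow C^+(V^\sharp)$. Since $\GSpin$ is defined functorially in terms of $C^+(-)$, this yields a homomorphism $G\hookrightarrow G^\sharp$ of algebraic groups over $\Qp$; injectivity on $R$-points for every $\Qp$-algebra $R$ is immediate because $C^+(V)_R\hookrightarrow C^+(V^\sharp)_R$ is injective, so it is a monomorphism of affine group schemes of finite type, hence a closed immersion (the image is closed because $G$ is reductive, equivalently one checks the relevant ideal is radical; alternatively invoke that a monomorphism of affine algebraic groups in characteristic $0$ is a closed immersion). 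The key compatibility to record is that $\sml_V$ is the restriction of $\sml_{V^\sharp}$, since $g^\dagger g$ is computed the same way in $C(V)$ and in $C(V^\sharp)$.

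For (ii), the functoriality of the Bruhat--Tits building under the closed immersion $G\hookrightarrow G^\sharp$ gives a $G(\Qp)$-equivariant map $\cB(G,\Qp)\to\cB(G^\sharp,\Qp)$, and I must identify where the vertex $\bx$ associated to $\bLambda$ goes. The cleanest route is to use the description of these parahoric group schemes via lattices: $\cG_{\bx}$ is the stabilizer (scheme) of $\bLambda$ inside the orthogonal/spinor picture, and likewise $\cG_{\bx^\sharp}$ for $\bLambda^\sharp$. Because $\bLambda=\bLambda^\sharp\cap V$ by Proposition \ref{spsd}(ii), the stabilizer of $\bLambda^\sharp$ inside $G(\Qp)$ — i.e.\ the elements of $G(\Qp)$ that preserve $\bLambda^\sharp$ — coincides with the stabilizer of $\bLambda$ in $G(\Qp)$; chasing this through the bijection between (certain) lattices and points of the building shows the image of $\bx$ is the point fixed by exactly these subgroups, which is $\bx^\sharp$ (using that $\bx$, $\bx^\sharp$ are the hyperspecial/special vertices in question and such vertices are pinned down by their stabilizers). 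Concretely I would verify that $\cG_{\bx}(\Zpb)$ acts on $\bLambda^\sharp\otimes\Zpb$ preserving it, so it lands in $\cG_{\bx^\sharp}(\Zpb)$, and that this forces the building map to send $\bx\mapsto\bx^\sharp$ by uniqueness of the fixed point.

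For (iii), I would use the extension of $G\hookrightarrow G^\sharp$ to the parahoric level obtained from part (ii) together with the construction of $\cG_{\bx}$, $\cG_{\bx^\sharp}$ as the smooth affine $\Zp$-models: the map $\cG_{\bx}\to\cG_{\bx^\sharp}$ is a closed immersion because it is so after inverting $p$ (part (i)) and both sides are smooth affine $\Zp$-schemes with the same special-fiber-to-generic-fiber flatness, so it suffices to check the map of $\Zp$-Hopf algebras is surjective, which reduces to the generic fiber plus a flatness/schematic-closure argument. The substantive and final point is the characterization of the image: an element $g\in\cG_{\bx^\sharp}$ lies in $\cG_{\bx}$ iff $g x_0 g^{-1}=x_0$. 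Here I would argue that $V\subset V^\sharp$ is exactly the orthogonal complement of $\Qp x_0$ (since $V^\sharp=l_{-pa_1/2}\oplus V$ with $x_0$ spanning the first summand), so $G=\GSpin(V)$ is precisely the subgroup of $G^\sharp$ whose conjugation action $g\bullet(-)$ on $V^\sharp$ fixes $x_0$ (equivalently stabilizes $\Qp x_0$ and its complement $V$, and one checks the scalar on $\Qp x_0$ must be $1$ using $\sml_V=\sml_{V^\sharp}|_G$ and the formula $g\bullet x_0 = g x_0 g^{-1}$, comparing $Q^\sharp(g\bullet x_0)=\sml(g)Q^\sharp(x_0)$); passing to $\Zp$-points and integral models gives the stated description with $x_0\in\bLambda^\sharp$.

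The main obstacle I anticipate is part (ii)/the first half of (iii): making the identification of the building map and the closed-immersion-of-integral-models statement rigorous requires carefully invoking the lattice-theoretic description of the special parahorics $\cG_{\bx}$, $\cG_{\bx^\sharp}$ and the compatibility of Bruhat--Tits theory with the closed immersion $G\hookrightarrow G^\sharp$ — there is a real danger of circularity if one is not precise about which normalization of the building and which functoriality statement (Landvogt, or the centralizer-of-a-torus approach) is being used. Parts (i) and the image characterization in (iii) are essentially formal once the Clifford-algebra functoriality and the orthogonal-complement description of $V\subset V^\sharp$ are in hand.
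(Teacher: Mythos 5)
The paper disposes of this Proposition in three one-line citations: (i) ``follows from the definition of spinor similitude groups,'' (ii) ``is a consequence of Proposition \ref{spsd} (ii),'' and (iii) ``follows from \cite[Lemma 2.6]{MadapusiPera2016}.'' The real mathematical content is outsourced to Madapusi Pera's Lemma 2.6, which identifies the parahoric group schemes $\cG_{\bx}$ and $\cG_{\bx^\sharp}$ with the explicit $\Zp$-group schemes $\GSpin(\bLambda)$ and $\GSpin(\bLambda^\sharp)$ built out of the Clifford algebras $C(\bLambda)\subset C(\bLambda^\sharp)$ of the lattices themselves, and shows that the smaller one is cut out inside the larger by centralizing $x_0$. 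Once that explicit model is available, (i), (ii) and (iii) all fall out formally. Your proposal tries to re-prove the proposition from scratch via the generic-fibre Clifford functoriality plus a stabilizer analysis of the buildings, which is a legitimate alternative route; your treatment of (i) and of the set-theoretic image characterization in (iii) is sound and essentially the argument that underlies Madapusi Pera's lemma.

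There are, however, two places where the proposal records a claim without the argument, and in both cases the claim is genuinely nontrivial. First, in (ii) you need the containment ``$\cG_{\bx}(\Zpb)$ preserves $\bLambda^\sharp\otimes\Zpb$'' — you flag it as something to ``verify'' but do not verify it, and it is not formal. The point is that $\bLambda\oplus\Zp x_0$ is not self-dual in $V^\sharp$; between it and its dual there are exactly two self-dual lattices (corresponding to the two isotropic lines in the discriminant form on $(\bLambda^\vee/\bLambda)\oplus\Fp(p^{-1}x_0)$), one of which is $\bLambda^\sharp$, and an element $g\bullet$ fixing $x_0$ and stabilizing $\bLambda$ could a priori swap them. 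What you actually need is that $\cG_{\bx}(\Zpb)$ acts trivially on $\bLambda^\vee/\bLambda$; this follows either from the connectedness of the special fibre of $\cG_{\bx}$ (any homomorphism $\cG_{\bx}\otimes\Fp\to\mu_2$ is trivial) or, more directly, from the explicit $\GSpin(\bLambda)$ description — but it has to be said. Second, your argument that $\cG_{\bx}\to\cG_{\bx^\sharp}$ is a closed immersion over $\Zp$ (``closed immersion on generic fibres plus a flatness/schematic-closure argument'') does not go through as stated: being a closed immersion on generic fibres does not in general propagate to an integral statement for flat affine $\Zp$-schemes, and this is exactly the kind of subtlety you yourself flag as a danger of circularity. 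The clean fix, which is what \cite[Lemma 2.6]{MadapusiPera2016} supplies, is to construct $\cG_{\bx}$ and $\cG_{\bx^\sharp}$ directly as closed subgroup schemes of $\GL(C(\bLambda^\sharp))$ via the integral Clifford-algebra conditions, so that the closed immersion and the centralizer-of-$x_0$ description are built in rather than deduced. I would recommend citing that lemma (as the paper does) or else proving a lattice-level statement of that shape before attempting the building-theoretic reformulation.
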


\begin{proof}
(i): This follows from the definition of spinor similitude groups. 

(ii): This is a consequence of Proposition \ref{spsd} (ii). 

(iii): The assertions follow from \cite[Lemma 2.6]{MadapusiPera2016}. 
\end{proof}

We denote by $C$ the finite free $\Zp$-module $C(\bLambda^{\sharp})$, and put $C_0:=C\otimes_{\Zp}\Qp=C(V^{\sharp})$. Fix $\delta_0 \in C$ and consider $\psi^{\sharp}=\psi_{\delta}$ and $i^{\sharp}=i_{\delta_0}$ as in Proposition \ref{spsp}. Then the $\Zp$-lattice $C\subset C_0$ is self-dual with respect to $\psi^{\sharp}$. Put $G':=\GSp(C_0)$, and denote by $\bx'\in \cB(G',\Qp)$ the vertex corresponding to the self-dual lattice $C$. Then $i^{\sharp}$ maps $\bx^{\sharp}$ to $\bx'$. Hence we obtain a closed immersion $\cG_{\bx^{\sharp}}\hookrightarrow \cG_{\bx'}$. On the other hand, we denote by $i$ the composite
\begin{equation*}
G\hookrightarrow G^{\sharp}\xrightarrow{i^{\sharp}}G'. 
\end{equation*}
In the sequel of this paper, we regard $C$ as a representation of $\cG_{\bx^{\sharp}}$ by $i^{\sharp}$, or as that of $\cG_{\bx}$ by $i$. Let $C^{*}:=\Hom_{\Zp}(C,\Zp)$ be the contragradient representation of $C$. We regard $C$ as a $\Zp$-subspace of the $\cG_{\bx^{\sharp}}$-representation $\End_{\Zp}(C^{*})\cong C\otimes_{\Zp}C^{*}$ by the map
\begin{equation*}
\bLambda^{\sharp}\hookrightarrow \End_{\Qp}(C^{*})
\end{equation*}
defined as $(c_1\cdot f)(x)=f(c_1x)$. Then the action of $g\in \cG_{\bx^{\sharp}}$ on $v\in \bLambda^{\sharp}$ can be written as $g\bullet v$. 

\begin{dfn}\label{tnsr}
Let $(s^{\sharp}_{\alpha^{\sharp}})$ be a finite collection $(s^{\sharp}_{\alpha^{\sharp}})$ of elements of $C^{\otimes}$ whose stabilizer in $\cG_{\bx_0}$ equals $\cG_{\bx^{\sharp}}$. Then we write $(s_{\alpha})$ for the (disjoint) union of $(s^{\sharp}_{\alpha^{\sharp}})$ and $x_0 \in C\subset \End_{\Zp}(C^{*})$. 
\end{dfn}

In the sequel of this paper, we fix $(s^{\sharp}_{\alpha^{\sharp}})$ as in Definition \ref{tnsr}, which determines $(s_{\alpha})$. By Proposition \ref{hseb} (iii), $\cG_{\bx}$ is the stabilizer of $(s_{\alpha})$. 

\subsection{Local models}

Until the end of Section \ref{spfb}, we use the notation $\star \in \{\emptyset,\sharp\}$. Let $V^{\star}$, $G^{\star}$ and $G'$ be as in Section \ref{quad}. %, and put $H^{\star}=\SO(V^{\star})$. 

First, we specify minuscule cocharacters of $G^{\star}$. Let
\begin{equation*}
\mu^{\sharp}\colon \G_m\rightarrow G^{\sharp}; t \mapsto t^{-1}x_1^{\sharp}x_2^{\sharp}+x_2^{\sharp}x_1^{\sharp}. 
\end{equation*}

\begin{lem}\label{hsmn}(\cite[4.2.1]{Howard2017})
\emph{The cocharacter $\mu^{\sharp}$ of $G^{\sharp}$ is minuscule, and is defined over $\Qp$. Moreover, we have the following: 
\begin{equation*}
(i^{\sharp}\circ \mu^{\sharp})(t)(x)=
\begin{cases}
t^{-1}x&\text{if }x\in x_1^{\sharp}C_0^{\sharp},\\
x&\text{if }x\in x_2^{\sharp}C_0^{\sharp},
\end{cases}
\quad \mu^{\sharp}(t)\bullet x_i^{\sharp}=
\begin{cases}
t^{(-1)^i}x_i^{\sharp}&\text{if }i=1,2,\\
x_i^{\sharp}&\text{if }3\leq i\leq n+1.
\end{cases}
\end{equation*}}
\end{lem}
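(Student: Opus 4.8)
The statement to prove is Lemma~\ref{hsmn}: the cocharacter $\mu^{\sharp}$ is minuscule, defined over $\Qp$, with the two displayed formulas for $i^{\sharp}\circ\mu^{\sharp}$ and for $\mu^{\sharp}(t)\bullet x_i^{\sharp}$.

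\medskip

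The plan is to argue directly from the structure of the Clifford algebra $C(V^{\sharp})$ relative to the hyperbolic plane $H=\Qp x_1^{\sharp}\oplus \Qp x_2^{\sharp}$, whose Gram matrix by Proposition~\ref{spsd}(i) is $\left(\begin{smallmatrix}0&1\\1&0\end{smallmatrix}\right)$. First I would record the basic Clifford identities in $C(V^{\sharp})$ coming from $[x_1^{\sharp},x_1^{\sharp}]=[x_2^{\sharp},x_2^{\sharp}]=0$ and $[x_1^{\sharp},x_2^{\sharp}]=1$, namely $(x_1^{\sharp})^2=(x_2^{\sharp})^2=0$ and $x_1^{\sharp}x_2^{\sharp}+x_2^{\sharp}x_1^{\sharp}=1$; in particular the element $e:=x_1^{\sharp}x_2^{\sharp}$ is an idempotent with complementary idempotent $1-e=x_2^{\sharp}x_1^{\sharp}$, and $\mu^{\sharp}(t)=t^{-1}e+(1-e)$. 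From this it is immediate that $\mu^{\sharp}(t)$ lies in $(C^{+}(V^{\sharp})\otimes R)^{\times}$ for every $\Qp$-algebra $R$ with $t\in R^{\times}$ (its inverse is $te+(1-e)$), that $t\mapsto\mu^{\sharp}(t)$ is a homomorphism $\G_m\to (C^{+})^{\times}$ defined over $\Qp$ since $e\in C^{+}(V^{\sharp})$ is $\Qp$-rational, and that $\mu^{\sharp}(t)^{\dagger}\mu^{\sharp}(t)$ computes to $1\in\G_m$ using $e^{\dagger}=(x_1^{\sharp}x_2^{\sharp})^{\dagger}=x_2^{\sharp}x_1^{\sharp}=1-e$; hence $\mu^{\sharp}$ genuinely factors through $G^{\sharp}=\GSpin(V^{\sharp})$ once we check it conjugates $V^{\sharp}$ into itself, which follows from the two displayed action formulas.

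\medskip

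Next I would verify the two formulas. For $i^{\sharp}\circ\mu^{\sharp}$: recall $C_0^{\sharp}=C(V^{\sharp})$ is the representation space and $i^{\sharp}(g)$ acts by left multiplication; since $e$ and $1-e$ are orthogonal idempotents we get a decomposition $C_0^{\sharp}=eC_0^{\sharp}\oplus(1-e)C_0^{\sharp}$, and left multiplication by $\mu^{\sharp}(t)=t^{-1}e+(1-e)$ acts by $t^{-1}$ on the first summand and by $1$ on the second. It remains to identify $eC_0^{\sharp}=x_1^{\sharp}C_0^{\sharp}$ and $(1-e)C_0^{\sharp}=x_2^{\sharp}C_0^{\sharp}$: the inclusion $eC_0^{\sharp}\subseteq x_1^{\sharp}C_0^{\sharp}$ is clear from $e=x_1^{\sharp}x_2^{\sharp}$, and conversely $x_1^{\sharp}=x_1^{\sharp}(x_1^{\sharp}x_2^{\sharp}+x_2^{\sharp}x_1^{\sharp})=x_1^{\sharp}x_2^{\sharp}x_1^{\sharp}=e\,x_1^{\sharp}$, so $x_1^{\sharp}C_0^{\sharp}\subseteq eC_0^{\sharp}$; symmetrically for $1-e$ and $x_2^{\sharp}$. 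For $\mu^{\sharp}(t)\bullet x_i^{\sharp}=\mu^{\sharp}(t)x_i^{\sharp}\mu^{\sharp}(t)^{-1}$: with $\mu^{\sharp}(t)^{-1}=te+(1-e)$ one computes $\mu^{\sharp}(t)x_1^{\sharp}\mu^{\sharp}(t)^{-1}$ using $x_1^{\sharp}e=x_1^{\sharp}x_2^{\sharp}x_1^{\sharp}=x_1^{\sharp}$ and $e x_1^{\sharp}=x_1^{\sharp}$ (shown above), hence $x_1^{\sharp}(1-e)=0=(1-e)x_1^{\sharp}$, giving $\mu^{\sharp}(t)x_1^{\sharp}\mu^{\sharp}(t)^{-1}=(t^{-1}e)x_1^{\sharp}(te)=t^{-1}x_1^{\sharp}$; the symmetric computation gives $t\,x_2^{\sharp}$; and for $3\le i\le n+1$ we have $x_i^{\sharp}$ anticommuting with both $x_1^{\sharp}$ and $x_2^{\sharp}$ (since the Gram matrix is block diagonal, $[x_i^{\sharp},x_1^{\sharp}]=[x_i^{\sharp},x_2^{\sharp}]=0$), so $x_i^{\sharp}e=x_i^{\sharp}x_1^{\sharp}x_2^{\sharp}=x_1^{\sharp}x_2^{\sharp}x_i^{\sharp}=e x_i^{\sharp}$, hence $x_i^{\sharp}$ commutes with $\mu^{\sharp}(t)$ and is fixed by $\mu^{\sharp}(t)\bullet(-)$. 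These formulas also show $\mu^{\sharp}(t)\bullet V^{\sharp}\subseteq V^{\sharp}$, completing the check that $\mu^{\sharp}$ lands in $G^{\sharp}$.

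\medskip

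Finally, minusculeness: from the formula for $i^{\sharp}\circ\mu^{\sharp}$, the cocharacter acts on the faithful representation $C_0^{\sharp}$ with weights only $0$ and $-1$, so via $i^{\sharp}$ it is minuscule in $\GSp(C_0^{\sharp},\psi^{\sharp})$; equivalently one reads off from $\mu^{\sharp}(t)\bullet x_i^{\sharp}$ that $\Ad\mu^{\sharp}$ acts on $\Lie G^{\sharp}$ (a subquotient of $\End(V^{\sharp})$-type data, with weights among $\{1,0,-1\}$ on $\SO(V^{\sharp})$) with weights in $\{-1,0,1\}$, which is precisely minusculeness for $G^{\sharp}$. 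I do not expect a genuine obstacle here; the only mildly delicate point is bookkeeping the idempotent relations $x_1^{\sharp}=e x_1^{\sharp}=x_1^{\sharp}e$ and $x_2^{\sharp}=(1-e)x_2^{\sharp}=x_2^{\sharp}(1-e)$ correctly, together with $e^{\dagger}=1-e$, and keeping track that we are working inside $C^{+}(V^{\sharp})$ so that everything indeed lands in $\GSpin$ rather than merely in $C(V^{\sharp})^{\times}$. Alternatively, since the statement is quoted from \cite[4.2.1]{Howard2017}, one may simply cite that reference; the above is the self-contained verification.
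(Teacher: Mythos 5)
The paper offers no proof of this lemma — it is quoted verbatim from Howard--Pappas with a citation — so a self-contained verification via the idempotent $e=x_1^{\sharp}x_2^{\sharp}$, $1-e=x_2^{\sharp}x_1^{\sharp}$ is exactly the right approach and is what the cited source does. However, your idempotent bookkeeping is off in a way that breaks the displayed computation, and this is precisely the ``mildly delicate point'' you flag at the end.

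The correct one-sided relations are $ex_1^{\sharp}=x_1^{\sharp}$ but $x_1^{\sharp}e=x_1^{\sharp}x_1^{\sharp}x_2^{\sharp}=0$, hence $(1-e)x_1^{\sharp}=0$ and $x_1^{\sharp}(1-e)=x_1^{\sharp}$; symmetrically $x_2^{\sharp}e=x_2^{\sharp}$, $ex_2^{\sharp}=0$, $(1-e)x_2^{\sharp}=x_2^{\sharp}$, $x_2^{\sharp}(1-e)=0$. You instead assert $x_1^{\sharp}e=ex_1^{\sharp}=x_1^{\sharp}$ and $x_1^{\sharp}(1-e)=(1-e)x_1^{\sharp}=0$; note you even write $x_1^{\sharp}e=x_1^{\sharp}x_2^{\sharp}x_1^{\sharp}$, which is $ex_1^{\sharp}$, not $x_1^{\sharp}e$. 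As a consequence the displayed computation $\mu^{\sharp}(t)x_1^{\sharp}\mu^{\sharp}(t)^{-1}=(t^{-1}e)x_1^{\sharp}(te)=t^{-1}x_1^{\sharp}$ is wrong twice over: under your (incorrect) relations the right-hand side evaluates to $x_1^{\sharp}$, and under the correct relations $ex_1^{\sharp}e=x_1^{\sharp}e=0$. The term that actually survives is the cross-term you discarded, namely $(t^{-1}e)\,x_1^{\sharp}\,(1-e)=t^{-1}x_1^{\sharp}$, while the three other terms vanish since $(1-e)x_1^{\sharp}=0$ and $x_1^{\sharp}e=0$. A similar correction is needed for $x_2^{\sharp}$, where the surviving term is $(1-e)x_2^{\sharp}(te)=t\,x_2^{\sharp}$. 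Separately, $\mu^{\sharp}(t)^{\dagger}\mu^{\sharp}(t)$ is not $1$: using $e^{\dagger}=1-e$ one gets $(t^{-1}(1-e)+e)(t^{-1}e+(1-e))=t^{-1}(1-e)+t^{-1}e=t^{-1}$. This is still a scalar, so your conclusion that $\mu^{\sharp}$ lands in $\GSpin$ stands, but the stated value of the similitude is incorrect (and in fact $\sml_{V^{\sharp}}\circ\mu^{\sharp}=t^{-1}$ is the normalization used throughout the paper, e.g.\ in matching the slope computations). The remaining parts — the decomposition $C_0^{\sharp}=eC_0^{\sharp}\oplus(1-e)C_0^{\sharp}$ with $eC_0^{\sharp}=x_1^{\sharp}C_0^{\sharp}$, the commutation of $x_i^{\sharp}$ with $e$ for $i\geq 3$, and the deduction of minusculeness from the weight decomposition — are fine. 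One further remark, not your fault: the paper's own indexing is inconsistent. Proposition~\ref{spsd} places the hyperbolic pair at $x_0^{\sharp},x_1^{\sharp}$ (indices $0,\dots,n$), while the definition of $\mu^{\sharp}$ and the lemma's clause $3\le i\le n+1$ presuppose the hyperbolic pair is $x_1^{\sharp},x_2^{\sharp}$ (Howard--Pappas's convention). Your argument uses the latter convention, which is the one the lemma statement requires.
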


On the other hand, fix a square root $\alpha$ of $-a_2a_3^{-1}$ in $\Qps$, and set
\begin{equation*}
\mu(t):=t^{-1}(x_2+\alpha x_3)(x_2-\alpha x_3)+(x_2-\alpha x_3)(x_2+\alpha x_3). 
\end{equation*}

\begin{lem}\label{mucp}
\emph{
\begin{enumerate}
\item The cocharacter $\mu$ of $G$ is minuscule, and is defined over $\Qps$. 
\item The composite
\begin{equation*}
\G_m \xrightarrow{\mu}G\hookrightarrow G^{\sharp}
\end{equation*}
is conjugate to $\mu^{\sharp}$ by construction by an element of $\cG_{\bx^{\sharp}}(\Zps)$. 
\end{enumerate}}
\end{lem}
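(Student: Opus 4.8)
The plan is to reduce everything to Lemma \ref{hsmn} by exhibiting an explicit element $h \in \cG_{\bx^{\sharp}}(\Zps)$ that conjugates $\mu$ into $\mu^{\sharp}$. The first step is to understand the relation between the two quadratic data. Recall that $\mu^{\sharp}$ is built from the hyperbolic pair $x_1^{\sharp}, x_2^{\sharp}$ with $[x_1^{\sharp},x_2^{\sharp}]=1$, $Q^{\sharp}(x_i^{\sharp})=0$, while $\mu$ is built from the pair $y_{\pm}:=x_2 \pm \alpha x_3$ inside $V \subset V^{\sharp}$, where $\alpha^2 = -a_2 a_3^{-1}$ lives in $\Qps$. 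A direct computation with the Gram matrix $\diag(a_2,a_3)$ on $\Qp x_2 \oplus \Qp x_3$ gives $Q(y_{\pm}) = a_2 + \alpha^2 a_3 = a_2 - a_2 = 0$ and $[y_+, y_-] = 2(a_2 - \alpha^2 a_3) = 4a_2$. So $y_+, y_-$ is a hyperbolic pair up to the scalar $4a_2 \in \Zps^{\times}$ (here $p>2$, so $2, a_2$ are units); rescaling, the pair $(4a_2)^{-1}y_+$, $y_-$ is a genuine hyperbolic pair spanning a hyperbolic plane $H \subset V \otimes_{\Qp}\Qps$.

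Next I would construct $h$. Both $\mu^{\sharp}$ and $\mu$ are, by their defining formulas, cocharacters factoring through the $\GSpin$ of a hyperbolic plane sitting inside $V^{\sharp}\otimes_{\Qp}\Qps$ (for $\mu^{\sharp}$ the plane $\Qp x_1^{\sharp} \oplus \Qp x_2^{\sharp}$; for $\mu$ the plane $H$); on the complementary space each acts trivially (cf. the second displayed formula in Lemma \ref{hsmn} and the analogous computation for $\mu$ using $\mu(t)\bullet x = x$ for $x \perp H$). It therefore suffices to find $h \in \cG_{\bx^{\sharp}}(\Zps)$ whose associated special-orthogonal element $h\bullet(-)$ carries the ordered pair $(x_1^{\sharp}, x_2^{\sharp})$ to a scalar multiple of the ordered pair $((4a_2)^{-1}y_+, y_-)$ and is the identity on the orthogonal complement; then $h \mu^{\sharp} h^{-1}$ and $\mu$ will agree, because a minuscule cocharacter of $\GSpin$ of a plane is determined by the ordered hyperbolic basis up to the central $\G_m$, which does not affect the $\bullet$-action. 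Such an orthogonal transformation exists over $\Zps$: it is block-diagonal, the identity on $\langle x_1^{\sharp}, x_2^{\sharp}\rangle^{\perp} \cap \langle y_+, y_-\rangle^{\perp}$, and on the relevant $4$-dimensional (or lower) block it is an isometry between two hyperbolic planes inside a fixed unimodular (after base change) lattice, which can be written down explicitly in the basis $x_0^{\sharp},\dots,x_n^{\sharp}$; one then lifts it through the two-to-one (on points, an honest central extension) map $\GSpin \to \SO$, noting the lift can be chosen in $\cG_{\bx^{\sharp}}(\Zps)$ since the Clifford construction of the lattice model $\cG_{\bx^{\sharp}}$ is functorial in isometries preserving $\bLambda^{\sharp}\otimes \Zps$. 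Part (i) — that $\mu$ is minuscule and defined over $\Qps$ — then follows formally: minusculity is conjugation-invariant and holds for $\mu^{\sharp}$ by Lemma \ref{hsmn}, and the $\Qps$-rationality is visible from the defining formula since $\alpha \in \Qps$ and all other data are in $\Qp$ (one checks $\sigma$-invariance directly, $\sigma$ sending $\alpha \mapsto -\alpha$ and hence swapping $y_+ \leftrightarrow y_-$, which is absorbed by the form of $\mu$).

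The main obstacle I anticipate is the integrality and the lift: producing the conjugating element not just in $G^{\sharp}(\Qps)$ but in the parahoric $\cG_{\bx^{\sharp}}(\Zps)$, i.e. verifying that the isometry between the two hyperbolic planes extends to a $\Zps$-isometry of $\bLambda^{\sharp}\otimes_{\Zp}\Zps$ with itself, and that its Clifford lift lands in the integral $\GSpin$. This is where Proposition \ref{spsd}(ii) (self-duality of $\bLambda^{\sharp}$) and the fact that $a_2 \in \Zpt$ are essential — both hyperbolic planes are unimodular direct summands of the ambient unimodular lattice, so Witt's extension theorem over the local ring $\Zps$ applies; and MadapusiPera-type functoriality of the Clifford lattice model (as already invoked in the proof of Proposition \ref{hseb}(iii)) supplies the integral lift. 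Everything else is a routine bookkeeping check that the two explicit formulas match after conjugation.
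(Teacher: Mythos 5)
Your approach is essentially the same as the paper's: both produce an explicit conjugating element of $\cG_{\bx^{\sharp}}(\Zps)$ carrying the hyperbolic plane underlying $\mu^{\sharp}$ to the one spanned by $y_{\pm}=x_2\pm\alpha x_3$, and then use that conjugation preserves minusculity. The paper writes the element $g_0$ down directly as an isometry of a rank-four block (identity on the complement), whereas you argue existence via Witt extension over the local ring $\Zps$ for unimodular sublattices followed by lifting along $\cG_{\bx^{\sharp}}\to\mathcal{SO}$; the latter is sound because the kernel is $\G_m$ and $H^1(\spec\Zps,\G_m)=0$, and is arguably more robust than chasing an explicit matrix.

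One small correction to your part (i): the concluding remark that $\sigma$ sends $\alpha\mapsto -\alpha$, swaps $y_+\leftrightarrow y_-$, and that this ``is absorbed by the form of $\mu$'' is both unnecessary and incorrect. The lemma asserts only that $\mu$ is defined over $\Qps$, which is immediate because the defining formula has coefficients in $\Qps$; it does \emph{not} assert $\sigma$-invariance, and indeed swapping $y_+\leftrightarrow y_-$ produces $\sigma(\mu)$, which differs from $\mu$ — this is precisely why the paper systematically works with $\sigma(\mu)$ (in $M^{\loc}(G,\sigma(\mu),\bx)$, $X_{\sigma(\mu)}(b_0)_{\Kb_p}$, etc.) rather than $\mu$. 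There are also a couple of factor-of-two slips in the bilinear-form arithmetic (one has $[y_+,y_-]=a_2-\alpha^2a_3=2a_2$, not $4a_2$), harmless for the argument since the value remains a unit, but worth cleaning up.
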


\begin{proof}
(i): This follows by definition. 

(ii): The map
\begin{equation*}
g_0\colon \bLambda^{\sharp}\otimes_{\Zp}\Zps \rightarrow \bLambda^{\sharp}\otimes_{\Zp}\Zps ;x_i^{\sharp}\mapsto 
\begin{cases}
x_3^{\sharp}+(-1)^{i-1}\alpha x_4^{\sharp}&\text{if }i=1,2,\\
(2\alpha^{i-3})^{-1}(x_{1}^{\sharp}+(-1)^{i-1}x_2^{\sharp})&\text{if }i=3,4,\\
x_i^{\sharp}&\text{otherwise}
\end{cases}
\end{equation*}
is an element of $\cG_{\bx^{\sharp}}(\Zps)$ which satisfies $\Ad(g_0)\circ \mu=\mu^{\sharp}$. 
\end{proof}

Let $M^{\loc}(G,\sigma(\mu^{\star}),\bx^{\star})$ be the local model for $(G^{\star},\mu^{\star},\bx^{\star})$ in the sense of \cite{Pappas2013}. We denote by $M^{\loc}(G')$ the moduli space of maximally totally isotropic subspaces of $C$. 

\begin{prop}\label{cllm}(\cite[Proposition 2.3.7]{Kisin2018})
\emph{The closed immersion $i^{\star}\colon G^{\star}\hookrightarrow G'$ induces a closed immersion
$i_{*}^{\star}\colon M^{\loc}(G^{\star},\mu^{\star},\bx^{\star})\hookrightarrow M^{\loc}(G')$. }
\end{prop}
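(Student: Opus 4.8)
The statement to prove is Proposition~\ref{cllm}: the closed immersion $i^{\star}\colon G^{\star}\hookrightarrow G'$ induces a closed immersion $i_{*}^{\star}\colon M^{\loc}(G^{\star},\mu^{\star},\bx^{\star})\hookrightarrow M^{\loc}(G')$. This is attributed to \cite[Proposition 2.3.7]{Kisin2018}, so the proof should consist of verifying that the hypotheses of that general result apply in our spinor setting.

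The plan is as follows. First I would recall the construction of the local model $M^{\loc}(G^{\star},\mu^{\star},\bx^{\star})$ following \cite{Pappas2013}: it is defined as the (flat, or scheme-theoretic) closure of the generic-fiber orbit $G^{\star}(\Qpb)\cdot [\Fil^{\bullet}_{\mu^{\star}}]$ inside a suitable affine Grassmannian / Grassmannian attached to $\cG^{\star}_{\bx^{\star}}$, while $M^{\loc}(G')$ is the honest Grassmannian (here: the symplectic/isotropic Grassmannian parametrizing maximal totally isotropic subspaces of $C$). The key input is that the embedding $i^{\star}$ is a closed immersion of group schemes $\cG^{\star}_{\bx^{\star}}\hookrightarrow \cG_{\bx'} = \GSp(C)$ over $\Zp$ — this is exactly what Proposition~\ref{hseb}(iii) and the discussion right after it (realizing $C$ as a faithful $\cG^{\star}_{\bx^{\star}}$-representation via $i^{\star}$, with the tensors $(s^{\star}_{\alpha})$ cutting out $\cG^{\star}_{\bx^{\star}}$) provide. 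Second, I would note that $\mu^{\star}$ (Lemma~\ref{hsmn}, Lemma~\ref{mucp}) is minuscule, and that $i^{\star}\circ\mu^{\star}$ lands in the minuscule coweight of $G'$ defining $M^{\loc}(G')$ — again Lemma~\ref{hsmn} computes the weights of $i^{\sharp}\circ\mu^{\sharp}$ on $C_0^{\sharp}$ explicitly, showing it is the cocharacter cutting out the Lagrangian Grassmannian, and Lemma~\ref{mucp}(ii) reduces the case $\star=\emptyset$ to $\star=\sharp$ after conjugation by $g_0\in\cG_{\bx^{\sharp}}(\Zps)$. Third, with these two facts in hand, \cite[Proposition 2.3.7]{Kisin2018} applies verbatim: the induced map on generic fibers is a closed immersion of flag varieties (since $G^{\star}\hookrightarrow G'$ is a closed immersion of reductive groups and the parabolics match), and taking scheme-theoretic closures inside the closed immersion of Grassmannians/affine Grassmannians induced by $\cG^{\star}_{\bx^{\star}}\hookrightarrow \GSp(C)$ yields a closed immersion $i_*^{\star}$.

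The main point requiring care — and the step I would flag as the real content rather than the formal citation — is that the embedding of local models is compatible with the \emph{flat closure} operation, i.e.\ that the scheme-theoretic image of $M^{\loc}(G^{\star},\mu^{\star},\bx^{\star})$ under the closed immersion of ambient Grassmannians is itself the local model $M^{\loc}(G^{\star},\mu^{\star},\bx^{\star})$ and not merely a closed subscheme with the same generic fiber; equivalently, one must know that the map $M^{\loc}(G^{\star},\mu^{\star},\bx^{\star})\to M^{\loc}(G')$ is already a closed immersion on the nose. This is precisely the assertion of \cite[Proposition 2.3.7]{Kisin2018}, whose proof uses that $C$ is a faithful representation of $\cG^{\star}_{\bx^{\star}}$ together with the ``(\textbf{LM})'' / local-model-diagram formalism and the fact that $\mu^{\star}$ is minuscule (so the $\Qpb$-fiber of the local model is a single flag variety, with no issue of multiple orbits). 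Since all the structural hypotheses — closed immersion of parahoric group schemes (Proposition~\ref{hseb}(iii)), faithful lattice representation $C$ with defining tensors $(s^{\star}_{\alpha})$ (Definition~\ref{tnsr} and surrounding text), minusculeness and explicit weight computation of $i^{\star}\circ\mu^{\star}$ (Lemmas~\ref{hsmn} and~\ref{mucp}) — have been established earlier in the excerpt, the proof of Proposition~\ref{cllm} reduces to invoking \cite[Proposition 2.3.7]{Kisin2018} after checking these hypotheses, and I would write it up as exactly that: a short verification-and-citation argument, with the $\star=\emptyset$ case handled by base-changing to $\Qps$/$\Zps$ and conjugating by $g_0$ as in Lemma~\ref{mucp}(ii), then descending.
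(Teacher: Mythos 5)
Your proposal is correct and follows the same route as the paper: the paper proves nothing beyond invoking \cite[Proposition 2.3.7]{Kisin2018}, and your write-up simply spells out that the required hypotheses (closed immersion of parahoric group schemes from Proposition~\ref{hseb}(iii), the faithful lattice representation $C$ with defining tensors, tameness and $p\nmid\pi_1(G^{\star,\der})$ from Corollary~\ref{sptm} and Proposition~\ref{gspi}(ii), minusculeness of $\mu^{\star}$ and compatibility of $i^{\star}\circ\mu^{\star}$ with the Lagrangian Grassmannian from Lemmas~\ref{hsmn} and~\ref{mucp}) have already been established in the text, making the citation applicable verbatim.
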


\begin{cor}\label{lmci}
\emph{The inclusion $\bLambda \subset \bLambda^{\sharp}$ induces a closed immersion
\begin{equation*}
M^{\loc}(G,\sigma(\mu),\bx)\hookrightarrow M^{\loc}(G^{\sharp},\mu^{\sharp},\bx^{\sharp})
\end{equation*}
such that the following diagram commutes: 
\begin{equation*}
\xymatrix{
M^{\loc}(G,\sigma(\mu),\bx)\ar[rd]_{i_{*}} \ar[r] & M^{\loc}(G^{\sharp},\mu^{\sharp},\bx^{\sharp}) \ar[d]^{i_{*}^{\sharp}} \\
&M^{\loc}(G'). }
\end{equation*}}
\end{cor}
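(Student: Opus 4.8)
\textbf{Proof proposal for Corollary \ref{lmci}.}

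The plan is to deduce the existence of the horizontal closed immersion from Proposition \ref{cllm} applied to both $\star=\emptyset$ and $\star=\sharp$, together with the compatibility of the various closed immersions of groups. First I would recall that by Proposition \ref{cllm} we already have closed immersions $i_{*}\colon M^{\loc}(G,\sigma(\mu),\bx)\hookrightarrow M^{\loc}(G')$ and $i_{*}^{\sharp}\colon M^{\loc}(G^{\sharp},\mu^{\sharp},\bx^{\sharp})\hookrightarrow M^{\loc}(G')$, where $i$ is by construction the composite $G\hookrightarrow G^{\sharp}\xrightarrow{i^{\sharp}} G'$. The key point is therefore to show that $i_{*}$ factors through $i_{*}^{\sharp}$, i.e. that the image of $M^{\loc}(G,\sigma(\mu),\bx)$ in $M^{\loc}(G')$ is contained in the image of $M^{\loc}(G^{\sharp},\mu^{\sharp},\bx^{\sharp})$. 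Once this is known, the factorization automatically defines a morphism $M^{\loc}(G,\sigma(\mu),\bx)\to M^{\loc}(G^{\sharp},\mu^{\sharp},\bx^{\sharp})$ which is a closed immersion because $i_{*}$ is and $i_{*}^{\sharp}$ is a monomorphism, and the triangle commutes by construction.

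To verify the containment of images I would argue on $\Zpb$-points (or more precisely on $R$-points for $R$ an arbitrary $\Zpb$-algebra, by the functorial description of the local models), since both sides are projective $\Zpb$-schemes and the scheme-theoretic image can be detected after faithfully flat base change. A point of $M^{\loc}(G,\sigma(\mu),\bx)$ is, by the Kisin--Pappas description, a totally isotropic subspace (filtration) of $\bLambda\otimes R$ of the type prescribed by $\sigma(\mu)$, together with the requirement that it be stabilized in the appropriate sense by the tensors cutting out $\cG_{\bx}$ inside $\cG_{\bx'}$. Via $\bLambda\subset\bLambda^{\sharp}$ such a filtration induces a filtration on $\bLambda^{\sharp}\otimes R$; the content is that this induced filtration is of type $\mu^{\sharp}$ and is stabilized by the tensors $(s^{\sharp}_{\alpha^{\sharp}})$ cutting out $\cG_{\bx^{\sharp}}$. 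The first half follows from Lemma \ref{mucp} (ii): after conjugating by the element $g_0\in\cG_{\bx^{\sharp}}(\Zps)$ realizing $\Ad(g_0)\circ\mu=\mu^{\sharp}$, the composite cocharacter $\G_m\xrightarrow{\mu}G\hookrightarrow G^{\sharp}$ is conjugate to $\mu^{\sharp}$, so the $G'$-conjugacy class of the induced filtration type is exactly that cut out by $M^{\loc}(G^{\sharp},\mu^{\sharp},\bx^{\sharp})$ inside $M^{\loc}(G')$. The second half follows because, by construction, the collection $(s_{\alpha})$ cutting out $\cG_{\bx}$ is obtained from $(s^{\sharp}_{\alpha^{\sharp}})$ by adjoining only $x_0$ (Definition \ref{tnsr}), so a filtration fixed by all of $(s_{\alpha})$ is in particular fixed by the sub-collection $(s^{\sharp}_{\alpha^{\sharp}})$.

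I expect the main obstacle to be bookkeeping rather than anything deep: one must make sure that ``local model for $(G^{\star},\mu^{\star},\bx^{\star})$'' is being used consistently in the sense of \cite{Pappas2013}/\cite{Kisin2018} with the correct normalizations of $\mu$ versus $\sigma(\mu)$ (note the asymmetry in the statement, where $\sigma(\mu)$ appears for $G$ but $\mu^{\sharp}$ for $G^{\sharp}$, reflecting that $\mu$ is only defined over $\Qps$ while $\mu^{\sharp}$ is defined over $\Qp$), and that the identification of $M^{\loc}(G')$ with the orthogonal/symplectic Grassmannian of maximally totally isotropic subspaces of $C$ is compatible on both levels. The genuinely substantive inputs — that $i$ and $i^{\sharp}$ induce closed immersions on local models, and that $\mu$ and $\mu^{\sharp}$ become conjugate after an integral base change — are already supplied by Proposition \ref{cllm} and Lemma \ref{mucp} respectively, so the corollary is essentially a formal consequence; the proof should be short, and in fact the author likely just writes ``this is immediate from Proposition \ref{cllm} and Lemma \ref{mucp}.''
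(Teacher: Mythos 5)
Your argument is correct and draws on the same two ingredients as the paper---Proposition \ref{cllm} and the Kisin--Pappas moduli descriptions of the local models---but the route is slightly more roundabout. The paper's proof is shorter: it cites \cite[4.1.5]{Kisin2018} to identify $M^{\loc}(G,\sigma(\mu),\bx)_{\Zpb}$ and $M^{\loc}(G^{\sharp},\mu^{\sharp},\bx^{\sharp})_{\Zpb}$ with moduli spaces of isotropic lines in $\bLambda$ and $\bLambda^{\sharp}$ respectively (the content of Proposition \ref{lmex} and Theorem \ref{qtrc}), so that the inclusion $\bLambda\subset\bLambda^{\sharp}$ tautologically induces a morphism $l\mapsto l$; it is then a closed immersion because its composite with $i_{*}^{\sharp}$ is $i_{*}$, which is a closed immersion by Proposition \ref{cllm}, and $i_{*}^{\sharp}$ is separated. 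Your factorization-of-images argument reaches the same conclusion, and your reduction to the tail of the triangle is sound. The one place I would tighten your writeup is the verification of containment of images: phrasing it in terms of filtrations ``stabilized by the tensors'' conflates the (abstract) Pappas--Zhu/Kisin--Pappas definition of the local model as a flat closure with the description of the closed subscheme it carves out in $M^{\loc}(G')$. A cleaner version of the same idea: the generic fibers are the $G$- and $G^{\sharp}$-flag varieties inside the $G'$-flag variety, and the former lies in the latter because of the group inclusions together with Lemma \ref{mucp}(ii) matching the cocharacter conjugacy classes; since $M^{\loc}(G,\sigma(\mu),\bx)$ is by construction the flat closure of its generic fiber in $M^{\loc}(G')$ and $M^{\loc}(G^{\sharp},\mu^{\sharp},\bx^{\sharp})$ is closed in $M^{\loc}(G')$, the containment propagates from the generic fiber. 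With that adjustment your proof is a valid variant of the paper's; what the paper's direct construction buys is that it avoids having to reason about scheme-theoretic images at all.
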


\begin{proof}
By a descriptions of $M^{\loc}(G,\sigma(\mu),\bx)$ and $M^{\loc}(G^{\sharp},\mu^{\sharp},\bx^{\sharp})$ as in \cite[4.1.5]{Kisin2018}, $\bLambda \subset \bLambda^{\sharp}$ induces a morphism $M^{\loc}(G,\sigma(\mu),\bx)\hookrightarrow M^{\loc}(G^{\sharp},\mu^{\sharp},\bx^{\sharp})$. This is a closed immersion by Proposition \ref{cllm}. 
\end{proof}

\begin{cor}\label{admr}
\emph{We have $(\Kb^{\sharp}_p\mu^{\sharp}(p)\Kb^{\sharp}_p)\cap G(\Qpb)=\bigcup_{w\in \Adm(\mu)}\Kb_p \sigma(w) \Kb_p$. }
\end{cor}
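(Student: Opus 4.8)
The plan is to deduce this statement about the intersection of the $\mu^{\sharp}$-admissible double coset in $G'(\Qpb)$ with $G(\Qpb)$ from the already-established compatibility of local models in Corollary \ref{lmci}, together with the group-theoretic characterization of $\cG_{\bx}$ inside $\cG_{\bx^{\sharp}}$ given in Proposition \ref{hseb} (iii). First I would recall that, by the work of Pappas--Zhu and the theory of $\mu$-admissible sets, the $\Kb_p$-double cosets appearing on the right-hand side, namely $\bigcup_{w\in \Adm(\mu)}\Kb_p\sigma(w)\Kb_p$, are exactly the $\Zpb$-points of the union of affine Schubert cells in the Witt vector (or loop group) affine Grassmannian whose closure is the local model $M^{\loc}(G,\sigma(\mu),\bx)$; the same applies verbatim for $G^{\sharp}$ with $\mu^{\sharp}$. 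So the equality to be proved translates into the statement that a $\Zpb$-point of $M^{\loc}(G^{\sharp},\mu^{\sharp},\bx^{\sharp})$ whose image in the affine flag variety for $G'$ comes from the affine flag variety for $G$ already lies in $M^{\loc}(G,\sigma(\mu),\bx)$.

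Next I would carry out the following steps in order. (1) Via $i^{\sharp}$ and $i$, identify the $\mu^{\star}$-admissible loci with their images in the symplectic affine Grassmannian $M^{\loc}(G')$; this uses Proposition \ref{cllm} and the fact (standard for $\GSp$) that admissibility is detected by the Schubert stratification of $M^{\loc}(G')$ via the cocharacter $\mu'$, the image of $\mu^{\star}$. (2) Intersect with $G(\Qpb)$: an element $g\in (\Kb^{\sharp}_p\mu^{\sharp}(p)\Kb^{\sharp}_p)\cap G(\Qpb)$ gives a point of $M^{\loc}(G^{\sharp},\mu^{\sharp},\bx^{\sharp})(\Zpb)$; I must show it lands in the closed subscheme $M^{\loc}(G,\sigma(\mu),\bx)\subset M^{\loc}(G^{\sharp},\mu^{\sharp},\bx^{\sharp})$ of Corollary \ref{lmci}. (3) Here the key input is Proposition \ref{hseb} (iii): membership in $\cG_{\bx}$ inside $\cG_{\bx^{\sharp}}$ is cut out by the single equation $g x_0 g^{-1}=x_0$, i.e.\ by fixing the special vector $x_0\in C$ under the twisted action $g\bullet(-)$. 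Since $g\in G(\Qpb)$ fixes $x_0$ by definition of $G=\GSpin(V)\subset \GSpin(V^{\sharp})=G^{\sharp}$, the corresponding point of the affine Grassmannian for $G^{\sharp}$ is fixed by the $x_0$-action, which is precisely the condition that carves out the $G$-affine Grassmannian, hence the $G$-local model, inside the $G^{\sharp}$-one. (4) Conclude that $g\in \bigcup_{w\in \Adm(\mu)}\Kb_p\sigma(w)\Kb_p$; the reverse inclusion is immediate since $\Adm(\mu)\subset \Adm(\mu^{\sharp})$ under $\cB(G,\Qp)\hookrightarrow \cB(G^{\sharp},\Qp)$ by Lemma \ref{mucp} (ii) and Proposition \ref{hseb} (ii), which also forces $\mu^{\sharp}(p)\in \Kb^{\sharp}_p\mu(p)\Kb^{\sharp}_p$ after conjugating by an element of $\cG_{\bx^{\sharp}}(\Zps)$.

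The main obstacle I expect is Step (3): making rigorous the passage from the pointwise statement ``$g$ fixes $x_0$'' to the scheme-theoretic statement ``the point of $\mathrm{Gr}_{G^{\sharp}}$ lies in the closed subscheme $\mathrm{Gr}_{G}$'', and ensuring this is compatible with the admissible (bounded) loci rather than the whole affine Grassmannians. One must be careful that the twisted action $g\bullet v = gvg^{-1}$ on $\bLambda^{\sharp}$ (equivalently on $C\subset \End(C^{*})$) extends to a $\Zpb[[t]]$- or Witt-vector-level action for which the stabilizer of $x_0$ is a closed subgroup, so that Proposition \ref{hseb} (iii) globalizes; this is where one invokes the construction of $(s_\alpha)$ from $(s^{\sharp}_{\alpha^{\sharp}})$ and $x_0$ in Definition \ref{tnsr}, exactly as $\cG_{\bx}=\stab(s_\alpha)$ is obtained from $\cG_{\bx^{\sharp}}=\stab(s^{\sharp}_{\alpha^{\sharp}})$ by adding $x_0$. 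Once this is set up, the equality of the two sides follows formally; a clean alternative, if the local-model argument proves delicate, is to argue directly with lattice chains, describing both double cosets as sets of self-dual (resp.\ almost self-dual) lattices in $C_0\otimes\Qpb$ invariant under $x_0$, but I expect the local-model route via Corollaries \ref{lmci} and \ref{cllm} to be the most economical.
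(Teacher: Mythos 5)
Your proposal takes essentially the same route as the paper: reduce the double-coset identity to the compatibility of local models from Corollary \ref{lmci} together with the identification of $\Fpbar$-points of $M^{\loc}(G^{\star},\mu^{\star},\bx^{\star})$ with the admissible double-coset spaces, which the paper cites from \cite[Proposition 3.4]{Zhou2020} and you attribute to Pappas--Zhu and the theory of admissible sets. You make explicit the role of Proposition \ref{hseb}(iii) and the $x_0$-tensor from Definition \ref{tnsr} in characterizing when a point of the $G^{\sharp}$-affine Grassmannian comes from $G$ — a point the paper's very terse proof leaves implicit — but the skeleton of the argument is the same; note only that the relevant isomorphisms are on $\Fpbar$-points rather than $\Zpb$-points as you write in Step (2).
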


\begin{proof}
This follows from Proposition \ref{lmci} and the isomorphisms
\begin{equation*}
M^{\loc}(G^{\sharp},\mu^{\sharp},\bx^{\sharp})(\Fpbar)\cong \Kb_{p}^{\sharp}\mu^{\sharp}(p)\Kb_{p}^{\sharp}/\Kb_p^{\sharp},\quad  M^{\loc}(G,\sigma(\mu),\bx)(\Fpbar)\cong \bigcup_{w\in \Adm(\mu)}\Kb_{p}\sigma(w)\Kb_{p}/\Kb_p,
\end{equation*}
that are consequences of \cite[Proposition 3.4]{Zhou2020}. 
\end{proof}

We give a more precise description of $M^{\loc}(G^{\star},\mu^{\star},\bx^{\star})_{\Zpb}:=M^{\loc}(G^{\star},\mu^{\star},\bx^{\star})\otimes_{\Zp}\Zpb$ and $i_{*}^{\star}$. 

\begin{prop}\label{lmex}
\emph{
\begin{enumerate}
\item There is an isomorphism between $M^{\loc}(G^{\sharp},\mu^{\sharp},\bx^{\sharp})_{\Zpb}$ and the moduli space of isotropic lines in $\bLambda^{\sharp}\otimes_{\Zp}\Zpb$, which is smooth over $\Zpb$. 
\item Under (i), the closed immersion $i_{*}^{\sharp}\colon M^{\loc}(G^{\sharp},\mu^{\sharp},\bx^{\sharp})_{\Zpb}\hookrightarrow M^{\loc}(G')_{\Zpb}$ equals the map $l \mapsto \cF(l)$, where
\begin{equation*}
\cF(l):=\{x\in C\mid v\cdot x=0\text{ for all }v\in l\}. 
\end{equation*}
\end{enumerate}}
\end{prop}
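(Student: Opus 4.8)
The plan is to deduce (i) from the general theory of local models at hyperspecial level, and then to prove (ii) by a $\cG_{\bx^{\sharp}}$-equivariance argument anchored at one point.

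For (i): since $p>2$ and $\bLambda^{\sharp}$ is self-dual (Proposition \ref{spsd} (ii)), the parahoric group scheme $\cG_{\bx^{\sharp}}$ is a reductive group scheme over $\Zp$, and since $\mu^{\sharp}$ is minuscule (Lemma \ref{hsmn}) the local model $M^{\loc}(G^{\sharp},\mu^{\sharp},\bx^{\sharp})$ of \cite{Pappas2013} is canonically the partial flag variety $\cG_{\bx^{\sharp}}/P_{\mu^{\sharp}}$, which is smooth and projective over $\Zp$; alternatively one may invoke \cite[\S 4.2]{Howard2017}, whose hypotheses are satisfied by $V^{\sharp}$. To identify this flag variety I would use that the representation of $\cG_{\bx^{\sharp}}$ on $\bLambda^{\sharp}$ factors through $\SO(\bLambda^{\sharp})$, and that by the weight formulas $\mu^{\sharp}(t)\bullet x_i^{\sharp}=t^{(-1)^i}x_i^{\sharp}$ ($i=1,2$) and $\mu^{\sharp}(t)\bullet x_i^{\sharp}=x_i^{\sharp}$ ($i\geq 3$) of Lemma \ref{hsmn}, the parabolic $P_{\mu^{\sharp}}$ is exactly the stabilizer of the isotropic line $\langle x_1^{\sharp}\rangle\subset\bLambda^{\sharp}$. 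Hence $\cG_{\bx^{\sharp}}/P_{\mu^{\sharp}}$ is the scheme of isotropic lines in $\bLambda^{\sharp}$, and base change to $\Zpb$ yields (i), including smoothness.

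For (ii): I would first check that $l\mapsto\cF(l)$ defines a morphism from the scheme of isotropic lines in $\bLambda^{\sharp}\otimes_{\Zp}\Zpb$ to $M^{\loc}(G')_{\Zpb}$. If $l=Rv$ is an isotropic line in $\bLambda^{\sharp}\otimes_{\Zp}R$ and $C_R:=C\otimes_{\Zp}R$, then $v\cdot C_R\subseteq\cF(l)$ because $v^2=Q^{\sharp}(v)=0$ in the Clifford algebra; since, étale-locally on the base, $v$ is part of a hyperbolic pair, left multiplication by $v$ on $C_R$ has constant rank $\tfrac{1}{2}\rk_{\Zp}C$, so $\cF(l)=v\cdot C_R$ is a direct summand of rank $\tfrac{1}{2}\rk_{\Zp}C$. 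It is totally isotropic for $\psi^{\sharp}$: for $c,c'\in C_R$, using $v^{\dagger}=v$ and cyclicity of the reduced trace, $\psi^{\sharp}(vc,vc')=\trd(vc\,\delta_0\,{c'}^{\dagger}v)=\trd(v^2\,c\,\delta_0\,{c'}^{\dagger})=0$, and by the rank count it is Lagrangian. Next, both $i_{*}^{\sharp}$ (a closed immersion by Proposition \ref{cllm}) and $l\mapsto\cF(l)$ are $\cG_{\bx^{\sharp}}$-equivariant, the latter because $\cF(g\bullet l)=i^{\sharp}(g)\cF(l)$. Hence it suffices to compare them at the base point $\langle x_1^{\sharp}\rangle$: on one hand $\cF(\langle x_1^{\sharp}\rangle)=\{x\in C_0:x_1^{\sharp}x=0\}=x_1^{\sharp}C_0$ (the inclusion $\supseteq$ is immediate, and equality follows from the rank count), and on the other hand, by the formula $(i^{\sharp}\circ\mu^{\sharp})(t)(x)=t^{-1}x$ for $x\in x_1^{\sharp}C_0$ in Lemma \ref{hsmn}, $i_{*}^{\sharp}$ sends the base point to the $t^{-1}$-eigenspace $x_1^{\sharp}C_0$ of $C_0$. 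So the two $\cG_{\bx^{\sharp}}$-equivariant morphisms agree at the base point, hence on the generic fiber, which is a single $\cG_{\bx^{\sharp}}\otimes_{\Zp}\Qpb$-orbit; since $M^{\loc}(G^{\sharp},\mu^{\sharp},\bx^{\sharp})_{\Zpb}$ is flat over $\Zpb$ and $M^{\loc}(G')_{\Zpb}$ is separated, they agree everywhere, which is (ii).

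The step I expect to be the main obstacle is the bookkeeping in the last paragraph: one must verify carefully that $\cF(l)$ is a Lagrangian \emph{subbundle} (so that $l\mapsto\cF(l)$ is a morphism at all), and that the normalization of the Hodge filtration built into $M^{\loc}(G^{\sharp},\mu^{\sharp},\bx^{\sharp})$ and into $i_{*}^{\sharp}$ matches the $t^{-1}$-eigenspace appearing in Lemma \ref{hsmn} rather than its complement, so that the base-point comparison is exact. Once these points are settled, the equivariance-and-density argument is routine.
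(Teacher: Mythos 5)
Your proof is correct and takes essentially the same route as the paper's one-line argument, which simply cites \cite[Prop.~1.10]{MadapusiPera2016} for the correspondence between the parabolic of $\SO(V^{\sharp})$ for $t\mapsto\mu^{\sharp}(t)\bullet$ and the parabolic of $\GSpin(V^{\sharp})$ for $\mu^{\sharp}$, and \cite[4.1.5]{Kisin2018} for the flag-variety description of the local model; you unwind those citations into a self-contained verification (reductivity of $\cG_{\bx^{\sharp}}$, identification of $P_{\mu^{\sharp}}$ via Lemma \ref{hsmn}, $\cF$ as an equivariant morphism into the Lagrangian Grassmannian, base-point comparison plus flatness). The sign-convention worry you flag at the end is self-resolving: since $\mu'=i^{\sharp}\circ\mu^{\sharp}$, whichever weight convention fixes the flag variety for $\cG_{\bx^{\sharp}}$ is inherited verbatim by $\cG_{\bx'}$, and $\cF$ intertwines the $t^{a}$-eigenline in $V^{\sharp}$ with the $t^{a}$-eigen-Lagrangian in $C$ for the same $a$, as your computation $\cF(\langle x_1^{\sharp}\rangle)=x_1^{\sharp}C_0$ together with Lemma \ref{hsmn} confirms.
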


\begin{proof}
By \cite[Proposition 1.10]{MadapusiPera2016}, the parabolic subgroup of $H^{\sharp}$ associated to $t \mapsto \mu^{\sharp}(t)\bullet$ corresponds to that of $G^{\sharp}$ associated to $\mu^{\sharp}$, which is the stabilizer of $x_1^{\sharp}C_0^{\sharp}$. Hence the assertions follow by \cite[4.1.5]{Kisin2018}. 
\end{proof}

\begin{dfn}\label{cplr}
We define a complete local $\Zpb$-algebra as follows: 
\begin{equation*}
R_n:=\Zpb[[t_1,\ldots,t_{n-1}]]/(p+\sum_{i=1}^{n-1}t_it_{n-i}). 
\end{equation*}
\end{dfn}

\begin{thm}\label{qtrc}
\emph{
\begin{enumerate}
\item There is an isomorphism between $M^{\loc}(G,\sigma(\mu),\bx)_{\Zpb}$ and the moduli space of isotropic lines in $\bLambda$. It is regular of dimension $n-1$, is flat over $\Zp$. 
\item The scheme $M^{\loc}(G,\sigma(\mu),\bx)_{\Zpb}$ is smooth over $\Zpb$ outside the unique $\Fpbar$-valued point $y_0$ corresponding to the radical of $\bLambda \otimes_{\Zp}\Fpbar$. Moreover, there is an isomorphism 
\begin{equation*}
\widehat{M}^{\loc}(G,\sigma(\mu),\bx)_{\Zpb,y_0}\cong \spf R_n. 
\end{equation*}
\item The closed immersion $M^{\loc}(G,\sigma(\mu),\bx)\hookrightarrow M^{\loc}(G^{\sharp},\mu^{\sharp},\bx^{\sharp})$ is the canonical morphism, that is, the map defined by regarding an isotropic line $l$ in $\bLambda$ as that in $\bLambda^{\sharp}$. 
\end{enumerate}}
\end{thm}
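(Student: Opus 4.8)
The plan is to reduce everything to the explicit description of $M^{\loc}(G,\sigma(\mu),\bx)_{\Zpb}$ as the moduli space of isotropic lines in $\bLambda$, so that parts (i)--(iii) become a concrete computation with the Gram matrix from Proposition \ref{alsd}. First I would establish (i): by \cite[4.1.5]{Kisin2018} and the recipe for local models of spinor type, $M^{\loc}(G,\sigma(\mu),\bx)$ represents the functor of isotropic lines $l\subset \bLambda$ lifting the line cut out by $\mu$; the point is that the almost self-dual lattice $\bLambda$ with the normalized basis $x_1,\ldots,x_n$ of Proposition \ref{alsd} gives this quadric over $\Zp$. Writing $[\,,\,]$ in coordinates $t_1 x_1 + \cdots + t_n x_n$, the quadratic form on $\bLambda$ is $p a_1 t_1^2 + a_2 t_2^2 + \cdots + a_n t_n^2$ (up to the factor $2$ relating $Q$ and $[\,,\,]$, harmless since $p>2$), and the isotropic-line condition is the vanishing of this form in $\P^{n-1}_{\Zpb}$. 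Over $\Zpb$ one can diagonalize the unit part $a_2 t_2^2 + \cdots + a_n t_n^2$ into a split form (all $a_i$ are units, and $\Zpb$ contains all the needed square roots), i.e. after a change of coordinates the equation becomes $p u_1^2 + \sum_{i=1}^{n-1} s_i s_{n-i} = 0$ for suitable linear forms; regularity and flatness over $\Zp$ then follow because this is a hypersurface whose defining equation is visibly not a zero divisor mod $p$ and whose singular locus is a single point. Dimension $n-1$ is immediate from the hypersurface being in $\P^{n-1}$.

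Next, for (ii), I would locate the non-smooth locus. In the affine chart $t_1 \neq 0$ (the chart containing the point $y_0$ corresponding to the radical of $\bLambda\otimes\Fpbar$), set $t_1 = 1$; the equation is $p a_1 + a_2 t_2^2 + \cdots + a_n t_n^2 = 0$, and after completing squares and rescaling by units of $\Zpb$, this is $p + \sum_{i=1}^{n-1} t_i' t_{n-i}' = 0$ up to a unit, which is exactly the equation defining $R_n$. The partial derivatives of $p a_1 + \sum_{i\ge 2} a_i t_i^2$ are $2 a_i t_i$, which all vanish (together with the equation, forcing $p = 0$) precisely at $t_2 = \cdots = t_n = 0$ in the special fiber — this is the single $\Fpbar$-point $y_0$, the radical of $\bLambda\otimes_{\Zp}\Fpbar$ (which is $1$-dimensional since $\length_{\Zp}(\bLambda^\vee/\bLambda)=1$, so the reduction has a $1$-dimensional radical). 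In the other charts $t_j \neq 0$ for $j\ge 2$, the corresponding partial derivative $2a_j$ is a unit, so the scheme is smooth there. Thus $M^{\loc}(G,\sigma(\mu),\bx)_{\Zpb}$ is smooth away from $y_0$, and the completed local ring at $y_0$ is, after the above coordinate change, the completion of $\Zpb[t_2,\ldots,t_n]/(p a_1 + \sum a_i t_i^2)$ at the origin, which is $\spf R_n$ by Definition \ref{cplr} — here I am using that a non-degenerate quadratic form in $n-1$ variables over $\Zpb$ is equivalent to the split one $\sum_{i=1}^{n-1} t_i t_{n-i}$ (taking $t_i t_{n-i}$-type pairs, possible since $\Zpb$ is strictly Henselian with residue field $\Fpbar$ and $p$ is odd).

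For (iii), I would invoke Corollary \ref{lmci} together with Proposition \ref{lmex}: the closed immersion $M^{\loc}(G,\sigma(\mu),\bx)\hookrightarrow M^{\loc}(G^{\sharp},\mu^{\sharp},\bx^{\sharp})$ was constructed in Corollary \ref{lmci} from the lattice inclusion $\bLambda\subset \bLambda^{\sharp}$, and under the identification of $M^{\loc}(G^{\sharp},\mu^{\sharp},\bx^{\sharp})_{\Zpb}$ with isotropic lines in $\bLambda^{\sharp}\otimes\Zpb$ (Proposition \ref{lmex} (i)) and of $M^{\loc}(G,\sigma(\mu),\bx)_{\Zpb}$ with isotropic lines in $\bLambda$ from part (i), the map is visibly "regard an isotropic line in $\bLambda$ as one in $\bLambda^{\sharp}$" — one checks this is compatible with the embeddings into $M^{\loc}(G')$ by chasing the diagram in Corollary \ref{lmci} and the formula $l\mapsto \cF(l)$ of Proposition \ref{lmex} (ii), since $\cF$ is computed the same way whether $l$ sits in $\bLambda$ or in $\bLambda^{\sharp}$.

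The main obstacle I anticipate is part (ii) — specifically, pinning down that the completed local ring at $y_0$ is genuinely isomorphic to $R_n$ and not merely to some form of it. This requires being careful that the unit coefficients $a_1, a_2, \ldots, a_n \in \Zpt$, after base change to $\Zpb$, can be absorbed so that the defining equation becomes exactly $p + \sum_{i=1}^{n-1} t_i t_{n-i}$: one must use that over $\Zpb$ every unit is a square (residue field $\Fpbar$ algebraically closed, $p$ odd, Hensel) to rescale each $t_i$, and then that the resulting diagonal form $p a_1 + \sum_{i=2}^n t_i^2$ is $\Zpb$-equivalent to the standard hyperbolic presentation. The identification of $y_0$ with the radical of $\bLambda\otimes_{\Zp}\Fpbar$ also needs the observation that the mod-$p$ reduction of the bilinear form has a $1$-dimensional kernel exactly because $\bLambda$ is almost self-dual (the $p a_1$ entry degenerates, the rest stay nondegenerate), so the radical is spanned by $\bar x_1$, matching the chart $t_1 \neq 0$, $t_2 = \cdots = t_n = 0$.
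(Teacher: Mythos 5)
The paper's own proof is a one-line citation to \cite[Proposition 12.6, Proposition 12.7]{He2020a}; your proposal is a self-contained computational reconstruction of what that reference contains, built directly on the Gram matrix of Proposition \ref{alsd}. The two are not in conflict, and your explicit verification (identify $M^{\loc}(G,\sigma(\mu),\bx)_{\Zpb}$ with the quadric $\{p a_1 t_1^2 + a_2 t_2^2 + \cdots + a_n t_n^2 = 0\}$ in $\P^{n-1}_{\Zpb}$, locate $y_0$ as the radical line $\Fpbar\bar x_1$, diagonalize the unit part over $\Zpb$ to land on $R_n$) is exactly the kind of argument the cited proposition rests on. This is a legitimate substitute for the citation.

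Two points deserve tightening. First, in part (i) you assert regularity from ``a hypersurface whose singular locus is a single point,'' but a hypersurface in a smooth $\Zpb$-scheme with a non-smooth point is not automatically regular there; regularity at $y_0$ needs the explicit observation that in $\Zpb[[t_2,\ldots,t_n]]/(pa_1+\sum_{i\ge2}a_it_i^2)$ one has $p \in (t_2,\ldots,t_n)$, so the maximal ideal is generated by $n-1$ elements matching the Krull dimension $n-1$. You effectively do this in part (ii) when you identify the completed local ring with $\spf R_n$, so part (i)'s regularity claim is front-running part (ii) and should be phrased accordingly (or the regularity of $R_n$ invoked directly). Second, the identification of $M^{\loc}(G,\sigma(\mu),\bx)_{\Zpb}$ with the isotropic-line quadric is the crux: the Pappas--Zhu/Kisin--Pappas local model is by definition a flat closure, and one must know it equals the naive quadric. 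Your flatness-and-integrality observation is precisely what makes this work (the naive quadric, being flat and irreducible with the correct generic fiber, must coincide with the flat closure), but you only gesture at it via ``the recipe for local models of spinor type''; it is worth stating that the naive model's flatness forces the agreement. Neither point is a real error — the computation is sound and matches the structure of what the paper delegates to \cite{He2020a}.
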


\begin{proof}
These follow from \cite[Proposition 12.6, Proposition 12.7]{He2020a} and their proofs. 
\end{proof}

\begin{cor}\label{sghd}
\emph{Let $y\in M^{\loc}(G,\sigma(\mu),\bx)(\Fpbar)$. Then $y$ is a non-smooth point of $M^{\loc}(G,\sigma(\mu),\bx)$ if and only if $x_0(C\otimes_{\Zp}\Fpbar)=\cF_{i_{*}(y)}$, where $\cF_{i_{*}(y)}$ is the maximally totally isotropic subspace of $C\otimes_{\Zp}\Fpbar$ corresponding to $i_{*}(y)\in M^{\loc}(G')(\Fpbar)$. }
\end{cor}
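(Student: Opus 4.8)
The plan is to combine Theorem~\ref{qtrc}(ii), which pins down the non-smooth locus of $M^{\loc}(G,\sigma(\mu),\bx)$, with the explicit formula for $i_*$ on $\Fpbar$-points furnished by Corollary~\ref{lmci}, Proposition~\ref{lmex}(ii) and Theorem~\ref{qtrc}(iii).

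First I would recall from Theorem~\ref{qtrc}(ii) that $M^{\loc}(G,\sigma(\mu),\bx)_{\Zpb}$ is smooth over $\Zpb$ away from the single $\Fpbar$-point $y_0$ attached to the radical of $[\,,\,]$ on $\bLambda\otimes_{\Zp}\Fpbar$, so $y$ is non-smooth exactly when $y=y_0$. In the basis $x_1,\dots,x_n$ of Proposition~\ref{alsd} the Gram matrix reduces mod $p$ to $\diag(0,\bar a_2,\dots,\bar a_n)$ with all $\bar a_i\neq0$, so this radical is the isotropic line $\Fpbar\bar x_1$; and from the formulas defining the $x_i^{\sharp}$ one has $x_1=x_0^{\sharp}+\tfrac{pa_1}{2}x_1^{\sharp}$ and $x_0=x_0^{\sharp}-\tfrac{pa_1}{2}x_1^{\sharp}$ in $\bLambda^{\sharp}$, whence (using $p>2$ and $a_1\in\Zpt$) $\bar x_1=\bar x_0^{\sharp}=\bar x_0$ is a primitive isotropic vector of $\bLambda^{\sharp}\otimes_{\Zp}\Fpbar$.

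Next I would unwind $i_*$. By Theorem~\ref{qtrc}(iii) the closed immersion $M^{\loc}(G,\sigma(\mu),\bx)\hookrightarrow M^{\loc}(G^{\sharp},\mu^{\sharp},\bx^{\sharp})$ sends the isotropic line $l\subset\bLambda\otimes_{\Zp}\Fpbar$ corresponding to $y$ to the same line inside $\bLambda^{\sharp}\otimes_{\Zp}\Fpbar$, and by Proposition~\ref{lmex}(ii) the latter is sent by $i_*^{\sharp}$ to $\cF(l)=\{x\in C\otimes_{\Zp}\Fpbar\mid v\cdot x=0\text{ for all }v\in l\}$; hence for $l=\Fpbar v$ we get $\cF_{i_*(y)}=\ker(v\cdot\colon C\otimes_{\Zp}\Fpbar\to C\otimes_{\Zp}\Fpbar)$. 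Specializing to $y_0$ with $v=\bar x_1=\bar x_0$: since $\bar x_0$ is isotropic, $\bar x_0^{2}=0$, so $x_0(C\otimes_{\Zp}\Fpbar)\subseteq\ker(\bar x_0\cdot)=\cF_{i_*(y_0)}$; but $\cF_{i_*(y_0)}$ is a point of $M^{\loc}(G')$, hence totally isotropic of dimension $2^n$ in $(C\otimes_{\Zp}\Fpbar,\psi^{\sharp})$, and rank--nullity gives $\dim_{\Fpbar}x_0(C\otimes_{\Zp}\Fpbar)=2^{n+1}-2^n=2^n$, so the inclusion is an equality: $x_0(C\otimes_{\Zp}\Fpbar)=\cF_{i_*(y_0)}$. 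For the converse, $i_*$ is a composite of closed immersions by Corollary~\ref{lmci} and Proposition~\ref{cllm}, hence injective on $\Fpbar$-points, so $\cF_{i_*(y)}=x_0(C\otimes_{\Zp}\Fpbar)=\cF_{i_*(y_0)}$ forces $y=y_0$; combining, $y$ is non-smooth $\iff y=y_0\iff x_0(C\otimes_{\Zp}\Fpbar)=\cF_{i_*(y)}$.

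The steps I expect to need the most care are the bookkeeping through the two local-model maps $M^{\loc}(G,\sigma(\mu),\bx)\to M^{\loc}(G^{\sharp},\mu^{\sharp},\bx^{\sharp})\to M^{\loc}(G')$ together with the identification $\bar x_1=\bar x_0$, and checking that $x_0(C\otimes_{\Zp}\Fpbar)$ is all of $\ker(\bar x_0\cdot)$ rather than a proper subspace; the latter is the dimension count above, but one may alternatively observe that $\bar x_1^{\sharp}$ is a hyperbolic partner of $\bar x_0$ by Proposition~\ref{spsd}(i) and argue directly in the Clifford algebra.
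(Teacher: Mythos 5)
Your argument is correct and follows essentially the same path as the paper: both proofs rest on Theorem~\ref{qtrc} to isolate the unique non-smooth point $y_0$, on Proposition~\ref{lmex}(ii) (together with Theorem~\ref{qtrc}(iii)) to identify $\cF_{i_*(y)}=\ker(v\cdot)$ for $l_y=\Fpbar v$, and on the dimension count $\dim_{\Fpbar} x_0(C\otimes_{\Zp}\Fpbar)=2^n=\dim\cF_{i_*(y)}$ to upgrade the inclusion $x_0(C\otimes_{\Zp}\Fpbar)\subseteq\cF_{i_*(y_0)}$ coming from $\bar x_0^2=0$ to an equality. The one small variation is that the paper phrases the whole thing as ``non-smooth $\iff$ $x_0(C\otimes_{\Zp}\Fpbar)\subset\cF(l_y)$, which equals equality by dimension,'' while you handle the converse by invoking injectivity of $i_*$ on $\Fpbar$-points; this is a legitimate and arguably slightly cleaner way to conclude $y=y_0$, since it avoids having to check directly in the Clifford algebra that $v\cdot x_0=0$ forces $v\in\Fpbar\bar x_0$. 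Your explicit bookkeeping ($\bar x_1=\bar x_0^{\sharp}=\bar x_0$ mod $p$) and the rank--nullity computation are both correct.
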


\begin{proof}
By Proposition \ref{lmex} and Theorem \ref{qtrc}, $y$ is a non-smooth point of $M^{\loc}(G,\sigma(\mu),\bx)$ if and only if $x_0(C\otimes_{\Zp}\Fpbar)\subset \cF(l_y)$. On the other hand, it is equivalent to the equality $x_0(C\otimes_{\Zp}\Fpbar)=\cF(l_y)$ since both $x_0(C\otimes_{\Zp}\Fpbar)$ and $\cF(l_y)$ have the same dimension $2^n$. 
\end{proof}

\subsection{$p$-divisible groups and their deformations}\label{dfpd}

Here we recall the theory of deformations of $p$-divisible groups with crystalline tensors, which is depeloped in \cite[\S 3]{Kisin2018}, in the case for spinor groups. 

For $b,b'\in G^{\star}(\Qpb)$, we say that $b$ and $b'$ are \emph{$\sigma$-conjugate} if there is $g\in G^{\star}(\Qpb)$ satisfying $b'=gb\sigma(g)^{-1}$. We write $B(G)$ for the set of $\sigma$-conjugacy classes of $G(\Qpb)$. 

We denote by $\T$ the pro-torus over $\Qpb$ whose cocharacter group is $\Q$, and let
\begin{equation*}
\overline{\nu}_{G^{\star}}([b])\colon B(G^{\star})\rightarrow \Hom_{\Qpb}(\T,G\otimes_{\Qp}\Qpb)
\end{equation*}
be the map constructed in \cite[\S 4]{Kottwitz1985}, which is called the \emph{Newton map}. As in \cite[5.1]{Kottwitz1985}, we say that $[b]$ is \emph{basic} if $\overline{\nu}_{G^{\star}}([b])$ factors through the center $\G_m$ of $G$. Then $\overline{\nu}_{G^{\star}}([b])$ corresponds to an element of $\Q$ since
\begin{equation*}
\Hom_{\Qp}(\T,\G_m)\cong \Q. 
\end{equation*}
Moreover, the proof of \cite[Proposition 4.2.5]{Howard2017} implies that $\overline{\nu}_{G^{\star}}([b])$ can be written as 
\begin{equation*}
B(G^{\star})\rightarrow \frac{1}{2}\Z; [b]\mapsto \frac{\ord_{p}(\sml_{V^{\star}}(b))}{2},
\end{equation*}
where $\ord_{p}\colon \breve{\mathbb{Q}}_p^{\times}\rightarrow \Z$ is the $p$-adic valuation satisfying $\ord_{p}(p)=1$. 

\begin{prop}\label{bseq}
\emph{For $b\in G^{\star}(\Qpb)$, the following are equivalent: 
\begin{enumerate}
\item $[b]\in B(G^{\star})$ is basic,
\item the isocrystal $(C_0^{*}\otimes_{\Qp}\Qpb,b\sigma^{*})$ is isoclinic,
\item the isocrystal $(V^{\star}\otimes_{\Qp}\Qpb,\Ad(b\sigma))$ is isoclinic of slope $0$. 
\end{enumerate}
Moreover, if the above equivalent conditions hold, then the slope of $(C^{*},b\sigma^{*})$ is $-\overline{\nu}([b])$. }
\end{prop}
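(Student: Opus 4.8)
The plan is to prove the chain of equivalences (i) $\Leftrightarrow$ (ii) $\Leftrightarrow$ (iii) using the explicit form of the Newton map recorded just before the statement, namely that $\overline{\nu}_{G^{\star}}([b])$ (when basic) corresponds to $\ord_p(\sml_{V^{\star}}(b))/2 \in \tfrac12\Z$, together with the representations $C^{*}$ and $V^{\star}$ of $G^{\star}$ coming from $i^{\star}$ and the action $g \mapsto g\bullet$. The key observation is that ``basic'' is equivalent to the Newton cocharacter $\overline{\nu}_{G^{\star}}([b])$ being central, and that centrality of $\overline{\nu}$ can be detected after composing with any faithful representation: a cocharacter $\nu\colon \T \to G^{\star}\otimes\Qpb$ is central if and only if its image under $i^{\star}$ (equivalently, its induced grading on $C^{*}\otimes\Qpb$) has a single weight, and if and only if the induced grading on $V^{\star}\otimes\Qpb$ (via $g\mapsto g\bullet$) is concentrated in weight $0$, since the kernel of $\SO(V^{\star}) \to \mathrm{PGO}(V^{\star})$ type considerations and the exact sequence $1\to\G_m\to\GSpin(V^{\star})\to\SO(V^{\star})\to 1$ show the center is exactly $\G_m$, which acts trivially on $V^{\star}$ but by the similitude on $C^{*}$.

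First I would make precise the dictionary between ``the isocrystal attached to $b$ via a representation is isoclinic'' and ``the Newton cocharacter, viewed in that representation, has a single weight''. For a representation $\rho$ of $G^{\star}$ and $b\in G^{\star}(\Qpb)$, the slope decomposition of $(\rho\otimes\Qpb,\, \rho(b)\sigma)$ is governed by the $\Q$-grading induced by $\rho\circ\overline{\nu}_{G^{\star}}([b])$ (this is the standard compatibility of the Newton map with representations, from \cite{Kottwitz1985}). Hence $(C_0^{*}\otimes\Qpb,\, b\sigma^{*})$ is isoclinic if and only if $i^{\star}\circ\overline{\nu}_{G^{\star}}([b])$ is a single weight, and $(V^{\star}\otimes\Qpb,\,\Ad(b\sigma))$ is isoclinic if and only if the grading on $V^{\star}$ induced by $\overline{\nu}_{G^{\star}}([b])$ is a single weight. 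For (iii) I would additionally pin down that this single slope is $0$: since $V^{\star}$ carries the quadratic form $Q^{\star}$ which is preserved by $G^{\star}$ up to the similitude $\sml_{V^{\star}}$, the induced grading on $V^{\star}$ is self-dual up to the twist by $\sml_{V^{\star}}$; but $\sml_{V^{\star}}\circ\overline\nu$ is a scalar, and a self-dual single-slope isocrystal carrying a nondegenerate symmetric form must have slope $0$ (pair weight $\lambda$ with weight $-\lambda + c$ where $c$ is the slope of the form; a single-weight space forces $2\lambda = c$, and the determinant/normalization via $p>2$ forces $c=0$; alternatively invoke that $\GSpin(V^{\star})^{\der}\to\SO(V^{\star})$ has central kernel so the $\SO$-isocrystal of a basic class is trivial-slope). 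Conversely, if $V^{\star}$ is isoclinic of slope $0$ then $\Ad(b\sigma)$ preserves a lattice up to isogeny in a way making $\overline\nu$ act trivially on the $\SO$-part, hence $\overline\nu_{G^{\star}}([b])$ lands in $\G_m$, i.e.\ $[b]$ is basic.

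The closing claim about the slope of $(C^{*},b\sigma^{*})$ being $-\overline\nu([b])$ I would read off directly: since $i^{\star}\circ\overline\nu_{G^{\star}}([b])$ acts on $C_0^{*}$ through the single weight equal to (minus) the value of $\overline\nu$ under the similitude character — here one must track the sign coming from the dual $C^{*}$ versus $C$ and from the identification $\overline\nu \leftrightarrow \ord_p(\sml_{V^{\star}}(b))/2$ — the isoclinic slope of $(C^{*},b\sigma^{*})$ equals $-\overline\nu([b])$, exactly as in the proof of \cite[Proposition 4.2.5]{Howard2017}, which computes $\overline\nu_{G^{\star}}$ in precisely this representation. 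The main obstacle I anticipate is bookkeeping the sign and normalization conventions consistently across the three incarnations ($C_0^{*}$ with $b\sigma^{*}$, $V^{\star}$ with $\Ad(b\sigma)$, and the $\tfrac12\Z$-valued $\overline\nu$): getting ``slope $0$'' in (iii) rather than ``slope $c$'' and getting the sign in ``$-\overline\nu([b])$'' right requires care, but all of it is forced once one fixes that $\G_m\subset\GSpin(V^{\star})$ acts on $V^{\star}$ trivially and on $C_0$ by the standard character. No genuinely hard new input is needed beyond the functoriality of the Newton map and Proposition \ref{gspi}.
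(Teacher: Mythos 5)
Your argument is essentially the same as the paper's, which simply cites \cite[Lemma 4.2.4]{Howard2017}: use the functoriality of the Newton map with respect to representations, the faithfulness of $i^{\star}$, and the exact sequence $1\to\G_m\to\GSpin(V^{\star})\to\SO(V^{\star})\to 1$ from Proposition \ref{gspi}(ii). Your structure (basic $\Leftrightarrow$ $\overline\nu$ central, detect centrality via $i^{\star}$ for (ii), and via the $\bullet$-action on $V^{\star}$ for (iii)) is the right one, and the sign tracking for the closing statement on $C^{*}$ is consistent with $\G_m$ acting with weight $-1$ on $C^{*}$.

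One small slip worth flagging: you write that $Q^{\star}$ is ``preserved by $G^{\star}$ up to the similitude $\sml_{V^{\star}}$,'' so that the grading on $V^{\star}$ is ``self-dual up to the twist by $\sml_{V^{\star}}$.'' This is not right. The $\bullet$-action of $G^{\star}$ on $V^{\star}$ factors through $\SO(V^{\star})$ and therefore preserves $Q^{\star}$ \emph{exactly}, with no similitude twist; $\sml_{V^{\star}}$ is the multiplier for the symplectic form on $C_0$, not for $Q^{\star}$. The cleaner argument for (iii), which you also mention parenthetically, is precisely that the kernel of $G^{\star}\to\SO(V^{\star})$ is the full center $\G_m$, so a basic $\overline\nu$ maps to the trivial cocharacter of $\SO(V^{\star})$ and hence gives the trivial grading on $V^{\star}$; conversely if the induced $\SO$-cocharacter is trivial then $\overline\nu$ factors through $\G_m$ and $[b]$ is basic. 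No invocation of ``$p>2$ forces $c=0$'' is needed once this is phrased correctly. With that correction the proposal is a faithful expansion of the cited proof.
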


\begin{proof}
The proof is the same as \cite[Lemma 4.2.4]{Howard2017}. 
\end{proof}

We give a basic element of $B(G^{\star})$ explicitly. Take a square root $\alpha_0$ of $-a_1(2a_2)^{-1}$, and put
\begin{equation*}
b_0:=\frac{2\alpha_0}{pa_1}x_2x_1\in G(\Qps). 
\end{equation*}

\begin{prop}\label{hsbs}
\emph{
\begin{enumerate}
\item We have $b_0=\alpha_0 x_2^{\sharp}(2(pa_1)^{-1}x_0^{\sharp}+x_1^{\sharp})\in G^{\sharp}(\Qps)$. 
\item Let $\Phi^{\sharp}:=\Ad(b_0\sigma)$ be the $\sigma$-linear map on $\L_0^{\sharp}$. Then we have
\begin{equation*}
\Phi^{\sharp}(x_i^{\sharp})=
\begin{cases}
-(pa_1/2)^{(-1)^{i}}x_{1-i}^{\sharp}&\text{if }i=0,1,\\
-x_2^{\sharp}&\text{if }i=2,\\
x_i^{\sharp}&\text{otherwise}. 
\end{cases}
\end{equation*}
\item The element $b_0$ is decent which is contained in $\Kb_p^{\sharp}\mu^{\sharp}(p)\Kb_p^{\sharp}$ which is decent. 
\item The $\sigma$-conjugacy class $[b_0]\in B(G^{\sharp})$ is basic. Moreover, we have $\overline{\nu}_{G^{\sharp}}([b_0])=-1/2$. 
\end{enumerate}}
\end{prop}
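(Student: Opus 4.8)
The plan is to verify the four assertions of Proposition~\ref{hsbs} by direct computation in the Clifford algebra $C(V^{\sharp})$, using the basis $x_0^{\sharp},\ldots,x_n^{\sharp}$ with Gram matrix as in Proposition~\ref{spsd}~(i), together with the general machinery on $\overline{\nu}_{G^{\sharp}}$ recalled just before the statement.

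First I would prove (i). The point is that $b_0=\tfrac{2\alpha_0}{pa_1}x_2x_1$ is a priori an element of $C^{+}(V)\otimes_{\Qp}\Qps$, and I want to rewrite it inside $C^{+}(V^{\sharp})\otimes_{\Qp}\Qps$ in terms of $x_0^{\sharp},x_1^{\sharp}$. Recall from Section~\ref{quad} that $x_0=(1,0)\in V^{\sharp}$, $x_0^{\sharp}=2^{-1}(x_1+x_0)$ and $x_1^{\sharp}=(pa_1)^{-1}(x_1-x_0)$; inverting these gives $x_1=x_0^{\sharp}+ (pa_1)x_1^{\sharp}$ (and $x_0=x_0^{\sharp}-(pa_1)x_1^{\sharp}$, equivalently $2(pa_1)^{-1}x_0^{\sharp}+x_1^{\sharp}=2(pa_1)^{-1}x_1$). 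Substituting $x_1=x_0^{\sharp}+(pa_1)x_1^{\sharp}$ and keeping in mind $x_2=x_2^{\sharp}$, a short manipulation yields $b_0=\tfrac{2\alpha_0}{pa_1}x_2^{\sharp}(x_0^{\sharp}+(pa_1)x_1^{\sharp})=\alpha_0 x_2^{\sharp}\bigl(2(pa_1)^{-1}x_0^{\sharp}+x_1^{\sharp}\bigr)$, which is the claimed identity; in particular this exhibits $b_0$ as lying in $G^{\sharp}(\Qps)$, not merely in the Clifford algebra, once one checks $b_0^{\dagger}b_0\in\G_m$ and $b_0 V^{\sharp}b_0^{-1}=V^{\sharp}$, both of which follow from the explicit form and the relations $x_i^{\sharp}x_j^{\sharp}+x_j^{\sharp}x_i^{\sharp}=[x_i^{\sharp},x_j^{\sharp}]$.

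For (ii) I would compute $\Phi^{\sharp}(x_i^{\sharp})=b_0\,\sigma(x_i^{\sharp})\,b_0^{-1}=b_0 x_i^{\sharp} b_0^{-1}$ (the $x_i^{\sharp}$ are $\sigma$-fixed since they lie in $V^{\sharp}\subset C(V^{\sharp})$, defined over $\Qp$) using the Clifford relations. Writing $y:=2(pa_1)^{-1}x_0^{\sharp}+x_1^{\sharp}$, one has $b_0=\alpha_0 x_2^{\sharp}y$, and one needs the products $x_0^{\sharp}y$, $x_1^{\sharp}y$, $x_2^{\sharp}y$, $x_j^{\sharp}y$ ($j\ge 3$), together with $y^2 = 2\cdot 2(pa_1)^{-1}=4(pa_1)^{-1}$ (from $[x_0^{\sharp},x_1^{\sharp}]=2$ and $Q^{\sharp}(x_0^{\sharp})=Q^{\sharp}(x_1^{\sharp})=0$) and $(x_2^{\sharp})^2=Q^{\sharp}(x_2^{\sharp})=a_2$, plus the definition $\alpha_0^2=-a_1(2a_2)^{-1}$, i.e.\ $\alpha_0^2 a_2 = -a_1/2$. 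Then conjugation of each basis vector is a routine (but bookkeeping-heavy) calculation with anticommutators: $x_j^{\sharp}$ for $j\ge 3$ anticommutes with both $x_2^{\sharp}$ and $y$, hence commutes with $b_0$, giving $\Phi^{\sharp}(x_j^{\sharp})=x_j^{\sharp}$; and for $i=0,1,2$ one tracks the scalars, the factor $\alpha_0^2$ combining with $a_2$ or with $y^2$ to produce the powers of $pa_1/2$ and the sign in the stated formulas. I expect this step — organizing the anticommutator computations and getting every scalar exactly right — to be the main obstacle, though it is entirely mechanical; the proof sketch in the paper presumably just says ``by direct computation,'' referencing the analogous \cite[Lemma 4.2.4, Proposition 4.2.5]{Howard2017}.

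Finally, (iii) and (iv) follow formally. For (iii), decency of $b_0$ (i.e.\ $(b_0\sigma)^s = $ a suitable integral power of $p$ for some $s$) can be read off from (ii): $\Phi^{\sharp}$ has finite order up to scalars — indeed $(\Phi^{\sharp})^2$ fixes $x_0^{\sharp},x_1^{\sharp}$ up to the scalar $(pa_1/2)$-factors and fixes $x_2^{\sharp}$, so a small power of $b_0\sigma$ is scalar — and the membership $b_0\in \Kb_p^{\sharp}\mu^{\sharp}(p)\Kb_p^{\sharp}$ is checked against the explicit description of $\mu^{\sharp}$ in Lemma~\ref{hsmn} (comparing the action of $b_0\bullet$ on the $x_i^{\sharp}$, recorded in (ii), with that of $\mu^{\sharp}(p)\bullet$), or directly via Corollary~\ref{admr}. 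For (iv), by Proposition~\ref{bseq} it suffices to observe that $(V^{\sharp}\otimes_{\Qp}\Qpb,\Phi^{\sharp})$ is isoclinic of slope $0$, which is immediate from (ii) since $\Phi^{\sharp}$ is, up to the units $-(pa_1/2)^{\pm 1}$ which contribute slope $0$ after a lattice change, of finite order; and then the formula displayed before Proposition~\ref{bseq} gives $\overline{\nu}_{G^{\sharp}}([b_0]) = \tfrac12\ord_p(\sml_{V^{\sharp}}(b_0))$, so I would compute $\sml_{V^{\sharp}}(b_0)=b_0^{\dagger}b_0$. Using $(x_2^{\sharp})^{\dagger}=x_2^{\sharp}$, $y^{\dagger}=y$, hence $b_0^{\dagger}=\alpha_0 y x_2^{\sharp}$, one gets $b_0^{\dagger}b_0=\alpha_0^2 y x_2^{\sharp}x_2^{\sharp}y = \alpha_0^2 a_2 y^2 = (-a_1/2)\cdot 4(pa_1)^{-1} = -2/p$, so $\ord_p(\sml_{V^{\sharp}}(b_0))=-1$ and $\overline{\nu}_{G^{\sharp}}([b_0])=-1/2$, as claimed.
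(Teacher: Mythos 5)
Your approach is essentially the same as the paper's: parts (i) and (ii) by direct Clifford algebra computation, part (iii) by observing that a small power of $b_0\sigma$ is central (equivalently $b_0^2\in p^{-1}\Zpt$) and comparing the action of $\Phi^\sharp$ with $\mu^{\sharp}(p)\bullet$, and part (iv) by the slope formula displayed before Proposition \ref{bseq}. The only real divergence is that the paper cites \cite[Proposition 4.2.6]{Howard2017} for the membership $b_0\in\Kb_p^{\sharp}\mu^{\sharp}(p)\Kb_p^{\sharp}$ whereas you propose a self-contained check against Lemma \ref{hsmn}, which is fine.

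However, your scalar bookkeeping does not match the paper's conventions. The Clifford ideal is generated by $v\otimes v'+v'\otimes v-[v,v']$, so $v^2=Q(v)=\tfrac{1}{2}[v,v]$, and the Gram matrix in Proposition \ref{spsd} gives $[x_0^\sharp,x_1^\sharp]=1$ (not $2$) and $[x_2^\sharp,x_2^\sharp]=a_2$. Hence $(x_2^\sharp)^2=a_2/2$ (not $a_2$), $y^2=2(pa_1)^{-1}$ (not $4(pa_1)^{-1}$), and the correct inversion is $x_1=x_0^\sharp+\tfrac{pa_1}{2}x_1^\sharp$ (the parenthetical identity $2(pa_1)^{-1}x_0^\sharp+x_1^\sharp=2(pa_1)^{-1}x_1$ you also wrote is correct, and is in fact what establishes (i)). Your two factor-of-two slips happen to cancel in the similitude computation — you get $\sml_{V^{\sharp}}(b_0)=-2/p$ whereas the correct value is $-(2p)^{-1}$, but both have $\ord_p=-1$, so the conclusion $\overline{\nu}_{G^{\sharp}}([b_0])=-1/2$ survives. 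For (ii), though, the same wrong scalars would not reproduce the stated eigenvalues $-(pa_1/2)^{\pm 1}$ if substituted literally, so the computation needs to be redone with $[x_0^\sharp,x_1^\sharp]=1$ and $v^2=Q(v)$. The structure of your argument is correct; only these constants need fixing.
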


\begin{proof}
(i), (ii): These follows by direct computation. 

(iii): The proof of $b_0 \in \Kb_p^{\sharp}\mu^{\sharp}(p)\Kb_p^{\sharp}$ is the same as that of \cite[Proposition 4.2.6]{Howard2017}. The decentness is a consequence of the equality $b_0^2=\pm p^{-1}$, which can be checked by calculation. 

(iv): This follows from (iii). 
\end{proof}

\begin{prop}\label{spbs}
\emph{
\begin{enumerate}
\item Let $\Phi:=\Ad(b_0\sigma)$ be the $\sigma$-linear map on $V\otimes_{\Qp}\Qpb$. Then we have
\begin{equation*}
\Phi(x_i)=
\begin{cases}
-x_i&\text{if }i=1,2,\\
x_i&\text{otherwise}. 
\end{cases}
\end{equation*}
\item The element $b_0$ is decent, and is contained in $\bigcup_{w\in \Adm(\mu)}\Kb_p\sigma(w)\Kb_p$. 
\item The $\sigma$-conjugacy class $[b_0]\in B(G)$ is basic. Moreover, we have $\overline{\nu}_{G}([b_0])=-1/2$. 
\end{enumerate}}
\end{prop}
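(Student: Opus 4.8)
The plan is to deduce all three assertions from the corresponding facts about $b_0$ viewed as an element of $G^{\sharp}(\Qps)$, namely Proposition \ref{hsbs}, together with the compatibility of the embedding $G \hookrightarrow G^{\sharp}$ established in Section \ref{quad}. For (i), I would argue exactly as in Proposition \ref{hsbs} (ii): since $\Phi = \Ad(b_0\sigma)$ restricts to the $\sigma$-linear operator $\Phi^{\sharp}$ on $\L_0^{\sharp} = \L_0 \oplus \Qp x_0$ and $V \otimes_{\Qp}\Qpb$ sits inside $\L_0^{\sharp}\otimes_{\Qp}\Qpb$ as the span of $x_1,\ldots,x_n$, it suffices to express each $x_i$ in terms of the $x_j^{\sharp}$ and apply Proposition \ref{hsbs} (ii). By the definitions $x_i^{\sharp} = x_i$ for $i \geq 2$, $x_1 = x_0^{\sharp} + pa_1 x_1^{\sharp}$ and $x_0 = x_0^{\sharp} - pa_1 x_1^{\sharp}$ (inverting the formulas for $x_0^{\sharp}, x_1^{\sharp}$), so the claimed formula $\Phi(x_i) = -x_i$ for $i=1,2$ and $\Phi(x_i) = x_i$ otherwise follows by a short direct computation from $\Phi^{\sharp}(x_0^{\sharp}) = -(pa_1/2)x_1^{\sharp}$, $\Phi^{\sharp}(x_1^{\sharp}) = -(2/(pa_1))x_0^{\sharp}$, $\Phi^{\sharp}(x_2^{\sharp}) = -x_2^{\sharp}$.

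For (iii), the fact that $[b_0] \in B(G)$ is basic is immediate from Proposition \ref{bseq}: part (i) above shows $(V\otimes_{\Qp}\Qpb, \Phi)$ is isoclinic of slope $0$ (indeed $\Phi^2 = \id$ on this space, or just note the eigenvalues have valuation $0$), which is condition (iii) of Proposition \ref{bseq}, hence $[b_0]$ is basic. The value of the Newton map then comes from the explicit formula $\overline{\nu}_{G}([b]) = \ord_p(\sml_V(b))/2$ recorded after the Newton map was introduced: one computes $\sml_V(b_0) = b_0^{\dagger} b_0$ directly from $b_0 = \tfrac{2\alpha_0}{pa_1} x_2 x_1$, using $(x_2 x_1)^{\dagger} = x_1 x_2$ and the Gram matrix of Proposition \ref{alsd}, to get $\sml_V(b_0) \in p^{-1}\Zpt$ (consistent with $b_0^2 = \pm p^{-1}$ from the proof of Proposition \ref{hsbs} (iii)); hence $\ord_p(\sml_V(b_0)) = -1$ and $\overline{\nu}_G([b_0]) = -1/2$.

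For (ii), decency follows from the identity $b_0^2 = \pm p^{-1}$ (already noted in the proof of Proposition \ref{hsbs} (iii), and independent of whether we regard $b_0$ in $G$ or $G^{\sharp}$), which gives a decency equation for $b_0$ over $\Qps$. The containment $b_0 \in \bigcup_{w\in \Adm(\mu)}\Kb_p \sigma(w)\Kb_p$ is then obtained by combining Proposition \ref{hsbs} (iii), which places $b_0$ in $\Kb_p^{\sharp}\mu^{\sharp}(p)\Kb_p^{\sharp}$, with Corollary \ref{admr}, which identifies $(\Kb_p^{\sharp}\mu^{\sharp}(p)\Kb_p^{\sharp}) \cap G(\Qpb)$ with $\bigcup_{w\in\Adm(\mu)}\Kb_p\sigma(w)\Kb_p$; since $b_0 \in G(\Qps) \subset G(\Qpb)$, it lies in the intersection and hence in the desired union.

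The only genuine obstacle is bookkeeping: carefully inverting the change-of-basis between $\{x_i\}$ and $\{x_i^{\sharp}\}$ and tracking the scalars $a_1, \alpha_0, \alpha$ through the formula for $\Phi^{\sharp}$ so that the signs in (i) come out exactly as stated. Everything else is a formal consequence of results already in hand — in particular the parallel statements for $G^{\sharp}$ in Proposition \ref{hsbs}, the slope formula for the Newton map, and Corollary \ref{admr}. I would present (i) with the change-of-basis computation spelled out just enough to be checkable, and treat (ii) and (iii) as the short deductions above.
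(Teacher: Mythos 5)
Your plan is correct and matches the paper's (very terse) proof: (ii) and (iii) are deduced exactly as you do, from Proposition \ref{hsbs}~(iii)--(iv) together with Corollary \ref{admr} and Proposition \ref{bseq}, and the slope formula $\overline{\nu}_G([b])=\ord_p(\sml_V(b))/2$ gives the value $-1/2$ just as you compute. One genuine slip to fix in~(i): inverting the definitions $x_0^{\sharp}=\tfrac{1}{2}(x_1+x_0)$, $x_1^{\sharp}=(pa_1)^{-1}(x_1-x_0)$ yields $x_1=x_0^{\sharp}+\tfrac{pa_1}{2}x_1^{\sharp}$ and $x_0=x_0^{\sharp}-\tfrac{pa_1}{2}x_1^{\sharp}$, not $x_0^{\sharp}\pm pa_1x_1^{\sharp}$ as you wrote; with the dropped $\tfrac{1}{2}$ the computation does not close up (you would get $\Phi(x_1)=-\tfrac{pa_1}{2}x_1^{\sharp}-2x_0^{\sharp}\neq -x_1$), whereas the corrected coefficients give $\Phi(x_1)=-\tfrac{pa_1}{2}x_1^{\sharp}-x_0^{\sharp}=-x_1$ as required. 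Also worth noting: for~(i) a one-line direct computation in $C(V)$ is available and is presumably what the paper means by ``follows by definition'' --- since $b_0$ is a nonzero multiple of $x_2x_1$ and the $x_i$ are mutually orthogonal, $x_2x_1$ anticommutes with $x_1$ and $x_2$ and commutes with $x_i$ for $i\geq 3$, whence $\Phi(x_i)=b_0x_ib_0^{-1}=\mp x_i$ immediately, avoiding the $\sharp$-basis altogether.
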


\begin{proof}
(i): This follows by definition. 

(ii): By Proposition \ref{hsbs} (iii), we obtain that $b_0$ is decent and $b_0\in \Kb_p^{\sharp}\mu^{\sharp}(p)\Kb_p^{\sharp}$. This implies $b_0\in \bigcup_{w\in \Adm(\mu)}\Kb_p\sigma(w)\Kb_p$ by Corollary \ref{admr}. 

(iii): This follows from Propositions \ref{bseq} and \ref{hsbs} (iv). 
\end{proof}

\begin{rem}
Let $B(G^{\star},\{\mu^{\star}\})$ be the finite set of $B(G^{\star})$ defined as in \cite[6.2]{Kottwitz1997}. Then \cite[Theorem 2.1]{He2016} implies $[b_0]\in B(G^{\star},\{\sigma(\mu^{\star})\})=B(G^{\star},\{\mu^{\star}\})$. Hence $[b_0]$ is the unique basic element of $B(G^{\star})$ which is containied in $B(G^{\star},\{\mu^{\star}\})$. 
\end{rem}

We recall an existence of certain $p$-divisible groups over $\Fpbar$ which will be used in later. For a $p$-divisible group $X$ over $\Fpbar$, we denote by $\D(X)$ the \emph{contavariant} Dieudonn{\'e} module of $X$. We write $F$ and $V$ for its Frobenius and Vershibung respectively. Then $\D(X)_{\Fpbar}:=\D(X)\otimes_{\Zpb}\Fpbar$ is equipped with the Hodge filtration $\Fil^1(X)=(\Lie(X))^{*}\subset \D(X)_{\Fpbar}$, which induces an isomorphism $\D(X)_{\Fpbar}/\Fil^1(X)\cong \Lie(X^{\vee})$. Moreover, there are isomorphisms
\begin{equation*}
\Fil^{1}(X)\cong V(\D(X))/p\D(X),\quad \Lie(X^{\vee})\cong \D(X)/V(\D(X)). 
\end{equation*}

In the sequel, we regard that the dual $\D(X)^{*}$ of $\D(X)$ is equpped with the action of $F$ by the contragradient. Note that $\D(X)^{*}$ is no longer stable under $F$. 

\begin{prop}\label{expd}
\emph{There is a triple $(\X_0,\lambda_0,\tau_0)$, where
\begin{itemize}
\item $\X_0$ is a $p$-divisible group over $\Fpbar$,
\item $\lambda_0 \colon \X_0\rightarrow \X_0^{\vee}$ is a principal polarization,
\item $\tau_0\colon \D(\X_0)\xrightarrow{\cong} C^{*}\otimes_{\Zp}\Zpb$ is an isomorphism,
\end{itemize}
which satisfies the following: 
\begin{itemize}
\item $F$ corresponds to $b\sigma^{*}$ under $\tau_0$, 
\item $\tau_0(\Fil^1(X))$ is induced by a conjugate of the cocharacter $\mu^{\sharp}\otimes \id_{\Fpbar}\colon \G_m\rightarrow \cG_{\bx}\otimes_{\Zp}\Fpbar$. 
\end{itemize}
Moreover, such a triple $(\X_0,\lambda_0,\tau_0)$ is unique up to isomorphism. }
\end{prop}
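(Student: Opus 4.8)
The plan is to construct the triple $(\X_0,\lambda_0,\tau_0)$ by linear algebra via the contravariant Dieudonn\'e equivalence over $\Fpbar$, following the template of the hyperspecial case in \cite[\S 4.2]{Howard2017}. First I would fix the basic element $b_0\in G^{\sharp}(\Qps)$ of Proposition \ref{hsbs} (this is the $b$ in the statement), and form the $\sigma$-linear operator $F:=b_0\sigma^{*}$ on the $\Zpb$-lattice $M:=C^{*}\otimes_{\Zp}\Zpb$, where $\sigma^{*}$ is the Frobenius on $C^{*}\otimes_{\Zp}\Zpb$. The first thing to check is the lattice-chain relation $pM\subset FM\subset M$: writing $b_0=k_1\mu^{\sharp}(p)k_2$ with $k_1,k_2\in\Kb_p^{\sharp}=\cG_{\bx^{\sharp}}(\Zpb)$, which is possible by Proposition \ref{hsbs}(iii), and using that $k_1,k_2$ and $\sigma^{*}$ all preserve $M$, one is reduced to the same statement for $\mu^{\sharp}(p)$, which holds because $\mu^{\sharp}$ is minuscule (Lemma \ref{hsmn}). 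Then $V:=pF^{-1}$ preserves $M$, so $(M,F,V)$ is a Dieudonn\'e module, and the Dieudonn\'e equivalence produces a $p$-divisible group $\X_0$ over $\Fpbar$ together with an isomorphism $\tau_0\colon\D(\X_0)\xrightarrow{\cong}M=C^{*}\otimes_{\Zp}\Zpb$ under which $F$ corresponds to $b_0\sigma^{*}$.

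Next I would verify the remaining two conditions. By construction $\tau_0(\Fil^{1}(\X_0))=VM/pM\subset M/pM$, and the relative position of $FM$ inside $M$ is precisely the minuscule (one-step) one prescribed by $\mu^{\sharp}$, again because $b_0\in\Kb_p^{\sharp}\mu^{\sharp}(p)\Kb_p^{\sharp}$; hence $\tau_0(\Fil^{1}(\X_0))$ is induced by a conjugate of $\mu^{\sharp}\otimes\id_{\Fpbar}$, as required. For the polarization, the symplectic form $\psi^{\sharp}=\psi_{\delta_0}$ of Proposition \ref{spsp}, for which $C\subset C_0$ is self-dual, transports through $\tau_0$ to a perfect alternating $\Zpb$-form on $\D(\X_0)$; since $i^{\sharp}(b_0)$ lies in $\GSp(C_0,\psi^{\sharp})$ with similitude factor of $p$-adic valuation $1$ (Propositions \ref{bseq} and \ref{hsbs}(iv)), this form satisfies the compatibility with $F$ and $V$ that characterizes a principal polarization, and so defines $\lambda_0\colon\X_0\to\X_0^{\vee}$. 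A minor unit discrepancy in this compatibility can be absorbed into the choice of $\delta_0$.

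For uniqueness, suppose $(\X_0,\lambda_0,\tau_0)$ is any triple satisfying the two bulleted conditions. Transporting $F$ along $\tau_0$ realizes $\D(\X_0)$ as $(C^{*}\otimes_{\Zp}\Zpb,\,b_0\sigma^{*})$, and its Hodge filtration is then forced to be $VM/pM$; thus the Dieudonn\'e module is pinned down, and by the Dieudonn\'e equivalence so are $\X_0$ and $\tau_0$, up to unique isomorphism. The form underlying $\lambda_0$ is then a perfect $F$-, $V$-compatible alternating form, and any two such differ by a scalar in $\Zp^{\times}$, so the polarizations agree up to isomorphism; conceptually this also reflects the fact, recorded in the Remark after Proposition \ref{spbs}, that $[b_0]$ is the unique basic class in $B(G^{\sharp},\{\mu^{\sharp}\})$.

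The main obstacle, such as it is, will be the passage from the double-coset membership $b_0\in\Kb_p^{\sharp}\mu^{\sharp}(p)\Kb_p^{\sharp}$ to both the lattice-chain relation $pM\subset FM\subset M$ and the $\mu^{\sharp}$-conjugate shape of the Hodge filtration: this requires keeping careful track of the contragredient action on $C^{*}$ and of how $\mu^{\sharp}$ interacts with the embedding $i^{\sharp}$ of Lemma \ref{hsmn}. Apart from that bookkeeping, the argument is a routine transcription of \cite[\S 4.2]{Howard2017}.
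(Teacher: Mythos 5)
Your proposal follows essentially the same route as the paper, which simply defers to \cite[Lemma 2.2.5]{Howard2017}: build the Dieudonn\'e module $M=C^{*}\otimes_{\Zp}\Zpb$ with $F=b_0\sigma^{*}$, use $b_0\in\Kb_p^{\sharp}\mu^{\sharp}(p)\Kb_p^{\sharp}$ (Proposition \ref{hsbs}(iii)) together with minusculity of $\mu^{\sharp}$ to verify $pM\subset FM\subset M$, invoke the Dieudonn\'e equivalence to produce $(\X_0,\tau_0)$, read off the Hodge filtration from $VM/pM$, and get $\lambda_0$ from the self-dual symplectic form $\psi^{\sharp}$ on $C$. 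The existence half of your argument is a faithful transcription of the Howard--Pappas construction, and the bookkeeping points you flag (contragredient action on $C^{*}$, compatibility of $\mu^{\sharp}$ with $i^{\sharp}$) are indeed the places requiring care.

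Two small caveats. First, in computing the similitude valuation you cite Proposition \ref{hsbs}(iv) with $\overline{\nu}_{G^{\sharp}}([b_0])=-1/2$; by the formula preceding Proposition \ref{bseq} this gives $\ord_p(\sml_{V^{\sharp}}(b_0))=-1$, so the sign should be tracked consistently with the normalization of the polarization pairing (this is a convention issue, not a gap). Second, and more substantively, your uniqueness argument for $\lambda_0$ asserts that any two perfect $F$-, $V$-compatible alternating forms on $M$ differ by a unit scalar; that is not immediate, since such forms are a torsor under $\psi$-symmetric, $F$-commuting elements of $\GL(M)$, which need not all be central. Notice also that for the uniqueness claim to make sense as a statement about triples $(\X_0,\lambda_0,\tau_0)$ with $\tau_0$ part of the data, one needs the (tacit, inherited from \cite{Howard2017}) requirement that $\tau_0$ carry $\lambda_0$ to a unit multiple of $\psi^{\sharp}$; with that condition made explicit, uniqueness of the triple reduces to uniqueness of the isomorphism class of the Dieudonn\'e module with form, which is what Howard--Pappas actually prove. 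Your proof uses this compatibility in the existence half but does not state it as a hypothesis; making it explicit would repair the uniqueness step.

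Apart from these points, the approach is correct and coincides with the one the paper references.
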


\begin{proof}
The proof is the same as that of \cite[Lemma 2.2.5]{Howard2017}. 
\end{proof}

Let $(X,\lambda)$ be a polarized $p$-divisible group over $\Fpbar$. We assume the following: 

\begin{axm}\label{exct}
There is a collection of $F$-invariant tensors $(t_{\alpha^{\star},0}^{\star})$ in $\D(X)^{\otimes}$ and an isomorphism $\tau \colon \D(X)\xrightarrow{\cong} C^{*}\otimes_{\Zp}\Zpb$ satisfying the following: 
\begin{enumerate}
\item under the isomorphism $\tau$, $\lambda$ and $t_{\alpha^{\star}}^{\star}$ correspond to a $\Qpbt$-multiple of $\psi^{\sharp}$ and $s_{\alpha^{\star}}^{\star}\otimes 1$ respectively,
\item the point of $M^{\loc}(G')$ corresponding to $\tau(\Fil^{1}(X))$ lies in the image of $M^{\loc}(G,\mu,\bx)$ under $i_{*}^{\sharp}$. 
\end{enumerate}
\end{axm}

On the other hand, Lemmas \ref{hsmn}, \ref{mucp} (ii), Corollary \ref{sptm} and Proposition \ref{gspi} (ii) imply the all hypothesis appeared in \cite[\S 3.3]{Hamacher2019} are satisfied. Hence we can apply the theory in \cite[\S 3]{Kisin}. 

We denote by $y$ the point of $M^{\loc}(G^{\star},\mu^{\star},\bx^{\star})$ in Axiom \ref{exct} (ii). 

\begin{dfn}
\begin{enumerate}
\item Let $\Def(X,\lambda)$ be the deformation of the polarized $p$-divisible group $(X,\lambda)$. Then the Grothendieck--Messing theory implies that there is an isomorphism $\Def(X,\lambda)\cong \widehat{M}^{\loc}(G')_{i_{*}^{\star}(y)}$. 
\item We define $\Def(X,\lambda,(t_{\alpha^{\star}}^{\star}))$ as the unique closed formal subscheme of $\Def(X,\lambda)$ corresponding to $\widehat{M}^{\loc}(G^{\star},\mu^{\star},\bx^{\star})_{y}\xrightarrow{i_{*}^{\star}} \widehat{M}^{\loc}(G')_{i_{*}^{\star}(y)}$. We call $\Def(X,\lambda,(t_{\alpha^{\star}}^{\star}))$ as the \emph{deformation of $(X,\lambda,(t_{\alpha^{\star}}^{\star}))$}. 
\end{enumerate}
\end{dfn}

Finally, we consider the case $\star=\emptyset$. We write $\iota_{X}(x_0)$ for the isogeny on $X$ corresponding to the tensor $\tau^{-1}(x_0\otimes 1)$. 

\begin{prop}\label{sgl0}
\emph{Suppose that $y\in M^{\loc}(G,\mu,\bx)$ is the unique non-smooth point, that is, $\iota_{X}(x_0)$ acts as the zero map on $\Lie(X^{\vee})$. Then $X$ is isoclinic of slope $1/2$. In particular, if we write $F=b\sigma^{*}$ where $b\in G(\Qpb)$, then $[b]=[b_0]$. }
\end{prop}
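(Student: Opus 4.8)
The plan is to translate the hypothesis into a containment of $\Zpb$-lattices in the contravariant Dieudonn\'e module $M:=\D(X)$, amplify it to the relation $F^{2}M\subseteq pM$ by exploiting that $\iota_X(x_0)$ is an endomorphism of $X$ whose square on $M$ equals $p$ times a unit, and finally invoke the polarization to upgrade the resulting slope estimate to isoclinicity of slope $1/2$. Write $M_0:=M\otimes_{\Zpb}\Qpb$.

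First I would record the two relevant properties of the operator $\iota_X(x_0)$ on $M$. Being induced by an endomorphism of the $p$-divisible group $X$ (namely $\tau^{-1}(x_0\otimes 1)$), it preserves the lattice $M$ and commutes with $F$ and $V$; and since $x_0\in V^{\sharp}\subseteq C(V^{\sharp})$ satisfies $x_0\cdot x_0=Q^{\sharp}(x_0)\in p\Zpt$, the operator $\iota_X(x_0)$ squares to $p$ times a unit on $M$, and in particular is invertible on $M_0$. Using $\Lie(X^{\vee})\cong M/VM$, the hypothesis that $\iota_X(x_0)$ acts as the zero map on $\Lie(X^{\vee})$ reads $\iota_X(x_0)M\subseteq VM$. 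Applying $F$ and using $F\iota_X(x_0)=\iota_X(x_0)F$ gives $\iota_X(x_0)(FM)=F(\iota_X(x_0)M)\subseteq F(VM)=pM$, hence $FM\subseteq\iota_X(x_0)^{-1}(pM)$; since $\iota_X(x_0)^{2}$ is $p$ times a unit one checks $\iota_X(x_0)^{-1}(pM)=\iota_X(x_0)M$, so $FM\subseteq\iota_X(x_0)M\subseteq VM$, and applying $F$ once more yields $F^{2}M\subseteq F(VM)=pM$. Therefore every Newton slope of $(M_0,F)$ is at least $1/2$. The principal polarization $\lambda$ endows $M$ with a perfect alternating pairing satisfying $\langle Fx,Fy\rangle=p\,\sigma(\langle x,y\rangle)$, so the Newton polygon of $(M_0,F)$ is symmetric about $1/2$; combined with the lower bound, all slopes must equal $1/2$. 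Thus $X$ is isoclinic of slope $1/2$.

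For the last assertion, write $F=b\sigma^{*}$ with $b\in G(\Qpb)$. The isocrystal $(C_0^{*}\otimes_{\Qp}\Qpb,b\sigma^{*})=(M_0,F)$ is then isoclinic, so $[b]$ is basic by Proposition \ref{bseq}, and its slope $1/2$ gives $\overline{\nu}_G([b])=-1/2$; by Proposition \ref{spbs} we also have $\overline{\nu}_G([b_0])=-1/2$. Since $X$ satisfies Axiom \ref{exct}, $[b]$ lies in $B(G,\{\mu\})$, so the Remark following Proposition \ref{spbs} forces $[b]=[b_0]$ (alternatively, one may argue that a basic class in $B(G)$ is determined by its image under the Kottwitz map described in Proposition \ref{kott}). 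The step I expect to be the main obstacle is the amplification from $\iota_X(x_0)M\subseteq VM$ to $F^{2}M\subseteq pM$: it is short once organized as above, but it genuinely uses both that $\iota_X(x_0)$ commutes with $F$---which is why it is essential that $x_0$ is one of the crystalline tensors $t_{\alpha}$ rather than merely an endomorphism of the lattice---and that $Q^{\sharp}(x_0)$ is a uniformizer up to a unit, and one must keep careful track of the contravariant Dieudonn\'e conventions and of the side of $M/pM$ on which $\Fil^{1}(X)$ lies.
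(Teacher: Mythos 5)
Your proposal is correct and follows essentially the paper's own argument: translate the hypothesis into $\iota_X(x_0)\D(X)\subseteq V\D(X)$, amplify this to $F^2\D(X)\subseteq p\D(X)$ using that $\iota_X(x_0)$ commutes with $F$ and squares to $p$ times a unit, conclude isoclinicity of slope $1/2$, and then deduce $[b]=[b_0]$ from basicity (Proposition \ref{bseq}) together with the uniqueness of the basic class in $B(G,\{\mu\})$. The only cosmetic difference is that the paper obtains the equality $V\D(X)=\iota_X(x_0)\D(X)$ (hence $F^2\D(X)=p\D(X)$ on the nose) via an implicit length count, whereas you keep the inclusion and invoke slope symmetry from the polarization; you also correctly spell out the final identification $[b]=[b_0]$, which the paper's proof leaves implicit.
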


\begin{proof}
By assumption, we have $V(\D(X_0))=\iota_{X}(x_0)(\D(X_0))$. This implies $F^2(\D(X))=p\D(X)$, which means that $X$ is isoclinic of slope $1/2$. Moreover, Proposition \ref{bseq} asserts that $[b]\in B(G)$ is basic.  
\end{proof}

\section{Kisin--Pappas integral models}\label{kpim}

\subsection{Definition of the Kisin--Pappas integral models}\label{dfkp}

Let $\bV$ be a quadratic space over $\Q$ of signature $(d,2)$ where $d\in \Zpn$. We assume that $V:=\bV\otimes_{\Q}\Q$ satisfies (A) as in Section \ref{quad}. Put $\bG:=\GSpin(\bV)$, the spinor similitude group of $\bV$, and let $K_p\subset \bG(\Qp)$ be as in Section \ref{quad}. 

\begin{lem}\label{sphs}
\emph{There is $a\in \Q_{>0}$ such that $\bV^{\sharp}:=\bV\oplus l_{a/2}$ satisfies $\bV^{\sharp}\otimes_{\Q}\Qp\cong V^{\sharp}$ as quadratic spaces over $\Qp$. }
\end{lem}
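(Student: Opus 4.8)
The statement is a purely local-to-global existence assertion: given the rational quadratic space $\bV$ of signature $(d,2)$ whose $p$-adic completion is our fixed $V$, we want a positive rational number $a$ so that $\bV \oplus l_{a/2}$ becomes $V^{\sharp}$ after base change to $\Qp$. Recall from Section \ref{quad} that $V^{\sharp}=l_{-pa_1/2}\oplus V$, where $pa_1$ is (up to $\Zpt$-units) the ``first'' diagonal entry of a suitable Gram matrix of the almost self-dual lattice $\bLambda$. So the task is really: find $a\in\Q_{>0}$ with $l_{a/2}\otimes_{\Q}\Qp\cong l_{-pa_1/2}$ as quadratic spaces over $\Qp$, i.e. $a\equiv -pa_1$ in $\Qpt/(\Qpt)^2$. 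Once that single local condition at $p$ is met, Witt's theorem (cf. the classification by discriminant and Hasse invariant quoted after Proposition \ref{witt}) gives $\bV^{\sharp}\otimes_{\Q}\Qp\cong V\oplus l_{-pa_1/2}=V^{\sharp}$.

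The first step is therefore to pin down the square class of $-pa_1$ in $\Qpt/(\Qpt)^2$. Since $p>2$, this group is $\{1,u,p,up\}$ for a fixed non-square unit $u$, and $-pa_1$ has odd valuation, so its class is either $p$ or $up$; in particular it is represented by an integer of the form $p\cdot c$ with $c\in\{1,u\}$. The second step is to realize this class by a \emph{positive rational}: I would simply take $a$ to be a positive integer in the prescribed square class — e.g. $a=pc$ if $pc>0$, which it is since $p>0$ and we may take $c$ to be a positive integer representative of the unit class. (If one is squeamish about the sign of the chosen representative of $u$, multiply by a square of a suitable prime or just note $pc>0$ automatically.) Then $a\in\Q_{>0}$ and $a/2 \equiv -pa_1/2$ in $\Qpt/(\Qpt)^2$, hence $l_{a/2}\otimes_\Q\Qp\cong l_{-pa_1/2}$.

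The third and final step is to assemble: $\bV^{\sharp}\otimes_\Q\Qp = (\bV\otimes_\Q\Qp)\oplus(l_{a/2}\otimes_\Q\Qp)\cong V\oplus l_{-pa_1/2}=V^{\sharp}$, where the middle isomorphism uses $\bV\otimes_\Q\Qp=V$ by hypothesis and the computation of the square class just made, and where ``$\oplus$'' is orthogonal direct sum of quadratic spaces. This is an isomorphism of quadratic spaces over $\Qp$, which is exactly the claim.

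I do not expect a serious obstacle here; the only mild subtlety is bookkeeping of square classes and signs — making sure the chosen $a$ is simultaneously positive as a rational number and in the correct square class at $p$ — but since $p>0$ this is automatic once one takes $a$ to be a positive integer representative of the class of $-pa_1$ (e.g. $a=p$ or $a=pu$ with $u$ a positive non-square unit mod $p$). No global (archimedean or other finite place) conditions enter, because we are only prescribing the completion at $p$ and are free to choose $a$ at will; Witt's classification over $\Qp$ then does the rest.
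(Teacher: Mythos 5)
Your proof is correct and follows essentially the same route as the paper's: the paper simply rescales $x_1$ by a unit in $\Zpt$ so that $a_1$ becomes a negative rational and then takes $a:=-pa_1>0$, which hits the right square class on the nose, whereas you phrase the same one-line observation via the classification of rank-one forms over $\Qp$ by square class. The content — that one can realize the square class of $-pa_1$ in $\Qpt/(\Qpt)^2$ by a positive rational, after which the orthogonal-sum identification is immediate — is identical.
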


\begin{proof}
Take a basis $x_1,\ldots,x_n$ of $V$ as Proposition \ref{alsd}. By replacing $x_1$ to its $\Zpt$-multiple, we may further assume $a_1\in \Q_{<0}$. Then $a:=-pa_1$ satisfies the desired condition. 
\end{proof}

Let $\bS$ be the Deligne torus, that is, the Weil restriction of $\G_{m,\C}$ to $\R$. Take an $\R$-basis $v_{1}^{+},\ldots,v_{d}^{+},v_{1}^{-},v_{2}^{-}\in \bV\otimes_{\Q}\R$ whose Gram matrix of the bilinear form associated to $\bV\otimes_{\Q}\R$ is
\begin{equation*}
\begin{pmatrix}
1&&&&\\
&\ddots&&&\\
&&1&&\\
&&&-1&\\
&&&&-1
\end{pmatrix}. 
\end{equation*}
We define $\bX^{\star}$ as the $\bG^{\star}(\R)$-conjugacy class of the $\R$-homomorphism
\begin{equation*}
\bS \rightarrow \bG^{\star}\otimes_{\Q}\R;a+b\sqrt{-1}\mapsto a+bv_{1}^{-}v_2^{-}. 
\end{equation*}
Then $(\bG^{\star},\bX^{\star})$ is a Shimura datum in the sense of \cite[2.1.1]{Deligne1979}. Moreover, the inclusion $\bV\subset \bV^{\sharp}$ induces an embedding of Shimura datum $(\bG,\bX)\hookrightarrow (\bG^{\sharp},\bX^{\sharp})$. 

We prove that $(\bG^{\sharp},\bX^{\sharp})$ is of Hodge type. Put $\bC_0:=C(V^{\sharp})$, and take ${\delta}_0\in \bC_0$ and consider the symplectic form $\psi_{\delta_0}$ on $\bC_0$ as in Proposition \ref{spsp} (i). Put $\bG':=\GSp(\bC_0,\psi_{\delta_0})$, the symplectic similitude group. Let
\begin{equation*}
v_{0,1}:=\frac{1}{\sqrt{2}}(v_{1}^{+}+v_{1}^{-}), \quad v_{0,2}:=\frac{1}{\sqrt{2}}(v_{1}^{+}-v_{1}^{-})
\end{equation*}
and write $S_{\bC_0}^{\pm}$ for the $\bG'(\R)$-conjugacy class of the $\R$-homomorphism
\begin{equation*}
\bS \rightarrow \bG'\otimes_{\Q}\R;a+b\sqrt{-1}\mapsto [v_{0,1}x+v_{0,2}x'\mapsto a(v_{0,1}x+v_{0,2}x')+b(v_{0,2}x'-v_{0,1}x)], 
\end{equation*}
where $x,x'\in \bC_0$. Then $(\bG',S_{\bC_0}^{\pm})$ is also a Shimura datum. Moreover, we have the following: 
\begin{prop}\label{hssh}
\emph{The closed immersion $i_{\delta_0}\colon \bG^{\sharp}\hookrightarrow \bG'$ in Proposition \ref{spsp} (ii) induces an embedding of Shimura datum
\begin{equation*}
\bi^{\sharp} \colon (\bG^{\sharp},\bX^{\sharp})\hookrightarrow (\bG',S_{C(\bV^{\sharp})}^{\pm}). 
\end{equation*}}
\end{prop}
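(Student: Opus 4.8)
The plan is to verify that the $\R$-homomorphism $h^\sharp\colon \bS \to \bG^\sharp\otimes_\Q\R$ defining $\bX^\sharp$ composes with $i_{\delta_0}$ to land in the conjugacy class $S^{\pm}_{C(\bV^\sharp)}$; since $i_{\delta_0}$ is a closed immersion of algebraic groups, the only real content is a computation on Hodge structures, i.e.\ checking the compatibility of the two homomorphisms $\bS\to\bG'\otimes_\Q\R$. First I would recall that, by construction, $h^\sharp(a+b\sqrt{-1}) = a + b v_1^- v_2^-$ inside $C^+(\bV^\sharp)\otimes_\Q\R$, and that the representation $C(\bV^\sharp)$ of $\bG^\sharp$ is via left multiplication, so $i_{\delta_0}(h^\sharp(a+b\sqrt{-1}))$ acts on $x\in C(\bV^\sharp)\otimes_\Q\R$ by $x\mapsto (a+bv_1^-v_2^-)x$.

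Next I would analyze the action of the element $v_1^-v_2^-\in C^+(\bV^\sharp)\otimes_\Q\R$ by left multiplication. Because $(v_1^-)^2 = Q(v_1^-) = -1$ and similarly $(v_2^-)^2=-1$ while $v_1^-$ and $v_2^-$ anticommute (as $[v_1^-,v_2^-]=0$ in the orthogonal basis), one gets $(v_1^-v_2^-)^2 = -v_1^-(v_2^-)^2 v_1^-\cdot(\text{sign})= -1$ after the standard sign bookkeeping, so left multiplication by $v_1^-v_2^-$ is a complex structure $J$ on $C(\bV^\sharp)\otimes_\Q\R$. The key step is then to match $J$ with the complex structure appearing in the definition of $S^\pm_{C(\bV^\sharp)}$, which is the one attached to the decomposition governed by $v_{0,1},v_{0,2}$: I would check that left multiplication by $v_1^-v_2^-$ sends $v_{0,1}x + v_{0,2}x' \mapsto v_{0,2}x' - v_{0,1}x$ (up to a uniform sign/orientation choice), using $v_1^- = \tfrac{1}{\sqrt 2}(v_{0,1}-v_{0,2})$, $v_1^+ = \tfrac{1}{\sqrt 2}(v_{0,1}+v_{0,2})$ and the Clifford relations among $v_1^+, v_1^-, v_2^-$; this identifies $i_{\delta_0}\circ h^\sharp$ with a specific member of $S^\pm_{C(\bV^\sharp)}$, hence the $\bG'(\R)$-conjugacy class of $i_{\delta_0}\circ h^\sharp$ is exactly $S^\pm_{C(\bV^\sharp)}$. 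Conjugacy-equivariance of $i_{\delta_0}$ then gives $i_{\delta_0}(\bX^\sharp)\subseteq S^\pm_{C(\bV^\sharp)}$, which is precisely the condition for $\bi^\sharp$ to be a morphism of Shimura data; that it is an embedding (closed immersion on the group level, injective on the domains) follows from Proposition \ref{spsp} (ii).

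I would also note the two supplementary points that make $(\bG^\sharp,\bX^\sharp)$ genuinely of Hodge type in the sense of \cite{Deligne1979}: the Hodge structure on $\bC_0$ induced by any $h\in\bX^\sharp$ is of type $\{(-1,0),(0,-1)\}$ (immediate from $J^2=-1$ and the eigenspace decomposition above), and the weight cocharacter and the existence of a polarization given by $\psi_{\delta_0}$ are handled exactly as in \cite[\S 4]{Howard2017}; these are routine once the complex structure is identified. The main obstacle I anticipate is purely bookkeeping: getting the Clifford-algebra signs right when computing $(v_1^-v_2^-)^2$ and the action of $v_1^-v_2^-$ on the $v_{0,1},v_{0,2}$-decomposition, and in particular pinning down the correct orientation so that one lands in $S^\pm_{C(\bV^\sharp)}$ rather than its complex conjugate — but this is exactly parallel to the hyperspecial computation in \cite[4.1]{Howard2017}, so I would cite that for the sign normalizations and keep the verification here to a minimum.
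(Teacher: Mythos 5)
The paper states Proposition \ref{hssh} without proof, implicitly treating it as the direct analogue of \cite[\S 4.1]{Howard2017} (and of \cite[\S\S 3--4]{MadapusiPera2016}); there is therefore no ``paper proof'' to compare against line by line. Your outline follows the same standard route implicit in that citation: compose $h^{\sharp}$ with $i_{\delta_0}$, compute the resulting Hodge structure on $C(\bV^{\sharp})\otimes_{\Q}\R$ via left Clifford multiplication, and check type and polarization.

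There is, however, one step that you should not expect to close as stated. You propose to verify that left multiplication by $v_1^-v_2^-$ literally sends $v_{0,1}x+v_{0,2}x'$ to $v_{0,2}x'-v_{0,1}x$ (up to sign), so that $i_{\delta_0}\circ h^{\sharp}$ equals the explicit representative $h'$ defining $S_{\bC_0}^{\pm}$. This identity does not hold on the nose. Writing $e_1=v_{0,1}v_{0,2}$ and $e_2=v_{0,2}v_{0,1}$ for the idempotents giving $\bC_0=e_1\bC_0\oplus e_2\bC_0=v_{0,1}\bC_0\oplus v_{0,2}\bC_0$, a short Clifford computation using $v_1^-=\tfrac{1}{\sqrt 2}(v_{0,1}-v_{0,2})$ shows that left multiplication by $v_1^-v_2^-$ does \emph{interchange} the two summands $v_{0,1}\bC_0$ and $v_{0,2}\bC_0$, but the map it induces involves a further right multiplication by (a scalar multiple of) $v_2^-$ on each coefficient, so it is not of the form $x\mapsto -x$, $x'\mapsto x'$. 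Thus $i_{\delta_0}\circ h^{\sharp}$ is not \emph{equal} to $h'$; it is only $\bG'(\R)$-conjugate to it. The clean argument is the softer one you gesture at in your last paragraph: left multiplication by $v_1^-v_2^-$ squares to $-1$ (so gives a weight $-1$ Hodge structure of type $\{(-1,0),(0,-1)\}$ on $\bC_0$), it lies in $\Sp(\psi_{\delta_0})(\R)$, and, for the chosen $\delta_0$, the form $\psi_{\delta_0}(\,\cdot\,,J\,\cdot\,)$ is definite as in \cite[4.1.6]{Howard2017}; any two such complex structures compatible with $\pm\psi_{\delta_0}$ are $\bG'(\R)$-conjugate, and $S_{\bC_0}^{\pm}$ is precisely this conjugacy class. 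That, together with $i_{\delta_0}$ being a closed immersion (Proposition \ref{spsp} (ii)), gives the embedding of Shimura data. You should also keep the normalization in view: with the Gram matrix as written (entries of the bilinear form $[\,,\,]=Q(\cdot+\cdot)-Q(\cdot)-Q(\cdot)$ equal to $\pm 1$), one has $(v_i^-)^2=Q(v_i^-)=-\tfrac{1}{2}$, not $-1$, so $(v_1^-v_2^-)^2=-\tfrac14$; either renormalize $Q$ so that $Q(v_i^-)=-1$, or replace $v_1^-v_2^-$ by $2v_1^-v_2^-$ in $h^\sharp$. This is a harmless bookkeeping point but worth fixing so that $h^\sharp$ is actually a group homomorphism.
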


We denote by $\bi$ the composite $\bG\hookrightarrow \bG^{\sharp}\xrightarrow{\bi^{\sharp}}\bG'$. Let $G^{\star}:=\bG^{\star}\otimes_{\Q}\Qp$, and $\bx^{\star}\in \cB(G^{\star})$ the vertex as in Section \ref{quad}. Put $K_p^{\star}:=\cG_{\bx^{\star}}(\Zp)$. Moreover, let $K^{\star,p}$ be a neat compact open subgroup of $\bG^{\star}(\A_{f}^{p})$, and put $K^{\star}:=K_p^{\star}K^{\star,p}$. In the following, we give a definition of Kisin--Pappas integral model of the Shimura variety $\Sh_{K}(\bG^{\star},\bX^{\star})$. For a neat compact open subgroup ${K'}^p$ of $\bG'(\A_{f}^{p})$, put $K':={K'}^pK'_p$. Here we require that the morphism
\begin{equation*}
\Sh_{K^{\star}}(\bG^{\star},\bX^{\star})\rightarrow \Sh_{K'}(\bG',S_{\bC_0}^{\pm})
\end{equation*}
induced by $\bi^{\star}$ is a closed immersion. Let $\sS_{K'}$ be the $\Z_{(p)}$-scheme defined as the moduli space of the functor which parametrizes prime-to-$p$ isogeny classes of triples $(A,\lambda,\overline{\eta}^p)$ for any connected noetherian scheme $S$ over $\Z_{(p)}$, where
\begin{itemize}
\item $A$ is an abelian scheme over $S$ of dimension $m$,
\item $\lambda$ is a prime-to-$p$ polarization of $A$,
\item $\overline{\eta}^p$ is a $K^p$-level structure on $A$. 
\end{itemize}
Note that $\sS_{K'}$ is flat over $\Z_{(p)}$, and its generic fiber is $\Sh_{K'}(\bG',S_{\bC_0}^{\pm})$. We define $\sS_{K^{\star}}$ as the normalization of the scheme-theoretic closure of $\Sh_{K^{\star}}(\bG^{\star},\bX^{\star})$ in $\sS_{K'}$. We call $\sS_{K^{\star}}$ the Kisin--Pappas integral model of $\Sh_{K^{\star}}(\bG^{\star},\bX^{\star})$ associated to $\bi^{\star}$. 

\subsection{Basic properties on the special fibers}\label{spfb}

We keep the notations as in Section \ref{dfkp}. Here we only consider the case $\star=\emptyset$. Let $(\cA_{K'},\lambda_{K'})$ be the universal polarized abelian scheme over $\sS_{K'}$, and $(\cA_{K},\lambda_{K})$ its pull-back by the canonical morphism $\sS_{K}\rightarrow \sS_{K'}$. Take $z\in \sS_{K}(\Fpbar)$, and denote by $(\cA_{z}[p^{\infty}],\lambda_{z})$ the fiber of $(\cA_{K},\lambda_{K})$ at $z$. Then, as discussed in \cite[\S 3.4]{Hamacher2019}, after replacing $(s_{\alpha})$ if necessary, there is a pair $((t_{\alpha,0,z}),\tau_{z})$, where
\begin{itemize}
\item $(t_{\alpha,0,z})$ is a collection of $\D(\cA_{z})^{\otimes}$,
\item $\tau_{z}\colon \D(\cA_{z})\xrightarrow{\cong} C^{*}\otimes_{\Zp}\Zpb$ is a $\Zpb$-isomorphism,
\end{itemize}
that satisfy Axiom \ref{exct}. Hence we can consider the deformation $\Def(\cA_{z}[p^{\infty}],\lambda_{z},(t_{\alpha,0,z}))$ of the triple $(\cA_{z},\lambda_{z},(t_{\alpha,0,z}))$. Moreover, it describes the complete neighborhood of $z$. 

\begin{prop}\label{dfp1} (\cite[Proposition 4.2.2, Corollary 4.2.4]{Kisin2018})
\emph{For $z\in \sS_{K}(\Fpbar)$, there is an isomorphism $\widehat{\sS}_{K,z}\cong \Def(\cA_{z}[p^{\infty}],\lambda_z,(t_{\alpha,0,z}))$. }
\end{prop}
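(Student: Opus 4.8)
The plan is to identify the complete local ring $\widehat{\sS}_{K,z}$ with a deformation space by climbing through the tower of integral models $\sS_{K} \to \sS_{K'}$ and using the fact that $\sS_{K}$ was defined as a normalization. First I would recall the Serre--Tate theorem, which identifies the completed local ring of the Siegel integral model $\sS_{K'}$ at the image $z'$ of $z$ with the (unobstructed) deformation space $\Def(\cA_{z}[p^\infty],\lambda_z)$ of the polarized $p$-divisible group; by the discussion preceding the statement and the Grothendieck--Messing identification recorded in the excerpt, $\Def(\cA_{z}[p^\infty],\lambda_z)\cong \widehat{M}^{\loc}(G')_{i_*(y)}$. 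Then I would invoke that $\widehat{\sS}_{K,z}$ maps to $\widehat{\sS}_{K',z'}$, and that by construction $\sS_{K}$ is the normalization of the Zariski closure of the Shimura variety $\Sh_K(\bG,\bX)$ inside $\sS_{K'}$, so $\widehat{\sS}_{K,z}$ is a normal complete local ring finite over the closure's local ring.

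The heart of the argument is to pin down this normalization by comparing it with the closed formal subscheme $\Def(\cA_z[p^\infty],\lambda_z,(t_{\alpha,0,z}))$ cut out inside $\Def(\cA_z[p^\infty],\lambda_z)$ by the crystalline tensors, which under Grothendieck--Messing corresponds to $\widehat{M}^{\loc}(G,\mu,\bx)_{y}\hookrightarrow \widehat{M}^{\loc}(G')_{i_*(y)}$. On the generic fiber, the tensor conditions carve out exactly the Hodge-type Shimura subvariety, so the closure of $\Sh_K(\bG,\bX)$ pulled back to $\widehat{\sS}_{K',z'}$ has the same underlying reduced support as $\Def(\cA_z[p^\infty],\lambda_z,(t_{\alpha,0,z}))$. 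Since the latter is, by Theorem \ref{qtrc} (or its $\star=\sharp$ smooth analogue combined with the explicit presentation via $R_n$), regular — hence normal — and since normalization is compatible with completion and with the étale-local structure, the universal property of normalization gives a canonical isomorphism $\widehat{\sS}_{K,z}\cong \Def(\cA_z[p^\infty],\lambda_z,(t_{\alpha,0,z}))$. Concretely this is the content of \cite[Proposition 4.2.2, Corollary 4.2.4]{Kisin2018} transported to the Hamacher--Kim setting via \cite[\S 3.4]{Hamacher2019}, whose hypotheses were checked in the excerpt (tame ramification by Corollary \ref{sptm}, simply-connectedness of the derived group by Proposition \ref{gspi} (ii), and the local-model compatibility of Corollary \ref{lmci}).

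The main obstacle I expect is verifying that the tensor-defined closed formal subscheme $\Def(\cA_z[p^\infty],\lambda_z,(t_{\alpha,0,z}))$ really coincides with the normalization of the closure, rather than merely containing it or sharing its reduced locus: one must know that $\Def(\cA_z[p^\infty],\lambda_z,(t_{\alpha,0,z}))$ is itself normal (so that it cannot be properly dominated by the normalization) and that it is flat over $\Zpb$ with the correct generic fiber (so that the normalization cannot be strictly smaller). Normality follows from the regularity of $M^{\loc}(G,\sigma(\mu),\bx)_{\Zpb}$ in Theorem \ref{qtrc} (i), and the identification of generic fibers is exactly the statement that the crystalline tensors reconstruct the Hodge-type locus — this is where one leans on the deformation-theoretic comparison of \cite[\S 3]{Kisin2018} (adapted in \cite{Hamacher2019}) between the $\cG_{\bx}$-structure on the Dieudonné module and the moduli interpretation of $\Sh_K(\bG,\bX)$. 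Once these two inputs are in place, the universal property of normalization closes the argument, and there is essentially nothing left to compute.
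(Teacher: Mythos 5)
The paper gives no proof of Proposition \ref{dfp1}; it is cited directly from \cite[Proposition 4.2.2, Corollary 4.2.4]{Kisin2018}, with the translation into the Hamacher--Kim setting taken from \cite[\S 3.4]{Hamacher2019}. So the right question is whether your sketch is a faithful reconstruction of the Kisin--Pappas argument. You correctly gather the ingredients one would expect: Serre--Tate/Grothendieck--Messing for $\widehat{\sS}_{K',z'}\cong\Def(\cA_z[p^{\infty}],\lambda_z)$, the definition of $\sS_K$ as the normalization of the scheme-theoretic closure of $\Sh_K(\bG,\bX)$ in $\sS_{K'}$, and the regularity (hence normality and flatness) of $M^{\loc}(G,\sigma(\mu),\bx)_{\Zpb}$ from Theorem \ref{qtrc} (i).

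Where your argument has a genuine gap is in the step you yourself flag as the main obstacle. You assert that the closure of $\Sh_K(\bG,\bX)$ inside $\widehat{\sS}_{K',z'}$ ``has the same underlying reduced support as $\Def(\cA_z[p^{\infty}],\lambda_z,(t_{\alpha,0,z}))$'' because ``on the generic fiber the tensor conditions carve out exactly the Hodge-type Shimura subvariety,'' and then you treat the ``deformation-theoretic comparison of \cite[\S 3]{Kisin2018}'' as an input that closes the argument. This conflates two genuinely different tensor data: the Betti/\'etale (absolute Hodge) tensors that define the Shimura embedding, and the Frobenius-invariant crystalline tensors $(t_{\alpha,0,z})$ on $\D(\cA_z[p^{\infty}])^{\otimes}$ that cut out $\Def(\cA_z[p^{\infty}],\lambda_z,(t_{\alpha,0,z}))$ via the local model. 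Matching them over the whole complete local ring $\widehat{\sS}_{K',z'}$ (which is a mixed-characteristic ring, neither smooth nor of characteristic zero) is not a formal consequence of the generic-fiber moduli interpretation; it is precisely the content of \cite[Proposition 4.2.2]{Kisin2018}, and it is established there using the Kisin-module/Breuil--Kisin machinery and the construction of a $\cG_{\bx}$-torsor over $\sS_K$, not by a normalization argument. If you already knew (a) that the completed local ring of the closure factors through $\Def(\cA_z[p^{\infty}],\lambda_z,(t_{\alpha,0,z}))$ and (b) that their generic fibers coincide as subschemes of the Siegel deformation space, then normality of the target would indeed finish the proof by the universal property of normalization, exactly as you say. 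But establishing (a) and (b) is essentially all of the work, so the closing phrase ``there is essentially nothing left to compute'' is backwards: the normalization step is the easy part, and the $p$-adic Hodge-theoretic identification you treat as given is the theorem.
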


On the other hand, under the isomorphism $\tau_{z}$ as above, the Frobenius on $\D(\cA_{z})$ corresponds to $b_{z}\sigma^{*}$ where $b_z \in G(\Qpb)$. By construction, the $\sigma$-conjugacy class $[b_z]\in B(G)$ is uniquely determined. Moreover, we have $[b_z]\in B(G,\{\mu\})$ by \cite[\S 8]{Zhou2020}. 

\begin{dfn}
We define a locally closed subscheme of $\sSbar_{K}$ as follows: 
\begin{equation*}
\sSbar_{K}^{\bs}:=\{z\in \sS_{K}\mid [b_z]=[b_0]\}. 
\end{equation*}
We call $\sSbar_{K}^{\bs}$ as the \emph{basic locus} of $\sSbar_{K}$.  
\end{dfn}
It is known that $\sSbar_{K}^{\bs}$ is a non-empty closed subset of $\sSbar_{K}$ by \cite[Theorem 1.3.13 (2)]{Kisin}. 

Finally, we give an explicit description the non-smooth locus $\sS_{K,\Zpb}^{\nsm}$ of $\sS_{K,\Zpb}:=\sS_{K}\otimes_{\Z_{(p)}}\Zpb$. We denote by $\iota_{z}(x_0)$ the isogeny on $\cA_{z}$ associated to the tensor $\tau^{-1}_{z}(x_0)$. 

\begin{thm}\label{sgbs}
\emph{
\begin{enumerate}
\item The $\Zp$-scheme $\sS_{K,\Zpb}$ is regular of dimension $n-1$, and is flat over $\Zp$. 
\item The non-smooth locus $\sS_{K,\Zpb}^{\nsm}$ is the set of $z\in \sS_{K}(\Fpbar)$ such that $\iota_{z}(x_0)$ acts by the zero map on $\Lie(\cA_{z}[p^{\infty}])$. Moreover, if the above conditions hold, there is an isomorphism $\widehat{\sS}_{K,\Zpb,z}\cong \spf R_n$. 
\item The non-smooth locus $\sS_{K,\Zpb}^{\nsm}$ is contained in $\sSbar_{K}^{\bs}$. 
\end{enumerate}}
\end{thm}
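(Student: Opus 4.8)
The plan is to prove the three parts in order, deducing each statement about the integral model $\sS_{K,\Zpb}$ from the corresponding local statement about the local model $M^{\loc}(G,\sigma(\mu),\bx)_{\Zpb}$ via the local model diagram, which is available here by \cite[Theorem 4.2.7]{Kisin2018}. For (i), the point is that formal smoothness is a local property and the local model diagram gives, for each $z\in\sS_K(\Fpbar)$, an isomorphism $\widehat{\sS}_{K,\Zpb,z}\cong \widehat{M}^{\loc}(G,\sigma(\mu),\bx)_{\Zpb,y}$ for the associated point $y$ (concretely, combine Proposition \ref{dfp1} with the deformation-theoretic identification $\Def(\cA_z[p^\infty],\lambda_z,(t_{\alpha,0,z}))\cong \widehat{M}^{\loc}(G,\sigma(\mu),\bx)_{\Zpb,y}$ coming from the Grothendieck--Messing theory discussed in Section \ref{dfpd}). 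Then regularity of dimension $n-1$ and flatness over $\Zp$ for $\sS_{K,\Zpb}$ follow immediately from Theorem \ref{qtrc} (i), since these are properties that can be checked on complete local rings (and flatness over $\Zp$ is equivalent to $p$ not being a zero divisor in each complete local ring).

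For (ii), I would again transport the statement through the local model diagram. By Theorem \ref{qtrc} (ii), the point $y\in M^{\loc}(G,\sigma(\mu),\bx)(\Fpbar)$ is a non-smooth point if and only if $y=y_0$, the point corresponding to the radical of $\bLambda\otimes_{\Zp}\Fpbar$, and in that case $\widehat{M}^{\loc}(G,\sigma(\mu),\bx)_{\Zpb,y_0}\cong \spf R_n$. By Corollary \ref{sghd}, the condition $y=y_0$ is equivalent to $x_0(C\otimes_{\Zp}\Fpbar)=\cF_{i_*(y)}$, i.e. to $\iota_X(x_0)$ acting as the zero map on $\Lie(X^\vee)$ (here one uses the dictionary of Section \ref{dfpd} between the tensor $\tau^{-1}(x_0)$ and the isogeny $\iota_X(x_0)$, together with the identification $\D(X)_{\Fpbar}/\Fil^1(X)\cong\Lie(X^\vee)$ and the description of $\Fil^1$ as the image of $V$). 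Applying this with $X=\cA_z[p^\infty]$ and $\tau=\tau_z$ identifies $\sS_{K,\Zpb}^{\nsm}$ with the locus where $\iota_z(x_0)$ kills $\Lie(\cA_z[p^\infty])$, and on that locus $\widehat{\sS}_{K,\Zpb,z}\cong\widehat{M}^{\loc}(G,\sigma(\mu),\bx)_{\Zpb,y_0}\cong\spf R_n$. One subtlety to check carefully is matching $\Lie(\cA_z[p^\infty])$ (rather than $\Lie(\cA_z[p^\infty]^\vee)$) with the relevant quotient of the Dieudonn\'e module under the chosen sign conventions, but this is bookkeeping.

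For (iii), the claim is that every $z\in\sS_{K,\Zpb}^{\nsm}$ is basic. Given (ii), at such a $z$ the isogeny $\iota_z(x_0)$ acts by zero on $\Lie(\cA_z[p^\infty])$, which is exactly the hypothesis of Proposition \ref{sgl0} applied to $X=\cA_z[p^\infty]$: that proposition yields that $X$ is isoclinic of slope $1/2$ and that, writing $F=b_z\sigma^*$, one has $[b_z]=[b_0]$. Since $\sSbar_K^{\bs}$ is by definition $\{z\mid [b_z]=[b_0]\}$, this shows $z\in\sSbar_K^{\bs}$, and as $\sSbar_K^{\bs}$ is closed in $\sSbar_K$ (hence the containment as sets is fine), we are done. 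I expect the only real obstacle to be the careful verification in (ii) that the scheme-theoretic non-smooth locus (a priori defined by vanishing of a Jacobian-type ideal) coincides pointwise with the explicitly described locus, which requires knowing that a complete local ring is formally smooth over $\Zpb$ precisely when it is isomorphic to a power series ring over $\Zpb$ — true here because $R_n$ is visibly not smooth (the quadratic relation $p+\sum t_it_{n-i}$ cuts out a singular point) while all other complete local rings of the local model are power series rings by Theorem \ref{qtrc} (ii). Everything else is a transcription through the two local model diagrams.
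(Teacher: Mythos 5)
Your proposal follows the paper's proof exactly: parts (i) and (ii) are transported from Theorem \ref{qtrc}, Corollary \ref{sghd}, and the local model diagram of \cite[Theorem 4.2.7]{Kisin2018} (via Proposition \ref{dfp1}), and part (iii) combines the characterization in (ii) with Proposition \ref{sgl0}. The bookkeeping you flag in (ii) — matching $\Lie(\cA_z[p^\infty])$ with the correct quotient of the Dieudonn\'e module via $\Fil^1(X)=(\Lie(X))^{*}$ and duality — is exactly the step the paper handles implicitly (cf. the proof of Theorem \ref{rzrg}), and your resolution is the right one.
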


\begin{proof}
(i), (ii): These follow from Theorem \ref{qtrc} (i), (ii), Corollary \ref{sghd} and \cite[Theorem 4.2.7]{Kisin2018}. 

(iii): This is a consequence of Propositions \ref{dfp1} and \ref{sgl0}. 
\end{proof}

\section{Rapoport--Zink spaces}\label{rzsp}

In this section, we use the notations $\star \in \{\emptyset,\sharp,\,'\}$. We define the Rapoport--Zink spaces for $G^{\star}$ with level $K_p^{\star}$, which is based on \cite[\S 5]{Hamacher2019}. See also \cite[\S 4]{Oki2020b}. 

\subsection{Definition of the Rapoport--Zink spaces}\label{dfrz}

Here we use the notation $\star \in \{\emptyset,\sharp,\,'\}$. We also denote by $b_0\in G'(\Qps)$ the image of $b_0\in G(\Qps)$ under $i$. On the other hand, we write $i'$ for the identity map on $G'$, and set $\mu':=i^{\sharp}\circ \mu^{\sharp}$. 

\begin{prop}
\emph{The quintuple $(G^{\star},\mu^{\star},\bx^{\star},b_0,i^{\star})$ is an embedded Rapoport--Zink datum in the sense of \cite[Definition 4.1]{Oki2020b}. More precisely, we have the following: 
\begin{itemize}
\item $G^{\star}$ is a reductive connected group over $\Qp$,
\item $\mu^{\star} \colon \G_{m,\Qps}\rightarrow G^{\star}\otimes_{\Qp}\Qps$ is a minuscule cocharacter,
\item $\bx^{\star} \in \cB(G^{\star},\Qp)$,
\item $b_0^{\star}\in \bigcup_{w\in \Adm(\mu^{\star})}\cG_{\bx^{\star}}(\Zpb)\sigma(w)\cG_{\bx^{\star}}(\Zpb)$ is a decent element,
\item $i\colon G^{\star}\hookrightarrow G'$ is a closed immersion,
\item we have $i^{\star}\circ \mu^{\star}=\mu'$, $[i^{\star}(b_0^{\star})]\in B(G',\mu^{\star})$ and $i^{\star}(\bx^{\star})=\bx'$. 
\end{itemize}
Moreover, we have the following: 
\begin{enumerate}
\item $G^{\star}$ splits over a tamely ramified extension of $\Qp$ and $p\nmid \pi_1(G^{\star,\der})$,
\item there is an embedding of Shimura datum $\bi^{\star}\colon (\bG^{\star},\bX^{\star})\hookrightarrow (\bG',S_{\bC_0}^{\pm})$ which induces $i^{\star} \colon G^{\star} \hookrightarrow G'$. 
\item \cite[Axiom A]{Hamacher2019} holds for $(G^{\star},\mu^{\star},\bx^{\star},b_0,i^{\star})$ and a taken $\bi^{\star}$ as in (ii). 
\end{enumerate}}
\end{prop}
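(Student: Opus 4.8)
The plan is to verify, one by one, the conditions in \cite[Definition 4.1]{Oki2020b} together with the three supplementary assertions, treating the cases $\star \in \{\emptyset, \sharp, \,'\}$ simultaneously: the case $\star = \sharp$ reproduces \cite[\S 4]{Howard2017}, the case $\star = \,'$ of symplectic similitudes is classical, and the case $\star = \emptyset$ is systematically reduced to $\star = \sharp$ along the closed immersion $G \hookrightarrow G^{\sharp}$ of Proposition \ref{hseb} (i). Essentially everything has already been prepared in Sections \ref{quad}--\ref{dfpd}, so the proof is a matter of citing the right statement in each case.

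First I would dispatch the structural items. That $G^{\star}$ is reductive and connected is Proposition \ref{gspi} (i) (and standard for $G'$). The cocharacter $\mu^{\star}$ is minuscule and defined over $\Qps$: this is Lemma \ref{hsmn} for $\star = \sharp$, Lemma \ref{mucp} (i) for $\star = \emptyset$, and for $\star = \,'$ it is read off from the explicit description of $i^{\sharp} \circ \mu^{\sharp}$ in Lemma \ref{hsmn}. The vertex $\bx^{\star} \in \cB(G^{\star}, \Qp)$ is the one fixed in Section \ref{quad}, the map $i^{\star} \colon G^{\star} \hookrightarrow G'$ is a closed immersion by Proposition \ref{spsp} (ii) (composed with $G \hookrightarrow G^{\sharp}$ when $\star = \emptyset$; the identity when $\star = \,'$), and $i^{\star}(\bx^{\star}) = \bx'$ combines Proposition \ref{hseb} (ii) with the remark, recorded just after it, that $i^{\sharp}$ carries $\bx^{\sharp}$ to $\bx'$. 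The compatibility $i^{\star} \circ \mu^{\star} = \mu'$ is the definition of $\mu'$ when $\star = \sharp$, and for $\star = \emptyset$ follows from Lemma \ref{mucp} (ii): conjugating by $i^{\sharp}(g_0) \in \cG_{\bx'}(\Zps)$ identifies $i \circ \mu$ with $\mu'$, which is permitted since the datum is only determined up to such an adjustment of the representatives.

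Next I would handle $b_0$. Decency is immediate from $b_0^2 = \pm p^{-1}$ (Proposition \ref{hsbs} (iii)), uniformly in $\star$. The admissibility $b_0 \in \bigcup_{w \in \Adm(\mu^{\star})} \cG_{\bx^{\star}}(\Zpb)\sigma(w)\cG_{\bx^{\star}}(\Zpb)$ is Proposition \ref{hsbs} (iii) for $\star = \sharp$, Proposition \ref{spbs} (ii) for $\star = \emptyset$ (deduced from the $\sharp$-case through Corollary \ref{admr}), and for $\star = \,'$ is immediate because $i^{\sharp}$ maps $\Kb^{\sharp}_p \mu^{\sharp}(p) \Kb^{\sharp}_p$ into $\Kb'_p \mu'(p) \Kb'_p$ with $\mu'(p) \in \Adm(\mu')$. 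Finally $[i^{\star}(b_0)] \in B(G', \{\mu'\})$ follows from the Remark after Proposition \ref{spbs}, which records $[b_0] \in B(G^{\star}, \{\mu^{\star}\})$, together with functoriality of the sets $B(-, \{-\})$ along $i^{\star}$.

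For the supplementary assertions: (i) $G^{\star}$ splits over a tamely ramified extension by Corollary \ref{sptm} (trivially for $G'$), and $p \nmid \pi_1(G^{\star,\der})$ because $G^{\star,\der}$ is simply connected (the kernel of $\sml_{V^{\star}}$ by Proposition \ref{gspi} (ii) for $\star \in \{\emptyset, \sharp\}$, and $\Sp(C_0)$ for $\star = \,'$), so $\pi_1(G^{\star,\der})$ is trivial. (ii) The embedding of Shimura data $\bi^{\star}$ is Proposition \ref{hssh} for $\star = \sharp$, its composite with $(\bG, \bX) \hookrightarrow (\bG^{\sharp}, \bX^{\sharp})$ for $\star = \emptyset$, and the identity for $\star = \,'$, and it induces $i^{\star}$ on $\Qp$-points by construction (Section \ref{dfkp}). (iii) I expect \cite[Axiom A]{Hamacher2019} to be the real content: I would verify it using the framing object $(\X_0, \lambda_0, \tau_0)$ of Proposition \ref{expd} — which supplies a $p$-divisible group whose Dieudonné module is $C^{*} \otimes_{\Zp} \Zpb$ with $F = b_0 \sigma^{*}$ and Hodge filtration in the conjugacy class of $\mu^{\sharp}$ — together with the compatibility of the local-model embeddings with $i^{\star}$ and $\mu^{\star}$ in Corollary \ref{lmci} and Proposition \ref{lmex}, transporting the local conditions of Axiom A through $\tau_0$ onto the group-theoretic data already checked. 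The main obstacle is precisely this last verification, since it requires matching the Dieudonné-theoretic normalizations of \cite{Hamacher2019} with our specific choices of $b_0$, $\mu^{\star}$, $\bx^{\star}$ and the tensors $(s_{\alpha^{\star}}^{\star})$; everything else is bookkeeping across the cited lemmas.
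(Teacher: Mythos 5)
Your verification of the structural items, the decency and admissibility of $b_0$, and parts (i) and (ii) runs essentially parallel to the paper's own proof and cites the same chain of lemmas (Proposition \ref{gspi}, Lemmas \ref{hsmn} and \ref{mucp}, Propositions \ref{hsbs} and \ref{spbs}, Corollary \ref{sptm}, Proposition \ref{hssh}); for (ii) you should also invoke Lemma \ref{sphs} explicitly, since it is what guarantees that a $\Q$-form $\bV^{\sharp}$ of $V^{\sharp}$ with the correct signature even exists, and hence that the Shimura datum $(\bG^{\sharp},\bX^{\sharp})$ and the embedding $\bi^{\sharp}$ can be formed at all.

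The genuine gap is in (iii). You correctly identify Axiom A of \cite{Hamacher2019} as ``the real content'' and ``the main obstacle,'' but the strategy you sketch --- transporting the conditions of Axiom A through the framing isomorphism $\tau_0$ of Proposition \ref{expd} and the local-model comparisons of Corollary \ref{lmci} and Proposition \ref{lmex} --- does not address what Axiom A actually requires. That axiom is not a purely local/linear-algebraic compatibility that can be read off from the Dieudonn\'e module of a single $p$-divisible group; it asserts, roughly, that the abstract isogeny class attached to $(b_0,\mu^{\star})$ is realized by a point of the integral model $\sS_{K^{\star}}$ in a way compatible with the tensors, which is a global non-emptiness statement and requires a substantial input about Kisin--Pappas models with parahoric level. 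The paper disposes of this with a single citation: \cite[Proposition 7.8]{Zhou2020}, applicable precisely because $[b_0]$ is basic. Without invoking that result (or an equivalent, e.g.\ the He--Rapoport non-emptiness of basic Newton strata together with Zhou's comparison of isogeny classes), your argument for (iii) does not close; the framing object and local-model diagram alone do not yield the existence of the required point of the Shimura variety.
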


\begin{proof}
We only prove the assertions for $\star \in \{ \emptyset,\sharp \}$; otherwise it is clear. 

The assertion that $(G^{\star},\mu^{\star},\bx^{\star},b_0,i^{\star})$ is an embedded Rapoport--Zink datum is a consequence of Propositions \ref{gspi} (i), \ref{hsmn}, \ref{mucp}, \ref{hsbs} and $\ref{spbs}$. In the following, we check (i)--(iii). 

(i): This is a consequence of Corollary \ref{sptm} and Proposition \ref{gspi} (ii). 

(ii): This follows from Lemma \ref{sphs} and Proposition \ref{hssh}. 

(iii): This is contained in \cite[Proposition 7.8]{Zhou2020} since $[b_0]$ is basic. 
\end{proof}

Let $\Kb_p^{\star}:=\cG_{\bx^{\star}}(\Zpb)$, which is a subgroup of $G^{\star}(\Qpb)$. Put
\begin{equation*}
X_{\mu^{\star}}(b_0)_{\Kb_p^{\star}}(\Fpbar):=\left\{g\Kb_p^{\star}\in G^{\star}(\Qp)/\Kb_p^{\star}\,\middle| \, g^{-1}b_0 \sigma(g)\in \bigcup_{w\in \Adm(\mu^{\star})}\Kb_p^{\star} \sigma(w)\Kb_p^{\star} \right\}. 
\end{equation*}
As mentioned in \cite[\S 5.2]{Hamacher2019}, we can regard it as a perfect subscheme $X_{\mu^{\star}}(b^{\star})_{\Kb_p^{\star}}$ of the Witt vector affine Grassmanian of $\cG_{\bx^{\star}}$. Moreover, $i$ and $i^{\sharp}$ induce closed immersion of schemes
\begin{equation*}
X_{\mu}(b_0)_{\Kb_p}\hookrightarrow X_{\mu^{\sharp}}(b_0)_{\Kb_p^{\sharp}}\hookrightarrow X_{\mu'}(b_0)_{\Kb'_p}. 
\end{equation*}
We rephrase $X_{\sigma(\mu^{\star})}(b_0)_{\Kb_p^{\star}}(\Fpbar)$ as a set of quasi-isogenies of $p$-divisible groups. Let $(\X_0,\lambda_0,\tau_0)$ be the polarized $p$-divisible group together with isomorphism $\D(\X_0)\cong C^{*}\otimes_{\Zp}\Zpb$ as in Proposition \ref{expd}. Note that $\X_0$ is of dimension $2^{n}$ which is isoclinic of slope $1/2$. For a collection $(s^{\star}_{\alpha^{\star}})$ of $C^{\otimes}$ as in Definition \ref{tnsr} (if $\star=\,'$, we set $(s'_{\alpha'})$ as the empty set), put $t^{\star}_{\X_0,\alpha^{\star}}:=s^{\star}_{\alpha^{\star}}\otimes 1$. 

\begin{prop}\label{adrz}
\emph{There is an isomorphism between $X_{\sigma(\mu^{\star})}(b_0)_{\Kb_p^{\star}}(\Fpbar)$ and the set of isomorphism classes of triples $(X,\lambda,\rho)$, where
\begin{itemize}
\item $X$ is a $p$-divisible group of dimension $2^n$ and height $2^{n+1}$ over $\Fpbar$,
\item $\lambda \colon X\rightarrow X^{\vee}$ is a principal polarization, 
\item $\rho \colon \X_{0} \rightarrow X$ is a quasi-isogeny,
\end{itemize}
that satisfy the following: 
\begin{itemize}
\item $\rho^{\vee}\circ \lambda \circ \rho=c\lambda_{0}$ for some $c\in \Qpt$,
\item $\rho_{*}(t^{\star}_{\X,\alpha^{\star}})\in \D(X)^{\otimes}\otimes_{\Zpb}\Qpb$ lies in $\D(X)^{\otimes}$ for all $\alpha^{\star}$. 
\end{itemize}}
\end{prop}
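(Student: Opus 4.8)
The plan is to pass everything through contravariant Dieudonn\'e theory over $\Fpbar$ and reduce the statement to a lattice-theoretic bijection, in the spirit of \cite[\S 2.2--2.3]{Howard2017}, \cite[\S 3]{Kisin2018} and \cite[\S 5]{Hamacher2019}; for $\star=\,'$ the assertion is the classical symplectic case and involves no tensors. By Proposition \ref{expd}, $\X_0$ is isoclinic of slope $1/2$ with $\D(\X_0)\cong C^{*}\otimes_{\Zp}\Zpb$ and $F=b_0\sigma^{*}$, so its isocrystal is $N:=(C^{*}\otimes_{\Zp}\Qpb,\,b_0\sigma^{*})$. A triple $(X,\lambda,\rho)$ as in the statement gives, via $\rho$, an isomorphism of the isocrystal of $X$ with $N$, hence exhibits $\D(X)$ as a $\Zpb$-lattice $M\subset C^{*}\otimes_{\Zp}\Qpb$ stable under $F=b_0\sigma^{*}$ and under $V=pF^{-1}$; conversely every such lattice is the Dieudonn\'e module of a $p$-divisible group $X$ together with a quasi-isogeny $\rho\colon \X_0\to X$. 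The condition that $X$ have dimension $2^{n}$ and height $2^{n+1}$ becomes $\rk_{\Zpb}M=2^{n+1}$ together with $pM\subset FM\subset M$ and $\dim_{\Fpbar}(M/FM)=2^{n}$, i.e.\ $\inv(M,FM)\in\Adm(\mu')$; the datum $(\lambda,c)$ with $\rho^{\vee}\circ\lambda\circ\rho=c\lambda_0$ translates, since $\lambda_0$ corresponds to a $\Qpbt$-multiple of $\psi^{\sharp}$, into the assertion that $M$ is self-dual up to the scalar $c$ for $\psi^{\sharp}$, with exact self-duality precisely when $\lambda$ is principal.

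First I would build the forward map. Given $g\Kb_p^{\star}\in X_{\sigma(\mu^{\star})}(b_0)_{\Kb_p^{\star}}(\Fpbar)$, set $M:=g\cdot(C^{*}\otimes_{\Zp}\Zpb)$. Then $b_0\sigma(M)=b_0\sigma(g)\cdot(C^{*}\otimes_{\Zp}\Zpb)$ and $\inv(M,b_0\sigma(M))$ equals the double coset of $g^{-1}b_0\sigma(g)$; the defining condition on $g$, together with Corollary \ref{admr} when $\star=\emptyset$ and its analogue when $\star=\sharp$, shows that this lies in $\Adm(\mu')$, so that $M$ is a Dieudonn\'e lattice of the required height and dimension and produces $(X,\rho)$. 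As $g\in G^{\star}(\Qpb)\subset G'(\Qpb)$, the lattice $M$ is $\psi^{\sharp}$-self-dual up to $c:=\sml_{V^{\star}}(g)$, which gives the principal polarization $\lambda$ with $\rho^{\vee}\circ\lambda\circ\rho=c\lambda_0$; and since $G^{\star}(\Qpb)$ is precisely the stabilizer of the tensors $(s^{\star}_{\alpha^{\star}})$ by Definition \ref{tnsr} and Proposition \ref{hseb}\,(iii), we have $g^{-1}(s^{\star}_{\alpha^{\star}}\otimes 1)=s^{\star}_{\alpha^{\star}}\otimes 1$, so $\rho_{*}(t^{\star}_{\X_0,\alpha^{\star}})=s^{\star}_{\alpha^{\star}}\otimes 1\in g\cdot(\Lambda_0^{\otimes})=M^{\otimes}=\D(X)^{\otimes}$. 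Replacing $g$ by $gk$ with $k\in\Kb_p^{\star}=\cG_{\bx^{\star}}(\Zpb)$ leaves $M$, hence the isomorphism class of $(X,\lambda,\rho)$, unchanged, so the map is well defined.

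Next I would construct the inverse. From $(X,\lambda,\rho)$ one obtains, as above, a Dieudonn\'e lattice $M\subset C^{*}\otimes_{\Zp}\Qpb$ with $\inv(M,FM)\in\Adm(\mu')$, self-dual up to a scalar for $\psi^{\sharp}$, and with $s^{\star}_{\alpha^{\star}}\otimes 1\in M^{\otimes}$ for all $\alpha^{\star}$. The crucial step is to show that $M=g\cdot(C^{*}\otimes_{\Zp}\Zpb)$ for some $g\in G^{\star}(\Qpb)$, unique modulo $\Kb_p^{\star}$: the functor of $\Zpb$-linear isomorphisms $C^{*}\otimes_{\Zp}\Zpb\xrightarrow{\sim}M$ carrying $s^{\star}_{\alpha^{\star}}\otimes 1$ to $s^{\star}_{\alpha^{\star}}\otimes 1$ for all $\alpha^{\star}$ is, by the tensor description of $\cG_{\bx^{\star}}$ and the crystalline-tensors formalism, a torsor under $\cG_{\bx^{\star}}\otimes_{\Zp}\Zpb$, which is trivial because $\Zpb$ is strictly henselian and $\cG_{\bx^{\star}}$ is smooth; any section yields $g$, and matching the tensors over $\Qpb$ forces $g\in G^{\star}(\Qpb)$. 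The self-duality up to a scalar places $g$ in $G'(\Qpb)$, and $\inv(M,FM)=\inv(M,b_0\sigma(M))\in\Adm(\mu')$ together with $g\in G^{\star}(\Qpb)$ and Corollary \ref{admr} (resp.\ its $\star=\sharp$ analogue) gives $g^{-1}b_0\sigma(g)\in\bigcup_{w\in\Adm(\mu^{\star})}\Kb_p^{\star}\sigma(w)\Kb_p^{\star}$, i.e.\ $g\Kb_p^{\star}\in X_{\sigma(\mu^{\star})}(b_0)_{\Kb_p^{\star}}(\Fpbar)$. One then checks directly that the two constructions are mutually inverse.

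The main obstacle is the paragraph above: producing an honest $G^{\star}$-structure on $M$ (equivalently a point of the $G^{\star}$-affine Deligne--Lusztig set, not merely of the $G'$-one) out of the bare integrality of the crystalline tensors. This is exactly the crystalline-tensors machinery of \cite[\S 3]{Kisin2018} and \cite[\S 3]{Hamacher2019}, whose hypotheses ($G^{\star}$ split over a tame extension, $p\nmid\pi_1(G^{\star,\der})$, and \cite[Axiom A]{Hamacher2019}) have been verified earlier in this section; granting them, the torsor-triviality argument is formal. The remaining bookkeeping --- matching $\Adm(\mu^{\star})$ for $G^{\star}$ with $\Adm(\mu')$ for $G'$, which rests on Corollary \ref{admr}, Proposition \ref{cllm} and the local model descriptions of Section \ref{prlm}, and reconciling the $\sigma$-twist in $\sigma(\mu^{\star})$ versus $\mu^{\star}$ --- is routine, the latter being the usual artifact of the contravariant normalization.
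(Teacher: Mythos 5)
Your proof is correct and reconstructs, in some detail, the argument that the paper simply outsources: the paper's own ``proof'' is the single sentence that the assertion is a direct consequence of \cite[Definition/Lemma 5.5]{Hamacher2019}, i.e.\ the classification of $\Fpbar$-points of a Hodge-type Rapoport--Zink space as lattices in the isocrystal on which the crystalline tensors stay integral. What you have written is precisely the content of that cited statement specialized to the present embedded datum: the forward map $g\Kb_p^{\star}\mapsto M:=g\cdot(C^{*}\otimes_{\Zp}\Zpb)$ and the observation that $g\in G^{\star}(\Qpb)$ forces integrality of the tensors and $\psi^{\sharp}$-self-duality up to $\sml_{V^{\star}}(g)$; and the inverse, where the genuinely hard step --- producing a genuine $\cG_{\bx^{\star}}$-structure on the Dieudonn\'e lattice $M$ from the bare integrality of the tensors via torsor triviality over the strictly henselian base --- is correctly identified and correctly attributed to the Kisin--Pappas/Hamacher--Kim machinery, whose hypotheses (tameness, $p\nmid\pi_1(G^{\star,\der})$, Axiom A) the paper has verified just before the proposition. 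Your use of Corollary \ref{admr} to move between $\Adm(\mu^{\star})$ and $\Adm(\mu')$ is the right tool (and for $\star=\sharp$ the admissible set is a single hyperspecial double coset, so the ``analogue'' you invoke is immediate). Two small cosmetic points: you write $g\cdot(\Lambda_0^{\otimes})$ where you mean $g\cdot(C^{*}\otimes_{\Zp}\Zpb)^{\otimes}$; and the torsor you need is, strictly speaking, the scheme of tensor-preserving isomorphisms, whose very being a $\cG_{\bx^{\star}}$-torsor (rather than a possibly empty scheme) is the nontrivial input from the cited machinery --- you flag this correctly, but the phrase ``is, by the tensor description of $\cG_{\bx^{\star}}$ and the crystalline-tensors formalism, a torsor'' elides exactly the step you then rightly call ``the main obstacle.'' Neither of these affects correctness; your argument is sound and amounts to unpacking the citation the paper relies on.
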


\begin{proof}
This is a direct consequence of \cite[Definition/Lemma 5.5]{Hamacher2019}. 
\end{proof}

Take neat compact open subgroups $K^{\star,p}$ of $G(\A_{f}^{p})$ and ${K'}^p$ of $\bG'(\A_{f}^{p})$ satisfying the condition as in Section \ref{dfkp}. Let $\sS_{K'}$ be as in Section \ref{dfkp}, and $\sS_{K^{\star}}$ the Kisin--Pappas integral model of $\Sh_{K^{\star}}(\bG^{\star},\bX^{\star})$ associated to $\bi^{\star}$. 

On the other hand, we define a formal scheme $\RZ_{K'_p}$ over $\spf \Zpb$ as follows. Let $\nilp$ be the category of $\Zpb$-schemes in which $p$ is locally nilpotent. Then we define $\RZ_{K'_p}$ as the moduli space of the functor which parametrizes $S$-isomorphism classes of triples $(X,\lambda,\rho)$ for any $S\in \nilp$, where
\begin{itemize}
\item $X$ is a $p$-divisible group of dimension $2^n$ and height $2^{n+1}$ over $S$,
\item $\lambda \colon X\rightarrow X^{\vee}$ is a polarization,
\item $\rho \colon \X_{0}\otimes_{\Fpbar}\Sbar \rightarrow X\times_{S}\Sbar$ is a quasi-isogeny satisfying $\rho^{\vee}\circ \lambda \circ \rho=c(\rho)\lambda_{0}$ for some locally constant function $c(\rho)\colon \Sbar \rightarrow \Qpt$. 
\end{itemize}

Take $\widetilde{z}^{\star}=(z^{\star},j)$, where $z^{\star}\in \sSbar_{K^{\star}}^{\bs}(\Fpbar)$ and $j\colon \X_{0}\rightarrow \cA_{z_0}[p^{\infty}]$ is a quasi-isogeny which respects the polarizations up to $\Qpt$-multiple. Then we define a morphism of $\Zpb$-formal schemes
\begin{equation*}
\Theta'_{\widetilde{z}^{\star}} \colon \RZ_{K'_p}\rightarrow \sS_{K'},
\end{equation*}
as in \cite[\S 5.3]{Hamacher2019}, which is functorial with respect to $K^p$. Now consider the formal scheme
\begin{equation*}
\RZ_{K_p^{\star}}^{\diamond}:=\sS_{K^{\star}}\times_{\sS_{K'}}\RZ_{K'_p}, 
\end{equation*}
and denote by $\pi_1^{\star}\colon \RZ_{K_p^{\star}}^{\diamond}\rightarrow \sS_{K^{\star}}$ and $\pi_2^{\star}\colon \RZ_{K_p^{\star}}^{\diamond}\rightarrow \RZ_{K_p^{\star}}$ the first and second projections respectively. Moreover, let $(X^{\diamond},\lambda^{\diamond},\rho^{\diamond})$ be the pull-back of the universal object over $\spf \Zpb$ along $\pi_2^{\star}$. On the other hand, for $y\in \RZ_{K_p^{\star}}^{\diamond}(\Fpbar)$, we write $(X_y^{\diamond},\lambda_y^{\diamond},\rho_{y}^{\diamond})$ for the fiber of $(X^{\diamond},\lambda^{\diamond},\rho^{\diamond})$ at $y$, and put $t_{\alpha^{\star},y}^{\star,\diamond}:=\rho_{y}^{\diamond,-1}(t^{\star}_{\X_0,\alpha^{\star}})\in \D(X_y^{\diamond})^{\otimes}$. Note that the triple $(X_{y}^{\diamond},\lambda_{y}^{\diamond},(t_{\alpha^{\star},y}^{\star,\diamond}))$ satisfies Axiom \ref{exct} by Proposition \ref{adrz}. 

\begin{thm}\label{hkrz}(\cite[\S 5.3]{Hamacher2019})
\emph{There is a unique closed formal subscheme $\RZ_{K_{p}^{\star}}$ of $\RZ_{K_p^{\star}}^{\diamond}$ satisfying the following conditions:
\begin{enumerate}
\item $\RZ_{K_p^{\star}}^{\red,p^{-\infty}}\cong X_{\sigma(\mu^{\star})}(b_0)_{\Kb_p^{\star}}$,
\item $\widehat{\RZ}_{K_p^{\star},y}\cong \Def(X_{y}^{\diamond},\lambda_{y}^{\diamond},(t_{\alpha^{\star},y}^{\star,\diamond}))$ for any $y\in \RZ_{K_p^{\star}}(\Fpbar)$. 
\end{enumerate}
Moreover, the composite $\RZ_{K_p^{\star}}\hookrightarrow \RZ_{K_p^{\star}}^{\diamond} \xrightarrow{\pi_2^{\star}} \RZ_{K'_p}$ is a closed immersion. }
\end{thm}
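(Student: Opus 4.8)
The plan is to follow the construction of \cite[\S 5.3]{Hamacher2019} (see also \cite[\S 4]{Oki2020b}) verbatim; the hypotheses needed there have been verified in the preceding propositions. First I would observe that $\RZ_{K'_p}$, as defined above, is a formal scheme locally formally of finite type over $\spf\Zpb$, and that the Kisin--Pappas model $\sS_{K^\star}$ is available with the properties used earlier because $G^\star$ splits over a tame extension (Corollary \ref{sptm}), $p\nmid\pi_1(G^{\star,\der})$ (Proposition \ref{gspi} (ii)), and \cite[Axiom A]{Hamacher2019} holds. Consequently $\RZ_{K_p^\star}^\diamond=\sS_{K^\star}\times_{\sS_{K'}}\RZ_{K'_p}$ is a formal scheme over $\spf\Zpb$, and $\pi_2^\star$ is the base change of the finite morphism $\sS_{K^\star}\to\sS_{K'}$ (finite, being the composite of a normalization with a closed immersion).

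Second I would carve out $\RZ_{K_p^\star}\subset\RZ_{K_p^\star}^\diamond$ as the locus on which the crystalline tensors remain integral: transporting $t_{\X_0,\alpha^\star}^\star$ along the universal quasi-isogeny $\rho^\diamond$, one imposes that $\rho^\diamond_{*}(t_{\X_0,\alpha^\star}^\star)$ lie in $\D(X^\diamond)^\otimes$ for every $\alpha^\star$. On $\Fpbar$-points this recovers exactly $X_{\sigma(\mu^\star)}(b_0)_{\Kb_p^\star}(\Fpbar)$ by Proposition \ref{adrz}. To see that this defines a genuine closed formal subscheme --- so that $\RZ_{K_p^\star}$ exists and $\RZ_{K_p^\star}^{\red,p^{-\infty}}\cong X_{\sigma(\mu^\star)}(b_0)_{\Kb_p^\star}$, which is (i) --- one uses that $\sS_{K^\star}\hookrightarrow\sS_{K'}$ is a closed immersion on generic fibres together with Kisin's theory of crystalline tensors spreading out over the universal deformation, as in \cite{Kisin2018}.

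Third, for (ii) and uniqueness I would compute the complete local rings. By Grothendieck--Messing theory and Proposition \ref{dfp1}, the complete local ring of $\RZ_{K_p^\star}^\diamond$ at $y\in\RZ_{K_p^\star}(\Fpbar)$ is $\Def(X_y^\diamond,\lambda_y^\diamond)\cong\widehat{M}^{\loc}(G')_{i_*^\star(y_0)}$, where $y_0$ is the point of $M^{\loc}(G^\star,\mu^\star,\bx^\star)$ attached to $(X_y^\diamond,\lambda_y^\diamond,(t_{\alpha^\star,y}^{\star,\diamond}))$ by Axiom \ref{exct}. Under Proposition \ref{cllm} and Corollary \ref{lmci}, the integrality condition on the tensors cuts out precisely the closed formal subscheme of $\widehat{M}^{\loc}(G')_{i_*^\star(y_0)}$ corresponding to $\widehat{M}^{\loc}(G^\star,\mu^\star,\bx^\star)_{y_0}$, that is, $\Def(X_y^\diamond,\lambda_y^\diamond,(t_{\alpha^\star,y}^{\star,\diamond}))$; this gives (ii). Uniqueness follows because a closed formal subscheme of a locally noetherian formal scheme is determined by its underlying reduced subscheme and its complete local rings at closed points. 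Finally, the composite $\RZ_{K_p^\star}\hookrightarrow\RZ_{K_p^\star}^\diamond\xrightarrow{\pi_2^\star}\RZ_{K'_p}$ is a closed immersion: it is finite (the base change of a finite morphism, restricted to a closed formal subscheme), it is a closed immersion on underlying reduced subschemes by the closed immersions of affine Deligne--Lusztig schemes recorded above, and it is surjective on complete local rings because $\Def(X,\lambda,(t_\alpha))$ is a closed formal subscheme of $\Def(X,\lambda)$; a finite morphism surjective on all complete local rings is a closed immersion.

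The step I expect to be the main obstacle is the second one: upgrading the pointwise ``the tensors stay integral'' condition to an honest closed formal subscheme of $\RZ_{K_p^\star}^\diamond$ with exactly the prescribed complete local rings. This is where the parahoric local-model theory of Section \ref{prlm} (Proposition \ref{cllm}, Corollary \ref{lmci}, Theorem \ref{qtrc}) and Kisin's results on crystalline tensors over deformation spaces are indispensable, and where one must verify that the resulting Hodge-type deformation functor is pro-representable and embeds inside the Siegel one as a \emph{closed} formal subscheme rather than merely as a formal subgroupoid. The remaining items are bookkeeping with the (contravariant) Dieudonn\'e theory and the affine Deligne--Lusztig description already established in Proposition \ref{adrz}.
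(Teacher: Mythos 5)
Your proposal faithfully reconstructs the Hamacher--Kim construction, and the paper itself gives no proof here — Theorem \ref{hkrz} is cited directly from \cite[\S 5.3]{Hamacher2019}, so your sketch is essentially the content of that reference. The one place I would tighten is the uniqueness step: you should make explicit that condition (i) fixes the underlying perfect reduced scheme (not just the $\Fpbar$-points, which is all Proposition \ref{adrz} literally gives) and that, for a formal scheme locally formally of finite type over $\spf\Zpb$, the ideal sheaf of a closed formal subscheme is indeed recovered from its completions at all closed points together with the support.
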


We call $\RZ_{K_p^{\star}}$ as the \emph{Rapoport--Zink space} for $(G^{\star},\mu^{\star},\bx^{\star},b^{\star},i^{\star})$. We denote by
\begin{equation*}
\Theta_{\widetilde{z}^{\star}} \colon \RZ_{K^{\star}_p}\rightarrow \sS_{K^{\star}}
\end{equation*}
the composite $\RZ_{K^{\star}_p}\xrightarrow{i^{\star}} \RZ_{K^{\star}_p}^{\diamond}\xrightarrow{\pi_1^{\star}} \sS_{K'}$. 

\begin{rem}\label{fsmt}
\begin{enumerate}
\item The Rapoport--Zink space $\RZ_{K_p^{\star}}$ depends only on the quintuple by Theorem \ref{hkrz} (ii). Moreover, if $\star \in \{\sharp,\,'\}$, then it is further independent of $i^{\sharp}$ and $a_i \bmod (\Zpt)^2$ (it is appeared in the definition of $b_0$), and is formally smooth over $\spf \Zpb$. They follow from \cite[Theorem 4.9.1]{Kim2018a}. 
\item The embedded Rapoport--Zink datum $(G^{\sharp},\mu^{\sharp},\bx^{\sharp},b_0,\rho^{\sharp})$ corresponds to the local Shimura--Hodge datum $(\cG_{\bx^{\sharp}},b_0,\mu^{\sharp},C)$ in the sense of \cite[Definition 2.2.4]{Howard2017}. 
\end{enumerate}
\end{rem}

As in Remark \ref{fsmt}, $\RZ_{K_p^{\sharp}}$ and $\RZ_{K'_p}$ are formally smooth over $\spf \Zpb$. On the other hand, we have the following on $\RZ_{K_p}$:

\begin{thm}\label{rzrg}
\emph{
\begin{enumerate}
\item The Rapoport--Zink space $\RZ_{K_p}$ is regular of dimension $n-1$, and is flat over $\spf \Zpb$. The non-formally smooth locus is the set of $y\in \RZ_{K_p}(\Fpbar)$ such that $\rho_y \circ \iota_{X_y}(x_0)\circ \rho_y^{-1}$ acts by the zero map on $\Lie(X_y)$. 
\item If $y\in \RZ_{K_p}(\Fpbar)$ is a non-formally smooth point of $\RZ_{K_p}$, then we have $\widehat{\RZ}_{K_p,y}\cong \spf R_n$. 
\end{enumerate}}
\end{thm}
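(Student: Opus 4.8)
The statement is the Rapoport--Zink analogue of Theorem \ref{sgbs} (i)--(ii), and the plan is to prove it in parallel, with Theorem \ref{hkrz} (ii) (equivalently, the Rapoport--Zink local model diagram of \cite[Theorem 5.2]{Oki2020b}) playing the role that the Kisin--Pappas local model diagram plays there. First I would use Theorem \ref{hkrz} (ii) together with the definition of the deformation space with crystalline tensors in Section \ref{dfpd}: for $y\in \RZ_{K_p}(\Fpbar)$ these give $\widehat{\RZ}_{K_p,y}\cong \Def(X_y^{\diamond},\lambda_y^{\diamond},(t_{\alpha,y}^{\diamond}))\cong \widehat{M}^{\loc}(G,\sigma(\mu),\bx)_{\Zpb,y'}$, where $y'$ is the point of the local model attached to the Hodge filtration of $X_y^{\diamond}$. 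Since $\RZ_{K_p}$ is locally formally of finite type over $\spf \Zpb$, regularity, the value of the dimension, and flatness over $\Zpb$ are all detected on the complete local rings at $\Fpbar$-points; hence the first sentence of (i) follows at once from Theorem \ref{qtrc} (i), noting that $\dim \widehat{M}^{\loc}(G,\sigma(\mu),\bx)_{\Zpb,y'}=n-1$ independently of $y'$.

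Next I would treat the non-formally smooth locus and part (ii) together. By the above identification, $y$ is a non-formally smooth point of $\RZ_{K_p}$ over $\Zpb$ if and only if $\widehat{M}^{\loc}(G,\sigma(\mu),\bx)_{\Zpb,y'}$ fails to be formally smooth over $\Zpb$, i.e. if and only if $y'$ is a non-smooth point of $M^{\loc}(G,\sigma(\mu),\bx)_{\Zpb}$; by Theorem \ref{qtrc} (ii) this forces $y'=y_0$, the unique such point, and then $\widehat{\RZ}_{K_p,y}\cong \widehat{M}^{\loc}(G,\sigma(\mu),\bx)_{\Zpb,y_0}\cong \spf R_n$, giving (ii). To rewrite ``$y'=y_0$'' in terms of $p$-divisible groups I would invoke Corollary \ref{sghd} exactly as in the proof of Theorem \ref{sgbs} (ii): $y'$ is non-smooth if and only if $x_0(C\otimes_{\Zp}\Fpbar)=\cF_{i_{*}(y')}$, and under the trivialization and the local model diagram the right-hand side is the Hodge filtration $\Fil^1(X_y^{\diamond})$ while the left-hand side is the image of the action of $x_0$ (through the tensor $t_{x_0,y}^{\diamond}$), the Dieudonn{\'e} incarnation of $\rho_y\circ \iota_{X_y}(x_0)\circ \rho_y^{-1}$; since both subspaces have dimension $2^{n}$ (as observed in the proof of Corollary \ref{sghd}), the inclusion $x_0(C\otimes_{\Zp}\Fpbar)\subseteq \Fil^1(X_y^{\diamond})$ is equivalent to equality, hence to the vanishing of the induced map on $\D(X_y^{\diamond})\otimes_{\Zpb}\Fpbar/\Fil^1(X_y^{\diamond})$, i.e. on $\Lie$ of the relevant $p$-divisible group. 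This yields (i).

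I expect the main point requiring care to be exactly this last dictionary step: one must check that the isomorphism of Theorem \ref{hkrz} (ii) is compatible with the trivialization $\tau_y$, the Hodge filtration, the action of $x_0\in \bLambda^{\sharp}\subset \End(C^{*})$ through $i^{\sharp}$, the isogeny $\iota_{X_y}(x_0)$, and the polarization $\lambda_y$ in precisely the same way as the Kisin--Pappas local model diagram is in the proof of Theorem \ref{sgbs} (ii) --- so that Corollary \ref{sghd}'s condition transports verbatim to the condition asserted in (i), including keeping straight, via $\lambda_y$, the $C$ versus $C^{*}$ duality so that the vanishing ends up on $\Lie(X_y)$ rather than on $\Lie(X_y^{\vee})$. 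This compatibility is built into the construction of $\RZ_{K_p}$ in \cite{Hamacher2019} and \cite{Oki2020b}, so it should be bookkeeping rather than a genuine obstacle; everything else (the local model computations, and the fact that regularity and flatness are detected on complete local rings) is routine.
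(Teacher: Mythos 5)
Your proposal follows the same route as the paper: invoke the Rapoport--Zink local model diagram (Theorem \ref{hkrz}(ii), equivalently \cite[Theorem 5.2]{Oki2020b}) to identify $\widehat{\RZ}_{K_p,y}$ with a complete local ring of $M^{\loc}(G,\sigma(\mu),\bx)_{\Zpb}$, deduce regularity, dimension $n-1$, flatness and (ii) from Theorem \ref{qtrc}, and translate the non-smooth criterion of the local model into a Dieudonn\'e-theoretic condition. The paper is terser only because it cites \cite[Theorem 5.2 (ii)]{Oki2020b} directly for the criterion on the non-formally smooth locus, whereas you reproduce that step from Corollary \ref{sghd}.

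One detail in your final paragraph should be corrected. Passing from ``$x_0(C\otimes_{\Zp}\Fpbar)=\cF_{i_*(y')}$'' to the condition in the statement, you conclude that $\iota_{X_y}(x_0)$ vanishes on $\D(X_y^{\diamond})_{\Fpbar}/\Fil^1\cong \Lie(X_y^{\vee})$ and then suggest transporting this to $\Lie(X_y)$ via the polarization $\lambda_y$. That step is not literally correct: $\lambda_y$ identifies the two Lie algebras, but it carries the action of an endomorphism to that of its Rosati transform, which is not \emph{a priori} the same operator. The clean argument, and the one the paper implicitly uses through \cite[Theorem 5.2 (ii)]{Oki2020b}, is that the local-model dictionary produces the Hodge filtration $\Fil^1(X_y)=(\Lie(X_y))^{*}$ and the non-smoothness criterion becomes ``$\iota_{X_y}(x_0)$ acts as zero on $\Fil^1(X_y)$''; since the action on $\Fil^1(X_y)$ is the transpose of the action on $\Lie(X_y)$, vanishing on one is equivalent to vanishing on the other, with no polarization needed. (Alternatively, your version can be repaired directly: $x_0^2=Q^{\sharp}(x_0)\in p\Zpt$, so $\bar{x}_0:=x_0\bmod p$ satisfies $\bar{x}_0^2=0$; the equality in Corollary \ref{sghd} forces $\mathrm{rk}\,\bar{x}_0=2^n$, whence $\operatorname{im}\bar{x}_0=\ker\bar{x}_0$, so $\bar{x}_0(\Fil^1)=\bar{x}_0(\operatorname{im}\bar{x}_0)=0$ and vanishing on $\D/\Fil^1$ does imply vanishing on $\Fil^1$.) With this adjustment, your proof agrees with the paper's.
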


\begin{proof}
(i): By Proposition \ref{qtrc}, we obtain the regularity and the flatness of $\RZ_{K_p}$ and the equality $\dim (\RZ_{K_p})=n-1$. On the other hand, by \cite[Theorem 5.2 (ii)]{Oki2020b}, $y\in \RZ_{K_p}(\Fpbar)$ lies in the non-formally smooth locus if and only if $\iota_{X}(x_0)$ acts as the zero map on $\Fil^{1}(X)=(\Lie(X))^{*}$. Hence this is equivalent to the condition that $\iota_{X}(x_0)$ acts as the zero map on $\Lie(X)$. 

(ii): This is a consequence of Theorems \ref{hkrz} (ii) and \ref{qtrc} (ii). 
\end{proof}

\subsection{Group actions}

For $y \in \RZ_{K_p^{\star}}(S)$ where $S \in \nilp$, we denote by $(X_y^{\star},\lambda_y^{\star},\rho_y^{\star})$ the $p$-divisible group with quasi-isogeny corresponding to $y$. Moreover, we denote by $\iota_{X_y^{\star}}(x_0)$ be the quasi-isogeny of $X_y^{\star}$ induced by the tensor $x_0\in \End_{\Zp}(C^{*})$, which is an isogeny when $\star=\emptyset$. 

\begin{dfn}\label{defj}
Let $J^{\star}$ be the algebraic group over $\Qp$ defined to be
\begin{equation*}
J^{\star}(R)=\{g\in G^{\star}(R\otimes_{\Qp}\Qpb)\mid gb_0=b_0\sigma(g)\}
\end{equation*}
for any $\Qp$-algebra $R$. It is an inner form of $G^{\star}$ since $[b_0]$ is basic (\cite[5.2]{Kottwitz1985}). 
\end{dfn}
By definition, the closed immersion $i^{\star}\colon G^{\star}\hookrightarrow G'$ induces a closed immersion $j^{\star}\colon J^{\star}\hookrightarrow J'$. 

In the sequel, we equip $\RZ_{K^{\star}_p}$ with the action of $J^{\star}(\Qp)$ defined as in \cite[\S 5.4]{Hamacher2019}. Note that its action and the isomorphism in Theorem \ref{hkrz} (i) induce the canonical action on $X_{\mu^{\star}}(b_0)_{\Kb^{\star}_{p}}$. 

\begin{prop}\label{cije}
\emph{The closed immersion $i^{\star}_{*}\colon \RZ_{K_p^{\star}}\hookrightarrow \RZ_{K'_p}$ is compatible with the actions of $J^{\star}(\Qp)$ and $J'(\Qp)$, that is, we have $i^{\star}_{*}(g\cdot y)=j^{\star}(g)\cdot i^{\star}_{*}(y)$ for any $g\in J^{\star}(\Qp)$ and $y\in \RZ_{K_p^{\star}}$. }
\end{prop}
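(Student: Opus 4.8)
The plan is to verify the identity $i^{\star}_{*}(g\cdot y)=j^{\star}(g)\cdot i^{\star}_{*}(y)$ functorially on $S$-valued points for every $S\in\nilp$; since $\RZ_{K'_p}$ is the formal scheme representing the moduli functor of triples $(X,\lambda,\rho)$, and both $i^{\star}_{*}$ and the two group actions are defined functorially, this is enough by the Yoneda lemma. First I would recall, from Theorem~\ref{hkrz} and its construction in \cite[\S 5.3]{Hamacher2019}, that an $S$-point of $\RZ_{K_p^{\star}}$ is an $S$-point of $\RZ_{K_p^{\star}}^{\diamond}=\sS_{K^{\star}}\times_{\sS_{K'}}\RZ_{K'_p}$ whose crystalline realization carries the prescribed tensor datum, and that $i^{\star}_{*}$ is the composite $\RZ_{K_p^{\star}}\hookrightarrow\RZ_{K_p^{\star}}^{\diamond}\xrightarrow{\pi_2^{\star}}\RZ_{K'_p}$; in moduli terms $i^{\star}_{*}$ simply forgets the tensors, sending such a point to its underlying triple $(X,\lambda,\rho)$.

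Next I would unwind the definition of the $J^{\star}(\Qp)$-action following \cite[\S 5.4]{Hamacher2019}. Under the identification $\tau_0\colon\D(\X_0)\xrightarrow{\cong}C^{*}\otimes_{\Zp}\Zpb$ of Proposition~\ref{expd}, with $F$ corresponding to $b_0\sigma^{*}$, the group $J'(\Qp)=\{g\in G'(\Qpb)\mid gb_0=b_0\sigma(g)\}$ is identified with the group of self-quasi-isogenies of $(\X_0,\lambda_0)$ preserving $\lambda_0$ up to a scalar, and it acts on $\RZ_{K'_p}$ by $(X,\lambda,\rho)\mapsto(X,\lambda,\rho\circ g^{-1})$; by construction $J^{\star}(\Qp)$ acts on $\RZ_{K_p^{\star}}^{\diamond}$ trivially on the $\sS_{K^{\star}}$-factor and through $j^{\star}$ on the $\RZ_{K'_p}$-factor. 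A self-quasi-isogeny $g\in J'(\Qp)$ fixes the tensors $t^{\star}_{\X_0,\alpha^{\star}}=s^{\star}_{\alpha^{\star}}\otimes 1$ exactly when $g\in G^{\star}(\Qpb)$, i.e.\ exactly when $g\in J^{\star}(\Qp)$ in the sense of Definition~\ref{defj}; hence the $J^{\star}(\Qp)$-action on $\RZ_{K_p^{\star}}^{\diamond}$ preserves the closed formal subscheme $\RZ_{K_p^{\star}}$, and the action on $\RZ_{K_p^{\star}}$ appearing in the statement is this restriction.

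Granting these two descriptions the proposition follows at once: since $j^{\star}\colon J^{\star}\hookrightarrow J'$ is the restriction of $i^{\star}\colon G^{\star}\hookrightarrow G'$ and $i^{\star}(b_0)=b_0$, an element $g\in J^{\star}(\Qp)$ and its image $j^{\star}(g)\in J'(\Qp)$ are the same self-quasi-isogeny of $\X_0$, so for $y\in\RZ_{K_p^{\star}}(S)$ with underlying triple $(X,\lambda,\rho)$ both $i^{\star}_{*}(g\cdot y)$ and $j^{\star}(g)\cdot i^{\star}_{*}(y)$ are equal to $(X,\lambda,\rho\circ j^{\star}(g)^{-1})\in\RZ_{K'_p}(S)$, whence the two morphisms $\RZ_{K_p^{\star}}\to\RZ_{K'_p}$ coincide. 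The hard part is not the algebra but the bookkeeping in the middle step: confirming that the Hamacher--Kim action on the Hodge-type space $\RZ_{K_p^{\star}}$ is compatible, along $\pi_2^{\star}$ and $j^{\star}$, with the classical quasi-isogeny action on the Siegel-type space $\RZ_{K'_p}$, equivalently that the identification of $J^{\star}(\Qp)$ in Definition~\ref{defj} with a group of self-quasi-isogenies is the restriction of the corresponding identification for $J'(\Qp)$. On the reduced perfection $\RZ_{K_p^{\star}}^{\red,p^{-\infty}}\cong X_{\sigma(\mu^{\star})}(b_0)_{\Kb_p^{\star}}$ this is transparent, since there the actions are left translation by $g$ resp.\ $j^{\star}(g)$ and $i^{\star}_{*}$ is induced by $G^{\star}(\Qp)\hookrightarrow G'(\Qp)$; the substance of the argument is that the same compatibility persists on the infinitesimal neighbourhoods, which by Theorem~\ref{hkrz}(ii) are governed by the deformation functors on which $g$ acts by transport of the rigidified crystalline tensors, again through $j^{\star}$.
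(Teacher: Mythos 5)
Your proposal is correct and takes essentially the same route as the paper, whose proof is the single sentence ``These follow from the definitions of $J^{\star}$ and the action of its $\Qp$-valued points on $\RZ_{K_p^{\star}}$.'' You have simply unwound those definitions explicitly: identifying $J'(\Qp)$ with self-quasi-isogenies of $(\X_0,\lambda_0)$, observing that $j^{\star}$ is the restriction of $i^{\star}$ with $i^{\star}(b_0)=b_0$, and checking that both sides send $(X,\lambda,\rho)$ to $(X,\lambda,\rho\circ j^{\star}(g)^{-1})$.
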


\begin{proof}
These follow from the definitions of $J^{\star}$ and the action of its $\Qp$-valued points on $\RZ_{K_p^{\star}}$. 
\end{proof}

\begin{prop}\label{embd}
\emph{The closed immersions $i\colon \RZ_{K_{p}}\hookrightarrow \RZ_{K'_{p}}$ and $i^{\sharp}\colon \RZ_{K_{p}^{\sharp}}\hookrightarrow \RZ_{K'_{p}}$ induce a closed immersion $\delta_{G,G^{\sharp}} \colon \RZ_{K_{p}}\hookrightarrow \RZ_{K_{p}^{\sharp}}$. Moreover, the following hold. 
\begin{enumerate}
\item $\delta_{G,G'}$ is compatible with the actions on $J(\Qp)$ and $J^{\sharp}(\Qp)$ in the same sense as Proposition \ref{cije}. 
\item The image of $\delta_{G,G^{\sharp}}$ is the locus of $y\in \RZ_{K_{p}^{\sharp}}$ where $\rho_y^{\sharp} \circ \iota_{X_y^{\sharp}}(x_0) \circ (\rho_y^{\sharp})^{-1}$ lifts to an isogeny of $X_y^{\sharp}$. 
\end{enumerate}}
\end{prop}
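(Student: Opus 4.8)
The plan is to construct $\delta_{G,G^{\sharp}}$ as the unique factorization of $i\colon \RZ_{K_p}\hookrightarrow \RZ_{K'_p}$ through $i^{\sharp}\colon \RZ_{K_p^{\sharp}}\hookrightarrow \RZ_{K'_p}$, and then identify its image. For the factorization, the key point is that $i^{\sharp}$ is a \emph{closed} immersion, so it suffices to show set-theoretically (on $\Fpbar$-points) that the image of $i$ lands inside the image of $i^{\sharp}$, and then appeal to the fact that both are closed formal subschemes of $\RZ_{K'_p}$ together with the deformation-theoretic description in Theorem \ref{hkrz} (ii) to upgrade this to a closed immersion of formal schemes. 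On $\Fpbar$-points we use the adic description of Proposition \ref{adrz}: a point of $\RZ_{K_p}(\Fpbar)$ is a triple $(X,\lambda,\rho)$ with $\rho_{*}(t_{\X_0,\alpha}^{\sharp})\in \D(X)^{\otimes}$ for the collection $(s_{\alpha^{\sharp}}^{\sharp})$ \emph{and} $\rho_{*}(t_{\X_0,x_0})\in \D(X)^{\otimes}$, by Definition \ref{tnsr} and the fact that $(s_\alpha)$ for $G$ is the union of $(s^{\sharp}_{\alpha^{\sharp}})$ and $x_0$. Forgetting the extra condition on $x_0$ lands us precisely in $\RZ_{K_p^{\sharp}}(\Fpbar)$, and on complete local rings one compares $\widehat{M}^{\loc}(G,\sigma(\mu),\bx)\hookrightarrow \widehat{M}^{\loc}(G^{\sharp},\mu^{\sharp},\bx^{\sharp})$ from Corollary \ref{lmci}/Theorem \ref{qtrc} (iii) with the deformation functors via Grothendieck--Messing; this gives the closed immersion $\delta_{G,G^{\sharp}}$ and simultaneously that it fits into a commuting triangle with $i$ and $i^{\sharp}$, proving uniqueness.

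For (i), the compatibility with the $J$-actions is formal: $J(\Qp)\hookrightarrow J^{\sharp}(\Qp)$ is induced by $G\hookrightarrow G^{\sharp}$ as noted after Definition \ref{defj}, and both $i$ and $i^{\sharp}$ are $J$-equivariant by Proposition \ref{cije}; since $\delta_{G,G^{\sharp}}$ is the factorization of $i$ through the monomorphism $i^{\sharp}$, it is automatically equivariant for the subgroup action. I would just spell this out in one line, invoking that $i^{\sharp}$ is a monomorphism so equivariance of the composite forces equivariance of the factor.

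For (ii), I would characterize the image. The tensor $x_0\in \End_{\Zp}(C^{*})$ corresponds, under $\tau_0$, to a quasi-isogeny $\iota_{\X_0}(x_0)$ of $\X_0$, and for $y=(X_y^{\sharp},\lambda_y^{\sharp},\rho_y^{\sharp})\in \RZ_{K_p^{\sharp}}$ the transported quasi-isogeny $\rho_y^{\sharp}\circ \iota_{X_y^{\sharp}}(x_0)\circ (\rho_y^{\sharp})^{-1}$ lifts to an actual isogeny of $X_y^{\sharp}$ exactly when the induced map on Dieudonn\'e modules preserves $\D(X_y^{\sharp})$ (not just $\D(X_y^{\sharp})\otimes_{\Zpb}\Qpb$) — which, by the $\Fpbar$-point analysis above and Proposition \ref{adrz}, is precisely the condition $(\rho_y^{\sharp})_{*}(t_{\X_0,x_0})\in \D(X_y^{\sharp})^{\otimes}$ that cuts out $\RZ_{K_p}$ inside $\RZ_{K_p^{\sharp}}$. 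One then needs to check this is a closed condition that is stable under deformation, i.e. that the locus where $x_0$ lifts to an isogeny is exactly the closed formal subscheme $\delta_{G,G^{\sharp}}(\RZ_{K_p})$; here the relevant input is the local model comparison — on complete local rings, the condition that $x_0$ kills $\Lie$ or lifts to an isogeny matches the closed immersion of local models via Corollary \ref{sghd} and the discussion around Axiom \ref{exct}.

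The main obstacle I expect is the bookkeeping in the last step: passing from the pointwise statement ``$x_0$ lifts to an isogeny at $y$'' to the scheme-theoretic identification of the image of $\delta_{G,G^{\sharp}}$ with that locus, since a priori the locus where a quasi-isogeny lifts is only a closed subscheme and one must verify it carries the \emph{reduced-and-deformation} structure coming from $\RZ_{K_p}$ rather than something smaller or with nilpotents. This is handled by the uniqueness clause in Theorem \ref{hkrz} applied to $G$ together with the local model diagram of \cite[Theorem 5.2]{Oki2020b}, so the argument is available — but it requires care to state correctly, and is the place where one genuinely uses that $\RZ_{K_p}$ was \emph{defined} by the tensor condition rather than merely characterized by it.
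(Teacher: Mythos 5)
Your proposal is correct and follows essentially the same route as the paper: both construct $\delta_{G,G^{\sharp}}$ by exploiting the compatible constructions of $\RZ_{K_p}$ and $\RZ_{K_p^{\sharp}}$ as closed formal subschemes of $\RZ_{K'_p}$ (via Theorem \ref{hkrz}), then identify the image with the $x_0$-lifting locus $\cZ(x_0)$ by matching first the perfected reduced subschemes (Proposition \ref{adrz}, with the admissibility comparison of Corollary \ref{admr} packaged therein) and then the complete local rings via the local model comparison. The only cosmetic difference is that the paper cites Corollary \ref{admr} explicitly at the reduced-perfection step while you fold that comparison into Proposition \ref{adrz} together with Definition \ref{tnsr}; your elaboration of the complete-local-ring step through Corollary \ref{lmci}/Theorem \ref{qtrc}(iii) is exactly what the paper's phrase ``the definitions imply'' is shorthand for.
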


\begin{proof}
By the constructions of $\RZ_{K_{p}}$ and $\RZ_{K_{p}^{\sharp}}$, $i$ and $i^{\sharp}$ induce a morphism $\delta_{G,G^{\sharp}}\colon \RZ_{K_{p}}\rightarrow \RZ_{K_{p}^{\sharp}}$. It is a closed immersion by Theorem \ref{hkrz}. On the other hand, let $\cZ(x_{0})$ be the locus of $y\in \RZ_{K_{p}^{\sharp}}$ where $\rho_y^{\sharp} \circ \iota_{X_y^{\sharp}}(x_0)\circ (\rho_y^{\sharp})^{-1}$ lifts to an isogeny. It is a closed subscheme of $\RZ_{K_{p}^{\sharp}}$ by the rigidity of quasi-isogenies. See \cite[p.~52]{Rapoport1996}. 

In the sequel, we prove that $\delta_{G,G^{\sharp}}$ induces an isomorphism $\RZ_{K_p}\cong \cZ(x_0)$. First, we have
\begin{equation*}
\RZ_{K_p}^{\red,p^{-\infty}}=X_{\sigma(\mu)}(b_0)_{\Kb_p}^{\red,p^{-\infty}}
\end{equation*}
by Proposition \ref{adrz}. Moreover, Lemma \ref{admr} asserts the isomorphism
\begin{equation*}
X_{\sigma(\mu)}(b_0)_{\Kb_p}^{\red,p^{-\infty}}\cong \cZ(x_0)^{\red,p^{-\infty}}. 
\end{equation*}
Hence we obtain $\RZ_{K_p}^{\red,p^{-\infty}}\cong \cZ(x_0)^{\red,p^{-\infty}}$. On the other hand, the definitions of $\RZ_{K_{p}}$ and $\cZ(x_0)$ imply the existence of an isomorpshim $\widehat{\RZ}_{{K_p},y}\cong \widehat{\cZ}(x_0)_{y}$ for all $y\in \RZ_{K_p}(\Fpbar)$. Therefore we obtain the desired isomorphism. 
\end{proof}

Until the end of this section, we assume $\star=\emptyset$ for simplicity. We study the set of connected components $\pi_0(\RZ_{K_p})$ of $\RZ_{K_p}$. For $i\in \Z$, we define $\RZ_{K_p}^{(i)}$ as the locus of $y\in \RZ_{K_p}$ where $c(\rho_y(\Sbar))\subset p^{-i}\Zpt$. It is an open and closed formal subscheme of $\RZ_{K_p}$. 

\begin{thm}\label{jcis}
\emph{Let $g_0:=p^{-1}x_2x_1\in G(\Qpb)$. 
\begin{enumerate}
\item We have $g_0\in J(\Qp)$ and $g_0^2=-p^{-1}a_1a_2$. 
\item The element $g_0$ induces an isomorphism $\RZ_{K_p}^{(i)}\cong \RZ_{K_p}^{(i-1)}$ for $i\in \Z$. 
\item For any $i\in \Z$, $\RZ_{K_p}^{(i)}$ is connected. Hence
\begin{equation*}
\RZ_{K_p}=\coprod_{i\in \Z}\RZ_{K_p}^{(i)}
\end{equation*}
gives a decomposition into connected components. 
\end{enumerate}}
\end{thm}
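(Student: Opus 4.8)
The plan is to dispatch the three parts in order, the first two being short computations in the Clifford algebra and with the $J(\Qp)$-action, and the third — connectedness of a single piece — being the real content.

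\emph{Part (i).} I would compute directly inside $C(V)$. By Proposition \ref{alsd} the Gram matrix of $[\,,\,]$ on $x_1,\dots,x_n$ is diagonal, so $x_1x_2=-x_2x_1$, $x_i^{2}\in\Qpt$, and every $x_j$ with $j\geq 3$ commutes with $x_2x_1$. Hence $g_0=p^{-1}x_2x_1$ lies in $C^{+}(V)^{\times}$, conjugation by $g_0$ preserves $V$ (it fixes $x_3,\dots,x_n$ and stabilizes the span of $x_1,x_2$), and $g_0^{\dagger}g_0=p^{-2}x_1x_2x_2x_1\in\Qpt$; therefore $g_0\in G(\Qp)$, and the same computation gives $g_0^{2}=-p^{-1}a_1a_2$. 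Since $g_0$ has $\Qp$-rational coordinates it is $\sigma$-fixed, and $b_0=\tfrac{2\alpha_0}{a_1}g_0$ is a scalar multiple of $g_0$, so $g_0b_0=b_0g_0=b_0\sigma(g_0)$; thus $g_0\in J(\Qp)$.

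\emph{Part (ii).} I would recall from \cite[\S5.4]{Hamacher2019} that $g\in J(\Qp)$ acts on $(X,\lambda,\rho)$ by $\rho\mapsto\rho\circ g^{-1}$, and that a direct check of this recipe shows $c(\rho)$ is multiplied by $\sml_{V}(g)^{-1}$. Now $\sml_{V}(g_0)=g_0^{\dagger}g_0=-g_0^{2}=p^{-1}a_1a_2$ has $\ord_p=-1$, so the automorphism of $\RZ_{K_p}$ induced by $g_0$ sends $\RZ_{K_p}^{(i)}$ (where $\ord_p c(\rho)=-i$) to $\RZ_{K_p}^{(i-1)}$; it is an isomorphism of formal schemes, with inverse induced by $g_0^{-1}\in J(\Qp)$. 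In particular all $\RZ_{K_p}^{(i)}$ are nonempty (as $\RZ_{K_p}\neq\emptyset$, $[b_0]$ being basic in $B(G,\{\mu\})$) and pairwise isomorphic, so it remains to prove that one of them is connected.

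\emph{Part (iii).} Since each $\RZ_{K_p}^{(i)}$ is open and closed in $\RZ_{K_p}$ (as noted before the theorem), by (ii) it suffices to show $\RZ_{K_p}^{(0),\red}$ is connected. I would use the Bruhat--Tits stratification $\RZ_{K_p}^{(0),\red}=\coprod_{\Lambda\in\VL(\L_0)}\BT_{K_p,\Lambda}^{(0)}$ (Theorem \ref{btnh}), the closure formula $\RZ_{K_p,\Lambda}^{(0),\red}=\coprod_{\Lambda'\subseteq\Lambda}\BT_{K_p,\Lambda'}^{(0)}$, and the fact that $\BT_{K_p,\Lambda}^{(0)}$ is a Deligne--Lusztig variety for a Coxeter element (Theorem \ref{dlnh}), hence irreducible; thus each closed stratum $\RZ_{K_p,\Lambda}^{(0),\red}$ is connected. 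Whenever $\Lambda'\subseteq\Lambda$ one has $\BT_{K_p,\Lambda'}^{(0)}\subseteq\RZ_{K_p,\Lambda}^{(0),\red}$, so the closed strata attached to a zig-zag of inclusions of vertex lattices have connected union. Hence $\RZ_{K_p}^{(0),\red}=\bigcup_{\Lambda}\RZ_{K_p,\Lambda}^{(0),\red}$ is connected as soon as the poset $(\VL(\L_0),\subseteq)$ is connected, and the proof concludes by checking that every vertex lattice of type $t>1$ contains one of type $t-2$ (choose an isotropic line in the nondegenerate $\Fp$-quadratic space $\Lambda/\Lambda^{\vee}$, of dimension $t\geq 3$), every vertex lattice of type $t<t_{\max}$ is contained in one of type $t+2$, and any two type-$1$ vertex lattices are linked because the Bruhat--Tits building of $\GSpin(\L_0)$ is connected — a standard lattice argument, cf. \cite{Vollaard2011}, \cite{Howard2017}, \cite{Oki2019}. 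Combined with (ii) and the open-closedness of the $\RZ_{K_p}^{(i)}$, this yields the asserted decomposition into connected components.

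The hard part is the combinatorial connectedness of $(\VL(\L_0),\subseteq)$ (equivalently of the building of $\GSpin(\L_0)$), together with correctly importing the irreducibility of the Coxeter Deligne--Lusztig strata from Section \ref{dlso}; parts (i) and (ii) are routine once the $J(\Qp)$-action and its effect on $c(\rho)$ are unwound.
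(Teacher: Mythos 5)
Your proof is correct, but part (iii) takes a genuinely different route from the paper. The paper disposes of (iii) in one stroke by appealing to He--Zhou's theorem on connected components of affine Deligne--Lusztig varieties (\cite[Theorem 0.1 (2)]{He2020b}): since $\pi_1(G)\cong\Z$ (Proposition \ref{kott} (i)), $\SO(V)$ is $\Qp$-simple, and the adjoint cocharacter is non-central, that theorem yields a bijection $\pi_0\bigl(X_{\sigma(\mu)}(b_0)_{\Kb_p}\bigr)\cong\pi_1(G)_I^{\sigma}\cong\Z$, and Proposition \ref{kott} (ii) identifies the resulting map with $\ord_p\circ\sml_V$, so its fibers are precisely the $\RZ_{K_p}^{(i),\red,p^{-\infty}}$. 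By contrast, you deduce connectedness of $\RZ_{K_p}^{(0)}$ from the Bruhat--Tits stratification (Theorems \ref{btnh} and \ref{dlnh}) together with irreducibility of the closed strata $\RZ_{K_p,\Lambda}^{(0),\red}$ (Corollary \ref{psnh}) and connectedness of the poset $(\VL(\L_0),\subseteq)$. This is mathematically sound and more explicit, but note two drawbacks: first, it forward-references Section \ref{rznh} to prove a Section \ref{rzsp} theorem (there is no actual circularity — the stratification results do not invoke Theorem \ref{jcis} — but the exposition would need to be reordered); second, the final step, that any two type-$1$ vertex lattices are linked in the inclusion graph, is exactly the content of connectedness of the $\SO(\L_0)$-building and deserves a pointer to a concrete reference (your raising/lowering-the-type reductions are fine but the real work is in that last bullet). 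The He--Zhou route is what the paper chooses precisely to avoid developing the combinatorics before Section \ref{rznh}.

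Two small remarks on the computational parts. In (i), your identification $b_0=\tfrac{2\alpha_0}{a_1}g_0$ and the observation that a scalar multiple of a $\sigma$-fixed element $\sigma$-commutes with it is exactly the paper's implicit argument, and is correct. The numerical claim $g_0^2=-p^{-1}a_1a_2$ in the theorem statement is off by a factor of $4$ with the Clifford convention $v^2=Q(v)=[v,v]/2$ (one gets $g_0^2=-p^{-1}a_1a_2/4$); this is harmless since $p>2$ so $4\in\Zpt$ and only $\ord_p$ matters, but you reproduce the paper's slip verbatim. In (ii), your computation $\ord_p(\sml_V(g_0))=-1$ and the effect on $c(\rho)$ are correct and match the paper's appeal to the definition of the $J(\Qp)$-action.
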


\begin{proof}
(i), (ii): These follow from the definitions of $g_0$ and its action on $\RZ_{K_p^{\star}}$. 

(iii): First, note that we have $\pi_1(G)\cong \Z$ by Proposition \ref{kott} (i). Moreover, $\SO(V)$ is $\Qp$-simple, and the cocharacter $t\mapsto \mu(t)\bullet$ is non-central by Lemmas \ref{hsmn} and \ref{mucp} (ii). Hence \cite[Theorem 0.1 (2)]{He2020b} implies that the Kottwitz map $\widetilde{\kappa}_{G}$ induces a bijection
\begin{equation*}
\pi_0(X_{\sigma(\mu)}(b_0)_{\Kb_p})\cong \pi_1(G)_{I}^{\sigma}\cong \Z. 
\end{equation*}
On the other hand, Proposition \ref{kott} (ii) implies that the composite
\begin{equation*}
X_{\sigma(\mu)}(b_0)_{\Kb_p}\rightarrow \pi_0(X_{\sigma(\mu)}(b_0)_{\Kb_p}) \cong \Z,
\end{equation*}
is given by $\ord_{p}\circ \sml_{V}$. Now consider the composition of the above map with the isomorphism in Theorem \ref{hkrz} (i): 
\begin{equation*}
\RZ_{K_p}^{\red,p^{-\infty}}\cong X_{\sigma(\mu)}(b_0)_{\Kb_p}\rightarrow \Z. 
\end{equation*}
Then the fiber of $i\in \Z$ is equal to $\RZ_{K_p}^{(i),\red,p^{-\infty}}$. Therefore the family $\{\RZ_{K_p}^{(i)}\}_{i\in \Z}$ gives all connected components of $\RZ_{K_p}$. 
\end{proof}

\section{Deligne--Lusztig varieties for odd special orthogonal groups}\label{dlso}

In this section, we recall the results in \cite[Section 6]{Oki2019}, which involves Deligne--Lusztig varieties for odd and non-split even special orthogonal groups. The results will be used in Sections \ref{rzgs} and \ref{rznh}. 

Let $(\Omega_{d,0},[\,,\,])$ be a quadratic space of dimension $2d-1$ over $\Fp$. Put $G_{d,0}:=\SO(\Omega_{d,0})$, and we regard it as a subgroup of $\GL(\Omega_{d,0})\cong \GL_{2d-1,\Fp}$. Here this isomorphism is induced by a basis $e_1,\ldots,e_{2d-1}$ satisfying
\begin{equation*}
[e_i,e_j]=
\begin{cases}
2&\text{if }i=j=d,\\
1&\text{if }i+j=2d,i\neq j,\\
0&\text{otherwise}. 
\end{cases}
\end{equation*}
Moreover, let $B_{d,0}\subset G_{d,0}$ be the upper-half Borel subgroup, and $T_{d,0}\subset G_{d,0}$ the diagonal torus. Set $G_{d}:=G_{d,0}\otimes_{\Fp}\Fpbar$, $B_{d}:=B_{d,0}\otimes_{\Fp}\Fpbar$ and $T_{d}:=T_{d,0}\otimes_{\Fp}\Fpbar$. Consider the Weyl group $W_{d}:=N_{G_{d}}(T_{d})/T_{d}$, and we denote by $\Delta_{d}$ the set of simple reflections of $W_{d}$ associated to $(T_{d},B_{d})$. Then there is an isomorphism
\begin{equation*}
W_{d}\cong \{w\in \mathfrak{S}_{2d-1}\mid w(i)+w(2d-i)=2d\text{ for any }i\in \{1,\ldots,2d-1\}\}. 
\end{equation*}
Moreover, we have $\Delta_{d}=\{s_{d,1},\ldots,s_{d,d-2},t_{d}\}$, where
\begin{itemize}
\item $s_{d,i}=(i\ i+1)(2d-i-1\ 2d-i)$ for $i\in \{1,\ldots,d-2\}$,
\item $t_{d}=(d-2\ d)$. 
\end{itemize}
Now we set $w_{d}:=s_{d,d-2}\cdot \cdots \cdot s_{d,1}$. 

Let $\OGr(\Omega_{d,0})$ be the moduli space of maximally totally isotropic subspaces of $\Omega_{d,0}$, and put
\begin{equation*}
S_{d,0}:=\{L\in \OGr(\Omega_{d,0})\mid \rk(L\cap \sigma(L))\geq d-2\}. 
\end{equation*}
Furthermore, set $S_{d}:=S_{d,0}\otimes_{\Fp}\Fpbar$. 

\begin{prop}\label{oksf}(\cite[Proposition 6.4]{Oki2019})
\emph{There is a locally closed stratification
\begin{equation*}
S_{d}=\coprod_{i=0}^{d}X_{B_{i}}(w_i). 
\end{equation*}
Moreover, for $0\leq i\leq d$, the closure of $X_{B_{i}}(w_{i})$ in $S_{d}$ is equal to $\coprod_{j=0}^{i}X_{B_{i}}(x_{i})$. }
\end{prop}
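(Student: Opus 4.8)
The plan is to attach to each geometric point $L$ of $S_d$ a discrete invariant measuring the failure of $L$ to be $\sigma$-stable, to show this invariant cuts $S_d$ into the $d+1$ pieces $X_{B_i}(w_i)$, and to deduce the closure relations from semicontinuity of the invariant together with the known closure relations for Deligne--Lusztig varieties of Coxeter type. First I would set up the invariant. For $L\in S_d(\Fpbar)$, viewed as a $(d-1)$-dimensional totally isotropic subspace of $\Omega_d:=\Omega_{d,0}\otimes_{\Fp}\Fpbar$, put $\widetilde{L}:=\bigcap_{m\ge 0}(L\cap\sigma(L)\cap\cdots\cap\sigma^m(L))$, the largest $\sigma$-stable subspace of $L$, and $\widehat{L}:=\sum_{m\ge 0}\sigma^m(L)$, the smallest $\sigma$-stable subspace containing $L$. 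The defining inequality $\rk(L\cap\sigma(L))\ge d-2$, fed into an induction on Frobenius translates, shows that each successive sum $\sum_{j\le m}\sigma^j(L)$ gains at most one dimension (and dually for the intersections); hence $\widetilde{L}\subset L\subset\widehat{L}\subset\widetilde{L}^{\perp}$, the space $\widetilde{L}$ is totally isotropic and $\sigma$-stable, and the $\sigma$-stable quadratic space $\widetilde{L}^{\perp}/\widetilde{L}$ has odd dimension, say $2i-1$, for a well-defined integer $i\in\{0,\dots,d\}$. Assigning to $L$ this integer and checking that it is locally constant exactly on the pieces in the statement yields the set-theoretic decomposition $S_d=\coprod_{i=0}^{d}S_d^{(i)}$ with $S_d^{(i)}:=\{L\in S_d : i(L)=i\}$, which one then upgrades to a decomposition into locally closed subschemes.

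Next I would identify $S_d^{(i)}$ with $X_{B_i}(w_i)$. Passing to the quotient $\widetilde{L}^{\perp}/\widetilde{L}\cong\Omega_i$, a quadratic space of dimension $2i-1$, the image $\overline{L}$ of $L$ is a maximal isotropic subspace which contains no nonzero $\sigma$-stable subspace, and the bound from the previous step forces $\overline{L}$ and $\sigma(\overline{L})$ into the relative position encoded by $w_i=s_{i,i-2}\cdots s_{i,1}$; the same then holds for all iterated Frobenius images, which together generate a complete flag in $\Omega_i$. By the standard dictionary between such relative-position loci and Deligne--Lusztig varieties — the Lang-map/Bruhat-cell analysis underlying Lusztig's treatment of Coxeter-type elements — the map sending $L$ to the flag attached to $\overline{L}$ and its Frobenius translates realizes $S_d^{(i)}$ as $X_{B_i}(w_i)$, the dimension count being consistent since $\ell(w_i)=i-2$. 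The point needing the most care here is rigidity of the invariant: one must verify that the construction acquires no extra moduli from the choice of the $\sigma$-stable subspace $\widetilde{L}$, so that $S_d^{(i)}$ is identified with $X_{B_i}(w_i)$ and not with a fibration over a positive-dimensional base.

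Finally, for the closure relations I would argue by two inclusions. For ``$\subseteq$'', $\dim\widetilde{L}$ is lower semicontinuous in $L$, being the dimension of the $\sigma$-stable part of an algebraically varying flag of subspaces, so $i(L)$ cannot jump up under specialization and $\overline{S_d^{(i)}}$ meets only strata of smaller index. For ``$\supseteq$'', one exhibits each lower stratum in the closure by explicit degenerations that shrink $\widetilde{L}$ inside $\widetilde{L}^{\perp}$; under the identification of the previous step these correspond to the Bruhat-order relations $x\le w_i$ among the subwords $x$ of $w_i$ that occur as strata, and properness of the relevant Schubert varieties upgrades this to the full closure asserted in the proposition. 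I expect this matching — between the geometry of degenerating $\sigma$-stable subspaces and the combinatorics of subwords of $w_i=s_{i,i-2}\cdots s_{i,1}$ — to be the main obstacle, since it is where one must check that the closure contains no spurious stratum and omits none.
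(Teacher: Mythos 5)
The paper does not prove this statement itself; it is cited verbatim from \cite[Proposition~6.4]{Oki2019}. So there is no internal argument to compare against, and I can only assess your proposal against the standard framework (Vollaard--Wedhorn, Howard--Pappas, Oki) that this line of work uses. At that level, your strategy is the right one: introduce the Frobenius-stability filtration of $L$, stratify $S_d$ by a discrete invariant extracted from it, identify each stratum with a Deligne--Lusztig variety in the subquotient, and get the closure relations from semicontinuity on one side and Bruhat order on the other. Your use of the largest $\sigma$-stable subspace $\widetilde{L}\subset L$ is the dual of the object that Proposition~\ref{svhs} actually manipulates (the smallest $\sigma$-stable overlattice $L+\Phi(L)+\cdots$), but the two are interchangeable via $(-)^{\perp}$. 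So the skeleton is fine.

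There are, however, two places where the proposal does not close a genuine gap. First, the rigidity issue you flag is not a worry to be set aside but an actual defect in the proposed set-theoretic decomposition. For $i<d$, the locus $\{L : \dim\widetilde{L}=d-i\}$ is a disjoint union over all $\Fp$-rational isotropic subspaces $W\subset\Omega_{d,0}$ of dimension $d-i$ (a discrete, finite set once one restricts to a bounded piece), with the fibre over each fixed $W$ being the relevant DL variety for the quotient $W^{\perp}/W$. The stratum is therefore not a single classical $X_{B_i}(w_i)$ but rather a $G_d(\Fp)$-induced object of the form $G_d(\Fp)\times^{P_i(\Fp)}X_{B_i}(w_i)\cong X_{B_d}(w_i)$, i.e.\ the Deligne--Lusztig variety for $G_d$ attached to the subword $w_i\leq w_d$. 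A complete proof has to either state the decomposition this way or invoke the Levi-induction isomorphism explicitly; merely noting the concern and moving on leaves the central identification unproved. Notice also that once one rewrites the strata as $X_{B_d}(w_i)$ for subwords $w_i\leq w_d$, the closure relations drop out of the standard formula $\overline{X_{B_d}(w)}=\coprod_{w'\leq w}X_{B_d}(w')$ and one does not need to construct degenerations by hand.

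Second, your aside ``the dimension count being consistent since $\ell(w_i)=i-2$'' conceals a real inconsistency that you should have caught. Taking the paper's definition $w_d=s_{d,d-2}\cdots s_{d,1}$ literally gives $\ell(w_d)=d-2$, hence a top open stratum of dimension $d-2$; but $S_d\cong S_d^{\sharp,\pm}$ is irreducible, and the cited Howard--Pappas result (and a direct count for, say, $d=3$) gives $\dim S_d=d-1$. The paper's Theorem~\ref{thl1}~(iv) also calls the relevant element a Coxeter element, which in $\SO_{2d-1}$ would have to involve all $d-1$ simple reflections, including $t_d$, and hence have length $d-1$. So $w_d$ as written in this paper is almost certainly a typo for an honest Coxeter element, and your dimension check should have detected the tension rather than declared it consistent. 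Until the length of $w_d$ matches $\dim S_d$, the claim that the top stratum is dense --- and hence the whole stratification --- does not hold.
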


We give a connection between $S_{d}$ and a projective variety relating with the non-split even special orthogonal group of dimension $2d$. Let $\Omega^{\sharp}_{d,0}$ be the non-split even special orthogonal group of dimension $2d$ over $\Fp$, and put $\Omega^{\sharp}_{d}:=\Omega^{\sharp}_{d,0}\otimes_{\Fp}\Fpbar$. We denote by $\OGr(\Omega^{\sharp}_{d})$ the moduli space of maximally totally isotropic subspaces of $\Omega^{\sharp}_{d}$, which is an $\Fpbar$-scheme. We have a decomposition of connected components
\begin{equation*}
\OGr(\Omega^{\sharp}_{d})=\OGr^{+}(\Omega^{\sharp}_{d})\sqcup \OGr^{-}(\Omega^{\sharp}_{d}),
\end{equation*}
and we have $\sigma(\OGr^{+}(\Omega^{\sharp}_{d}))=\OGr^{-}(\Omega^{\sharp}_{d})$. Moreover, put
\begin{gather*}
S^{\sharp}_{d}:=\{L^{\sharp}\in \OGr(\Omega^{\sharp}_{d})\mid \rk(L^{\sharp}\cap \sigma(L^{\sharp}))=d-1\},\quad S^{\sharp,\pm}_{d}:=S^{\sharp}_{d}\cap \OGr^{\pm}(\Omega^{\sharp}_{d}). 
\end{gather*}

\begin{prop}\label{sssm}(\cite[Proposition 5.3.2]{Howard2017})
\emph{The $\Fpbar$-scheme $S_d^{\sharp,\pm}$ is irreducible and projective smooth of dimension $d$. }
\end{prop}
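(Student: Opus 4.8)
The plan is to reduce the statement to the already-recalled structure of the odd-orthogonal variety $S_{d}$ of Proposition \ref{oksf}, by way of the classical identification of each ruling of the $2d$-dimensional orthogonal Grassmannian with the orthogonal Grassmannian of a $(2d-1)$-dimensional quadratic space.

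First I would fix a decomposition $\Omega^{\sharp}_{d,0}\cong\Omega_{d,0}\oplus l$ over $\Fp$ with $l$ an anisotropic line, rescaled so that $\Omega^{\sharp}_{d,0}$ is non-split; then $\sigma$ preserves $\Omega_{d}:=\Omega_{d,0}\otimes_{\Fp}\Fpbar$ inside $\Omega^{\sharp}_{d}$ and interchanges the two components $\OGr^{+}(\Omega^{\sharp}_{d})$ and $\OGr^{-}(\Omega^{\sharp}_{d})$. For $L^{\sharp}\in\OGr(\Omega^{\sharp}_{d})$ the intersection $L:=L^{\sharp}\cap\Omega_{d}$ is totally isotropic of dimension exactly $d-1$ (it cannot be contained in $\Omega_{d}$, and $\dim(L^{\sharp}\cap\Omega_{d})\ge d-1$ for dimension reasons), and conversely each $L\in\OGr(\Omega_{d})$ has exactly one extension lying in $\OGr^{+}(\Omega^{\sharp}_{d})$ and exactly one lying in $\OGr^{-}(\Omega^{\sharp}_{d})$; this gives $\sigma$-compatible isomorphisms $\OGr^{\pm}(\Omega^{\sharp}_{d})\xrightarrow{\sim}\OGr(\Omega_{d})$, $L^{\sharp}\mapsto L^{\sharp}\cap\Omega_{d}$. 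The crux of the reduction is then a parity computation: since $L^{\sharp}/L$ is one-dimensional one has $\rk(L^{\sharp}\cap\sigma(L^{\sharp}))\in\{\rk(L\cap\sigma(L)),\rk(L\cap\sigma(L))+1\}$, whereas $L^{\sharp}$ and $\sigma(L^{\sharp})$ lie in opposite rulings and therefore $\rk(L^{\sharp}\cap\sigma(L^{\sharp}))\equiv d-1\pmod 2$; hence $\rk(L^{\sharp}\cap\sigma(L^{\sharp}))$ is the unique member of that two-element set of the correct parity, and it equals $d-1$ precisely when $\rk(L\cap\sigma(L))\ge d-2$. Combined with the previous isomorphisms this yields $S^{\sharp,\pm}_{d}\cong S_{d}$.

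It then remains to transport the properties of $S_{d}$. Projectivity is immediate: $S^{\sharp,\pm}_{d}$ is the locus where $\rk(L^{\sharp}\cap\sigma(L^{\sharp}))\ge d-1$ inside the projective variety $\OGr^{\pm}(\Omega^{\sharp}_{d})$ (the value $d-1$ being the maximum possible in a ruling-crossing intersection), so it is closed by upper semicontinuity of $\rk(L\cap\sigma(L))$. Irreducibility and the dimension follow from Proposition \ref{oksf}: the open stratum $X_{B_{d}}(w_{d})$ is a Deligne--Lusztig variety associated with a Coxeter element, hence irreducible of pure dimension, and its closure is all of $S_{d}$, so $S_{d}$ is irreducible of that dimension. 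The genuinely substantive point — and the main obstacle — is smoothness \emph{everywhere}: along the open stratum it is automatic from the Deligne--Lusztig description, but one must also control the lower strata $X_{B_{i}}(w_{i})$ with $i<d$. For this I would either invoke the regularity of $S_{d}$ established in \cite{Oki2019}, or, for a self-contained argument, identify the whole of $S_{d}$ (equivalently $S^{\sharp,\pm}_{d}$) with a \emph{single} Deligne--Lusztig variety $X(w)$ attached to a Coxeter element of the relevant twisted orthogonal group, such varieties being smooth, rather than merely with a variety stratified by such; an alternative is to realise $S_{d}$ as an iterated $\P^{1}$-fibration over the lower-dimensional pieces. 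The parity bookkeeping in the reduction step and this smoothness assertion are where the real work lies; everything else is formal.
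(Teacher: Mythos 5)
The paper offers no proof of Proposition~\ref{sssm}: it is stated as a direct citation of Howard--Pappas, \cite[Proposition 5.3.2]{Howard2017}, where it is proved from scratch for the even orthogonal quadric $S^{\pm}_{\Lambda}$ (irreducibility, projectivity, and smoothness are established there by exhibiting $S^{\pm}_{\Lambda}$ as a smooth closed subscheme of $\OGr^{\pm}$ and relating it to a generalized Deligne--Lusztig variety for the non-split even orthogonal group, with no detour through an odd orthogonal group). Your route is therefore genuinely different: you first re-derive the isomorphism $S^{\sharp,\pm}_d\cong S_d$ of Proposition~\ref{odev} by the ruling/parity bookkeeping, and then try to transport the structure of $S_d$ from the stratification of Proposition~\ref{oksf}. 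The reduction step you give is correct --- the observation that $L^{\sharp}\cap\sigma(L^{\sharp})$ has dimension of parity $d-1$ because $L^{\sharp}$ and $\sigma(L^{\sharp})$ lie in opposite rulings, combined with $\rk(L^{\sharp}\cap\sigma(L^{\sharp}))\in\{\rk(L\cap\sigma(L)),\rk(L\cap\sigma(L))+1\}$, is exactly what makes the two incidence conditions match, and the closedness of $S^{\sharp,\pm}_d$ follows because $\{\rk(L^{\sharp}\cap\sigma(L^{\sharp}))\geq d-1\}$ is closed (the dimension of the intersection jumps \emph{up} on closed subsets; calling this ``upper semicontinuity of the rank'' is a slip of language, but the conclusion is right).

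Two points deserve attention. First, as you yourself flag, the real content is smoothness along the non-open strata of $S_d$, and Proposition~\ref{oksf} as quoted only gives you a \emph{stratification} by Deligne--Lusztig varieties, not smoothness of the total space; you must supply something extra (an iterated $\P^1$-fibration structure, a tangent-space computation, or an identification with a single smooth parahoric Deligne--Lusztig-type variety). Note that the Howard--Pappas argument you would be replacing does precisely this kind of direct work on $S^{\pm}_{\Lambda}$ without passing to $S_d$; if you instead ``invoke the regularity of $S_d$ from \cite{Oki2019}'' you must check that the relevant statement there is not itself deduced from \cite[Prop.~5.3.2]{Howard2017} via Proposition~\ref{odev}, or your argument becomes circular. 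Second, your proof would actually show that $\dim S^{\sharp,\pm}_d = d-1$, not $d$: the Lagrangians of $\Omega^{\sharp}_d$ are $d$-dimensional, and the incidence condition cuts down by one, exactly as in \cite[Prop.~5.3.2]{Howard2017} where the dimension is $t_\Lambda/2-1$. This agrees with the way the paper actually uses the proposition (Proposition~\ref{hprs}(ii) deduces dimension $t^{\sharp}(\Lambda)/2-1$ from it), so the ``dimension $d$'' in the statement is a typo for $d-1$ --- worth noting explicitly rather than silently inheriting the stated value.
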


\begin{prop}\label{odev}(\cite[Proposition 6.7]{Oki2019})
\emph{Let $\Omega^{\sharp}_{d,0}=\Omega_{d,0}\oplus l$ be an orthogonal decomposition of quadratic space. Then the morphism
\begin{equation*}
\OGr(\Omega^{\sharp}_{d})\rightarrow \OGr(\Omega_{d});L^{\sharp}\mapsto L^{\sharp}\cap \Omega_{d}
\end{equation*}
induces an isomorphism $S^{\sharp,\pm}_{d}\cong S_{d}$. }
\end{prop}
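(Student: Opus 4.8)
The plan is to study the morphism $\pi\colon\OGr(\Omega^{\sharp}_{d})\to\OGr(\Omega_{d})$, $L^{\sharp}\mapsto L^{\sharp}\cap\Omega_{d}$, in three steps: check it is a well-defined morphism of $\Fpbar$-schemes; show that each restriction $\pi|_{\OGr^{\pm}(\Omega^{\sharp}_{d})}\colon\OGr^{\pm}(\Omega^{\sharp}_{d})\to\OGr(\Omega_{d})$ is an isomorphism; and finally check that this isomorphism carries $S^{\sharp,\pm}_{d}$ onto $S_{d}$. For the first step: a maximal totally isotropic subspace $L^{\sharp}$ of $\Omega^{\sharp}_{d}$ has $\dim L^{\sharp}=d$, so $L^{\sharp}\cap\Omega_{d}$ is a totally isotropic subspace of dimension $\geq\dim L^{\sharp}+\dim\Omega_{d}-\dim\Omega^{\sharp}_{d}=d-1$ and, since $\Omega_{d}$ has Witt index $d-1$, of dimension $\leq d-1$; hence it has constant dimension $d-1$ and lies in $\OGr(\Omega_{d})$. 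Writing $\Omega^{\sharp}_{d}=\Omega_{d}\oplus l$, the composite $L^{\sharp}\hookrightarrow\Omega^{\sharp}_{d}\twoheadrightarrow l$ is surjective on every fibre because $L^{\sharp}\not\subseteq\Omega_{d}$, so in families it is a surjection of vector bundles whose kernel $L^{\sharp}\cap\Omega_{d}$ is a rank $d-1$ subbundle; thus $\pi$ is a morphism of schemes.

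Next I would show $\pi$ is finite étale of degree $2$ and deduce the isomorphisms on components. It is proper since both Grassmannians are projective; for the fibres, given $L\in\OGr(\Omega_{d})$ and forming $L^{\perp}$ inside $\Omega^{\sharp}_{d}$, the quotient $L^{\perp}/L$ is a non-degenerate quadratic space of dimension $2$ (its radical is $L^{\perp}\cap L=L$), and the maximal isotropics $L^{\sharp}$ of $\Omega^{\sharp}_{d}$ with $\pi(L^{\sharp})=L$ are precisely the preimages in $L^{\perp}$ of the isotropic lines of $L^{\perp}/L$. In families this identifies the fibres of $\pi$ with the relative scheme of isotropic lines of a non-degenerate rank-$2$ quadratic bundle, so $\pi$ is finite, and by miracle flatness (source smooth, target regular, fibres finite) it is finite flat; the fibres being geometrically reduced of length $2$, $\pi$ is finite étale of degree $2$. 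Since $\OGr(\Omega^{\sharp}_{d})=\OGr^{+}(\Omega^{\sharp}_{d})\sqcup\OGr^{-}(\Omega^{\sharp}_{d})$ with both parts non-empty, while $\OGr(\Omega_{d})$ is connected, the degrees of the two restrictions of $\pi$ sum to $2$ and are each $\geq 1$, so both equal $1$, and therefore $\pi|_{\OGr^{\pm}(\Omega^{\sharp}_{d})}$ is an isomorphism onto $\OGr(\Omega_{d})$. (Equivalently, one may invoke the classical isomorphism between the orthogonal Grassmannians of types $B_{d-1}$ and $D_{d}$.)

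Finally I would match the two loci. Fix $L^{\sharp}\in\OGr^{\pm}(\Omega^{\sharp}_{d})$, put $L:=L^{\sharp}\cap\Omega_{d}$ and $M:=L^{\sharp}\cap\sigma(L^{\sharp})$. Because $\sigma$ interchanges $\OGr^{+}(\Omega^{\sharp}_{d})$ and $\OGr^{-}(\Omega^{\sharp}_{d})$, the subspaces $L^{\sharp}$ and $\sigma(L^{\sharp})$ lie in different families, so $\dim M\leq d-1$ and $\dim M\equiv d-1\pmod 2$, i.e. $\dim M\in\{d-1,d-3,\dots\}$. As $\Omega_{d}$ is defined over $\Fp$ we have $\sigma(\Omega_{d})=\Omega_{d}$, hence $L\cap\sigma(L)=M\cap\Omega_{d}$, and since $\Omega_{d}$ is a hyperplane in $\Omega^{\sharp}_{d}$ one has $\dim M-1\leq\dim(M\cap\Omega_{d})\leq\dim M$. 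Thus $\dim M=d-1$ forces $\rk(L\cap\sigma(L))\geq d-2$, while $\dim M\leq d-3$ forces $\rk(L\cap\sigma(L))\leq d-3<d-2$. Comparing with the definitions of $S^{\sharp}_{d}$ and $S_{d}$, this shows $\pi^{-1}(S_{d})\cap\OGr^{\pm}(\Omega^{\sharp}_{d})=S^{\sharp,\pm}_{d}$ on $\Fpbar$-points; since both sides carry the reduced induced structure and $\pi|_{\OGr^{\pm}(\Omega^{\sharp}_{d})}$ is already an isomorphism of the ambient schemes, it restricts to the desired isomorphism $S^{\sharp,\pm}_{d}\cong S_{d}$.

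I expect the only real obstacle to be the étaleness of $\pi$, namely making sure its fibres are everywhere reduced of length exactly $2$, so that restricting to a single connected component yields a scheme isomorphism and not merely a bijection on points. This reduces to the non-degeneracy of $L^{\perp}/L$, which holds since $\Omega^{\sharp}_{d}$ is non-degenerate and $L$ is totally isotropic; everything else is elementary linear algebra with orthogonality and with the parity of the dimensions of intersections of maximal isotropics in an even-dimensional quadratic space.
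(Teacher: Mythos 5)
Your argument is correct. Since the paper merely cites \cite[Proposition 6.7]{Oki2019} for this statement and does not reproduce a proof, I cannot compare line-by-line, but your route is the natural one and I would expect the cited proof to be essentially the same: show the intersection map is a well-defined morphism by a rank computation, identify its fibres with schemes of isotropic lines in the rank-two non-degenerate quotient $L^{\perp}/L$, conclude that $\pi$ is finite {\'e}tale of degree $2$ so that each restriction $\pi|_{\OGr^{\pm}}$ is an isomorphism onto $\OGr(\Omega_{d})$ (this is the classical $B_{d-1}\cong D_{d}$ coincidence for orthogonal Grassmannians), and then transport the defining condition using the parity constraint on $\dim(L^{\sharp}\cap\sigma(L^{\sharp}))$ forced by $L^{\sharp}$ and $\sigma(L^{\sharp})$ lying in opposite families, together with the elementary fact that cutting by the hyperplane $\Omega_{d}$ drops the dimension of $M=L^{\sharp}\cap\sigma(L^{\sharp})$ by at most one.

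Two small remarks. First, the parenthetical ``its radical is $L^{\perp}\cap L=L$'' is phrased backwards: $L^{\perp}\cap(L^{\perp})^{\perp}=L$ is the radical of the restricted form on $L^{\perp}$, and the non-degeneracy of $L^{\perp}/L$ is precisely that quotienting by this radical kills it; the radical of $L^{\perp}/L$ itself is $0$. Second, your appeal to reducedness of the fibres of $\pi$ (needed to upgrade finite flat of degree $2$ to {\'e}tale) rests on the observation that a non-degenerate binary quadratic form over $\Fpbar$ has two distinct zeros in $\mathbb{P}^1$; it would be worth making this one-line justification explicit, since it is exactly the point you flagged as the only potential obstacle. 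With that spelled out the proof is complete. Your final passage from the bijection on $\Fpbar$-points to an isomorphism of schemes does require that $S_{d}$ carries the reduced structure, which is the convention implicit in the set-theoretic definition $S_{d,0}=\{L\mid \rk(L\cap\sigma(L))\geq d-2\}$ and is consistent with Proposition~\ref{oksf}, so this is fine.
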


\section{Structure of the Rapoport--Zink space in the hyperspecial case}\label{rzgs}

In this section, we recall the results on $\RZ_{K_p^{\sharp}}$ by \cite[\S\S5--6]{Howard2017}. Let $(\X_0,\lambda_0,(t^{\sharp}_{0,\alpha^{\sharp}}),\tau_0)$ be a quadruple, where $(\X_0,\lambda_0,\tau_0)$ be a triple as in Proposition \ref{expd}, and $t^{\sharp}_{0,\alpha^{\sharp}}:=\tau_0(s^{\sharp}_{\alpha^{\sharp}}\otimes 1)$. Then the inclusion $\bLambda^{\sharp}\subset \End_{\Zp}(C^{*})$ as in Section \ref{quad} induces an inclusion $V^{\sharp}_{\Qpb}\subset \End_{\Qpb}(\D(\X_0)_{\Q})$. We define a $\sigma$-linear map $\D(\X_0)_{\Q}$ as $f\mapsto F\circ f\circ F^{-1}$. Then the isocrystal $(\End_{\Qpb}(\D(\X_0)_{\Q}),\Phi)$ is isoclinic of slope $0$ such that $V_{\Qpb}$ becomes a subisocrystal. 

Now we define a $\Qp$-vector space $\L_{0}^{\sharp}$ as the $\Phi$-fixed part of $V_{\Qpb}$. We regard $\L_0^{\sharp}$ as a quadratic space by $Q^{\sharp}$. Note that we can written as $Q^{\sharp}(v)=v\circ v$ under $\bLambda^{\sharp}\subset \End_{\Zp}(C^{*})$ by its definition. Moreover, Proposition \ref{hsbs} (i) implies $x_0\in \L_0^{\sharp}$. 

In the sequel, $y\in \RZ_{K_p^{\sharp}}$, we denote by $(X_y^{\sharp},\lambda_y^{\sharp},\rho_y^{\sharp})$ the polarized $p$-divisible group with quasi-isogeny associated to $y$. Moreover, for a $\Zp$-lattice $\Lambda$ in $\L_0^{\sharp}$, we write $\Lambda^{\cup}$ for the dual lattice of $\Lambda$ with respect to the above quadratic form.  

\begin{dfn}
\begin{enumerate}
\item A $\Zp$-lattice $\Lambda$ in $\L_0^{\sharp}$ is said to be a \emph{vertex lattice} in $\L_0^{\sharp}$ if $p\Lambda \subset \Lambda^{\cup}\subset \Lambda$. 
\item For a vertex lattice $\Lambda$ in $\L_0^{\sharp}$, the \emph{type} of $\Lambda$ is the integer $t(\Lambda):=\length_{\Zp}(\Lambda/\Lambda^{\cup})$. 
\end{enumerate}
\end{dfn}
We denote by $\VL(\L_0^{\sharp})$ the set of vertex lattices in $\L_0^{\sharp}$. Moreover, for $t\in \Znn$, set
\begin{equation*}
\VL(\L_0^{\sharp},t):=\{\Lambda \in \VL(\L_0^{\sharp})\mid t^{\sharp}(\Lambda)=t\}. 
\end{equation*}

\begin{prop}\label{vlhs}(\cite[Proposition 5.1.2]{Howard2017})
\emph{
\begin{enumerate}
\item The set $\VL(\L_0^{\sharp},t)$ is non-empty if and only if $t\in \{2,4,\ldots,t_{\max}^{\sharp}\}$, where
\begin{equation*}
t_{\max}^{\sharp}:=
\begin{cases}
n&\text{if $n$ is even},\\
n-1&\text{if $n$ is odd and $\disc(V^{\sharp})=(-1)^{(n+1)/2}$},\\
n+1&\text{if $n$ is odd and $\disc(V^{\sharp})\neq (-1)^{(n+1)/2}$}. 
\end{cases}
\end{equation*}
\item For $\Lambda \in \VL(\L_0^{\sharp})$, there is a vertex lattice in $\L_0^{\sharp}$ of type $t_{\max}^{\sharp}$ which contains $\Lambda$. 
\end{enumerate}}
\end{prop}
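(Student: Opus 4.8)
The plan is to reduce the statement to elementary lattice theory for the quadratic space $\L_0^{\sharp}$ over $\Qp$, exactly as in \cite[\S 5.1]{Howard2017}. First I would recall the structure of $\L_0^{\sharp}$: by Proposition \ref{bseq} and the construction preceding the statement, $\L_0^{\sharp}$ is the $\Phi$-fixed part of $V^{\sharp}_{\Qpb}$, so $\L_0^{\sharp}$ is a quadratic space over $\Qp$ of dimension $n+1$, and I would first pin down its invariants (discriminant and Hasse invariant) in terms of those of $V^{\sharp}$ — this is where the non-split/split dichotomy enters and where the case division in $t_{\max}^{\sharp}$ originates. The key input is that an isoclinic isocrystal of slope $0$ with a quadratic form has its $\Qp$-descent determined by the $\sigma$-linear Frobenius, and the reduction $\Ad(b_0\sigma)$ is computed explicitly in Proposition \ref{hsbs}(ii); from that formula one reads off that $\L_0^{\sharp}$ contains the anisotropic line $\Qp x_0$ (with $x_0^2 \in p\Zpt$) together with a split part, so $\L_0^{\sharp}$ has the same Hasse/disc behaviour as $V^{\sharp}$ up to the controlled contribution of $x_0$.

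Second, for part (i), I would argue as follows. A vertex lattice $\Lambda$ of type $t$ gives, via $\Omega := \Lambda/\Lambda^{\cup}$, a non-degenerate quadratic space of dimension $t$ over $\Fp$ (the form is $p$-divided); conversely the radical filtration $p\Lambda \subset \Lambda^{\cup}\subset \Lambda$ forces the Jordan-type decomposition of $\L_0^{\sharp}$ to have exactly one "intermediate" block of rank $t$. Standard Jordan splitting over $\Zp$ (available since $p>2$) shows a type-$t$ vertex lattice exists iff $\L_0^{\sharp}$ admits an orthogonal decomposition into a unimodular part, a part scaled by $p$, and a rank-$t$ piece; parity of $t$ is forced because $\Omega$ and its complement must pair up the "modular" and "unimodular" halves. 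The maximal such $t$ is then governed by the Witt index and the anisotropic kernel of $\L_0^{\sharp}$ over $\Qp$, and invoking the classification of $\Qp$-quadratic forms (the discriminant/Hasse computation from the first step) yields precisely the stated trichotomy for $t_{\max}^{\sharp}$: when $n+1$ is odd (i.e. $n$ even) the anisotropic kernel can have dimension up to $1$, giving $t_{\max}^{\sharp}=n$; when $n+1$ is even the anisotropic kernel has dimension $0$ or $2$ according to the discriminant, giving $n-1$ or $n+1$. I expect the bookkeeping of exactly which quadratic space appears in each residue-class block — and matching it against the Hilbert-symbol condition $\disc(V^{\sharp})=(-1)^{(n+1)/2}$ — to be the main obstacle; everything else is formal manipulation of Jordan splittings.

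Third, for part (ii), given $\Lambda \in \VL(\L_0^{\sharp})$ of type $t < t_{\max}^{\sharp}$, I would enlarge it: choose a Jordan splitting adapted to $\Lambda$, locate inside the rank-$t$ block (or its orthogonal complement) an isotropic vector or hyperbolic plane that can be "absorbed" to increase the intermediate block's rank by $2$, and iterate until the block has rank $t_{\max}^{\sharp}$. Concretely, one shows: if $\Lambda^{\cup}\subsetneq \Lambda'^{\cup} \subset \Lambda' \subsetneq \Lambda$ with $\Lambda'$ a lattice satisfying the vertex condition of type $t+2$, then $\Lambda \subset \Lambda'$, and such a $\Lambda'$ exists whenever $t+2 \le t_{\max}^{\sharp}$ because the ambient $\Qp$-form still has enough isotropy. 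This is the lattice-theoretic statement that vertex lattices of a given type are always dominated by vertex lattices of the maximal type, and it follows by the same Witt-extension argument used in \cite[Proposition 5.1.2]{Howard2017}; I would simply cite that proof, noting that nothing in it uses self-duality of the ambient lattice, only the quadratic structure of $\L_0^{\sharp}$ over $\Qp$.
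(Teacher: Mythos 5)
The paper gives no independent proof of this proposition: it is stated with a citation to \cite[Proposition 5.1.2]{Howard2017} and nothing else, so you are in the same position as the author when you end by saying you would simply cite Howard--Pappas. Your sketch of the intermediate steps is broadly in the spirit of Howard--Pappas's argument (Jordan splitting, reduction to the discriminant and Hasse invariant of $\L_0^{\sharp}$, Witt-type enlargement for part (ii)), so the approach is essentially the same.

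However, the two steps you describe as ``formal manipulation'' or ``bookkeeping'' are in fact where the content lies, and your account of them is off. The parity of $t$ is not forced by any pairing of modular and unimodular halves; it is forced because a type-$t$ vertex lattice contributes a factor of $p^t$ (modulo unit squares) to $\disc(\L_0^{\sharp})$, so $t \equiv \ord_p(\disc \L_0^{\sharp}) \pmod 2$, and $\L_0^{\sharp}$ has the same unit discriminant as $V^{\sharp}$ because the two spaces become isometric over $\Qpb$. More importantly, the reason the minimum type is $2$ rather than $0$ --- i.e., that $\L_0^{\sharp}$ has \emph{no} self-dual $\Zp$-lattice --- is precisely that $\varepsilon(\L_0^{\sharp}) = -\varepsilon(V^{\sharp}) = -1$, a genuine computation via the Frobenius $\Phi^{\sharp}$ of Proposition \ref{hsbs}. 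Your statement that $\L_0^{\sharp}$ ``has the same Hasse/disc behaviour as $V^{\sharp}$ up to the controlled contribution of $x_0$'' elides exactly this sign flip, which is the crux; $\L_0^{\sharp}$ is \emph{not} $\Qp x_0$ plus a split space. (A smaller issue: in part (ii) your inclusion chain $\Lambda^{\cup}\subsetneq \Lambda'^{\cup}\subset\Lambda'\subsetneq\Lambda$ is the wrong way around --- to raise the type you enlarge the lattice, so you want $\Lambda\subsetneq\Lambda'$ and correspondingly $\Lambda'^{\cup}\subsetneq\Lambda^{\cup}$.) Since you ultimately defer to Howard--Pappas, none of this derails you, but the sketch as written would not stand on its own as a proof.
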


\begin{prop}\label{qtvl}
\emph{For $\Lambda \in \VL(\L_0^{\sharp})$, there is an isomorphism of the quadratic spaces
\begin{equation*}
(\Lambda/\Lambda^{\cup},pQ^{\sharp}\bmod p)\cong \Omega_{t^{\sharp}(\Lambda)/2,0}^{\sharp}. 
\end{equation*}}
\end{prop}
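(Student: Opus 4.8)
The plan is to prove the statement by reducing, via the standard dictionary for vertex lattices, to the classification of quadratic spaces over $\Fp$, and then to identify the discriminant and Hasse-type invariant of $\Lambda/\Lambda^{\cup}$ with those of the non-split even orthogonal space $\Omega^{\sharp}_{t^{\sharp}(\Lambda)/2,0}$. First I would observe that for $\Lambda \in \VL(\L_0^{\sharp})$, the inclusions $p\Lambda \subset \Lambda^{\cup}\subset \Lambda$ make $M:=\Lambda/\Lambda^{\cup}$ an $\Fp$-vector space of dimension $t^{\sharp}(\Lambda)$, and the form $pQ^{\sharp}$ descends to a well-defined quadratic form on $M$: indeed if $v\in\Lambda^{\cup}$ then $Q^{\sharp}(v)=[v,v]/2\in p^{-1}\cdot(\text{something in }p\Zp)$ by the definition of the dual lattice, so $pQ^{\sharp}(v)\in\Zp$ and $pQ^{\sharp}(v)\equiv 0 \pmod p$; bilinearity is checked the same way. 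Nondegeneracy of the induced form on $M$ is precisely the statement that $\Lambda^{\cup}$ is the full radical, which is immediate from the definition of $\Lambda^{\cup}$. So $(M,pQ^{\sharp}\bmod p)$ is a nondegenerate quadratic space over $\Fp$ of even dimension $t^{\sharp}(\Lambda)$.

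Next I would use the fact that a nondegenerate quadratic space of even dimension $2d$ over a finite field is determined up to isomorphism by whether it is split (hyperbolic, $\cong \H_{\Fp}^{\oplus d}$) or non-split; equivalently by its discriminant in $\Fpt/(\Fpt)^2$. So it suffices to show $(M,pQ^{\sharp}\bmod p)$ is the non-split form, i.e. that $\disc(M)\neq (-1)^d$ in $\Fpt/(\Fpt)^2$ where $d=t^{\sharp}(\Lambda)/2$. Here the key input is Proposition~\ref{hsbs}(ii), which gives the explicit action of $\Phi^{\sharp}=\Ad(b_0\sigma)$ on the basis $x_0^{\sharp},\ldots,x_n^{\sharp}$ of $V^{\sharp}_{\Qpb}$ and hence an explicit model for $\L_0^{\sharp}$ and its quadratic form; combined with Proposition~\ref{spsd}(i),(iii) this pins down $\disc(\L_0^{\sharp})$ and relates it to $\disc(V^{\sharp})$. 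Since $\Lambda$ can be enlarged (Proposition~\ref{vlhs}(ii)) to a vertex lattice of maximal type, and since passing from $\Lambda$ to a larger vertex lattice $\Lambda'$ changes $\Lambda/\Lambda^{\cup}$ by an orthogonal direct sum with a split (hence discriminant-$(-1)^{\ast}$) piece —— this is the content of the ``$\Lambda'\subset\Lambda$'' containment relation on quotients that also underlies Theorem~\ref{thl1}(iii) —— it is enough to compute the discriminant once, for a maximal vertex lattice, and then propagate. Concretely I would exhibit an explicit maximal vertex lattice inside the model of $\L_0^{\sharp}$ coming from Proposition~\ref{hsbs}(ii) and read off $\disc(\Lambda/\Lambda^{\cup})$ directly from the Gram matrix, comparing with $\disc(\Omega^{\sharp}_{d,0})$ (the non-split even orthogonal space over $\Fp$ has discriminant $\not\equiv(-1)^d$ by definition/standard fact). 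This matches the case analysis for $t_{\max}^{\sharp}$ in Proposition~\ref{vlhs}(i), which is reassuring: the three cases there are exactly the three possibilities for $\disc(V^{\sharp})$ and parity of $n$, and in every case the maximal-type quotient comes out non-split.

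The main obstacle I expect is the bookkeeping in the discriminant computation: one must carefully track the factor of $p$ in $pQ^{\sharp}$ through the quotient $\Lambda/\Lambda^{\cup}$, and match normalizations between the ``$Q^{\sharp}(v)=v\circ v$'' description of the form on $\L_0^{\sharp}\subset\End_{\Qpb}(C^{*})$ and the Gram-matrix description from Proposition~\ref{spsd}, keeping in mind that $p>2$ so that $2$ is a unit and all the $a_i$ lie in $\Zpt$. A secondary technical point is justifying the reduction to a maximal vertex lattice: one needs that for $\Lambda^{\cup}\subset\Lambda'^{\cup}\subset\Lambda'\subset\Lambda$ (with $\Lambda'$ maximal containing $\Lambda$ via Proposition~\ref{vlhs}(ii)) the space $\Lambda/\Lambda^{\cup}$ is an orthogonal sum of $\Lambda'/\Lambda'^{\cup}$ with a hyperbolic space of dimension $t^{\sharp}(\Lambda')-t^{\sharp}(\Lambda)$; this follows from elementary lattice arguments (the pairing between $\Lambda'/\Lambda$ and $\Lambda^{\cup}/\Lambda'^{\cup}$ induced by $[\,,\,]$ is perfect). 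Once these two points are handled, the isomorphism with $\Omega^{\sharp}_{t^{\sharp}(\Lambda)/2,0}$ is just the classification theorem over $\Fp$. I would likely cite \cite{Howard2017} for the parallel statement there and indicate that the same argument applies verbatim since the quadratic space $\L_0^{\sharp}$ and its form are built in the same way.
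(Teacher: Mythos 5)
The paper's own proof is a one-line citation to \cite[5.3.1]{Howard2017}; the quadratic space $\L_0^{\sharp}$ here is \emph{exactly} the one constructed there, so the statement is literally the cited one, not merely a parallel. Your strategy—descend $pQ^{\sharp}$ to a nondegenerate $\Fp$-form on $\Lambda/\Lambda^{\cup}$ and then, by the classification of quadratic spaces over $\Fp$, reduce to showing the form is non-split—is sound, and your checks of well-definedness and nondegeneracy are correct.

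Two points need attention, however. First, the chain of inclusions in your Witt reduction is reversed: when $\Lambda\subset\Lambda'$ with $\Lambda'$ the larger vertex lattice, the correct containments are $\Lambda'^{\cup}\subset\Lambda^{\cup}\subset\Lambda\subset\Lambda'$, and it is the \emph{larger} quotient $\Lambda'/\Lambda'^{\cup}$ that decomposes as $(\Lambda/\Lambda^{\cup})\oplus(\text{hyperbolic of rank }t^{\sharp}(\Lambda')-t^{\sharp}(\Lambda))$; as written your containments run the wrong way. Second, and more substantially, the discriminant computation for a maximal vertex lattice is only promised and is in fact unnecessary: non-splitness follows at once from Proposition \ref{vlhs}(i), which says $\VL(\L_0^{\sharp},0)=\emptyset$, i.e.\ $\L_0^{\sharp}$ contains no self-dual lattice. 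Indeed, if $(\Lambda/\Lambda^{\cup},pQ^{\sharp}\bmod p)$ were split for some $\Lambda\in\VL(\L_0^{\sharp})$, pick a Lagrangian $\overline{L}\subset\Lambda/\Lambda^{\cup}$ and let $L\subset\Lambda$ be its preimage; then $L\subset L^{\cup}$ because $\overline{L}$ is totally isotropic, while $L^{\cup}/\Lambda^{\cup}$ is the perpendicular of $\overline{L}$ in $\Lambda/\Lambda^{\cup}$, which equals $\overline{L}$ since $\overline{L}$ is Lagrangian, so $L=L^{\cup}$—a self-dual lattice, contradiction. This single step dispenses with the reduction to maximal type and with any explicit model of $\L_0^{\sharp}$, and is essentially the argument of \cite[5.3.1]{Howard2017}.
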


\begin{proof}
This is proved in \cite[5.3.1]{Howard2017}. 
\end{proof}

Now we attach a closed formal subscheme of $\RZ_{K_p^{\sharp}}$ for any vertex lattice in $\L_0^{\sharp}$. 
\begin{dfn}
For $\Lambda \in \VL(\L_0^{\sharp})$, we define $\RZ_{K_{p}^{\sharp},\Lambda}$ as the locus of $y\in \RZ_{K_{p}^{\sharp}}$ where
\begin{equation*}
\rho_y^{\sharp} \circ \Lambda \circ (\rho_y^{\sharp})^{-1}\subset \End(X_y^{\sharp}). 
\end{equation*}
It is a closed formal subscheme of $\RZ_{K_{p}^{\sharp}}$ by \cite[Proposition 2.9]{Rapoport1996}. Moreover, put
\begin{equation*}
\RZ_{K_p^{\sharp}}^{(0)}:=\RZ_{K_p^{\sharp},\Lambda}\cap \RZ_{K_p^{\sharp}}^{(0)}. 
\end{equation*}
\end{dfn}

Note that the action of $J^{\sharp}(\Qp)$ on $\RZ_{K_p^{\sharp}}$ and Proposition \ref{qtvl} induces an action of $\GSpin(\Omega_{t^{\sharp}(\Lambda)/2,0}^{\sharp})$ on $\RZ_{K_p^{\sharp},\Lambda}$. 

\begin{prop}\label{rdhs}(\cite[Theorem 4.2.11]{Li2018})
\emph{For $\Lambda \in \VL(\L_0^{\sharp})$, we have $\RZ_{K_p^{\sharp},\Lambda}=\RZ_{K_p^{\sharp},\Lambda}^{\red}$. }
\end{prop}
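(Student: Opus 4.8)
The plan is to reduce the statement to two inputs and then invoke the known structure of $\RZ_{K_p^{\sharp},\Lambda}^{\red}$. First I would isolate what must be shown: that $\RZ_{K_p^{\sharp},\Lambda}$ is a scheme of finite type over $\Fpbar$ (equivalently, $p$ acts as $0$ on it and it is noetherian), and that $\dim_{\Fpbar}T_y\RZ_{K_p^{\sharp},\Lambda}=t(\Lambda)/2$ for every $y\in\RZ_{K_p^{\sharp},\Lambda}(\Fpbar)$. Granting both, the conclusion is immediate: by \cite{Howard2017} together with Propositions \ref{sssm} and \ref{odev}, the reduced scheme $\RZ_{K_p^{\sharp},\Lambda}^{\red}$ is smooth of pure dimension $t(\Lambda)/2$, so for each closed point $y$
\begin{equation*}
\dim_y\RZ_{K_p^{\sharp},\Lambda}=\dim_y\RZ_{K_p^{\sharp},\Lambda}^{\red}=\frac{t(\Lambda)}{2}\leq\dim_{\Fpbar}T_y\RZ_{K_p^{\sharp},\Lambda}=\frac{t(\Lambda)}{2};
\end{equation*}
hence $\RZ_{K_p^{\sharp},\Lambda}$ is smooth --- a fortiori reduced --- of dimension $t(\Lambda)/2$ at $y$, and the canonical closed immersion $\RZ_{K_p^{\sharp},\Lambda}^{\red}\hookrightarrow\RZ_{K_p^{\sharp},\Lambda}$ is therefore an isomorphism.

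For the finiteness input I would use the vertex-lattice inequalities $p\Lambda\subset\Lambda^{\cup}\subset\Lambda$: forcing the whole of $\rho_y^{\sharp}\circ\Lambda\circ(\rho_y^{\sharp})^{-1}$ into $\End(X_y^{\sharp})$ bounds the quasi-isogeny $\rho_y^{\sharp}$ on both sides and keeps $X_y^{\sharp}$ isoclinic of slope $1/2$; with the obstructedness of the deformation this shows $\RZ_{K_p^{\sharp},\Lambda}$ is proper over $\Fpbar$ with $p$ acting as $0$. Alternatively one may deduce properness a posteriori from the projectivity of $\RZ_{K_p^{\sharp},\Lambda}^{\red}$ and the fact that $\RZ_{K_p^{\sharp},\Lambda}$ is topologically of finite type over it.

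For the tangent-space input I would argue via Grothendieck--Messing theory as in Section \ref{dfpd}. A tangent vector at $y$ is an isotropic lift of the Hodge filtration $\Fil^{1}(X_y^{\sharp})\subset\D(X_y^{\sharp})_{\Fpbar}$ to a direct summand of $\D(X_y^{\sharp})_{\Fpbar}\otimes_{\Fpbar}\Fpbar[\epsilon]$, and the condition cutting out $\RZ_{K_p^{\sharp},\Lambda}$ becomes stability of this lift under the action of $\Lambda$ on $\D(X_y^{\sharp})_{\Fpbar}\otimes_{\Fpbar}\Fpbar[\epsilon]$. This turns the computation into linear algebra with the $F$-compatible $\Lambda$-action on $\D(X_y^{\sharp})_{\Fpbar}$; running through the dictionary of \cite{Howard2017}, which attaches to $y$ a point of the orthogonal Grassmannian $\OGr(\Lambda/\Lambda^{\cup}\otimes_{\Fp}\Fpbar)$ lying in $S_{t(\Lambda)/2}^{\sharp}$ (here $\Lambda/\Lambda^{\cup}$ is a nondegenerate quadratic space over $\Fp$ of dimension $t(\Lambda)$), identifies $T_y\RZ_{K_p^{\sharp},\Lambda}$ with the tangent space of $S_{t(\Lambda)/2}^{\sharp}$ at that point, which has dimension $t(\Lambda)/2$ by Proposition \ref{sssm}.

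The main obstacle is the tangent-space bound at the ``deepest'' points $y$ --- those lying in the small strata indexed by vertex sublattices $\Lambda'\subsetneq\Lambda$ --- where the Hodge filtration is in very special position relative to the $\Lambda$-action, so that proving the space of $\Lambda$-stable isotropic lifts is still only $t(\Lambda)/2$-dimensional, rather than larger, genuinely requires the explicit orthogonal linear algebra of \cite{Howard2017} and the Deligne--Lusztig computations recalled in Section \ref{dlso}. Establishing finiteness and $p=0$ in the first step is likewise not purely formal and uses the vertex-lattice inequalities in an essential way.
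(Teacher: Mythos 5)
The paper does not prove this proposition; it simply cites \cite[Theorem 4.2.11]{Li2018}, so there is no in-paper argument to compare with. Your overall strategy --- compare the tangent space at each closed point with the dimension of the smooth reduced subscheme $\RZ_{K_p^{\sharp},\Lambda}^{\red}$, then conclude regularity and hence reducedness --- is the right one and is indeed in the spirit of Li--Zhu, but the step you label the ``finiteness input,'' namely that $p=0$ on $\RZ_{K_p^{\sharp},\Lambda}$, is a genuine gap. Bounding the quasi-isogeny by the vertex-lattice condition $p\Lambda\subset\Lambda^{\cup}\subset\Lambda$ gives quasi-compactness, and projectivity of the reduced scheme is already known from \cite{Howard2017}, but neither implies that $p$ annihilates $\O_{\RZ_{K_p^{\sharp},\Lambda}}$; the phrase ``with the obstructedness of the deformation this shows \dots $p$ acting as $0$'' is where all the real work is hidden, and as written it is not an argument. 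Nor can this be decoupled from the tangent-space input: Grothendieck--Messing over $\Fpbar[\epsilon]$ only computes the relative tangent space $\mathfrak{m}_y/(\mathfrak{m}_y^2+p\O_y)$, not $\mathfrak{m}_y/\mathfrak{m}_y^2$. If $p$ were a regular element not lying in $\mathfrak{m}_y^2$, the complete local ring could be $\Zpb[[t_1,\dots,t_d]]$ --- a regular domain with exactly the relative tangent space and the reduced subscheme you compute --- and yet $\RZ_{K_p^{\sharp},\Lambda}$ would fail to equal its reduction. Li--Zhu rule this out by computing the deformation ring over Artinian $\Zpb$-test rings (via Zink's window theory), not merely over $\Fpbar[\epsilon]$; it is that finer crystalline computation that yields both the tangent-space bound and $p=0$ simultaneously, and your sketch does not contain a substitute for it.

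A minor remark: you take $\dim\RZ_{K_p^{\sharp},\Lambda}^{\red}=t(\Lambda)/2$ from Proposition \ref{sssm}, but Howard--Pappas (and Proposition \ref{hprs}(ii) of this paper) give $t(\Lambda)/2-1$; Proposition \ref{sssm} appears to contain a typo. Since you use the same value on both sides of the comparison this is only cosmetic, but the numerology is off by one.
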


\begin{prop}\label{hprs}
\emph{Let $\Lambda \in \VL(\L_0^{\sharp})$. 
\begin{enumerate}
\item There is an isomorphism
\begin{equation*}
r_{\Lambda}^{\sharp}\colon p^{\Z}\backslash \RZ_{K^{\sharp}_p,\Lambda}\xrightarrow{\cong} S^{\sharp}_{t^{\sharp}(\Lambda)/2}, 
\end{equation*}
which commutes with the actions on $\GSpin(\Omega^{\sharp}_{t^{\sharp}(\Lambda)/2})$. 
\item The isomorphism $r_{\Lambda}^{\sharp}$ induces isomorphisms
\begin{equation*}
\RZ_{K^{\sharp}_p,\Lambda}^{(0)}\cong S^{\sharp,\pm}_{t^{\sharp}(\Lambda)/2}. 
\end{equation*}
In particular, $\RZ_{K^{\sharp}_p,\Lambda}^{(0)}$ is irreducible and projective smooth of dimension $(t^{\sharp}(\Lambda)/2)-1$ over $\Fpbar$. 
\end{enumerate}}
\end{prop}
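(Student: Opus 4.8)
The plan is to follow closely the analysis of Howard and Pappas (\cite[\S\S 5--6]{Howard2017}), of which this proposition is essentially a repackaging adapted to the vertex-lattice strata. By Proposition \ref{rdhs} the formal scheme $\RZ_{K_p^{\sharp},\Lambda}$ is already reduced, so it suffices to identify it, as a reduced $\Fpbar$-scheme, with $S^{\sharp}_{t^{\sharp}(\Lambda)/2}$. First I would recall the Dieudonn{\'e}-theoretic parametrization of $\RZ_{K_p^{\sharp}}^{\red}$ underlying Theorem \ref{hkrz}(i): an $\Fpbar$-point is the same datum as a \emph{special lattice} $L$ in the isocrystal $(\L_0^{\sharp}\otimes_{\Qp}\Qpb,\Phi^{\sharp})$, i.e.\ a $\Zpb$-lattice which is self-dual for $Q^{\sharp}$ and such that $L$ and $\Phi^{\sharp}(L)$ sit in the relative position prescribed by the minuscule coweight $\mu^{\sharp}$. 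Under the inclusion $\bLambda^{\sharp}\subset \End_{\Zp}(C^{*})$ and the formula $Q^{\sharp}(v)=v\circ v$, the condition cutting out $\RZ_{K_p^{\sharp},\Lambda}$ — namely that $\rho_y^{\sharp}\circ\Lambda\circ(\rho_y^{\sharp})^{-1}$ preserve $\D(X_y^{\sharp})$, so that $\Lambda$ acts by endomorphisms — translates into the lattice sandwich $\Lambda^{\cup}\otimes_{\Zp}\Zpb\subset L\subset \Lambda\otimes_{\Zp}\Zpb$. This translation is the point I expect to require the most care, since it is exactly where one passes between $p$-divisible groups and the lattice combinatorics; but it is carried out in \cite[\S\S 5--6]{Howard2017} (compare also the proof of Proposition \ref{rdhs} in \cite{Li2018}).

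Next, set $d:=t^{\sharp}(\Lambda)/2$. Reduction modulo $\Lambda^{\cup}\otimes_{\Zp}\Zpb$ sends a special lattice $L$ in the above sandwich to a subspace $\overline{L}\subset (\Lambda/\Lambda^{\cup})\otimes_{\Fp}\Fpbar$, and by Proposition \ref{qtvl} the target is the $\Fpbar$-quadratic space $\Omega^{\sharp}_{d}$. Under this dictionary the self-duality of $L$ becomes the statement that $\overline{L}$ is a Lagrangian (so $\overline{L}\in \OGr(\Omega^{\sharp}_{d})$), while the $\mu^{\sharp}$-relative position of $L$ and $\Phi^{\sharp}(L)$ becomes $\rk(\overline{L}\cap\sigma(\overline{L}))=d-1$, i.e.\ $\overline{L}\in S^{\sharp}_{d}$; conversely every such $\overline{L}$ lifts uniquely to a special lattice in the sandwich. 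This already gives a bijection on $\Fpbar$-points. To promote it to an isomorphism of $\Fpbar$-schemes — rather than merely of perfections, which is the genuine subtlety — I would, following \cite[\S\S 5--6]{Howard2017}, construct the map $r_{\Lambda}^{\sharp}$ directly at the level of schemes: the Hodge filtration of the universal $p$-divisible group on $\RZ_{K_p^{\sharp},\Lambda}$, read inside the constant quadratic bundle $(\Lambda/\Lambda^{\cup})\otimes\O$, yields a universal Lagrangian sub-bundle and hence a morphism $p^{\Z}\backslash\RZ_{K_p^{\sharp},\Lambda}\to \OGr(\Omega^{\sharp}_{d})$ factoring through $S^{\sharp}_{d}$; since $S^{\sharp}_{d}$ is smooth by Proposition \ref{sssm} and the morphism is proper and bijective on points, it is an isomorphism. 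The $\GSpin(\Omega^{\sharp}_{d})$-equivariance is built into the construction, the action on $\RZ_{K_p^{\sharp},\Lambda}$ arising from $J^{\sharp}(\Qp)$ via Proposition \ref{qtvl}; this proves (i).

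For (ii): the central element $p\in \G_m(\Qp)\subset J^{\sharp}(\Qp)$ acts on $\RZ_{K_p^{\sharp}}=\coprod_{i\in\Z}\RZ_{K_p^{\sharp}}^{(i)}$ by shifting the index by $2$ (because $\ord_p(\sml_{V^{\sharp}}(p))=2$), and it preserves $\RZ_{K_p^{\sharp},\Lambda}$; hence $p^{\Z}\backslash\RZ_{K_p^{\sharp},\Lambda}$ has exactly two classes of connected components, represented by $\RZ_{K_p^{\sharp},\Lambda}^{(0)}$ and $\RZ_{K_p^{\sharp},\Lambda}^{(1)}$. Under $r_{\Lambda}^{\sharp}$ these match the two connected components $S^{\sharp,+}_{d}$ and $S^{\sharp,-}_{d}$ of $S^{\sharp}_{d}$ (which are interchanged by $\sigma$, paralleling $\OGr^{\pm}(\Omega^{\sharp}_{d})$), so $r_{\Lambda}^{\sharp}$ restricts to an isomorphism $\RZ_{K_p^{\sharp},\Lambda}^{(0)}\cong S^{\sharp,\pm}_{d}$. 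The asserted irreducibility, projectivity, smoothness and dimension of $\RZ_{K_p^{\sharp},\Lambda}^{(0)}$ then follow from Proposition \ref{sssm} (equivalently, via Propositions \ref{odev} and \ref{oksf} and the identification $S^{\sharp,\pm}_{d}\cong S_{d}$).

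The main obstacle throughout is the first step: setting up the precise dictionary between the moduli problem defining $\RZ_{K_p^{\sharp},\Lambda}$ and the lattice picture, and then checking that the resulting bijection on $\Fpbar$-points upgrades to an honest isomorphism of schemes and not just of their perfections. Both are already done in \cite[\S\S 5--6]{Howard2017}, so in our setting the remaining work is essentially that of transcribing and assembling their results into the form stated here.
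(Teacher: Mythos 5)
Your proposal is correct and takes the same route as the paper: the paper's proof is literally a citation of Howard--Pappas, Theorem~6.3.1 (for part (i)) and Corollary~6.3.2 together with Proposition~\ref{sssm} (for part (ii)), and your write-up is a faithful, more detailed reconstruction of the argument underlying those references — special lattices, the lattice sandwich $\Lambda^{\cup}_{W(k)}\subset L\subset \Lambda_{W(k)}$, reduction modulo $\Lambda^{\cup}$ to a Lagrangian in $\Omega^{\sharp}_{d}$, scheme-theoretic upgrade via the universal Hodge filtration, and the $p^{\Z}$-shift-by-$2$ for the connected-component matching. The only point worth flagging is a purely expository one in the surrounding paper rather than in your argument: Proposition~\ref{sssm} as printed asserts $\dim S^{\sharp,\pm}_d = d$, which is inconsistent with the dimension $(t^{\sharp}(\Lambda)/2)-1=d-1$ asserted in part (ii) of the present proposition; the Howard--Pappas source gives $d-1$, so you should read Proposition~\ref{sssm} with $d-1$ in place of $d$ when invoking it, as you implicitly do.
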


\begin{proof}
(i): This is \cite[Theorem 6.3.1]{Howard2017}. 

(ii): The existence of isomorphisms is from \cite[Corollary 6.3.2]{Howard2017}. The rest of the assertion is a consequence of Proposition \ref{sssm}. 
\end{proof}

\begin{prop}\label{rzis}(\cite[Corollary 6.2.4]{Howard2017})
\emph{For $\Lambda_1,\Lambda_2\in \VL(\L_0^{\sharp})$, we have
\begin{equation*}
\RZ_{K_p^{\sharp},\Lambda_1}\cap \RZ_{K_p^{\sharp},\Lambda_2}=
\begin{cases}
\RZ_{K_p^{\sharp},\Lambda_1\cap \Lambda_2}&\text{if }\Lambda_1\cap \Lambda_2\in \VL(\L_0^{\sharp}),\\
\emptyset &\text{otherwise. }
\end{cases}
\end{equation*}}
\end{prop}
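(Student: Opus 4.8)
The plan is to translate the statement into the combinatorics of vertex lattices via the Bruhat--Tits stratification of $\RZ_{K_p^\sharp}^{\red}$ due to Howard and Pappas, and then to settle the two cases by an elementary lattice computation. The input from \cite{Howard2017} that I would use is: the closed subschemes $\RZ_{K_p^\sharp,\Lambda}$, $\Lambda \in \VL(\L_0^\sharp)$, are exactly the closures of the strata of the Bruhat--Tits stratification of $\RZ_{K_p^\sharp}^{\red}$, so that each $y \in \RZ_{K_p^\sharp}^{\red}(\Fpbar)$ lies in the open stratum attached to a unique vertex lattice $\Lambda(y)$, and for every $\Lambda \in \VL(\L_0^\sharp)$ one has $y \in \RZ_{K_p^\sharp,\Lambda}$ if and only if $\Lambda(y) \subseteq \Lambda$. (Equivalently, since endomorphisms form a $\Zp$-module, $\RZ_{K_p^\sharp,\Lambda_1}\cap\RZ_{K_p^\sharp,\Lambda_2}$ is the locus where the single lattice $\Lambda_1+\Lambda_2$ acts by honest endomorphisms, whose points one reads off from the previous description.) I would also use that each $\RZ_{K_p^\sharp,\Lambda}$ is reduced, which is Proposition \ref{rdhs}.

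The second ingredient is purely lattice-theoretic: \emph{if $\Lambda_1,\Lambda_2 \in \VL(\L_0^\sharp)$ and some $\Lambda_0 \in \VL(\L_0^\sharp)$ satisfies $\Lambda_0 \subseteq \Lambda_1 \cap \Lambda_2$, then $\Lambda_1 \cap \Lambda_2 \in \VL(\L_0^\sharp)$.} Set $M := \Lambda_1 \cap \Lambda_2$; non-degeneracy of $Q^\sharp$ gives $M^\cup = \Lambda_1^\cup + \Lambda_2^\cup$. From $M^\cup \subseteq \Lambda_0^\cup \subseteq \Lambda_0 \subseteq M$ we get $M^\cup \subseteq M$, and from $pM \subseteq p\Lambda_i \subseteq \Lambda_i^\cup$ for $i=1,2$ we get $pM \subseteq \Lambda_1^\cup \cap \Lambda_2^\cup \subseteq \Lambda_1^\cup + \Lambda_2^\cup = M^\cup$; hence $pM \subseteq M^\cup \subseteq M$, i.e. $M$ is a vertex lattice.

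Combining the two ingredients: on $\Fpbar$-points, $y \in \RZ_{K_p^\sharp,\Lambda_1} \cap \RZ_{K_p^\sharp,\Lambda_2}$ iff $\Lambda(y) \subseteq \Lambda_1$ and $\Lambda(y) \subseteq \Lambda_2$, i.e. iff $\Lambda(y) \subseteq \Lambda_1 \cap \Lambda_2$. If $\Lambda_1 \cap \Lambda_2 \notin \VL(\L_0^\sharp)$, the lattice observation rules out any vertex lattice inside $\Lambda_1 \cap \Lambda_2$, so there is no such $y$; as $\RZ_{K_p^\sharp,\Lambda_1} \cap \RZ_{K_p^\sharp,\Lambda_2}$ is a closed subscheme of a scheme locally of finite type over $\Fpbar$ with no $\Fpbar$-point, it is empty. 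If $\Lambda_1 \cap \Lambda_2 \in \VL(\L_0^\sharp)$, the same computation identifies the $\Fpbar$-points of $\RZ_{K_p^\sharp,\Lambda_1} \cap \RZ_{K_p^\sharp,\Lambda_2}$ with those of $\RZ_{K_p^\sharp,\Lambda_1 \cap \Lambda_2}$. To pass from points to schemes: the inclusions $\Lambda_1 \cap \Lambda_2 \subseteq \Lambda_i$ give $\RZ_{K_p^\sharp,\Lambda_1 \cap \Lambda_2} \subseteq \RZ_{K_p^\sharp,\Lambda_i}$ (a set-theoretic containment of a reduced closed subscheme into a closed subscheme, hence scheme-theoretic), whence $\RZ_{K_p^\sharp,\Lambda_1 \cap \Lambda_2} \subseteq \RZ_{K_p^\sharp,\Lambda_1} \cap \RZ_{K_p^\sharp,\Lambda_2}$; the reverse inclusion holds after passing to underlying reduced subschemes, and the scheme-theoretic intersection is itself reduced --- by its description, via rigidity of quasi-isogenies, as the locus where $\Lambda_1+\Lambda_2$ lifts to honest endomorphisms, together with the reducedness results of \cite{Howard2017} --- so the two coincide.

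The step I expect to be the main obstacle is the first one: pinning down the Howard--Pappas dictionary ``$y \in \RZ_{K_p^\sharp,\Lambda} \Leftrightarrow \Lambda(y) \subseteq \Lambda$'' in the form needed and checking its compatibility with the definition of $\RZ_{K_p^\sharp,\Lambda}$ through the lifting criterion for quasi-isogenies; this requires keeping careful track of the rescaling built into $\rho_y^\sharp$ and of the normalisation of $Q^\sharp$ relative to the self-dual lattice $C^{*}\otimes \Zpb$. Once this dictionary is available, the lattice computation and the scheme-theoretic bookkeeping are routine.
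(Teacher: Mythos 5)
The paper does not prove this proposition; it simply cites \cite[Corollary 6.2.4]{Howard2017}. Your proposal is, in effect, a reconstruction of the Howard--Pappas argument, and the route you take --- translate to vertex lattices via the stratification, then settle it by a lattice computation --- is indeed the one in the source. The lattice lemma at the heart of your argument (if a vertex lattice sits inside $\Lambda_1\cap\Lambda_2$ then $\Lambda_1\cap\Lambda_2$ is itself a vertex lattice) is correct and cleanly proved: $M^\cup=\Lambda_1^\cup+\Lambda_2^\cup$ by non-degeneracy, and the sandwich $pM\subseteq M^\cup\subseteq\Lambda_0^\cup\subseteq\Lambda_0\subseteq M$ does the job. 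The dictionary $y\in\RZ_{K_p^\sharp,\Lambda}\Leftrightarrow\Lambda^\sharp(L_y)\subseteq\Lambda$ that you single out as the main obstacle is in fact already recorded in the paper as Proposition~\ref{svhs}~(ii), so that step is available with no extra work.

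One remark on the scheme-theoretic bookkeeping: your argument that the intersection is reduced (``by its description \ldots as the locus where $\Lambda_1+\Lambda_2$ lifts, together with the reducedness results'') is the one soft spot, since Proposition~\ref{rdhs} only asserts reducedness of $\RZ_{K_p^\sharp,\Lambda}$ for $\Lambda$ a vertex lattice, and $\Lambda_1+\Lambda_2$ need not be one. You can sidestep this entirely: by the functor-of-points definition, the locus where a finite subset $S\subseteq\L_0^\sharp$ lifts depends only on the $\Zp$-span of $S$ (special endomorphisms form a $\Zp$-module), so the scheme-theoretic intersection is exactly the locus where $\Lambda_1^\cup+\Lambda_2^\cup=(\Lambda_1\cap\Lambda_2)^\cup$ lifts, which is $\RZ_{K_p^\sharp,\Lambda_1\cap\Lambda_2}$ by definition whenever $\Lambda_1\cap\Lambda_2$ is a vertex lattice --- no reducedness of the intersection needs to be invoked. (Here one should note that the displayed definition of $\RZ_{K_p^\sharp,\Lambda}$ in the paper reads ``the locus where $\Lambda$ lifts''; internal consistency with Proposition~\ref{svhs}, which is inclusion-preserving in $\Lambda$, indicates that the defining condition should really involve $\Lambda^\cup$, matching the Howard--Pappas convention, and your argument implicitly relies on this inclusion-preserving behavior.) With that adjustment the proof is complete and matches the cited source.
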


The following gives a description of the structure of $\RZ_{K_p^{\sharp}}$. 

\begin{thm}\label{bths}
\emph{There is an equality
\begin{equation*}
\RZ_{K_{p}^{\sharp}}=\bigcup_{\Lambda \in \VL(\L^{\sharp})}\RZ_{K_{p}^{\sharp},\Lambda}. 
\end{equation*}}
\end{thm}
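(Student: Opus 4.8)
The plan is to prove that the family $\{\RZ_{K_p^{\sharp},\Lambda}\}_{\Lambda\in\VL(\L_0^{\sharp})}$ covers $\RZ_{K_p^{\sharp}}$ by working at the level of the reduced subscheme, equivalently the perfect scheme $X_{\sigma(\mu^{\sharp})}(b_0)_{\Kb_p^{\sharp}}(\Fpbar)$. By Theorem \ref{hkrz}(i) and Proposition \ref{adrz}, a point of the target is given by a triple $(X,\lambda,\rho)$ with all the crystalline tensors preserved; passing to contravariant Dieudonn\'e modules, this is equivalent to a $\Zpb$-lattice $M\subset \D(\X_0)_{\Q}$ that is self-dual up to scalar, stable under the requisite tensors, and satisfies the Kottwitz/admissibility condition $pM\subset FM\subset M$ (since $b_0$ is basic of slope $1/2$, so $\X_0$ is isoclinic of slope $1/2$). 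So the statement reduces to: \emph{for every such lattice $M$, there exists a vertex lattice $\Lambda\in\VL(\L_0^{\sharp})$ with $\rho^{\sharp}\circ\Lambda\circ(\rho^{\sharp})^{-1}\subset\End(X)$}, i.e. $\Lambda$ maps $M$ into itself under the action of $\L_0^{\sharp}\subset\End_{\Qpb}(\D(\X_0)_{\Q})$.

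The key step is to produce the vertex lattice from $M$ directly. Given $M$, set
\begin{equation*}
\Lambda(M):=\{v\in\L_0^{\sharp}\mid vM\subset M\}.
\end{equation*}
This is visibly a $\Zp$-lattice in $\L_0^{\sharp}$ (one must check it spans; this follows because $M$ and $pM$ are both commensurable with a $\Phi$-stable self-dual lattice, so some $\Zp$-lattice in $\L_0^{\sharp}$ stabilizes $M$). The main work is to verify the vertex condition $p\Lambda(M)\subset\Lambda(M)^{\cup}\subset\Lambda(M)$. The inclusion $\Lambda(M)^{\cup}\subset\Lambda(M)$ and the bound $p\Lambda(M)\subset\Lambda(M)^{\cup}$ are extracted from the relation between $M$, its dual $M^{\vee}$ (self-dual up to scalar via $\lambda$), and the quadratic form $Q^{\sharp}(v)=v\circ v$ on $\L_0^{\sharp}$: for $v\in\Lambda(M)$ one computes that $Q^{\sharp}(v)$ and the polarization pairing force $v$ to send $M$ into $M$ and $M^{\vee}$ into $M^{\vee}$, and the condition $FM\subset M$ together with $F^2=p$ on $\D(\X_0)_{\Q}$ (isoclinic slope $1/2$) translates into $p\Lambda(M)^{\vee}$-type containments. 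This is precisely the computation carried out by Howard--Pappas in \cite[\S6.2]{Howard2017}; I would cite and, where the parahoric level forces a variant, reproduce it.

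Having shown $\Lambda(M)\in\VL(\L_0^{\sharp})$, the point $(X,\lambda,\rho)$ lies in $\RZ_{K_p^{\sharp},\Lambda(M)}$ by construction, which gives the inclusion $\RZ_{K_p^{\sharp}}^{\red}\subset\bigcup_{\Lambda}\RZ_{K_p^{\sharp},\Lambda}$; the reverse inclusion is trivial since each $\RZ_{K_p^{\sharp},\Lambda}$ is by definition a closed formal subscheme of $\RZ_{K_p^{\sharp}}$. Because $\RZ_{K_p^{\sharp}}$ is formally smooth (Remark \ref{fsmt}) its reduced locus determines it set-theoretically, and Proposition \ref{rdhs} says each $\RZ_{K_p^{\sharp},\Lambda}$ is already reduced, so the set-theoretic covering upgrades to the asserted equality of (formal) schemes.

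The main obstacle I anticipate is the lattice-theoretic heart of the second paragraph: showing that $\Lambda(M)$ is actually a \emph{vertex} lattice and not merely a lattice stabilizing $M$. One must use simultaneously the polarization (self-duality up to scalar of $M$), the tensor $x_0$ and the other $s^{\sharp}_{\alpha^{\sharp}}$ (which pin the $\GSpin$-structure so that $\Lambda(M)$ is genuinely a lattice in the quadratic space $\L_0^{\sharp}$ rather than in $\End$), and the Frobenius condition $pM\subset FM\subset M$ coming from admissibility with respect to $\mu^{\sharp}$ — and see that these combine to give exactly $p\Lambda(M)\subset\Lambda(M)^{\cup}\subset\Lambda(M)$. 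I expect this to be exactly \cite[Theorem 6.4.1 / \S6.2]{Howard2017} with at most cosmetic changes, so the proof in the paper should amount to quoting that argument after the reduction in the first paragraph.
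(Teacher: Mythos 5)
Your high-level plan (pass to Dieudonn\'e lattices, extract a vertex lattice from each $M$, conclude coverage) does track Howard--Pappas's route, which the paper simply cites. But the specific lattice-theoretic reduction in your second paragraph has a duality error that breaks the argument.

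Namely, you set $\Lambda(M):=\{v\in\L_0^{\sharp}\mid vM\subset M\}$ and claim to verify $p\Lambda(M)\subset\Lambda(M)^{\cup}\subset\Lambda(M)$, so that $\Lambda(M)$ is itself a vertex lattice. This cannot hold. For any $v,w\in\Lambda(M)$ one has $(vw+wv)M\subset M$, and since $vw+wv=[v,w]^{\sharp}$ acts by a scalar in $\Qp$, the lattice $M$ being preserved forces $[v,w]^{\sharp}\in\Zp$. Thus $\Lambda(M)\subset\Lambda(M)^{\cup}$ always: $\Lambda(M)$ is an \emph{integral} lattice, which is the opposite containment from the vertex condition $\Lambda^{\cup}\subset\Lambda$. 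Your claimed inclusion $\Lambda(M)^{\cup}\subset\Lambda(M)$ would, together with the above, force $\Lambda(M)$ to be self-dual — false for the non-hyperspecial strata that carry the interesting geometry. The object that is genuinely a vertex lattice is $\Lambda(M)^{\cup}$, and tracing through Propositions \ref{rphs} and \ref{svhs} one finds $\Lambda(M)=\Lambda^{\sharp}(L_y)^{\cup}$, where $L_y$ is the \emph{special} $W(k)$-lattice (the stabilizer of $F^{-1}(pM_y)$, not of $M_y$), and $\Lambda^{\sharp}(L_y)$ is obtained from $L_y$ by the Frobenius-chain construction of Proposition \ref{svhs}(i).

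This gap is not merely notational, because it hides the actual content of the Howard--Pappas proof of the covering: one must show that the chain $L_y\subset L_y+\Phi(L_y)\subset\cdots$ stabilizes after at most $t_{\max}^{\sharp}/2$ steps, which is exactly the statement that the resulting vertex lattice has bounded type $\leq t_{\max}^{\sharp}$, i.e.\ that $p\Lambda(M)^{\cup}\subset\Lambda(M)$. That finiteness comes from the special-lattice condition $\length_{W(k)}\bigl((L+\Phi(L))/L\bigr)=1$ and the slope-$1/2$ structure, and it requires working over $W(k)$ with $\Phi$; your paragraph does not engage with it, deferring to ``precisely the computation in HP \S6.2'' — which is, in fact, the entire proof. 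So the reduction you perform in the first paragraph is correct and the appeal to HP at the end lands on the right result, but the middle step as written is both incorrectly stated (duality reversed) and substantively incomplete (the bound on the type is the theorem, not a footnote). Relatedly, the final paragraph's upgrading of the set-theoretic covering to an equality of formal schemes should instead land on an equality of \emph{reduced} subschemes, as in the analogous Theorem \ref{rzrd}(i): the formally smooth $\RZ_{K_p^{\sharp}}$ over $\spf\Zpb$ is not reduced, whereas by Proposition \ref{rdhs} each $\RZ_{K_p^{\sharp},\Lambda}$ is.
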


\begin{proof}
This follows from \cite[Corollary 6.2.3]{Howard2017}. 
\end{proof}

We are going to give more information on the set of field valued points of $\RZ_{K_{p}^{\sharp}}$. Let $k/\Fpbar$ be a field extension, and denote by $W(k)$ the Cohen ring of $k$, that is, the discrete valuation ring with uniformizer $p$ whose residue field $k$. 
\begin{dfn}
A $W(k)$-lattice $L$ in $\L^{\sharp}_{0,W(k)}:=\L_0^{\sharp}\otimes_{\Zpb}W(k)$ is said to be a \emph{special lattice} if $L$ is self-dual and $\length_{W(k)}(L+\Phi(L)/L)=1$. 
\end{dfn}
We denote by $\SpL(\L_{0,W(k)}^{\sharp})$ the set of special lattices in $\L_{0,W(k)}^{\sharp}$. 

For $y\in \RZ_{K_{p}^{\sharp}}(k)$, let $M_y$ be the $W(k)$-lattice in $\D(\X_0^{\sharp})_{\Q}$ corresponding to $X_{y}^{\sharp}$, and set
\begin{equation*}
L_{y}:=\{v\in \L^{\sharp}_{0,W(k)}\mid v(F^{-1}(pM_{y}))\subset F^{-1}(pM_{y})\}. 
\end{equation*}

\begin{prop}\label{rphs}
\emph{
\begin{enumerate}
\item The $W(k)$-lattice $L_{y}$ is a special lattice in $\L_{0,W(k)}^{\sharp}$ for any $y\in \RZ_{K_{p}^{\sharp}}(k)$. 
\item The map $y\mapsto L_{y}$ induces a bijection $p^{\Z}\backslash \RZ_{K_{p}^{\sharp}}(k)\cong \SpL(\L^{\sharp}_{0,W(k)})$. 
\end{enumerate}}
\end{prop}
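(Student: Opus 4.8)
The plan is to reduce this statement to the corresponding description in the hyperspecial case established by Howard and Pappas in \cite{Howard2017}, since $(G^\sharp,\mu^\sharp,\bx^\sharp,b_0,i^\sharp)$ corresponds exactly to the local Shimura--Hodge datum $(\cG_{\bx^\sharp},b_0,\mu^\sharp,C)$ (Remark \ref{fsmt} (ii)). First I would record the Dieudonn\'e-theoretic dictionary: for a field extension $k/\Fpbar$ and $y\in\RZ_{K_p^\sharp}(k)$, the $p$-divisible group $X_y^\sharp$ with its quasi-isogeny $\rho_y^\sharp$ corresponds, via $\tau_0$, to a $W(k)$-lattice $M_y\subset \D(\X_0^\sharp)_\Q = C^*\otimes_{\Zp}W(k)[1/p]$ satisfying $pM_y\subset FM_y\subset M_y$, stable under the crystalline tensors $t^\sharp_{0,\alpha^\sharp}$, with the Hodge filtration $M_y/FM_y$ induced by a $\cG_{\bx^\sharp}$-conjugate of $\mu^\sharp$. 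The key point, already contained in \cite[\S5]{Howard2017}, is that such lattices $M_y$ are in bijection with the special lattices $L_y\subset \L^\sharp_{0,W(k)}$ via $L\mapsto M$ with $M$ recovered from $L$ (and conversely $L$ recovered from $M$ by the formula $L_y = \{v\in \L^\sharp_{0,W(k)} \mid v(F^{-1}(pM_y))\subset F^{-1}(pM_y)\}$ defining the displayed map).

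The main steps are then: (1) verify that $L_y$ as defined is indeed a $W(k)$-lattice in $\L^\sharp_{0,W(k)}$ — this uses that $\L_0^\sharp\subset \End_{\Qpb}(\D(\X_0^\sharp)_\Q)$ is the $\Phi$-fixed part of the isoclinic slope-$0$ isocrystal $V^\sharp_{\Qpb}$, so its elements act on the $W(k)$-lattice $F^{-1}(pM_y)$ up to bounded denominators; (2) check self-duality of $L_y$ with respect to $Q^\sharp$, which follows from the compatibility of $\lambda_y^\sharp$ with $\lambda_0$ up to a $\Qpt$-scalar (the polarization condition $\rho^{\vee}\circ\lambda\circ\rho = c\lambda_0$) together with the fact that $Q^\sharp(v)=v\circ v$ and the self-duality of $C$ under $\psi^\sharp$; (3) check the length condition $\length_{W(k)}((L_y+\Phi(L_y))/L_y)=1$, which should be a direct translation of the condition that the Hodge filtration on $M_y$ comes from the minuscule cocharacter $\mu^\sharp$ — concretely, the single ``$\mu^\sharp$-step'' in the filtration corresponds to the single step in $L_y+\Phi(L_y)$ over $L_y$, using Lemma \ref{hsmn} to see how $\mu^\sharp$ acts on $V^\sharp$. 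For (2) of the proposition, construct the inverse map $L\mapsto M_L$ explicitly (as in \cite[\S5]{Howard2017}, e.g. $M_L$ is determined by $FM_L$ being cut out inside $pL_W$-type lattices via the filtration attached to $L$), check it lands in $\RZ_{K_p^\sharp}(k)$ using Axiom \ref{exct}/Proposition \ref{adrz} to confirm the tensor and polarization conditions, and verify the two maps are mutually inverse modulo the $p^\Z$-action (the ambiguity $p^\Z$ corresponds precisely to rescaling $M_y$ by powers of $p$, which does not change $L_y$).

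I expect the main obstacle to be step (3) — matching the minuscularity condition on the Hodge filtration of $M_y$ with the length-one condition defining special lattices. This requires unwinding the relation between the filtration $\Fil^1(X_y^\sharp)\subset \D(X_y^\sharp)_k$ induced by a $\cG_{\bx^\sharp}$-conjugate of $\mu^\sharp$ and the lattice $F^{-1}(pM_y)$, and showing the $\Phi$-stable lattice $L_y$ inherits exactly a length-one defect against $\Phi(L_y)$. Here I would lean directly on \cite[Proposition 5.1.2, \S5.3, Corollary 6.2.3]{Howard2017} and \cite[Lemma 4.2.4]{Howard2017}, where this equivalence is worked out in detail for the hyperspecial spinor datum; since our situation is literally that datum, the proof amounts to citing and transcribing their argument, noting that nothing in it uses the specific choice of $a_i\bmod(\Zpt)^2$ beyond what Remark \ref{fsmt} (i) already grants. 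A secondary, more routine point is checking compatibility of $r_\Lambda^\sharp$-type constructions and the $\GSpin$-equivariance, but that is not needed for the bare statement of Proposition \ref{rphs}.
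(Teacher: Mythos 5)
Your approach is essentially the same as the paper's: the paper's proof of Proposition \ref{rphs} is a one-line citation to \cite[Proposition 6.2.2]{Howard2017}, justified precisely by the observation you lead with (Remark \ref{fsmt} (ii)) that $(G^{\sharp},\mu^{\sharp},\bx^{\sharp},b_0,i^{\sharp})$ is literally the hyperspecial local Shimura--Hodge datum of Howard--Pappas. Your detailed plan is a fair unwinding of what that citation encodes, though the single most direct reference is \cite[Proposition 6.2.2]{Howard2017} rather than the cluster of statements you list.
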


\begin{proof}
The assertions are contained in \cite[Proposition 6.2.2]{Howard2017}. 
\end{proof}

Finally, we give a relation between vertex lattices and special lattices. In particular, we rephrase Theorem \ref{bths} by means of special lattices. 

\begin{prop}\label{svhs}
\emph{
\begin{enumerate}
\item For $L\in \SpL(\L^{\sharp}_{0,W(k)})$, there is a minimum integer $d\in \{1,2,\ldots,t_{\max}^{\sharp}/2\}$ such that
\begin{equation*}
L^{(d)}:=L+\cdots +\Phi^{d}(L)
\end{equation*}
is $\Phi$-stable. Moreover, the $\Phi$-fixed part $\Lambda^{\sharp}(L)$ of $L^{(d)}$ is a vertex lattice in $\L_0^{\sharp}$ of type $2d$. 
\item For $\Lambda \in \VL(\L_0^{\sharp})$, the bijection $p^{\Z}\backslash \RZ_{K_{p}^{\sharp}}(k)\cong \SpL(\L^{\sharp}_{0,W(k)})$ induces a bijection
\begin{align*}
p^{\Z}\backslash \RZ_{K_{p}^{\sharp},\Lambda}(k) &\cong \{L\in \SpL(\L^{\sharp}_{0,W(k)})\mid \Lambda^{\sharp}(L)\subset \Lambda_{W(k)}\},\\
p^{\Z}\backslash \BT_{K_{p}^{\sharp},\Lambda}(k) &\cong \{L\in \SpL(\L^{\sharp}_{0,W(k)})\mid \Lambda^{\sharp}(L)=\Lambda \}. 
\end{align*}
\end{enumerate}}
\end{prop}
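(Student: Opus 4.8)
The plan is to follow \cite[\S 5.3, \S 6.2]{Howard2017} closely: establish (i), the lattice-theoretic core, first, and then read off (ii) from the dictionary of Proposition \ref{rphs} together with Theorem \ref{bths} and Proposition \ref{rzis}. The basic input is that $\Phi=\Ad(b_0\sigma)$ acts on $\L_{0,W(k)}^{\sharp}$ as a $\sigma$-semilinear \emph{isometry} of $Q^{\sharp}$ — the rotation $\Ad(b_0)$ preserves the form and $\sigma$ is $Q^{\sharp}$-equivariant since $Q^{\sharp}$ is defined over $\Qp$ — so $\Phi$ commutes with $\cup$-duality and preserves $W(k)$-lengths of quotients of lattices.

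For (i), writing $L^{(i+1)}=L^{(i)}+\Phi(L^{(i)})$, the isomorphism $L^{(i+1)}/L^{(i)}\cong\Phi(L^{(i)})/(\Phi(L^{(i)})\cap L^{(i)})$ together with $\Phi(L^{(i-1)})\subseteq\Phi(L^{(i)})\cap L^{(i)}$ exhibits $L^{(i+1)}/L^{(i)}$ as a quotient of $\Phi(L^{(i)})/\Phi(L^{(i-1)})\cong L^{(i)}/L^{(i-1)}$ (apply the semilinear bijection $\Phi^{-1}$); iterating down to $L^{(1)}/L$, of length $1$ by the special-lattice condition, shows $\length_{W(k)}(L^{(i+1)}/L^{(i)})\le 1$ for all $i$ and that this is non-increasing in $i$. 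So these lengths read $1,\dots,1,0,0,\dots$, and for the least $d$ with $L^{(d)}$ $\Phi$-stable one has $d\ge 1$ (since $\length_{W(k)}(L^{(1)}/L)=1\ne 0$), $L^{(j)}=L^{(d)}$ for $j\ge d$, and $\length_{W(k)}(L^{(d)}/L)=d$. Since the isocrystal $(\L_0^{\sharp}\otimes_{\Qp}\Qpb,\Phi)$ is trivial, \'etale descent makes $\Lambda^{\sharp}(L):=(L^{(d)})^{\Phi}$ a $\Zp$-lattice with $\Lambda^{\sharp}(L)\otimes_{\Zp}W(k)=L^{(d)}$, and its $\cup$-dual corresponds to $(L^{(d)})^{\cup}=\bigcap_{i=0}^{d}\Phi^{i}(L)\subseteq L$; thus $\Lambda^{\sharp}(L)^{\cup}\subseteq\Lambda^{\sharp}(L)$, while the remaining inequality $p\Lambda^{\sharp}(L)\subseteq\Lambda^{\sharp}(L)^{\cup}$ is part of the vertex-lattice verification in \cite[\S 5.3]{Howard2017}. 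Finally duality and self-duality of $L$ give $\length_{W(k)}(L/(L^{(d)})^{\cup})=\length_{W(k)}(L^{(d)}/L)=d$, whence $t(\Lambda^{\sharp}(L))=\length_{W(k)}(L^{(d)}/(L^{(d)})^{\cup})=2d$; and $2d\le t_{\max}^{\sharp}$ by Proposition \ref{vlhs}(i), i.e. $d\le t_{\max}^{\sharp}/2$.

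For (ii), Theorem \ref{bths} puts each $y\in\RZ_{K_p^{\sharp}}(k)$ in some $\RZ_{K_p^{\sharp},\Lambda}$, and Proposition \ref{rzis} shows that the vertex lattices $\Lambda$ with $y\in\RZ_{K_p^{\sharp},\Lambda}(k)$ form a set closed under intersection, hence with a least element $\Lambda_{\min}(y)$, characterized by $y\in\RZ_{K_p^{\sharp},\Lambda}(k)\iff\Lambda_{\min}(y)\subseteq\Lambda$. The substance of (ii) is the identification $\Lambda_{\min}(y)=\Lambda^{\sharp}(L_y)$: this comes from matching the defining condition of $\RZ_{K_p^{\sharp},\Lambda}$ — integrality of $\rho_y^{\sharp}\circ\Lambda\circ(\rho_y^{\sharp})^{-1}$ on $X_y^{\sharp}$ — against the $\Phi$-saturation procedure defining $\Lambda^{\sharp}(L_y)$, which is \cite[Proposition 6.2.2]{Howard2017}; this yields the first displayed bijection. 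The second is then formal: $\BT_{K_p^{\sharp},\Lambda}=\RZ_{K_p^{\sharp},\Lambda}\setminus\bigcup_{\Lambda'\subsetneq\Lambda}\RZ_{K_p^{\sharp},\Lambda'}$ over vertex lattices $\Lambda'\subsetneq\Lambda$, so $y\in\BT_{K_p^{\sharp},\Lambda}(k)$ iff $\Lambda^{\sharp}(L_y)\subseteq\Lambda$ and $\Lambda^{\sharp}(L_y)\not\subseteq\Lambda'$ for every vertex lattice $\Lambda'\subsetneq\Lambda$; since $\Lambda^{\sharp}(L_y)$ is itself a vertex lattice inside $\Lambda$, this forces $\Lambda^{\sharp}(L_y)=\Lambda$.

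The one genuinely non-formal point I expect is the identification $\Lambda_{\min}(y)=\Lambda^{\sharp}(L_y)$ in (ii) — equivalently, translating ``$\Lambda$ acts by endomorphisms on $X_y^{\sharp}$'' through the Dieudonn\'e dictionary into ``$\Lambda^{\sharp}(L_y)\subseteq\Lambda$'' — since this is where the argument engages the $p$-divisible group rather than pure lattice combinatorics; it is precisely what \cite[\S 6.2]{Howard2017} supplies, so the remaining work is organizational: assembling that identification with the elementary chain analysis of (i) and the covering and intersection statements already recalled.
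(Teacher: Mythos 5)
Your proposal is correct and follows the same route as the paper, which simply cites \cite[Propositions 5.2.2 and 6.2.2]{Howard2017} for (i) and (ii) respectively; you have reconstructed the Howard--Pappas chain argument (the non-increasing lengths of $L^{(i+1)}/L^{(i)}$, the duality computation for the type, and the reduction of (ii) to identifying $\Lambda^{\sharp}(L_y)$ with the minimal vertex lattice through $y$) rather than merely citing it. The only caveat is that you yourself flag, and defer to \cite{Howard2017}, the two substantive steps (the inequality $p\Lambda^{\sharp}(L)\subseteq\Lambda^{\sharp}(L)^{\cup}$ and the Dieudonn\'e-theoretic identification $\Lambda_{\min}(y)=\Lambda^{\sharp}(L_y)$), which is consistent with the paper doing the same.
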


\begin{proof}
(i): This is \cite[Proposition 5.2.2]{Howard2017}. 

(ii): The assertion is contained in \cite[Proposition 6.2.2]{Howard2017}. 
\end{proof}

Finally, we give a description of the isomorphism in Proposition \ref{hprs} on $k$-valued points. By Propositions \ref{qtvl} and \ref{svhs} (ii), we obtain a bijection
\begin{equation*}
r_{\Lambda,k}^{\sharp}\colon p^{\Z}\backslash \RZ_{K_p^{\sharp},\Lambda}(k)\xrightarrow{\cong}S_{d}^{\sharp}(k);L\mapsto L/\Lambda^{\cup}_{W(k)}. 
\end{equation*}

\begin{prop}\label{rkhs}
\emph{The isomorphism $r_{\Lambda}^{\sharp}$ in Proposition \ref{hprs} induces $r_{\Lambda,k}^{\sharp}$ on $k$-valued points. }
\end{prop}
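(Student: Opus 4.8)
The plan is to unwind the construction of $r_{\Lambda}^{\sharp}$ given in \cite[\S 6.3]{Howard2017} and read off its effect on field-valued points. Recall that the isomorphism of Proposition \ref{hprs} (i) is obtained in \cite[Theorem 6.3.1]{Howard2017} by first producing, for each field extension $k/\Fpbar$, a bijection $p^{\Z}\backslash\RZ_{K_p^{\sharp},\Lambda}(k)\xrightarrow{\cong}S_{t^{\sharp}(\Lambda)/2}^{\sharp}(k)$, and then verifying that this family of bijections is realized by an isomorphism of the corresponding reduced $\Fpbar$-schemes (the source being reduced by Proposition \ref{rdhs}). Since a morphism between reduced schemes locally of finite type over $\Fpbar$ is determined by its effect on $\Fpbar$-points, it is enough to check that the $k$-point map underlying $r_{\Lambda}^{\sharp}$ coincides with $r_{\Lambda,k}^{\sharp}$.

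For this I would trace through the Dieudonn\'e-theoretic dictionary. Let $y\in\RZ_{K_p^{\sharp},\Lambda}(k)$ and put $d:=t^{\sharp}(\Lambda)/2$. By Proposition \ref{rphs} the point $y$ is encoded by its special lattice $L_y\in\SpL(\L_{0,W(k)}^{\sharp})$, and by Proposition \ref{svhs} (ii) the condition $y\in\RZ_{K_p^{\sharp},\Lambda}(k)$ is equivalent to $\Lambda^{\sharp}(L_y)\subset\Lambda_{W(k)}$; together with the self-duality of $L_y$ this yields $\Lambda^{\cup}_{W(k)}\subset L_y\subset\Lambda_{W(k)}$. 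Using the isomorphism of quadratic spaces $(\Lambda/\Lambda^{\cup},pQ^{\sharp}\bmod p)\cong\Omega_{d,0}^{\sharp}$ of Proposition \ref{qtvl}, the quotient $L_y/\Lambda^{\cup}_{W(k)}$ is a $k$-subspace of $\Omega_{d,0}^{\sharp}\otimes_{\Fp}k$; from $L_y=L_y^{\cup}$ one checks that it is maximally totally isotropic, and the relation $\length_{W(k)}(L_y+\Phi(L_y)/L_y)=1$ defining a special lattice translates into $\rk\bigl((L_y/\Lambda^{\cup}_{W(k)})\cap\sigma(L_y/\Lambda^{\cup}_{W(k)})\bigr)=d-1$, so that $L_y/\Lambda^{\cup}_{W(k)}\in S_d^{\sharp}(k)$. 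Comparing with the proof of \cite[Theorem 6.3.1]{Howard2017}, which realizes $r_{\Lambda}^{\sharp}$ on $k$-points precisely by first forming $L_y$ from $M_y$ and then passing to the quotient $L_y/\Lambda^{\cup}_{W(k)}$, one sees that the $k$-point map of $r_{\Lambda}^{\sharp}$ is $L_y\mapsto L_y/\Lambda^{\cup}_{W(k)}$, which is exactly $r_{\Lambda,k}^{\sharp}$.

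As a safeguard I would also invoke the equivariance of both maps for the action of $\GSpin(\Omega_{d}^{\sharp})$ (Propositions \ref{qtvl} and \ref{hprs} (i)), which lets one reduce a direct comparison to a single orbit should it be needed. I expect the main obstacle to be purely one of bookkeeping: one must carefully match the normalizations under which \cite{Howard2017} identifies special lattices with points of $S_d^{\sharp}$ --- in particular the choices of dual lattices, the Frobenius twist $F^{-1}(pM_y)$ occurring in the definition of $L_y$, and which connected component of the orthogonal Grassmannian is selected --- since at this level it is easy to be off by a twist or a dual. Once these conventions are aligned, the identification is immediate.
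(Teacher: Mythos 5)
Your proof is correct and follows the same route as the paper: both reduce the claim to tracing through the construction of $r_{\Lambda}^{\sharp}$ in the proof of \cite[Theorem 6.3.1]{Howard2017}, where the map on $k$-points is manifestly $L_y\mapsto L_y/\Lambda^{\cup}_{W(k)}$. You have simply spelled out the Dieudonn\'e-theoretic bookkeeping that the paper leaves implicit.
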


\begin{proof}
This follows from the construction of $r_{\Lambda}^{\sharp}$. See the proof of \cite[Theorem 6.3.1]{Howard2017}. 
\end{proof}

\section{Structure of the Rapoport--Zink space in the non-hyperspecial case}\label{rznh}

In this section, we construct the Bruhat--Tits stratification of the Rapoport--Zink space $\RZ_{K_p}$. 

\subsection{Vertex lattices}

We keep the notations in Section \ref{rzgs}. As mentioned in Section \ref{rzgs}, we have $x_0\in \L_0^{\sharp}$. We define a quadratic subspace $\L_0$ of $\L_0^{\sharp}$ as the perpendicular of $x_0$ in $\L_0^{\sharp}$. We denote by $\Lambda^{\vee}$ the dual lattice of $\Lambda$ in $\L_0$ for $\Lambda \in \VL(\L_0)$. Moreover, for $y\in \RZ_{K_p}$, $(X_y,\lambda_y,\rho_y)$ denotes the polarized $p$-divisible group with quasi-isogeny attached to $y$. 

\begin{dfn}
\begin{enumerate}
\item A $\Zp$-lattice $\Lambda$ in $\L_0$ is said to be a \emph{vertex lattice} in $\L_0$ if $p\Lambda \subset \Lambda^{\vee}\subset \Lambda$. 
\item We call $t(\Lambda):=\length_{\Zp}(\Lambda/\Lambda^{\vee})$ for the type of $\Lambda$. 
\end{enumerate}
\end{dfn}

We denote by $\VL(\L_0)$ the set of vertex lattices in $\L_0$. Furthermore, for $t\in \Znn$, set
\begin{equation*}
\VL(\L_0,t):=\{\Lambda \in \VL(\L_0)\mid t(\Lambda)=t\}. 
\end{equation*}

We are going to study a relation between $\VL(\L_0)$ and $\VL(\L_0^{\sharp})$. 

\begin{prop}\label{vtdl}
\emph{
\begin{enumerate}
\item For $\Lambda \in \VL(\L_0)$, the $\Zp$-lattice $\Lambda^{\sharp}:=\Lambda \oplus \Zp (p^{-1}x_0)$ is a vertex lattice in $\L_{0}^{\sharp}$. 
\item For $\Lambda \in \VL(\L_0)$, we have $t^{\sharp}(\Lambda^{\sharp})=t(\Lambda)+1$. 
\item The map $\Lambda \mapsto \Lambda^{\sharp}$ induces a bijection
\begin{equation*}
\VL(\L_0)\rightarrow \{\Lambda \in \VL(\L_0^{\sharp})\mid p^{-1}x_0\in \Lambda\}. 
\end{equation*}
The inverse map is given by $\Lambda' \mapsto \Lambda' \cap \L_0$ for $\Lambda' \in \VL(\L_0^{\sharp})$. 
\end{enumerate}}
\end{prop}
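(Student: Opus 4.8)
The plan is to verify each assertion by a direct lattice-theoretic computation, using the orthogonal decomposition $\L_0^{\sharp} = \L_0 \oplus \Qp x_0$ together with the fact that $Q^{\sharp}(x_0) = x_0^2 \in p\Zpt$. The key numerical input is that $x_0$ spans an anisotropic line of ``type $1$'': writing $c := x_0^2 \in p\Zpt$, the lattice $\Zp x_0$ has dual $\Zp(c^{-1}x_0) = p^{-1}\Zp x_0$ inside $\Qp x_0$ with respect to the form $Q^{\sharp}$, since $[\,x_0, c^{-1}x_0\,]^{\sharp} = 2$ is a unit. Equivalently, $\Zp(p^{-1}x_0)$ has dual $p\cdot\Zp(p^{-1}x_0) = \Zp x_0$.

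For (i): given $\Lambda \in \VL(\L_0)$, so $p\Lambda \subset \Lambda^{\vee} \subset \Lambda$, I compute the dual of $\Lambda^{\sharp} = \Lambda \oplus \Zp(p^{-1}x_0)$ in $\L_0^{\sharp}$ with respect to $Q^{\sharp}$. Since the decomposition $\L_0^{\sharp} = \L_0 \oplus \Qp x_0$ is orthogonal, the dual splits as a direct sum: $(\Lambda^{\sharp})^{\cup} = \Lambda^{\vee} \oplus (\Zp(p^{-1}x_0))^{\cup} = \Lambda^{\vee} \oplus \Zp x_0$. It is then immediate that $p\Lambda^{\sharp} = p\Lambda \oplus \Zp x_0 \subset \Lambda^{\vee} \oplus \Zp x_0 = (\Lambda^{\sharp})^{\cup} \subset \Lambda \oplus \Zp(p^{-1}x_0) = \Lambda^{\sharp}$, using $p\Lambda \subset \Lambda^{\vee} \subset \Lambda$ and $\Zp x_0 \subset \Zp(p^{-1}x_0)$. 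Hence $\Lambda^{\sharp} \in \VL(\L_0^{\sharp})$. Note $p^{-1}x_0 \in \Lambda^{\sharp}$ by construction. For (ii): from the splitting of the dual, $\Lambda^{\sharp}/(\Lambda^{\sharp})^{\cup} \cong (\Lambda/\Lambda^{\vee}) \oplus (\Zp(p^{-1}x_0)/\Zp x_0)$, and the second summand has length $1$, so $t^{\sharp}(\Lambda^{\sharp}) = t(\Lambda) + 1$.

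For (iii): the map $\Lambda \mapsto \Lambda^{\sharp}$ lands in $\{\Lambda' \in \VL(\L_0^{\sharp}) \mid p^{-1}x_0 \in \Lambda'\}$ by (i). To see it is a bijection with inverse $\Lambda' \mapsto \Lambda' \cap \L_0$, I would argue: if $p^{-1}x_0 \in \Lambda'$ then, since $\Lambda'$ is a vertex lattice, $x_0 = p\cdot(p^{-1}x_0) \in p\Lambda' \subset (\Lambda')^{\cup} \subset \Lambda'$, and moreover $x_0 \in (\Lambda')^{\cup}$. The orthogonal projection $\L_0^{\sharp} \to \Qp x_0$ then carries $\Lambda'$ into $\Zp(p^{-1}x_0)$ (because pairing any $v \in \Lambda'$ against $p^{-1}x_0 \in \Lambda'$ lands in $\Zp$ after dividing by the unit $[\,x_0,p^{-1}x_0\,]^{\sharp}$... more precisely, since $x_0 \in (\Lambda')^{\cup}$ and $\Lambda' \subset \L_0^{\sharp}$ one controls the $x_0$-component), so $\Lambda' = (\Lambda' \cap \L_0) \oplus \Zp(p^{-1}x_0)$; one then checks $\Lambda := \Lambda' \cap \L_0$ is a vertex lattice in $\L_0$ by reversing the computation in (i), i.e. intersecting the chain $p\Lambda' \subset (\Lambda')^{\cup} \subset \Lambda'$ with $\L_0$ and using that duality commutes with the orthogonal projection. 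The two constructions are mutually inverse by inspection.

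The only genuinely delicate point is the step in (iii) showing that a vertex lattice $\Lambda'$ containing $p^{-1}x_0$ actually decomposes as $(\Lambda' \cap \L_0) \oplus \Zp(p^{-1}x_0)$ — i.e. that the orthogonal splitting of $\L_0^{\sharp}$ is respected by $\Lambda'$. The crux is that $x_0^2 \in p\Zpt$ makes the pairing $[x_0, p^{-1}x_0]^{\sharp}$ a $p$-adic unit, so the rank-one summand $\Zp(p^{-1}x_0)$ is a unimodular-up-to-$p$ orthogonal direct summand of $\Lambda'$; once $p^{-1}x_0 \in \Lambda'$ and $x_0 \in (\Lambda')^{\cup}$, standard arguments on splitting off a ``distinguished'' vector of the quadratic lattice give the decomposition. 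I expect everything else to be routine bookkeeping with the chain conditions defining vertex lattices.
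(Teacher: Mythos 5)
Your proof is correct and follows essentially the same route as the paper: parts (i) and (ii) by orthogonal splitting of duals, and for (iii) the key observation that $p^{-1}x_0 \in \Lambda'$ forces $x_0 = p(p^{-1}x_0) \in p\Lambda' \subset (\Lambda')^{\cup}$, so pairing $v' \in \Lambda'$ against $x_0$ lies in $\Zp$ and bounds the $x_0$-coefficient (the paper computes this explicitly as $[v',x_0]^{\sharp} = -aa_1 \in \Zp$ with $a_1 \in \Zpt$). One small wobble: your first parenthetical in (iii), ``pairing any $v \in \Lambda'$ against $p^{-1}x_0 \in \Lambda'$ lands in $\Zp$,'' is not true as stated, since vertex lattices are not self-dual; your ``more precisely'' correction, pairing against $x_0 \in (\Lambda')^{\cup}$, is the right argument and is exactly what the paper does.
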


\begin{proof}
(i), (ii): This follows by definition. 

(iii): The injectivity is clear by definition. To prove the surjectivity, take $\Lambda' \in \VL(\L_0^{\sharp})$ containing $p^{-1}x_0$. It suffices to prove $\Lambda'=(\Lambda'\cap \L_0)\oplus \Zp(p^{-1}x_0)$. The inclusion $(\Lambda'\cap \L_0)\oplus \Zp(p^{-1}x_0)\subset \Lambda'$ is clear since $p^{-1}x_0\in \Lambda'$. For another inclusion, take $v'\in \Lambda'$, and write $v'=v+a(p^{-1}x_0)$ where $v\in \L_0$ and $a\in \Qp$. By $p\Lambda'\subset (\Lambda')^{\cup}$, we have $x_0\in (\Lambda')^{\cup}$. Hence we obtain
\begin{equation*}
-aa_1=[a(p^{-1}x_0),x_0]^{\sharp}=[v',x_0]^{\sharp}\in \Zp. 
\end{equation*}
Since $a_1\in \Zpt$, we have $a\in \Zp$. This implies $v\in \Lambda'$, which gives a desired assertion. 
\end{proof}

\begin{cor}\label{vltn}
\emph{
\begin{enumerate}
\item The set $\VL(\L_0,t)$ is non-empty if and only if $t\in \{1,3,\ldots,t_{\max}\}$, where 
\begin{equation*}
t_{\max}:=
\begin{cases}
n-1&\text{if $n$ is even},\\
n-2&\text{if $n$ is odd and $\varepsilon(V)=(p,-1)_{p}^{(n-1)/2}$},\\
n&\text{if $n$ is odd and $\varepsilon(V)\neq (p,-1)^{(n-1)/2}$}. 
\end{cases}
\end{equation*}
\item For $\Lambda \in \VL(\L_0)$, there is a vertex lattice in $\L_0$ of type $t_{\max}$ which contains $\Lambda$. 
\end{enumerate}}
\end{cor}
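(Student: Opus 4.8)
The plan is to reduce everything to the quadratic space $\L_0^{\sharp}$, where the analogous facts about vertex lattices are already recorded in Section \ref{rzgs}. A preliminary observation is that $t_{\max}=t_{\max}^{\sharp}-1$ in all cases: when $n$ is even this is immediate from Proposition \ref{vlhs} (i), and when $n$ is odd it follows by combining Proposition \ref{vlhs} (i) with Proposition \ref{spsd} (iii), which translates the dichotomy $\disc(V^{\sharp})=(-1)^{(n+1)/2}$ into $\varepsilon(V)=(p,-1)_{p}^{(n-1)/2}$.

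For the ``only if'' part of (i), I would use Proposition \ref{vtdl} (ii), (iii): the assignment $\Lambda\mapsto\Lambda^{\sharp}=\Lambda\oplus\Zp(p^{-1}x_0)$ identifies $\VL(\L_0,t)$ with $\{\Lambda'\in\VL(\L_0^{\sharp},t+1)\mid p^{-1}x_0\in\Lambda'\}$. Hence if $\VL(\L_0,t)\neq\emptyset$ then $\VL(\L_0^{\sharp},t+1)\neq\emptyset$, so $t+1\in\{2,4,\dots,t_{\max}^{\sharp}\}$ by Proposition \ref{vlhs} (i), i.e.\ $t$ is odd with $1\le t\le t_{\max}^{\sharp}-1=t_{\max}$.

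For the ``if'' part the point is to realize every even $t'$ with $2\le t'\le t_{\max}^{\sharp}$ by a vertex lattice of $\L_0^{\sharp}$ that contains $p^{-1}x_0$. First note that $\L_0$ admits at least one vertex lattice: take an orthogonal $\Zp$-basis of any lattice (possible since $p>2$) and rescale the basis vectors so that the $p$-adic valuations of the diagonal Gram entries lie in $\{0,1\}$; the resulting lattice is a vertex lattice. Pushing it into $\L_0^{\sharp}$ via $\Lambda\mapsto\Lambda^{\sharp}$ and enlarging it by Proposition \ref{vlhs} (ii) produces a vertex lattice $\Lambda'_{\max}$ of type $t_{\max}^{\sharp}$ still containing $p^{-1}x_0$. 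Then I would descend in steps of $2$: given a vertex lattice $\Lambda'$ of type $t'\ge 4$ with $p^{-1}x_0\in\Lambda'$, work in the nondegenerate $\Fp$-quadratic space $D=\Lambda'/(\Lambda')^{\cup}$ of dimension $t'$ (Proposition \ref{qtvl}); since $Q^{\sharp}(x_0)\in p\Zpt$, the image $\bar{x}_0$ of $p^{-1}x_0$ in $D$ is anisotropic, so $\bar{x}_0^{\perp}$ is nondegenerate of dimension $\ge 3$ and hence isotropic. Choosing a nonzero isotropic $\bar{x}\in\bar{x}_0^{\perp}$, the hyperplane $H:=\bar{x}^{\perp}$ contains its orthogonal complement $\Fp\bar{x}$ and also contains $\bar{x}_0$, and the preimage $\Lambda''$ of $H$ in $\Lambda'$ is a vertex lattice of type $\dim_{\Fp}(H/\Fp\bar{x})=t'-2$ with $p^{-1}x_0\in\Lambda''$. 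Iterating down to type $2$ and transferring back through Proposition \ref{vtdl} (iii) yields vertex lattices of every odd type $1,3,\dots,t_{\max}$ in $\L_0$, completing (i). Part (ii) then falls out of the same construction: applying the enlargement step to $\Lambda^{\sharp}$ for a given $\Lambda\in\VL(\L_0)$ gives $\Lambda'_{\max}\supseteq\Lambda^{\sharp}\supseteq\Lambda$ of type $t_{\max}^{\sharp}$, and $\Lambda'_{\max}\cap\L_0$ is a vertex lattice of type $t_{\max}$ containing $\Lambda$.

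The main obstacle is the ``if'' direction: a descent among vertex lattices of $\L_0^{\sharp}$ need not keep $p^{-1}x_0$ inside the lattice, and the argument above works only because $\bar{x}_0$ is anisotropic in $D$ and a nondegenerate $\Fp$-quadratic form of dimension $\ge 3$ is automatically isotropic, which is exactly what makes a coisotropic hyperplane through $\bar{x}_0$ available at each stage. The remaining work is routine lattice bookkeeping: checking that $(\Lambda')^{\cup}\subseteq\Lambda''\subseteq\Lambda'$, that $(\Lambda'')^{\cup}$ is the preimage of $H^{\perp}=\Fp\bar{x}$, and that $p\Lambda''\subseteq(\Lambda'')^{\cup}$, so that $\Lambda''$ is indeed a vertex lattice of the asserted type.
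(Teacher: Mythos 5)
Your proof is correct and follows the same overall route as the paper's, namely reducing everything to $\L_{0}^{\sharp}$ via the bijection in Proposition~\ref{vtdl}~(iii) and the translation of invariants in Propositions~\ref{vtdl}~(ii) and~\ref{spsd}~(iii). For part~(ii) your argument is identical to the paper's: enlarge $\Lambda^{\sharp}$ inside $\L_{0}^{\sharp}$ to a vertex lattice of type $t_{\max}^{\sharp}$ by Proposition~\ref{vlhs}~(ii), observe it still contains $p^{-1}x_{0}$, and intersect with $\L_{0}$.

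The genuine difference is in part~(i). The paper dispatches the entire equivalence with a one-line reference to Proposition~\ref{vtdl}~(iii), which only supplies the bijection $\VL(\L_{0})\leftrightarrow\{\Lambda'\in\VL(\L_{0}^{\sharp})\mid p^{-1}x_{0}\in\Lambda'\}$; this handles the ``only if'' direction (any realized type $t$ forces $t+1$ to be a realized type for $\L_{0}^{\sharp}$, hence $t+1\in\{2,\dots,t_{\max}^{\sharp}\}$), but for the ``if'' direction one still needs to know that every even $t'\in\{2,\dots,t_{\max}^{\sharp}\}$ is realized by a vertex lattice of $\L_{0}^{\sharp}$ \emph{containing $p^{-1}x_{0}$}, which does not follow formally from the cited proposition. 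Your descent argument fills exactly this gap: starting from a type-$t_{\max}^{\sharp}$ lattice containing $p^{-1}x_{0}$, you pass to the $\Fp$-quadratic space $D=\Lambda'/(\Lambda')^{\cup}$, observe that $\bar{x}_{0}$ is anisotropic so that $\bar{x}_{0}^{\perp}$ is nondegenerate of dimension $\geq 3$ and hence contains an isotropic line $\Fp\bar{x}$, and take the preimage of the coisotropic hyperplane $\bar{x}^{\perp}\ni\bar{x}_{0}$. This produces a vertex lattice of type $t'-2$ still containing $p^{-1}x_{0}$, and iterating gives all the required types. That is a clean and self-contained way to justify what the paper asserts tersely, and it is worth keeping.

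One small correction: when you build an initial vertex lattice in $\L_{0}$ by diagonalizing and rescaling, the correct normalization is to arrange the diagonal Gram entries (i.e.\ $[e_{i},e_{i}]=2Q(e_{i})$) to have $p$-adic valuation in $\{-1,0\}$, not $\{0,1\}$; with your normalization it is the \emph{dual} of the lattice you wrote down that is the vertex lattice. Alternatively, you can skip this step entirely: since $\L_{0}$ is the perpendicular of $x_{0}$ inside $\L_{0}^{\sharp}$ and $Q^{\sharp}(x_{0})\in p\Zpt$, one may start the descent directly from any $\Lambda'\in\VL(\L_{0}^{\sharp},t_{\max}^{\sharp})$ and adjust it to contain $p^{-1}x_{0}$, or invoke Proposition~\ref{vtdl}~(i) applied to any almost self-dual lattice in $\L_{0}$ (which exists by the same diagonalization argument applied in the correct direction). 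Either way, the rest of your argument is unaffected.
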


\begin{proof}
(i): By Propositions \ref{vtdl} (ii) and \ref{spsd} (iii), the conditions on $V$ to define $t_{\max}$ and those on $V^{\sharp}$ to define $t_{\max}^{\sharp}$ are equivalent respectively. The non-emptiness assertion is a consequence of Proposition \ref{vtdl} (iii). 

(ii): By Proposition \ref{vlhs}, there is $\Lambda'\in \VL(\L_0^{\sharp},t_{\max}^{\sharp})$ which contains $\Lambda^{\sharp}$. Then $\Lambda'\cap \L_0$ is a vertex lattice in $L_0$ containing $\Lambda$ by Proposition \ref{vtdl} (iii). 
\end{proof}

\begin{rem}
Corollary \ref{vltn} gives an information on the Hasse invariant of $\L_0$. More precisely, if $n$ is odd, we have $\varepsilon(\L_0)=(p,-1)_{p}^{(n-1)/2}$ if and only if $\varepsilon(V)\neq (p,-1)_{p}^{(n-1)/2}$. 
\end{rem}

\subsection{Bruhat--Tits stratification}

\begin{dfn}
For $\Lambda \in \VL(\L_0)$, we define a closed formal subscheme $\RZ_{K_p,\Lambda}$ of $\RZ_{K_p}$ as the locus of $y\in \RZ_{K_p}$ where $\rho_y \circ \Lambda \circ \rho_y^{-1}\subset \End(X_y)$. It is a closed formal subscheme of $\RZ_{K_p}$ by \cite[Proposition 2.9]{Rapoport1996}. 
\end{dfn}

\begin{prop}\label{btrs}
\emph{For $\Lambda \in \VL(\L_0)$, the closed immersion $\delta_{G,G^{\sharp}} \colon \RZ_{K_p}\hookrightarrow \RZ_{K_{p}^{\sharp}}$ induces an isomorphism $\RZ_{K_p,\Lambda}\cong \RZ_{K_p^{\sharp},\Lambda^{\sharp}}$. }
\end{prop}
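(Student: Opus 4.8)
The plan is to transport the whole statement to $\RZ_{K_p^{\sharp}}$ by means of Proposition \ref{embd}, which identifies $\delta_{G,G^{\sharp}}$ with the inclusion of the closed formal subscheme $\cZ(x_0)\subset \RZ_{K_p^{\sharp}}$ defined by the condition that the quasi-isogeny of $X_y^{\sharp}$ attached to $x_0$ lifts to an isogeny; under this identification the defining condition of $\RZ_{K_p,\Lambda}$---that $\Lambda$ act on $X_y$---becomes the analogous condition for the lattice $\Lambda\subset \L_0\subset \L_0^{\sharp}$ inside $\RZ_{K_p^{\sharp}}$. Thus the assertion is that, as closed formal subschemes of $\RZ_{K_p^{\sharp}}$, the scheme $\RZ_{K_p^{\sharp},\Lambda^{\sharp}}$ equals the locus where $\Lambda$ acts, intersected with $\cZ(x_0)$.

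First I would settle the two easy inclusions. As $\Lambda\subset \Lambda^{\sharp}$, the locus where $\Lambda^{\sharp}$ acts lies in the locus where $\Lambda$ acts; and as $p^{-1}x_0\in \Lambda^{\sharp}$ (equivalently $x_0\in (\Lambda^{\sharp})^{\cup}$), every point of $\RZ_{K_p^{\sharp},\Lambda^{\sharp}}$ lies in $\cZ(x_0)$. Since $\RZ_{K_p^{\sharp},\Lambda^{\sharp}}$ is reduced (Proposition \ref{rdhs}), these pointwise statements upgrade to inclusions of closed formal subschemes, giving $\RZ_{K_p^{\sharp},\Lambda^{\sharp}}\subset \cZ(x_0)$ and $\RZ_{K_p^{\sharp},\Lambda^{\sharp}}\subset \delta_{G,G^{\sharp}}(\RZ_{K_p,\Lambda})$. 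It remains to prove the reverse inclusion, which is the core of the argument.

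For the reverse inclusion I would first test it on $k$-valued points, $k/\Fpbar$ a field extension, via the dictionary between $\RZ_{K_p^{\sharp}}$ and special lattices (Propositions \ref{rphs} and \ref{svhs}): a point $y$ corresponds to a self-dual special lattice $L_y\subset \L_{0,W(k)}^{\sharp}$, the condition $y\in \RZ_{K_p^{\sharp},\Lambda^{\sharp}}$ becomes a containment relating $L_y$ and $\Lambda^{\sharp}\otimes W(k)$ (equivalently relating $(\Lambda^{\sharp})^{\cup}\otimes W(k)$ and $L_y$), and membership of $\cZ(x_0)$ and of the $\Lambda$-locus become the analogous statements for $x_0$ and for $\Lambda$. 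The crux is then the purely lattice-theoretic fact, to be proved from $\Lambda^{\sharp}=\Lambda\oplus \Zp(p^{-1}x_0)$, $(\Lambda^{\sharp})^{\cup}=\Lambda^{\vee}\oplus \Zp x_0$, the orthogonal decomposition $\L_0^{\sharp}=\L_0\perp \Qp x_0$, and $Q^{\sharp}(x_0)\in p\Zpt$, that a self-dual $W(k)$-lattice in $\L_{0,W(k)}^{\sharp}$ which is compatible with both $x_0$ and $\Lambda$ in the above sense automatically contains $p^{-1}x_0$. The proof I envisage uses the orthogonal projection onto the $x_0$-component: because the line $\Qp x_0$ carries no self-dual lattice (its norm has $\ord_p$ equal to $1$), a self-dual $L$ admits no orthogonal splitting along $\L_0\perp \Qp x_0$, and this, with $x_0\in L$, forces the $x_0$-component of $L$ to be $p^{-1}W(k)x_0$; writing $v=v_0+p^{-1}x_0\in L$ with $v_0\in \L_{0,W(k)}$, compatibility with $\Lambda$ forces $v_0\in \Lambda^{\vee}\otimes W(k)\subset \Lambda\otimes W(k)\subset L$, so $p^{-1}x_0=v-v_0\in L$. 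Finally I would pass from equality of underlying spaces to equality of closed formal subschemes, either by checking that $\delta_{G,G^{\sharp}}(\RZ_{K_p,\Lambda})$ is reduced by the method used for Proposition \ref{rdhs} in \cite{Li2018}, or by comparing complete local rings through Theorem \ref{hkrz} (ii)---the deformation condition attached to $\Lambda^{\sharp}$ and that attached to the pair $(\Lambda,x_0)$ cutting out the same ideal of $\widehat{\RZ}_{K_p^{\sharp},y}$ by the same lattice bookkeeping over Artinian bases. The step I expect to be the main obstacle is exactly this lattice-theoretic fact, together with keeping the two dualities (the dual $\vee$ in $\L_0$ and the dual $\cup$ in $\L_0^{\sharp}$) and the $\Qp$- versus $W(k)$-rationality of the various lattices straight.
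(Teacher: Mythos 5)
Your plan is internally coherent, but it is solving a harder problem than the paper actually poses, because it reads the defining conditions of $\RZ_{K_p,\Lambda}$ and $\RZ_{K_p^{\sharp},\Lambda^{\sharp}}$ with $\Lambda$ (respectively $\Lambda^{\sharp}$) acting, rather than with the dual lattices $\Lambda^{\vee}$ (respectively $\Lambda^{\sharp,\cup}$) acting. The latter is the reading that the rest of the paper uses: it is forced by the containment direction in Proposition~\ref{svhs}(ii) (where $y\in \RZ_{K_p^{\sharp},\Lambda}$ is characterized by $\Lambda^{\sharp}(L_y)\subset\Lambda$, i.e.\ $\Lambda^{\cup}_{W(k)}\subset L_y$), by the intersection formula in Corollary~\ref{rinh} (which gives $\Lambda_1\cap\Lambda_2$ rather than $\Lambda_1+\Lambda_2$), and by Definition~\ref{btst} (which requires $\RZ_{K_p,\Lambda'}\subset\RZ_{K_p,\Lambda}$ whenever $\Lambda'\subsetneq\Lambda$). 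With the dual-lattice reading, the proposition is a tautology: both $\delta_{G,G^{\sharp}}(\RZ_{K_p,\Lambda})$ and $\RZ_{K_p^{\sharp},\Lambda^{\sharp}}$ are defined by the condition that a given collection of quasi-endomorphisms lift integrally, and the two collections generate the same $\Zp$-lattice, namely $\Lambda^{\vee}\oplus\Zp x_0=\Lambda^{\sharp,\cup}$ (using $[p^{-1}x_0,x_0]^{\sharp}\in\Zpt$). This is precisely the paper's proof; it is a one-liner at the level of closed formal subschemes, with no detour through $k$-points, special lattices, or reducedness.

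Your lattice-theoretic "crux" is, in fact, correct as a standalone statement: if $L$ is self-dual, $x_0\in L$, and $\Lambda_{W(k)}\subset L$, then $p^{-1}x_0\in L$ (either $x_0$ is imprimitive in $L$ and you are done, or primitivity of $x_0$ plus self-duality of $L$ gives $v=v_0+u\,p^{-1}x_0\in L$ with $u\in W(k)^{\times}$; pairing $v$ against $\Lambda_{W(k)}\subset L$ shows $v_0\in\Lambda^{\vee}_{W(k)}\subset\Lambda_{W(k)}\subset L$, hence $p^{-1}x_0\in L$). But with the correct reading of the definitions this fact is never needed, and the bookkeeping overhead it generates (passing to $k$-points, then upgrading to formal schemes via reducedness or local models, which you leave unfinished) is avoidable. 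The efficient move you are missing is simply to replace "$\Lambda$ acts" by "$\Lambda^{\vee}$ acts" throughout and then invoke the displayed identity $\Lambda^{\sharp,\cup}=\Lambda^{\vee}\oplus\Zp x_0$.
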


\begin{proof}
This follows from the equality $\Lambda^{\sharp,\cup}=\Lambda^{\vee}\oplus \Zp x_0$ and the definitions of $\RZ_{K_p,\Lambda}$ and $\RZ_{K_p^{\sharp}}$. 
\end{proof}

\begin{cor}
\emph{For $\Lambda \in \VL(\L_0)$, we have $\RZ_{K_p,\Lambda}=\RZ_{K_p,\Lambda}^{\red}$. }
\end{cor}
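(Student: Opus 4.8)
The plan is to deduce the statement immediately from Proposition \ref{btrs} together with Proposition \ref{rdhs}, with no serious work required. First I would recall that, by Proposition \ref{vtdl} (i), the lattice $\Lambda^{\sharp}=\Lambda\oplus\Zp(p^{-1}x_0)$ attached to $\Lambda\in\VL(\L_0)$ is a vertex lattice in $\L_0^{\sharp}$, so that the closed formal subscheme $\RZ_{K_p^{\sharp},\Lambda^{\sharp}}$ of $\RZ_{K_p^{\sharp}}$ is defined. Applying Proposition \ref{rdhs} to $\Lambda^{\sharp}\in\VL(\L_0^{\sharp})$ then gives $\RZ_{K_p^{\sharp},\Lambda^{\sharp}}=\RZ_{K_p^{\sharp},\Lambda^{\sharp}}^{\red}$; in particular $\RZ_{K_p^{\sharp},\Lambda^{\sharp}}$ is reduced.

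Next I would transport this property along the isomorphism furnished by Proposition \ref{btrs}, which asserts that $\delta_{G,G^{\sharp}}\colon\RZ_{K_p}\hookrightarrow\RZ_{K_p^{\sharp}}$ restricts to an isomorphism of formal schemes $\RZ_{K_p,\Lambda}\xrightarrow{\cong}\RZ_{K_p^{\sharp},\Lambda^{\sharp}}$. Since reducedness is invariant under isomorphism, $\RZ_{K_p,\Lambda}$ is reduced, which is exactly the asserted equality $\RZ_{K_p,\Lambda}=\RZ_{K_p,\Lambda}^{\red}$.

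Thus there is no real obstacle here: the content of the corollary is entirely carried by Propositions \ref{btrs} and \ref{rdhs}, and the argument is purely formal. If one wished to avoid invoking Proposition \ref{rdhs}, an alternative route would be to describe $\RZ_{K_p,\Lambda}$ on field-valued points through the special-lattice dictionary of Proposition \ref{svhs} and to identify it with a reduced Deligne--Lusztig-type locus, but this is unnecessary given the results already established.
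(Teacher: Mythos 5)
Your proof is correct and follows exactly the paper's own argument: identify $\RZ_{K_p,\Lambda}$ with $\RZ_{K_p^{\sharp},\Lambda^{\sharp}}$ via Proposition \ref{btrs} and invoke the reducedness of the latter from Proposition \ref{rdhs}. The only difference is that you spell out the intermediate check (via Proposition \ref{vtdl} (i)) that $\Lambda^{\sharp}$ is indeed a vertex lattice, which the paper leaves implicit.
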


\begin{proof}
We have $\RZ_{K_p,\Lambda}=\RZ_{K_p^{\sharp}}$ by Proposition \ref{btrs}, and the right-hand side is a reduced scheme by Proposition \ref{rdhs}. Hence the assertion follows. 
\end{proof}

\begin{cor}\label{rinh}
\emph{For $\Lambda_1,\Lambda_2\in \VL(\L_0)$, we have
\begin{equation*}
\RZ_{K_p,\Lambda_1}\cap \RZ_{K_p,\Lambda_2}=
\begin{cases}
\RZ_{K_p,\Lambda_1\cap \Lambda_2}&\text{if }\Lambda_1\cap \Lambda_2\in \VL(\L_0),\\
\emptyset &\text{otherwise. }
\end{cases}
\end{equation*}}
\end{cor}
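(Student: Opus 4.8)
The plan is to transport the statement to the hyperspecial Rapoport--Zink space $\RZ_{K_p^{\sharp}}$ via Propositions \ref{btrs} and \ref{vtdl}, and then read it off from Proposition \ref{rzis}.

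First I would settle the underlying lattice bookkeeping. Since $\L_0^{\sharp}=\L_0\oplus \Qp x_0$ is a direct sum and $\Lambda_i^{\sharp}=\Lambda_i\oplus \Zp(p^{-1}x_0)$ for $\Lambda_i\in \VL(\L_0)$, one checks immediately that
\[
\Lambda_1^{\sharp}\cap \Lambda_2^{\sharp}=(\Lambda_1\cap \Lambda_2)\oplus \Zp(p^{-1}x_0),
\]
so in particular $p^{-1}x_0\in \Lambda_1^{\sharp}\cap \Lambda_2^{\sharp}$. By Proposition \ref{vtdl} (iii), a lattice in $\L_0^{\sharp}$ containing $p^{-1}x_0$ is a vertex lattice exactly when it is of the form $\Lambda^{\sharp}$ for the vertex lattice $\Lambda$ obtained as its intersection with $\L_0$; applied to $\Lambda_1^{\sharp}\cap \Lambda_2^{\sharp}$, whose intersection with $\L_0$ is $\Lambda_1\cap \Lambda_2$, this gives
\[
\Lambda_1^{\sharp}\cap \Lambda_2^{\sharp}\in \VL(\L_0^{\sharp})\iff \Lambda_1\cap \Lambda_2\in \VL(\L_0),
\]
and, when these hold, $\Lambda_1^{\sharp}\cap \Lambda_2^{\sharp}=(\Lambda_1\cap \Lambda_2)^{\sharp}$.

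Next I would use the closed immersion $\delta_{G,G^{\sharp}}\colon \RZ_{K_p}\hookrightarrow \RZ_{K_p^{\sharp}}$. Being a closed immersion (in particular a monomorphism), it carries the scheme-theoretic intersection $\RZ_{K_p,\Lambda_1}\cap \RZ_{K_p,\Lambda_2}$ formed in $\RZ_{K_p}$ onto $\delta_{G,G^{\sharp}}(\RZ_{K_p,\Lambda_1})\cap \delta_{G,G^{\sharp}}(\RZ_{K_p,\Lambda_2})$ formed in $\RZ_{K_p^{\sharp}}$, which by Proposition \ref{btrs} equals $\RZ_{K_p^{\sharp},\Lambda_1^{\sharp}}\cap \RZ_{K_p^{\sharp},\Lambda_2^{\sharp}}$. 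Proposition \ref{rzis} evaluates the latter: it is $\RZ_{K_p^{\sharp},\Lambda_1^{\sharp}\cap \Lambda_2^{\sharp}}$ if $\Lambda_1^{\sharp}\cap \Lambda_2^{\sharp}\in \VL(\L_0^{\sharp})$ and is empty otherwise. Feeding in the lattice computation above finishes the proof: if $\Lambda_1\cap \Lambda_2\notin \VL(\L_0)$ the intersection is empty, while if $\Lambda_1\cap \Lambda_2\in \VL(\L_0)$ it equals $\RZ_{K_p^{\sharp},(\Lambda_1\cap \Lambda_2)^{\sharp}}$, which is identified with $\RZ_{K_p,\Lambda_1\cap \Lambda_2}$ by Proposition \ref{btrs} applied to $\Lambda_1\cap \Lambda_2$. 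I do not anticipate a genuine difficulty here; the only point worth stating carefully is the compatibility of the scheme-theoretic intersection with $\delta_{G,G^{\sharp}}$, which holds precisely because $\delta_{G,G^{\sharp}}$ is a closed immersion and Proposition \ref{btrs} identifies each $\RZ_{K_p,\Lambda_i}$ with $\RZ_{K_p^{\sharp},\Lambda_i^{\sharp}}$.
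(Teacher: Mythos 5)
Your argument is correct and follows the same route as the paper: transport to the hyperspecial space via the isomorphisms $\RZ_{K_p,\Lambda_i}\cong\RZ_{K_p^{\sharp},\Lambda_i^{\sharp}}$ of Proposition \ref{btrs} and then apply Proposition \ref{rzis}; the only thing you add is the (correct) explicit lattice bookkeeping $\Lambda_1^{\sharp}\cap\Lambda_2^{\sharp}=(\Lambda_1\cap\Lambda_2)\oplus\Zp(p^{-1}x_0)$ together with Proposition \ref{vtdl}(iii), which the paper leaves implicit.
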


\begin{proof}
This follows from Propositions \ref{btrs} and \ref{rzis}. 
\end{proof}

Let $\Lambda \in \VL(\L_0)$. By composing the isomorphisms in Propositions \ref{btrs}, \ref{hprs} and \ref{odev}, we obtain an isomorphism
\begin{equation*}
r_{\Lambda}\colon \RZ_{K_p,\Lambda}^{(0)}\xrightarrow{\cong}S_{(t(\Lambda)-1)/2}. 
\end{equation*}

\begin{cor}\label{psnh}
\emph{For $\Lambda \in \VL(\L_0)$, $\RZ_{K_p,\Lambda}^{(0)}$ is irreducible and projective smooth of dimension $(t(\Lambda)-1)/2$. }
\end{cor}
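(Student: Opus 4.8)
The plan is to simply unwind the chain of isomorphisms that has already been assembled immediately before the statement. By Proposition \ref{btrs} the closed immersion $\delta_{G,G^{\sharp}}$ identifies $\RZ_{K_p,\Lambda}$ with $\RZ_{K_p^{\sharp},\Lambda^{\sharp}}$, and this identification is compatible with the decompositions into the pieces indexed by $c(\rho)$; hence it carries $\RZ_{K_p,\Lambda}^{(0)}$ isomorphically onto $\RZ_{K_p^{\sharp},\Lambda^{\sharp}}^{(0)}$. Next, Proposition \ref{hprs} (ii) gives $\RZ_{K_p^{\sharp},\Lambda^{\sharp}}^{(0)}\cong S_{t^{\sharp}(\Lambda^{\sharp})/2}^{\sharp,\pm}$, and by Proposition \ref{vtdl} (ii) we have $t^{\sharp}(\Lambda^{\sharp})=t(\Lambda)+1$, so the target is $S_{(t(\Lambda)+1)/2}^{\sharp,\pm}$. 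Finally, Proposition \ref{odev} (applied with $d=(t(\Lambda)+1)/2$, using Proposition \ref{qtvl} to see that $\Lambda^{\sharp}/\Lambda^{\sharp,\cup}$ with the induced form is the non-split even orthogonal space and that the perpendicular of the line spanned by the image of $x_0$ recovers $\Omega_{(t(\Lambda)-1)/2,0}$) identifies $S_{(t(\Lambda)+1)/2}^{\sharp,\pm}$ with $S_{(t(\Lambda)-1)/2}$. Composing these three isomorphisms yields the isomorphism $r_\Lambda\colon \RZ_{K_p,\Lambda}^{(0)}\xrightarrow{\cong}S_{(t(\Lambda)-1)/2}$ recorded just above the corollary.

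It then remains to read off the geometric properties from the right-hand side. Irreducibility and projective smoothness of $S_{(t(\Lambda)-1)/2}$ of dimension $(t(\Lambda)-1)/2$ are exactly what is asserted for $S_d^{\sharp,\pm}$ with $d=(t(\Lambda)+1)/2$ in Proposition \ref{sssm}, transported across the isomorphism of Proposition \ref{odev}; alternatively one can invoke directly the fact, already noted in the final sentence of Proposition \ref{hprs} (ii), that $\RZ_{K_p^{\sharp},\Lambda^{\sharp}}^{(0)}\cong S_{t^{\sharp}(\Lambda^{\sharp})/2}^{\sharp,\pm}$ is irreducible and projective smooth of dimension $(t^{\sharp}(\Lambda^{\sharp})/2)-1=(t(\Lambda)+1)/2-1=(t(\Lambda)-1)/2$. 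Either way the desired conclusion for $\RZ_{K_p,\Lambda}^{(0)}$ follows.

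There is essentially no obstacle here: the corollary is a bookkeeping consequence of results already in place. The only point requiring a moment's care is the index shift, namely checking consistently that passing from $\L_0$ to $\L_0^{\sharp}$ raises the type by $1$ (Proposition \ref{vtdl} (ii)) so that the orthogonal group dropping one dimension in Proposition \ref{odev} lands us back at $S_{(t(\Lambda)-1)/2}$ rather than at $S_{(t(\Lambda)+1)/2}$, and that $(t(\Lambda)-1)/2$ is indeed a non-negative integer, which holds because $t(\Lambda)\in\{1,3,\ldots,t_{\max}\}$ is odd by Corollary \ref{vltn}. Given that, the proof is just "combine Propositions \ref{btrs}, \ref{hprs}, \ref{odev} and \ref{sssm}."
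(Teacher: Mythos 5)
Your argument is essentially the paper's: the paper likewise constructs $r_\Lambda$ by composing the isomorphisms of Propositions \ref{btrs}, \ref{hprs} and \ref{odev}, then invokes Proposition \ref{sssm}. The second route you offer is the cleanest and is fully correct: by Proposition \ref{vtdl}(ii) and the final sentence of Proposition \ref{hprs}(ii), $\RZ_{K_p,\Lambda}^{(0)}\cong\RZ^{(0)}_{K_p^\sharp,\Lambda^\sharp}$ is irreducible and projective smooth of dimension $(t^\sharp(\Lambda^\sharp)/2)-1=(t(\Lambda)-1)/2$. There is, however, a dimension slip in the parenthetical of your first route: the orthogonal complement of the image of $x_0$ in $\Lambda^\sharp/\Lambda^{\sharp,\cup}\cong\Omega^\sharp_{(t(\Lambda)+1)/2,0}$ has dimension $t(\Lambda)=2\cdot\tfrac{t(\Lambda)+1}{2}-1$, so it is $\Omega_{(t(\Lambda)+1)/2,0}$ rather than $\Omega_{(t(\Lambda)-1)/2,0}$, and Proposition \ref{odev} preserves the index $d$, so the composite lands in $S_{(t(\Lambda)+1)/2}$, not $S_{(t(\Lambda)-1)/2}$. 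The paper's display for $r_\Lambda$ shows the same shift, and the ``dimension $d$'' in Proposition \ref{sssm} is incompatible with the $(t^\sharp(\Lambda)/2)-1$ in Proposition \ref{hprs}(ii); the consistent reading is that $S_d^{\sharp,\pm}$ has dimension $d-1$. None of this affects the corollary: the asserted dimension $(t(\Lambda)-1)/2$ is correct, and your second route establishes it without ambiguity.
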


\begin{proof}
This follows from the isomorphism $r_{\Lambda}$ and Proposition \ref{sssm}. 
\end{proof}

\begin{dfn}
We say that a special lattice $L$ in $\L_{0,W(k)}^{\sharp}$ is \emph{$x_0$-special} if $x_0\in L$. 
\end{dfn}
We denote by $\SpL_{x_0}(\L_{0,W(k)}^{\sharp})$ the set of $x_0$-special lattices in $\L_{0,W(k)}^{\sharp}$. 

\begin{prop}\label{rpsm}
\emph{The bijection in Proposition \ref{rphs} (ii) induces a bijection
\begin{equation*}
p^{\Z}\backslash \RZ_{K_{p}}(k)\cong \SpL_{x_0}(\L_{0,W(k)}^{\sharp}). 
\end{equation*}}
\end{prop}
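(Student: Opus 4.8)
The plan is to carry the question over to the bijection $y\mapsto L_y$ of Proposition \ref{rphs} (ii) and then cut out the $x_0$-special lattices using the description of the image of $\delta_{G,G^{\sharp}}$. First I would invoke Proposition \ref{embd} (ii): it identifies $\RZ_{K_p}$ with the closed formal subscheme of $\RZ_{K_p^{\sharp}}$ consisting of those $y$ for which $\rho_y^{\sharp}\circ \iota_{X_y^{\sharp}}(x_0)\circ (\rho_y^{\sharp})^{-1}$ lifts to an isogeny of $X_y^{\sharp}$. Hence, for a field extension $k/\Fpbar$, a point $y\in \RZ_{K_p^{\sharp}}(k)$ lies in $\RZ_{K_p}(k)$ exactly when this quasi-isogeny is effective over $k$. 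Passing to contravariant Dieudonn\'e modules and using the identification of $\D(X_y^{\sharp})_{\Q}$ with $\D(\X_0^{\sharp})_{\Q}$ given by $\rho_y^{\sharp}$, this quasi-isogeny acts on $M_y\subset \D(\X_0^{\sharp})_{\Q}$ as the operator $x_0\in \L_0^{\sharp}\subset V^{\sharp}_{\Qpb}\subset \End_{\Qpb}(\D(\X_0^{\sharp})_{\Q})$, so it is effective if and only if $x_0(M_y)\subset M_y$; thus $\RZ_{K_p}(k)=\{y\in \RZ_{K_p^{\sharp}}(k)\mid x_0(M_y)\subset M_y\}$.

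Next I would match the condition $x_0(M_y)\subset M_y$ with the defining condition $x_0\in L_y$ of an $x_0$-special lattice. The point is that $\L_0^{\sharp}$ is, by its construction in Section \ref{rzgs}, the $\Phi$-fixed subspace of $V^{\sharp}_{\Qpb}$, so $F\circ x_0=x_0\circ F$ on $\D(\X_0^{\sharp})_{\Q}$, whence also $V\circ x_0=x_0\circ V$ because $V=pF^{-1}$ on this isocrystal. Since $F$, and therefore $F^{-1}$, is bijective on $\D(\X_0^{\sharp})_{\Q}$, this gives
\begin{equation*}
x_0\bigl(F^{-1}(pM_y)\bigr)=F^{-1}\bigl(p\,x_0(M_y)\bigr),
\end{equation*}
so that $x_0(M_y)\subset M_y$ holds if and only if $x_0\bigl(F^{-1}(pM_y)\bigr)\subset F^{-1}(pM_y)$, i.e.\ if and only if $x_0\in L_y$, i.e.\ if and only if the special lattice $L_y$ is $x_0$-special. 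Combined with the first paragraph, this shows $y\in \RZ_{K_p}(k)$ if and only if $L_y\in \SpL_{x_0}(\L_{0,W(k)}^{\sharp})$.

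To conclude I would recall that $y\mapsto L_y$ induces the bijection $p^{\Z}\backslash \RZ_{K_p^{\sharp}}(k)\cong \SpL(\L_{0,W(k)}^{\sharp})$ of Proposition \ref{rphs} (ii), and that $\RZ_{K_p}(k)$ is a $p^{\Z}$-stable subset of $\RZ_{K_p^{\sharp}}(k)$ (the quasi-isogeny $\rho_y^{\sharp}\circ \iota_{X_y^{\sharp}}(x_0)\circ (\rho_y^{\sharp})^{-1}$ is unaffected by rescaling $\rho_y^{\sharp}$ by a power of $p$, as $p$ is central); by the equivalence just established this bijection carries $p^{\Z}\backslash \RZ_{K_p}(k)$ bijectively onto $\SpL_{x_0}(\L_{0,W(k)}^{\sharp})$, which is the assertion. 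I expect the main obstacle to be the dictionary in the first paragraph — verifying carefully, with the conventions fixed in Section \ref{rzgs}, that effectivity of $\rho_y^{\sharp}\circ \iota_{X_y^{\sharp}}(x_0)\circ (\rho_y^{\sharp})^{-1}$ is equivalent to $x_0(M_y)\subset M_y$; the genuinely substantive input is the commutation $F\circ x_0=x_0\circ F$ coming from $x_0\in\L_0^{\sharp}$ being $\Phi$-fixed, which is exactly what lets one pass between the lattices $M_y$ and $F^{-1}(pM_y)$ appearing in the two descriptions.
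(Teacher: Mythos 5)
Your proof is correct and takes essentially the same route as the paper: the paper's proof is the one-liner ``This follows from Proposition~\ref{embd}~(ii)'', and your argument supplies exactly the missing bookkeeping — translating effectivity of the quasi-isogeny attached to $x_0$ into $x_0(M_y)\subset M_y$, and using the commutation $Fx_0=x_0F$ (from $x_0$ being $\Phi$-fixed) to pass between $M_y$ and the lattice $F^{-1}(pM_y)$ entering the definition of $L_y$.
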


\begin{proof}
This follows from the Proposition \ref{embd} (ii). 
\end{proof}

\begin{prop}\label{svnh}
\emph{
\begin{enumerate}
\item For $L\in \SpL(\L_{0,W(k)})$, there is a minimum integer $d\in \{0,1,\ldots,(t_{\max}-1)/2\}$ such that
\begin{equation*}
L^{d}:=L+\cdots +\Phi^{d}(L)+\Zp(p^{-1}x_0)
\end{equation*}
is $\Phi$-stable. Moreover, the $\Phi$-fixed part $\Lambda(L)$ of $L^{d}\cap \L_{0}$ is a vertex lattice in $\L_0$ of type $2d+1$ such that $\Lambda^{\sharp}(L)\subset \Lambda(L)^{\sharp}$. 
\item For $\Lambda \in \VL(\L_0)$, the bijection $p^{\Z}\backslash \RZ_{K_{p}}(k)\cong \SpL(\L_{0,W(k)})$ induces a bijection 
\begin{align*}
p^{\Z}\backslash \RZ_{K_{p},\Lambda}(k) &\cong \{L\in \SpL_{x_0}(\L_{0,W(k)}^{\sharp})\mid \Lambda(L)\subset \Lambda \},\\
p^{\Z}\backslash \BT_{K_{p},\Lambda}(k) &\cong \{L\in \SpL_{x_0}(\L_{0,W(k)}^{\sharp})\mid \Lambda(L)=\Lambda \}. 
\end{align*}
\end{enumerate}}
\end{prop}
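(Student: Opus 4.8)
The plan is to deduce both statements from the hyperspecial case, Proposition~\ref{svhs}, via the closed immersion $\delta_{G,G^\sharp}\colon\RZ_{K_p}\hookrightarrow\RZ_{K_p^\sharp}$, carrying along the vector $x_0$. Two facts drive the argument: $x_0\in\L_0^\sharp$, so $\Phi$ fixes $x_0$ and $p^{-1}x_0$; and $Q^\sharp(x_0)\in p\Zpt$ by the construction of $V^\sharp$.

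For (i), I would take an $x_0$-special lattice $L$, regard it as a special lattice in $\L_{0,W(k)}^\sharp$, and apply Proposition~\ref{svhs}(i): let $e$ be minimal with $L^{(e)}=L+\cdots+\Phi^e(L)$ being $\Phi$-stable, and let $\Lambda^\sharp(L)$ be its $\Phi$-fixed part, a vertex lattice of type $2e$. Because $p^{-1}x_0$ is $\Phi$-fixed, $L^{(e)}+W(k)(p^{-1}x_0)$ is $\Phi$-stable, so a minimal $d\le e$ with $L^d$ $\Phi$-stable exists, and since any $\Phi$-stable lattice containing $L$ contains $L^{(e)}$, for that $d$ one has $L^d=L^{(e)}+W(k)(p^{-1}x_0)$, with $\Phi$-fixed part $\Lambda_d:=\Lambda^\sharp(L)+\Zp(p^{-1}x_0)$. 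The key step is to show $\Lambda_d$ is a vertex lattice in $\L_0^\sharp$ (it visibly contains $p^{-1}x_0$). For this I would first observe $x_0\in(\Lambda^\sharp(L))^\cup$: indeed $x_0=\Phi^i(x_0)\in\Phi^i(L)$ for all $i$, so $x_0$ lies in $\bigcap_{i=0}^e\Phi^i(L)$, the $W(k)$-dual of $L^{(e)}$ since $L$ is self-dual, and $x_0$ is $\Phi$-fixed. Combined with $Q^\sharp(x_0)\in p\Zpt$, checking $p\Lambda_d\subseteq\Lambda_d^\cup\subseteq\Lambda_d$ then reduces to a short computation (the inclusion $\Lambda_d^\cup\subseteq(\Lambda^\sharp(L))^\cup\subseteq\Lambda^\sharp(L)\subseteq\Lambda_d$ is immediate; the other uses $x_0\in(\Lambda^\sharp(L))^\cup$ and $Q^\sharp(x_0)\in p\Zpt$). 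Granting this, Proposition~\ref{vtdl}(iii) shows $\Lambda(L):=\Lambda_d\cap\L_0$ is a vertex lattice in $\L_0$ with $\Lambda(L)^\sharp=\Lambda_d\supseteq\Lambda^\sharp(L)$ and $t(\Lambda(L))=t^\sharp(\Lambda_d)-1$; and one identifies this $\Lambda(L)$ with the $\Phi$-fixed part of $L^d\cap\L_{0,W(k)}$ using the decomposition $L^d=(L^d\cap\L_{0,W(k)})\oplus W(k)(p^{-1}x_0)$, proved as in Proposition~\ref{vtdl}(iii): for $v\in L^{(e)}$ each homogeneous component lies in some $\Phi^i(L)\ni x_0$, so $[v,x_0]^\sharp\in W(k)$, and $Q^\sharp(x_0)\in p\Zpt$ then forces the $x_0$-component of $v$ into $W(k)(p^{-1}x_0)$. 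The remaining point is that $t^\sharp(\Lambda_d)=2d+2$, equivalently that the minimal $d$ above is exactly $(t(\Lambda(L))-1)/2$; as in the proof of Proposition~\ref{svhs}(i) this comes from analyzing how the chain $L^{(0)}\subseteq L^{(1)}\subseteq\cdots$ stabilizes (the value $d$ is $e-1$ or $e$ according to whether or not $p^{-1}x_0\in\Lambda^\sharp(L)$, the two possibilities for $t^\sharp(\Lambda_d)$ being $2e$ and $2e+2$). The bound $d\le(t_{\max}-1)/2$ is then forced by Corollary~\ref{vltn}(i), since $t(\Lambda(L))$ is the type of a vertex lattice in $\L_0$.

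For (ii), Proposition~\ref{btrs} identifies $\RZ_{K_p,\Lambda}$ with $\RZ_{K_p^\sharp,\Lambda^\sharp}$ via $\delta_{G,G^\sharp}$, and under the bijection of Proposition~\ref{svhs}(ii) together with the characterization of $\delta_{G,G^\sharp}$ in terms of $x_0$-special lattices (Propositions~\ref{rpsm} and \ref{embd}(ii)) this yields $p^{\Z}\backslash\RZ_{K_p,\Lambda}(k)\cong\{L\in\SpL_{x_0}(\L_{0,W(k)}^\sharp)\mid\Lambda^\sharp(L)\subseteq\Lambda^\sharp_{W(k)}\}$. It then remains to see, for $x_0$-special $L$, that $\Lambda^\sharp(L)\subseteq\Lambda^\sharp_{W(k)}$ is equivalent to $\Lambda(L)\subseteq\Lambda_{W(k)}$: from right to left, if $\Lambda(L)\subseteq\Lambda_{W(k)}$ then $\Lambda(L)^\sharp\subseteq\Lambda^\sharp_{W(k)}$, and $\Lambda^\sharp(L)\subseteq\Lambda(L)^\sharp$ by part (i); from left to right, use $p^{-1}x_0\in\Lambda^\sharp$ to get $\Lambda_d=\Lambda^\sharp(L)+\Zp(p^{-1}x_0)\subseteq\Lambda^\sharp_{W(k)}$, hence $\Lambda(L)=\Lambda_d\cap\L_{0,W(k)}\subseteq\Lambda^\sharp_{W(k)}\cap\L_{0,W(k)}=\Lambda_{W(k)}$. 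This gives the first bijection. The second, for $\BT_{K_p,\Lambda}$, follows by removing the proper closed substrata: by Definition~\ref{btst} and Corollary~\ref{rinh}, $p^{\Z}\backslash\BT_{K_p,\Lambda}(k)$ consists of those $L$ with $\Lambda(L)\subseteq\Lambda$ but $\Lambda(L)\not\subseteq\Lambda'$ for every vertex lattice $\Lambda'\subsetneq\Lambda$, and since $\Lambda(L)$ is itself a vertex lattice this happens precisely when $\Lambda(L)=\Lambda$.

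The hardest part is the vertex-lattice claim inside (i) — that $\Lambda_d$ is a vertex lattice whose type is exactly $2d+2$ — and in particular pinning down the minimal $d$. Once $x_0\in(\Lambda^\sharp(L))^\cup$ and $Q^\sharp(x_0)\in p\Zpt$ are in play, the lattice computations are short; the delicate piece is the stabilization analysis of the chain $(L^{(j)})_j$, which I would model on the proof of Proposition~\ref{svhs}(i). Everything else is bookkeeping with the orthogonal decomposition $\L_0^\sharp=\L_0\perp\Qp x_0$ and the established dictionary among $\RZ_{K_p}$, $\RZ_{K_p^\sharp}$, special lattices, and vertex lattices.
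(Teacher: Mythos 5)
Your proposal is correct and takes essentially the same route as the paper: both reduce (i) to Proposition~\ref{svhs}(i) and the dichotomy on whether $p^{-1}x_0$ lies in $L^{(e)}$ (so that the minimum $d$ is $e-1$ or $e$, with $\Lambda(L)^\sharp$ equal to $\Lambda^\sharp(L)$ or $\Lambda^\sharp(L)+\Zp(p^{-1}x_0)$ respectively), deferring the chain-stabilization analysis — the paper cites the argument of Lemma~5.22 of \cite{Oki2019} for exactly the step you flag as the hardest — and both reduce (ii) via Propositions~\ref{btrs} and~\ref{svhs}(ii) to the equivalence $\Lambda(L)\subset\Lambda\Leftrightarrow\Lambda^\sharp(L)\subset\Lambda^\sharp$, settled using $p^{-1}x_0\in\Lambda^\sharp$ and part~(i). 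You spell out a few verifications the paper leaves implicit (that $\Lambda_d=\Lambda^\sharp(L)+\Zp(p^{-1}x_0)$ is a vertex lattice, via $x_0\in\Lambda^\sharp(L)^\cup$ and $Q^\sharp(x_0)\in p\Zpt$, and the decomposition $L^d=(L^d\cap\L_{0,W(k)})\oplus W(k)(p^{-1}x_0)$), but the substance and the key deferral coincide.
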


\begin{proof}
(i): Let $L\in \SpL_{x_0}(\L_0^{\sharp})$, and $d\in \Z$ as in Proposition \ref{svhs} (i). It suffices to prove the following. 
\begin{itemize}
\item If $p^{-1}x_0\in L^{(d)}$, then $d-1$ is the minimum integer among $m\in \Z$ such that $L^{m}$ is $\Phi$-stable, and we have $\Lambda(L)^{\sharp}=\Lambda^{\sharp}(L)$. 
\item If $p^{-1}x_0\in L^{(d)}$ (this implies $d<t_{\max}/2$), then $d$ is the minimum integer among $m\in \Z$ such that $L^{m}$ is $\Phi$-stable, and we have $\Lambda(L)^{\sharp}=\Lambda^{\sharp}(L)+\Zp(p^{-1}x_0)$. 
\end{itemize}
These follow from the same argument as \cite[Lemma 5.22]{Oki2019}. 

(ii): By Propositions \ref{btrs} and \ref{svhs} (ii), it suffices to prove that we have $\Lambda(L)\subset \Lambda$ if and only if $\Lambda(L)^{\sharp}\subset \Lambda^{\sharp}$ for a $x_0$-special lattice in $\L_{0,W(k)}$. However, this follows from (i), since we have $p^{-1}x_0\in \Lambda^{\sharp}$ by definition. 
\end{proof}

\begin{thm}\label{rzrd}
\emph{
\begin{enumerate}
\item There is an equality
\begin{equation*}
\RZ_{K_p}^{\red}=\bigcup_{\Lambda\in \VL(\L_0)}\RZ_{K_p,\Lambda}. 
\end{equation*}
\item Every irreducible component of $\RZ_{K_p}^{(0),\red}$ is of the form $\RZ_{K_p,\Lambda}^{(0)}$ for some $\Lambda \in \VL(\L_0,t_{\max})$. 
\end{enumerate}}
\end{thm}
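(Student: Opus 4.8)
The plan is to derive both statements from the hyperspecial case, Theorem~\ref{bths}, by means of the dictionary between $\Fpbar$-points of $\RZ_{K_p}$ and $x_0$-special lattices provided by Propositions~\ref{rpsm} and~\ref{svnh}; statement~(ii) is then a formal consequence of~(i) together with the dimension formula of Corollary~\ref{psnh} and the existence of maximal vertex lattices in Corollary~\ref{vltn}.

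For~(i), put $Z:=\bigcup_{\Lambda\in\VL(\L_0)}\RZ_{K_p,\Lambda}$. Each $\RZ_{K_p,\Lambda}$ is a closed subscheme of $\RZ_{K_p}$ carried by $\delta_{G,G^{\sharp}}$ onto $\RZ_{K_p^{\sharp},\Lambda^{\sharp}}$ (Proposition~\ref{btrs}), hence is reduced by Proposition~\ref{rdhs}, and the family $\{\RZ_{K_p,\Lambda}\}_{\Lambda}$ is locally finite because $\{\RZ_{K_p^{\sharp},\Lambda'}\}_{\Lambda'}$ is (Theorem~\ref{bths}); thus $Z$ is a reduced closed subscheme of $\RZ_{K_p}^{\red}$, visibly stable under the central $p$-action. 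It therefore suffices to check that $Z$ contains every $\Fpbar$-point of $\RZ_{K_p}$. Given $y\in\RZ_{K_p}(\Fpbar)$, Proposition~\ref{rpsm} attaches to it an $x_0$-special lattice $L_y$, Proposition~\ref{svnh}~(i) produces a vertex lattice $\Lambda(L_y)\in\VL(\L_0)$, and Proposition~\ref{svnh}~(ii), applied with $\Lambda=\Lambda(L_y)$, places $y$ in $\RZ_{K_p,\Lambda(L_y)}(\Fpbar)\subseteq Z(\Fpbar)$. Since $\RZ_{K_p}^{\red}$ is locally of finite type over $\Fpbar$, hence Jacobson, a reduced closed subscheme containing all closed points must be the whole scheme, so $Z=\RZ_{K_p}^{\red}$. (Equivalently one may argue inside $\RZ_{K_p^{\sharp}}$, using $\RZ_{K_p}\cong\cZ(x_0)$ from Proposition~\ref{embd}~(ii): by Proposition~\ref{vtdl}~(iii) the $\Lambda'\in\VL(\L_0^{\sharp})$ with $p^{-1}x_0\in\Lambda'$ are exactly the lattices $\Lambda^{\sharp}$, each such $\RZ_{K_p^{\sharp},\Lambda'}$ is contained in $\cZ(x_0)$, and for $y\in\cZ(x_0)(\Fpbar)$ Proposition~\ref{svnh}~(i) gives $\Lambda^{\sharp}(L_y)\subseteq\Lambda(L_y)^{\sharp}$, whence $y\in\RZ_{K_p^{\sharp},\Lambda(L_y)^{\sharp}}$ by Proposition~\ref{svhs}~(ii).)

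For~(ii), intersecting the equality of~(i) with the open and closed formal subscheme $\RZ_{K_p}^{(0)}$ of Theorem~\ref{jcis} yields $\RZ_{K_p}^{(0),\red}=\bigcup_{\Lambda\in\VL(\L_0)}\RZ_{K_p,\Lambda}^{(0)}$, where each $\RZ_{K_p,\Lambda}^{(0)}$ is irreducible, closed, and of dimension $(t(\Lambda)-1)/2$ by Corollary~\ref{psnh}. Hence the generic point of any irreducible component $C$ lies in some $\RZ_{K_p,\Lambda_0}^{(0)}$, so $C\subseteq\RZ_{K_p,\Lambda_0}^{(0)}$; since the latter is irreducible and closed it is contained in a single component, which forces $C=\RZ_{K_p,\Lambda_0}^{(0)}$. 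It remains to rule out $t(\Lambda_0)<t_{\max}$: in that case Corollary~\ref{vltn}~(ii) provides $\Lambda_1\in\VL(\L_0,t_{\max})$ with $\Lambda_0\subsetneq\Lambda_1$, hence $\Lambda_0^{\sharp}\subsetneq\Lambda_1^{\sharp}$, and then $\RZ_{K_p,\Lambda_0}^{(0)}\subseteq\RZ_{K_p,\Lambda_1}^{(0)}$ by Propositions~\ref{btrs} and~\ref{rzis}; this inclusion is strict because $\dim\RZ_{K_p,\Lambda_0}^{(0)}=(t(\Lambda_0)-1)/2<(t_{\max}-1)/2=\dim\RZ_{K_p,\Lambda_1}^{(0)}$, contradicting the maximality of $C$. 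Therefore $t(\Lambda_0)=t_{\max}$.

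The genuinely non-formal input is Proposition~\ref{svnh}~(i): extracting the vertex lattice $\Lambda(L)\in\VL(\L_0)$ from an $x_0$-special lattice $L$ and determining exactly when $p^{-1}x_0$ enters $L^{(d)}$, which is settled by the argument of \cite[Lemma~5.22]{Oki2019}. Granting this, the remaining content is organizational, and the step I expect to require the most care is confirming that $\bigcup_{\Lambda}\RZ_{K_p,\Lambda}$ exhausts $\RZ_{K_p}^{\red}$ rather than a proper closed part of it — precisely where the point-counting dictionary and the Jacobson property of $\RZ_{K_p}^{\red}$ enter.
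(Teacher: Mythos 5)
Your proof is correct and follows essentially the same route as the paper: (i) is derived from the dictionary of Propositions~\ref{rpsm} and~\ref{svnh} (the paper states this even more tersely, simply citing Proposition~\ref{svnh}), and (ii) combines Corollary~\ref{vltn}~(ii), the inclusion relations among the strata coming from Corollary~\ref{rinh} (equivalently Propositions~\ref{btrs}+\ref{rzis}), and Corollary~\ref{psnh}. The only cosmetic difference is that the paper first collapses the union in~(i) to a union over $\VL(\L_0,t_{\max})$ before identifying components, whereas you argue directly from the generic point of a component and rule out non-maximal type by the dimension count; both variants rest on the same three ingredients.
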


\begin{proof}
(i): This is a consequence Proposition \ref{svnh}. 

(ii): By Corollaries \ref{vltn} (ii), \ref{rinh} and (i), we obtain
\begin{equation*}
\RZ_{K_p}^{\red}=\bigcup_{\Lambda\in \VL(\L_0,t_{\max})}\RZ_{K_p,\Lambda}. 
\end{equation*}
Hence the assertion follows by the above equality and Corollary \ref{psnh}. 
\end{proof}

Theorem \ref{rzrd} derives a locally closed stratification of $\RZ_{K_p}^{(0)}$. 

\begin{dfn}\label{btst}
For $\Lambda \in \VL(\L_0)$, we define a locally closed subscheme $\BT_{K_p,\Lambda}$ of $\RZ_{K_p}$ as follows: 
\begin{equation*}
\BT_{K_p,\Lambda}:=\RZ_{K_p,\Lambda}\setminus \left(\bigcup_{\Lambda'\subsetneq \Lambda}\RZ_{K_p,\Lambda'}\right). 
\end{equation*}
Moreover, put $\BT_{K_p,\Lambda}^{(0)}:=\BT_{K_p,\Lambda}\cap \RZ_{K_p}^{(0)}$. 
\end{dfn}

\begin{thm}\label{btnh}
\emph{
\begin{enumerate}
\item There is a locally closed stratification
\begin{equation*}
\RZ_{K_{p}}^{(0),\red}=\coprod_{\Lambda \in \VL(\L_0)}\BT_{K_{p},\Lambda}^{(0)}. 
\end{equation*}
\item For $\Lambda \in \VL(\L^{\sharp})$, we have an equality
\begin{equation*}
\RZ_{K_{p},\Lambda}^{(0),\red}=\coprod_{\Lambda' \subset \Lambda}\BT_{K_{p},\Lambda'}^{(0)}. 
\end{equation*}
\end{enumerate}}
\end{thm}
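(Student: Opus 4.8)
The plan is to reduce both assertions to the lattice-theoretic dictionary of Proposition~\ref{svnh}(ii) and then argue combinatorially with Theorem~\ref{rzrd}(i) and Corollary~\ref{rinh}. Since $\RZ_{K_p}^{(0),\red}$ is reduced and locally of finite type over $\Fpbar$, and each of $\BT_{K_p,\Lambda}$ and $\RZ_{K_p,\Lambda}$ is by construction the locally closed (resp.\ closed) reduced subscheme carried by a prescribed subset, both equalities may be checked on $k$-valued points for every field extension $k/\Fpbar$, which is exactly the generality of Proposition~\ref{svnh}. Throughout I would write, for $y\in\RZ_{K_p}(k)$, $L_y\in\SpL_{x_0}(\L_{0,W(k)}^{\sharp})$ for the associated $x_0$-special lattice and $\Lambda(L_y)\in\VL(\L_0)$ for the vertex lattice of Proposition~\ref{svnh}(i), so that Proposition~\ref{svnh}(ii) reads: $y\in\RZ_{K_p,\Lambda}$ iff $\Lambda(L_y)\subset\Lambda$, and $y\in\BT_{K_p,\Lambda}$ iff $\Lambda(L_y)=\Lambda$ (and likewise after intersecting with the open and closed $\RZ_{K_p}^{(0)}$).

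For (i): local closedness of $\BT_{K_p,\Lambda}^{(0)}$ is built into the definition ($\BT_{K_p,\Lambda}$ is open in the closed $\RZ_{K_p,\Lambda}$, and $\RZ_{K_p}^{(0)}$ is open and closed). Disjointness is immediate, since $y\in\BT_{K_p,\Lambda_1}^{(0)}\cap\BT_{K_p,\Lambda_2}^{(0)}$ forces $\Lambda_1=\Lambda(L_y)=\Lambda_2$; alternatively, without invoking Proposition~\ref{svnh}, $y\in\RZ_{K_p,\Lambda_1}\cap\RZ_{K_p,\Lambda_2}$ forces $\Lambda_1\cap\Lambda_2\in\VL(\L_0)$ with $y\in\RZ_{K_p,\Lambda_1\cap\Lambda_2}$ by Corollary~\ref{rinh}, and then $\Lambda_1\cap\Lambda_2\subsetneq\Lambda_1$ would contradict $y\in\BT_{K_p,\Lambda_1}$, so $\Lambda_1\subset\Lambda_2$ and symmetrically the reverse. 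For the covering, Theorem~\ref{rzrd}(i) gives $\RZ_{K_p}^{(0),\red}=\bigcup_{\Lambda}\RZ_{K_p,\Lambda}^{(0)}$, and every $y$ on the right lies in $\BT_{K_p,\Lambda(L_y)}^{(0)}$; equivalently, the set of $\Lambda$ with $y\in\RZ_{K_p,\Lambda}$ is nonempty and, by Corollary~\ref{rinh}, stable under intersection, hence has a least element $\Lambda_0$, and $y\in\BT_{K_p,\Lambda_0}^{(0)}$.

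For (ii): the key preliminary is that $\Lambda'\subset\Lambda$ in $\VL(\L_0)$ implies $\RZ_{K_p,\Lambda'}\subseteq\RZ_{K_p,\Lambda}$ as subsets, which I would read off from Proposition~\ref{svnh}(ii): $y\in\RZ_{K_p,\Lambda'}(k)$ gives $\Lambda(L_y)\subset\Lambda'\subset\Lambda$, hence $y\in\RZ_{K_p,\Lambda}(k)$. Consequently each $\BT_{K_p,\Lambda'}^{(0)}$ with $\Lambda'\subset\Lambda$ sits inside $\RZ_{K_p,\Lambda'}^{(0)}\subseteq\RZ_{K_p,\Lambda}^{(0)}=\RZ_{K_p,\Lambda}^{(0),\red}$ (the last scheme being reduced by the corollary following Proposition~\ref{btrs}), the pieces are pairwise disjoint by (i), and conversely any $y\in\RZ_{K_p,\Lambda}^{(0)}(k)$ satisfies $\Lambda(L_y)\subset\Lambda$ and so lies in $\BT_{K_p,\Lambda(L_y)}^{(0)}$; this yields $\RZ_{K_p,\Lambda}^{(0),\red}=\coprod_{\Lambda'\subset\Lambda}\BT_{K_p,\Lambda'}^{(0)}$. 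One can moreover note, via $t(\Lambda)-t(\Lambda')=2\length_{\Zp}(\Lambda/\Lambda')$ for $\Lambda'\subsetneq\Lambda$ together with Corollary~\ref{psnh}, that $\BT_{K_p,\Lambda}^{(0)}$ is open and dense in the irreducible scheme $\RZ_{K_p,\Lambda}^{(0)}$, which in particular re-proves Theorem~\ref{rzrd}(ii).

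I expect no serious obstacle once Proposition~\ref{svnh} is in hand: Theorem~\ref{btnh} is then a formal consequence of the vertex-lattice/special-lattice dictionary, Theorem~\ref{rzrd}(i), and Corollary~\ref{rinh}. The one point that deserves care is the direction of the inclusion $\RZ_{K_p,\Lambda'}\subseteq\RZ_{K_p,\Lambda}$ for $\Lambda'\subset\Lambda$, which is cleaner to extract from Proposition~\ref{svnh}(ii) than from the moduli-theoretic definition of $\RZ_{K_p,\Lambda}$; and, more substantively, essentially all of the real content has been pushed into Proposition~\ref{svnh}(i), namely that the minimal $\Phi$-stable lattice $L^d$ attached to an $x_0$-special lattice interacts correctly with $x_0$ and matches the hyperspecial construction of Proposition~\ref{svhs} after descent along $\L_0\hookrightarrow\L_0^{\sharp}$.
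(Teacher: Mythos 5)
Your proof is correct and supplies exactly the reasoning the paper leaves implicit: Theorem \ref{btnh} is stated directly after Definition \ref{btst} with no proof, preceded only by the sentence that Theorem \ref{rzrd} ``derives'' the stratification, so the intended argument is precisely the reduction to $k$-points via Proposition \ref{svnh}(ii), coverage via Theorem \ref{rzrd}(i), and disjointness/intersection behavior via Corollary \ref{rinh}, as you do. Your caution about the direction of $\RZ_{K_p,\Lambda'}\subseteq\RZ_{K_p,\Lambda}$ for $\Lambda'\subset\Lambda$ is well placed: taken literally, the moduli-theoretic definition of $\RZ_{K_p,\Lambda}$ (the locus where $\Lambda$ itself becomes integral) would give the reverse inclusion, and is inconsistent with Propositions \ref{svhs}(ii), \ref{svnh}(ii) and Corollary \ref{rinh} (and with the cited Howard--Pappas results, which use the dual convention for vertex lattices); reading the inclusion off the lattice dictionary of Proposition \ref{svnh}(ii), as you do, is what keeps the argument consistent.
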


Next, we consider the non-formally smooth locus $\RZ_{K_{p}}^{\nfs}$ of $\RZ_{K_{p}}$. 

\begin{thm}\label{sgt1}
\emph{There is an equality
\begin{equation*}
\RZ_{K_{p}}^{\nfs}=\coprod_{\Lambda \in \VL(\L_{0},1)}\RZ_{K_p,\Lambda}=\coprod_{\Lambda \in \VL(\L_{0},1)}\BT_{K_p,\Lambda}. 
\end{equation*}}
\end{thm}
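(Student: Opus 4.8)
The plan is to compare the two sides as reduced closed subschemes of $\RZ_{K_p}$, so that it suffices to match their $k$-valued points for all field extensions $k/\Fpbar$.

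First I would dispose of the rightmost equality by a lattice computation. If $\Lambda \in \VL(\L_0,1)$ and $\Lambda' \in \VL(\L_0)$ with $\Lambda' \subsetneq \Lambda$, then $\Lambda^{\vee} \subseteq (\Lambda')^{\vee} \subseteq \Lambda' \subsetneq \Lambda$, and duality with respect to the nondegenerate form on $\L_0$ gives $\length_{\Zp}((\Lambda')^{\vee}/\Lambda^{\vee}) = \length_{\Zp}(\Lambda/\Lambda')$, so
\begin{equation*}
t(\Lambda) = \length_{\Zp}(\Lambda/\Lambda') + t(\Lambda') + \length_{\Zp}((\Lambda')^{\vee}/\Lambda^{\vee}) = 2\length_{\Zp}(\Lambda/\Lambda') + t(\Lambda') \geq 3,
\end{equation*}
contradicting $t(\Lambda)=1$. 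Hence a type-$1$ vertex lattice has no proper vertex sublattice, so $\BT_{K_p,\Lambda} = \RZ_{K_p,\Lambda}$ by Definition \ref{btst}; and applying the same inequality to $\Lambda_1 \cap \Lambda_2 \subsetneq \Lambda_1$ for distinct $\Lambda_1,\Lambda_2 \in \VL(\L_0,1)$ shows $\Lambda_1\cap\Lambda_2 \notin \VL(\L_0)$, whence $\RZ_{K_p,\Lambda_1}\cap \RZ_{K_p,\Lambda_2} = \emptyset$ by Corollary \ref{rinh}. Thus both unions on the right are disjoint and literally equal, and it remains to prove $\RZ_{K_p}^{\nfs} = \bigcup_{\Lambda \in \VL(\L_0,1)} \RZ_{K_p,\Lambda}$.

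For this I would characterize the $k$-points of each side. By Theorem \ref{rzrg} (i), $y \in \RZ_{K_p}(k)$ lies in $\RZ_{K_p}^{\nfs}$ if and only if $\rho_y \circ \iota_{X_y}(x_0) \circ \rho_y^{-1}$ kills $\Lie(X_y)$. On the other hand, writing $L_y \in \SpL_{x_0}(\L_{0,W(k)}^{\sharp})$ for the $x_0$-special lattice attached to $y$ by Proposition \ref{rpsm} and $d(L_y)$ for the minimal integer with $L_y + \cdots + \Phi^{d}(L_y) + W(k)(p^{-1}x_0)$ being $\Phi$-stable (Proposition \ref{svnh} (i)), Proposition \ref{svnh} (ii) together with the minimality just established shows $y \in \bigcup_{\Lambda \in \VL(\L_0,1)} \RZ_{K_p,\Lambda}(k)$ if and only if $\Lambda(L_y) \in \VL(\L_0,1)$, i.e. if and only if $d(L_y)=0$, i.e. if and only if $\Phi(L_y) \subseteq L_y + W(k)(p^{-1}x_0)$. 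So the theorem reduces to the equivalence, for $y \in \RZ_{K_p}(k)$: the map $\iota_{X_y}(x_0)$ kills $\Lie(X_y)$ if and only if $\Phi(L_y) \subseteq L_y + W(k)(p^{-1}x_0)$.

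This equivalence is the main obstacle, and I would establish it by unwinding both conditions in terms of the Dieudonné lattice $M_y$ of $X_y$ inside $\D(\X_0)_{\Q}$. Using $\Fil^1(X_y) \cong F^{-1}(pM_y)/pM_y$ and the duality $\Lie(X_y) \cong (\Fil^1(X_y))^{*}$, the vanishing of $\iota_{X_y}(x_0)$ on $\Lie(X_y)$ translates into $x_0 \cdot F^{-1}(pM_y) \subseteq pM_y$ — a refinement of $x_0 \cdot F^{-1}(pM_y) \subseteq F^{-1}(pM_y)$, which already holds because $x_0 \in L_y$. Then, via the defining relation $L_y = \{v \in \L_{0,W(k)}^{\sharp} \mid v(F^{-1}(pM_y)) \subseteq F^{-1}(pM_y)\}$, the compatibility $F\circ v = \Phi(v)\circ F$ with $\Phi = \Ad(b_0\sigma)$, and the facts that $\Phi^2 = \sigma^2$ on $\L_{0,W(k)}^{\sharp}$ (from the explicit formula in Proposition \ref{hsbs} (ii), using $b_0^2 \in p^{-1}\Zpt$) and $\Phi(p^{-1}x_0) = p^{-1}x_0$ (as $x_0 \in \L_0^{\sharp}$ is $\Phi$-fixed), one rewrites the condition as $\Phi(L_y) \subseteq L_y + W(k)(p^{-1}x_0)$. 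This computation is the exact analogue of \cite[Lemma 5.22]{Oki2019}, already invoked in the proof of Proposition \ref{svnh} (i). Alternatively, the same equivalence can be obtained through the local model: the local model diagram \cite[Theorem 5.2]{Oki2020b} identifies $\RZ_{K_p}^{\nfs}$ \'etale-locally with the preimage of the unique non-smooth point $y_0$ of $M^{\loc}(G,\sigma(\mu),\bx)$ of Theorem \ref{qtrc} (ii), which by Corollary \ref{sghd} is cut out by $x_0(C\otimes_{\Zp}\Fpbar) = \cF_{i_{*}(y)}$, and this unwinds to the same lattice condition. Combining the two characterizations yields $\RZ_{K_p}^{\nfs} = \bigcup_{\Lambda \in \VL(\L_0,1)} \RZ_{K_p,\Lambda}$, which together with the first step completes the proof.
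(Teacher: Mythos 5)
Your proposal is correct and takes essentially the same route as the paper: translate the non-formally-smooth condition of Theorem \ref{rzrg}(i) (equivalently the proof of Proposition \ref{sgl0}) into a lattice condition on the Dieudonn\'e module $M_y$, then into a condition on the associated $x_0$-special lattice $L_y$, and match this with the type-$1$ strata via Proposition \ref{svnh} --- which is exactly the content the paper compresses into the citation of [Oki19, Theorem 5.28] together with the asserted characterization $F^{-1}(pM_y)=x_0(M_y)$ (your formulation $x_0\cdot F^{-1}(pM_y)\subseteq pM_y$ is equivalent to it by the index count coming from $x_0^2\in p\Zpt$). Your preliminary observation that a type-$1$ vertex lattice has no proper vertex sublattice, yielding both $\BT_{K_p,\Lambda}=\RZ_{K_p,\Lambda}$ and the disjointness of the union, is a correct (and worthwhile) detail that the paper leaves implicit.
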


\begin{proof}
The proof is the same as that of \cite[Theorem 5.28]{Oki2019} by using Proposition \ref{svnh} (ii) and the following assertion. 
\begin{itemize}
\item For $x\in p^{\Z}\backslash \RZ_{K_p}(k)$ where $k$ is an algebraically closed field of characteristic $p$, we have $x\in \RZ_{K_{p}}^{\nfs}(k)$ if and only if $F^{-1}(pM_y)=x_0(M_y)$. 
\end{itemize}
Note that this follows from the proof of Proposition \ref{sgl0}. 
\end{proof}

Finally, we give a connection between Bruhat--Tits strata of $\RZ_{K_{p}}$ and Deligne--Lusztig varieties for odd special orthogonal groups. We use the notations in Section \ref{dlso}. 

\begin{thm}\label{dlnh}
\emph{For $\Lambda \in \VL(\L_0)$, the isomorphism $r_{\Lambda}\colon \RZ_{K_p}\xrightarrow{\cong} S_{(t(\Lambda)-1)/2}$ induces an isomorphism
\begin{equation*}
\BT_{K_p,\Lambda}^{(0)}\cong X_{B_{t(\Lambda)}}(w_{t(\Lambda)}). 
\end{equation*}}
\end{thm}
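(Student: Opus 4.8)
The plan is to verify the claimed isomorphism on $k$-valued points for every algebraically closed field $k$ of characteristic $p$ and then to upgrade it to an isomorphism of schemes. Since $r_{\Lambda}$ is, by construction, the composite of the scheme isomorphisms of Propositions \ref{btrs}, \ref{hprs} and \ref{odev}, and since both $\BT_{K_p,\Lambda}^{(0)}$ and the target stratum are reduced locally closed subschemes of $\RZ_{K_p,\Lambda}^{(0)}\cong S_{(t(\Lambda)-1)/2}$ (the domain is reduced because $\RZ_{K_p,\Lambda}=\RZ_{K_p,\Lambda}^{\red}$, via Propositions \ref{btrs} and \ref{rdhs}; the target is reduced by Proposition \ref{oksf}, and dense in $S_{(t(\Lambda)-1)/2}$ by Corollary \ref{psnh}), it is enough to show that $r_{\Lambda}$ carries the underlying set of $\BT_{K_p,\Lambda}^{(0)}$ onto that of the open stratum $X_{B_{t(\Lambda)}}(w_{t(\Lambda)})$. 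For this I would compare $k$-points for all such $k$.

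First I would unwind $r_{\Lambda}$ on $k$-points. By Propositions \ref{rkhs} and \ref{odev}, a class of $x_0$-special lattices $L\in\SpL_{x_0}(\L_{0,W(k)}^{\sharp})$ with $\Lambda(L)\subset\Lambda$ is sent to the reduction of $L$ modulo $\Lambda^{\sharp,\cup}_{W(k)}$, intersected with the ``odd part'' $\Lambda/\Lambda^{\vee}\otimes_{\Fp}k\subset\Lambda^{\sharp}/\Lambda^{\sharp,\cup}\otimes_{\Fp}k$, viewed as a point of $S_{(t(\Lambda)-1)/2}(k)$ through the identification of Proposition \ref{qtvl}. On the domain side, Proposition \ref{svnh}(ii) identifies $p^{\Z}\backslash\BT_{K_p,\Lambda}^{(0)}(k)$ with the set of $L$ for which $\Lambda(L)=\Lambda$ exactly, that is, for which the minimal $d$ making $L+\dots+\Phi^{d}(L)+\Zp(p^{-1}x_0)$ a $\Phi$-stable lattice equals $(t(\Lambda)-1)/2$. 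On the target side, the stratum $X_{B_i}(w_i)$ of $S_{(t(\Lambda)-1)/2}$ containing a given isotropic line is detected by the least number of $\sigma$-translates one must adjoin to its reduction to span a $\sigma$-stable subspace of $\Lambda/\Lambda^{\vee}\otimes_{\Fp}k$. The key step is to verify that this number coincides with the invariant $d$ above, so that $\Lambda(L)=\Lambda$ becomes equivalent to $r_{\Lambda}(L)$ lying in the open (Coxeter) stratum. This is the same type of lattice computation as \cite[Lemma 5.22]{Oki2019}, now relativized over $\Lambda^{\sharp}$; combined with the closure relations in Proposition \ref{oksf}, the incidence relations among the vertex sublattices of $\Lambda$ (Corollary \ref{rinh}), and Theorem \ref{btnh}(ii), it matches the two stratifications stratum by stratum, and in particular their open strata.

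I expect the computation in the second paragraph to be the main obstacle: one must track the filtration $L\subset L+\Phi(L)\subset\cdots$, which governs $\Lambda(L)$, through the passage from $\Lambda^{\sharp}$ to $\Lambda$ and the reduction modulo $\Lambda^{\sharp,\cup}$, and show that it becomes precisely the filtration by $\sigma$-powers that cuts out the Deligne--Lusztig stratification of $S_{(t(\Lambda)-1)/2}$ recorded in Proposition \ref{oksf}. Here one uses that every isomorphism entering the definition of $r_{\Lambda}$ is equivariant for the relevant $\GSpin$-actions and compatible with the closed formal subschemes $\RZ_{K_p,\Lambda'}$ for $\Lambda'\subsetneq\Lambda$; granting the corresponding computation of \cite{Oki2019} in its $x_0$-special form, the statement of the theorem follows.
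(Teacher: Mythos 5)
Your proposal is correct and follows the same route the paper takes, which is to transfer the argument of \cite[Theorem 6.9]{Oki2019}: reduce to a set-theoretic identification of reduced locally closed subschemes of $S_{(t(\Lambda)-1)/2}$, then match the invariant $d$ of an $x_0$-special lattice (the minimal $d$ with $L+\cdots+\Phi^d(L)+\Zp(p^{-1}x_0)$ $\Phi$-stable, via Proposition \ref{svnh}) with the Deligne--Lusztig stratum index from Proposition \ref{oksf} under $r_{\Lambda}$, exactly as in the lattice computation of \cite[Lemma 5.22]{Oki2019}. The paper's own proof is just the citation to that argument, so your outline fills in the implicit steps rather than departing from them.
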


\begin{proof}
This follows from the same argument as \cite[Theorem 6.9]{Oki2019} by using the definition of $r_{\Lambda}$. 
\end{proof}

\section{Basic loci of Shimura varieties for spinor similitude groups}\label{bslc}

In this section, we consider the basic loci of the Kisin--Pappas integral model $\sS_{K}$ of the Shimura variety $\Sh_{K}(\bG,\bX)$, where $(\bG,\bX)$ is as in Section \ref{kpim}. We denote by $(\widehat{\sS}_{K,\Zpb})_{/\sSbar_{K}^{\bs}}$ the completion of $\sS_{K}\otimes_{\Z_{p}}\Zpb$ along the basic locus $\sSbar_{K}^{\bs}$. On the other hand, let $\bI$ be the unique inner form of $\bG$ which is anisotropic modulo center over $\R$ satisfying
\begin{equation*}
\bI \otimes_{\Q}\Ql \cong
\begin{cases}
\bG\otimes_{\Q}\Ql&\text{if }\ell \neq p,\\
J&\text{if }\ell=p. 
\end{cases}
\end{equation*}

\begin{thm}(\cite[Theorem 6.1]{Oki2020b})\label{unif}
\emph{There is an isomorphism of formal schemes
\begin{equation*}
\bI(\Q)\backslash (\RZ_{K_p}\times \bG(\A_f^p)/K^p)\cong (\widehat{\sS}_{K,\Zpb})_{/\sSbar_{K}^{\bs}}. 
\end{equation*}}
\end{thm}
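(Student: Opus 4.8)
The plan is to prove Theorem~\ref{unif} by the classical Rapoport--Zink uniformization method, adapted to the Hodge-type setting of Kisin--Pappas models, and I would begin by constructing the uniformization morphism. Fix $z_0 \in \sSbar_{K}^{\bs}(\Fpbar)$; since every point of the basic locus realizes the class $[b_0]$, there is a quasi-isogeny $j_0 \colon \X_0 \to \cA_{z_0}[p^{\infty}]$ compatible with the polarizations up to $\Qpt$-scalar, giving a datum $\widetilde{z}_0 = (z_0, j_0)$ and hence, by the construction of Section~\ref{rzsp}, a morphism $\Theta_{\widetilde{z}_0} \colon \RZ_{K_p} \to \sS_{K,\Zpb}$. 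Twisting the prime-to-$p$ level structure by $g \in \bG(\A_f^p)$ and passing to the limit over $K^p$ assembles these into a morphism $\Theta \colon \RZ_{K_p} \times \bG(\A_f^p)/K^p \to \sS_{K,\Zpb}$. Using the rigidity of quasi-isogenies on nilpotent thickenings (\cite[p.~52]{Rapoport1996}), $\Theta$ factors through $(\widehat{\sS}_{K,\Zpb})_{/\sSbar_{K}^{\bs}}$, and it is equivariant for the diagonal action of $\bI(\Q)$, where $\bI$ is realized as the group of self-quasi-isogenies of $(\X_0,\lambda_0)$ preserving the crystalline tensors combined with automorphisms of the prime-to-$p$ rational Tate modules of $\cA_{z_0}$; by basicness of $[b_0]$ this is an inner form of $\bG$ with $\bI \otimes_{\Q}\Qp \cong J$ and $\bI \otimes_{\Q}\Ql \cong \bG \otimes_{\Q}\Ql$ for $\ell \neq p$, as in the statement. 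Since $K^p$ is neat and $\bI$ is anisotropic modulo center over $\R$, the source quotient is again a formal scheme, and $\Theta$ descends to $\overline{\Theta} \colon \bI(\Q) \backslash (\RZ_{K_p} \times \bG(\A_f^p)/K^p) \to (\widehat{\sS}_{K,\Zpb})_{/\sSbar_{K}^{\bs}}$.

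Next I would show $\overline{\Theta}$ is an isomorphism. As both sides are formal schemes locally formally of finite type over $\spf \Zpb$, it suffices to check that $\overline{\Theta}$ is bijective on $\Fpbar$-points and induces isomorphisms on complete local rings. For the completions: at $y \in \RZ_{K_p}(\Fpbar)$, Theorem~\ref{hkrz}(ii) identifies $\widehat{\RZ}_{K_p,y}$ with the deformation space of $(X_y,\lambda_y)$ together with its crystalline tensors, and Proposition~\ref{dfp1} identifies $\widehat{\sS}_{K,z}$ at $z=\Theta(y)$ with the deformation space of $(\cA_z[p^{\infty}],\lambda_z)$ together with its crystalline tensors; the quasi-isogeny $\rho_y$ matches the two triples, so $\overline{\Theta}$ is an isomorphism on completions (it is insensitive to the discrete quotient, $\bI(\Q)$ acting freely when $K^p$ is neat). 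For the points: one shows that $\sSbar_{K}^{\bs}(\Fpbar)$ forms a single quasi-isogeny class, i.e. every $z \in \sSbar_{K}^{\bs}(\Fpbar)$ is linked to $z_0$ by a quasi-isogeny $\cA_{z_0}[p^{\infty}]\to \cA_z[p^{\infty}]$ respecting polarizations and crystalline tensors, the prime-to-$p$ Tate modules then being matched by an element of $\bG(\A_f^p)$. This yields a bijection $\sSbar_{K}^{\bs}(\Fpbar)\cong \bI(\Q)\backslash(X_{\sigma(\mu)}(b_0)_{\Kb_p}(\Fpbar)\times \bG(\A_f^p)/K^p)$, and Theorem~\ref{hkrz}(i) identifies the right-hand side with the point set of the source of $\overline{\Theta}$. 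Combining the two statements, $\overline{\Theta}$ is an isomorphism of formal schemes.

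The main obstacle is this last point: the assertion that $\sSbar_{K}^{\bs}$ is a single quasi-isogeny class whose automorphism group is exactly $\bI$. For PEL Shimura varieties this is Rapoport--Zink's theorem, but for Kisin--Pappas models it requires the theory of isogeny classes in characteristic $p$ for Hodge-type integral models (after Kisin and Hamacher--Kim) together with the identification of the inner twist attached to the basic class $[b_0]$ with the global group $\bI$ described above. In the present paper this step is supplied by \cite[Theorem 6.1]{Oki2020b}, which is invoked directly; the only remaining task here is to record it in the normalization used in Section~\ref{mtsh} and the subsequent applications.
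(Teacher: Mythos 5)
The paper does not prove this theorem: it is imported wholesale from \cite[Theorem 6.1]{Oki2020b}, so there is no internal argument to compare against. Your sketch is the standard Rapoport--Zink uniformization strategy in the Hodge-type/Kisin--Pappas setting (construct $\Theta$ from a base point $\widetilde{z}_0$, check isomorphy on complete local rings via Theorem \ref{hkrz}(ii) and Proposition \ref{dfp1}, and reduce bijectivity on $\Fpbar$-points to the single-isogeny-class statement for the basic locus), and you correctly isolate that last step --- together with the identification of the global inner form $\bI$ --- as the genuinely hard input that the citation supplies; this is consistent with how the paper uses the result.
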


\begin{thm}\label{ccl1}
\emph{
\begin{enumerate}
\item The basic locus $\sSbar_{K}^{\bs}$ is pure of dimension $(t_{\max}-1)/2$. Every irreducible component of $\sSbar_{K}^{\bs}$ is birational to a smooth projective variety. 
\item Let $\pi_0(\sSbar_{K}^{\bs})$ be the set of connected components of $\sSbar_{K}^{\bs}$. Then there is a bijection
\begin{equation*}
\pi_0(\sSbar_{K}^{\bs})\cong \bI(\Q)\backslash (\Z \times \bG(\A_f^p)/K^p). 
\end{equation*}
\item Let $\Irr(\sSbar_{K}^{\bs})$ be the set of irreducible components of $\sSbar_{K}^{\bs}$. Then there is a bijection
\begin{equation*}
\Irr(\sSbar_{K}^{\bs})\cong \bI(\Q)\backslash (\VL(\L_0,t_{\max})\times \bG(\A_f^p)/K^p). 
\end{equation*}
\end{enumerate}}
\end{thm}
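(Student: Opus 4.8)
The plan is to follow the strategy of \cite{Vollaard2010}: combine the $p$-adic uniformization theorem (Theorem \ref{unif}) with the structural description of $\RZ_{K_p}$ obtained in Section \ref{rznh} (Theorems \ref{jcis}, \ref{rzrd}, \ref{btnh}, Corollary \ref{psnh}).

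\emph{Descent to the Rapoport--Zink space.} First I would pass to underlying reduced $\Fpbar$-schemes in Theorem \ref{unif}: the reduced subscheme of $(\widehat{\sS}_{K,\Zpb})_{/\sSbar_K^{\bs}}$ is $\sSbar_K^{\bs}$ (with its reduced structure), and that of $\bI(\Q)\backslash(\RZ_{K_p}\times\bG(\A_f^p)/K^p)$ is $\bI(\Q)\backslash(\RZ_{K_p}^{\red}\times\bG(\A_f^p)/K^p)$, so
\[
\sSbar_K^{\bs}\;\cong\;\bI(\Q)\backslash\bigl(\RZ_{K_p}^{\red}\times\bG(\A_f^p)/K^p\bigr)
\]
Hecke-equivariantly. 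Since $\bI$ is anisotropic modulo centre over $\R$, $\bI(\Q)$ is discrete in $J(\Qp)\times\bG(\A_f^p)$, and since $K^p$ is neat the action on $\RZ_{K_p}^{\red}\times\bG(\A_f^p)/K^p$ is free with quotient map étale and surjective (the standard bookkeeping of \cite[\S 6]{Rapoport1996}); in particular $\sSbar_K^{\bs}$ is étale-locally isomorphic to $\RZ_{K_p}^{\red}$.

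\emph{Part (i).} By Theorem \ref{thl1}(i) all $\RZ_{K_p}^{(i)}$ are isomorphic, and $\RZ_{K_p}^{(0),\red}$ is pure of dimension $(t_{\max}-1)/2$ with smooth irreducible components; hence the same holds for $\RZ_{K_p}^{\red}$, and purity of $\sSbar_K^{\bs}$ of that dimension follows from the displayed isomorphism since the quotient map is étale and surjective. An irreducible component $Z$ of $\sSbar_K^{\bs}$ is the image of some $\RZ_{K_p,\Lambda}^{(i)}\times\{gK^p\}$ with $\Lambda\in\VL(\L_0,t_{\max})$; by Corollary \ref{psnh} (via Propositions \ref{sssm}, \ref{odev}) this $\RZ_{K_p,\Lambda}^{(i)}$ is smooth projective irreducible, and using that $\Lambda$ is of maximal type, the intersection pattern of Corollary \ref{rinh} and the stratification of Theorem \ref{btnh}, together with neatness of $K^p$ (which kills the finite stabiliser), the quotient map is injective on the open stratum $\BT_{K_p,\Lambda}^{(i)}\times\{gK^p\}$. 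Thus $Z$ is projective and birational to the smooth projective $\RZ_{K_p,\Lambda}^{(i)}$.

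\emph{Parts (ii) and (iii).} Taking $\pi_0$ and using $\pi_0(\RZ_{K_p})\cong\Z$ from Theorem \ref{jcis}(iii) (with its $J(\Qp)$-action through $\ord_p\circ\sml_V$) gives $\pi_0(\sSbar_K^{\bs})\cong\bI(\Q)\backslash(\Z\times\bG(\A_f^p)/K^p)$, which is (ii). For (iii), Theorem \ref{rzrd}(ii) with the injectivity of $\Lambda\mapsto\RZ_{K_p,\Lambda}^{(0)}$ (from Theorem \ref{btnh}(i)) identifies $\Irr(\RZ_{K_p}^{(0),\red})$ with $\VL(\L_0,t_{\max})$, whence $\Irr(\RZ_{K_p}^{\red})\cong\Z\times\VL(\L_0,t_{\max})$ with $\bI(\Q)$-equivariant maps to $\pi_0=\Z$ and to $\VL(\L_0,t_{\max})$ (the latter being $(i,\Lambda)\mapsto\Lambda$). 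Dividing by $\bI(\Q)$ along the second projection yields a surjection $\Irr(\sSbar_K^{\bs})\twoheadrightarrow\bI(\Q)\backslash(\VL(\L_0,t_{\max})\times\bG(\A_f^p)/K^p)$, which is bijective provided $\stab_{\bI(\Q)}(\Lambda,gK^p)$ surjects onto the $\Z$-fibre, i.e. provided $\ord_p\circ\sml_{\L_0}$ is surjective on $\stab_{\bI(\Q)}(\Lambda)\cap gK^pg^{-1}$. This surjectivity is the main obstacle: I would prove it by choosing, for $\Lambda\in\VL(\L_0,t_{\max})$, a vector $v\in\Lambda$ with $\ord_pQ^{\sharp}(v)$ odd (anisotropic modulo $\Lambda^{\vee}$) and $w\in\Lambda^{\vee}$ with $Q^{\sharp}(w)\in\Zpt$, so that the Clifford product $vw$ lies in $\GSpin(\L_0)(\Qp)$, stabilises $\Lambda$, and has $\ord_p\sml_{\L_0}(vw)$ odd; one then spreads this out to an element of $\bI(\Q)$ lying in $gK^pg^{-1}$ at the remaining finite places by strong approximation for the simply connected $\bI^{\der}$, which applies because $\L_0$ carries vertex lattices of type $1$ (Corollary \ref{vltn}) and is therefore isotropic, making $\bI^{\der}(\Qp)$ non-compact. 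This is exactly what makes the $\Z$-index collapse in (iii) while surviving in (ii).
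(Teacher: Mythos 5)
Your handling of (i) and (ii) essentially reproduces the paper's argument: pass to reduced subschemes in the uniformization isomorphism of Theorem \ref{unif}, use the freeness of the $\bI(\Q)$-action (via the $\Gamma_i$-quotients, or, as you phrase it, the \'etale covering structure), and then read off dimension/smoothness of components from Corollary \ref{psnh} and take $\pi_0$ using Theorem \ref{jcis}. No essential difference there.

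For (iii) you go well beyond what the paper writes, and you have put your finger on a real subtlety. The direct output of the uniformization together with $\Irr(\RZ_{K_p}^{(0),\red})\cong\VL(\L_0,t_{\max})$ is
\[
\Irr(\sSbar_K^{\bs})\;\cong\;\bI(\Q)\backslash\bigl(\Z\times\VL(\L_0,t_{\max})\times\bG(\A_f^p)/K^p\bigr),
\]
because $\Irr(\RZ_{K_p}^{\red})=\{\RZ_{K_p,\Lambda}^{(i)}:(i,\Lambda)\in\Z\times\VL(\L_0,t_{\max})\}$ and, by Propositions \ref{btrs}, \ref{hprs} and the $p^{\Z}$-analysis in \cite{Howard2017}, $\RZ_{K_p,\Lambda}^{(i)}$ is non-empty and irreducible for \emph{every} $i$. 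The paper's one-line ``same as (ii)'' does not address the needed collapse of the $\Z$-factor, and you are right that the assertion is equivalent to: for each $(\Lambda,gK^p)$ the subgroup $\stab_{\bI(\Q)}(\Lambda,gK^p)$ contains an element $\gamma$ with $\ord_p\sml_V(\gamma)=1$.

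However, your proposed construction of such a $\gamma$ has a gap precisely in the boundary case. You take $v\in\Lambda$ anisotropic mod $\Lambda^{\vee}$ (so $Q^{\sharp}(v)\in p^{-1}\Zpt$) and $w\in\Lambda^{\vee}$ with $Q^{\sharp}(w)\in\Zpt$, and then $vw\in\GSpin(\L_0)(\Qp)$ stabilises $\Lambda$ with $\ord_p\sml(vw)=\ord_pQ^{\sharp}(v)+\ord_pQ^{\sharp}(w)=-1$, which is what you want. This works when $t_{\max}<n$, since then $\Lambda^{\vee}/p\Lambda$ is a non-zero anisotropic $\Fp$-quadratic space and $w$ exists. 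But when $n$ is odd and $\varepsilon(V)\neq(p,-1)_p^{(n-1)/2}$ one has $t_{\max}=n$, so $\Lambda^{\vee}=p\Lambda$ and $Q^{\sharp}(\Lambda^{\vee})\subset p\Zp$: no such $w$ exists. In fact in this case $(\Lambda,pQ^{\sharp})$ is a self-dual $\Zp$-lattice, the spinor norm of $\SO(\Lambda)(\Zp)$ (with respect to $Q^{\sharp}$, which agrees mod squares with that for $pQ^{\sharp}$ on even products of reflections) lands in $\Zpt/(\Zpt)^2$, and $\sml_V$ restricted to the centre is $c\mapsto c^2$; so $\ord_p\sml_V$ is identically even on $\stab_{J(\Qp)}(\Lambda)$. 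Thus no choice of $\gamma$ can make the $\Z$-fibre collapse by your mechanism, and your argument does not close in the case $t_{\max}=n$. You should either supply a separate argument for that case or flag it explicitly; as written, the strong-approximation step (which itself needs a word for $n=3,4$ where $\L_0$ might a priori be anisotropic) rests on an existence statement that fails exactly when $t_{\max}=n$.
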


\begin{proof}
(i): Take a complete representative $g_1,\ldots,g_m$ of $\bI(\Q)\backslash \bG(\A_f^p)/K^p$, and write
\begin{equation*}
\Gamma_{i}:=\bI(\Q)\cap (J(\Qp)\times g_iK^pg_i^{-1})\subset J(\Qp). 
\end{equation*}
Note that $\Gamma_i$ is discrete, torsion free and cocompact modulo center. Take $\Lambda \in \VL(\L_0,t_{\max})$. It suffices to prove that the canonical morphism $r_{i}\colon \RZ_{K_p}^{(0)}\rightarrow \Gamma_i \backslash \RZ_{K_p}$ induces an isomorphism $\BT_{K_p,\Lambda}^{(0)}\cong r_i(\BT_{K_p,\Lambda}^{(0)})$. By Corollary \ref{rinh}, it is reduced to prove $g\bullet \Lambda \neq \Lambda$ for $g\in J(\Qp)$ with $\sml_{V}(g)\in \Zpt$. However, this follows from the compactness of the stabilizer of $\Lambda$ in $\Qp$ and the fact that $\Gamma_i$ is discrete and torsion free. 

(ii): This follows from Theorem \ref{rzrd} (i) by taking connected components of the both-hand sides of the isomorphism in Theorem \ref{unif}. 

(iii): The proof is the same as (ii) by using Theorems \ref{unif} and \ref{rzrd} (ii). 
\end{proof}

\begin{thm}\label{sgrs}
\emph{There is a bijection $\sS_{K,\Zpb}^{\nsm}\cong \bI(\Q)\backslash (\VL(\L_0,1) \times \bG(\A_f^p)/K^p)$. }
\end{thm}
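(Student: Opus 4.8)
The plan is to deduce the statement from the $p$-adic uniformization of $\sSbar_{K}^{\bs}$ (Theorem \ref{unif}) together with the determination of the non-formally smooth locus of $\RZ_{K_p}$ in Theorem \ref{sgt1}, transporting the non-smoothness condition across the uniformization isomorphism. First I would recall from Theorem \ref{sgbs}(iii) that $\sS_{K,\Zpb}^{\nsm}$ is contained in $\sSbar_{K}^{\bs}$, so it coincides with the non-formally smooth locus of the formal completion $(\widehat{\sS}_{K,\Zpb})_{/\sSbar_{K}^{\bs}}$; Theorem \ref{unif} then identifies this with the non-formally smooth locus of $\bI(\Q)\backslash(\RZ_{K_p}\times\bG(\A_f^p)/K^p)$.

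The key step is to check that, under this identification, a point corresponding to $(y,gK^p)$ with $y\in\RZ_{K_p}(\Fpbar)$ is non-formally smooth if and only if $y$ is a non-formally smooth point of $\RZ_{K_p}$. This follows once one unwinds the construction of the uniformization morphism $\Theta_{\widetilde{z}}$: the isomorphism $\widehat{\sS}_{K,\Zpb,z}\cong\widehat{\RZ}_{K_p,y}$ comes from an identification of the universal $p$-divisible group with its crystalline tensors at $z$ with the one at $y$ (up to the quasi-isogeny $\rho_y$), and in particular it matches the isogeny $\iota_z(x_0)$ with $\rho_y\circ\iota_{X_y}(x_0)\circ\rho_y^{-1}$. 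Since by Theorem \ref{sgbs}(ii) and Theorem \ref{rzrg}(i) both $\sS_{K,\Zpb}^{\nsm}$ and $\RZ_{K_p}^{\nfs}$ are exactly the loci where this operator annihilates the Lie algebra of the $p$-divisible group, the two conditions correspond. As $\RZ_{K_p}^{\nfs}=\coprod_{\Lambda\in\VL(\L_0,1)}\RZ_{K_p,\Lambda}$ (Theorem \ref{sgt1}) is stable under $J(\Qp)$ — because $g\bullet x_0=x_0$ for $g\in G(\Qpb)$, so the $J(\Qp)$-action merely permutes the $\RZ_{K_p,\Lambda}$ according to its action on vertex lattices, preserving their type — this gives a bijection $\sS_{K,\Zpb}^{\nsm}\cong\bI(\Q)\backslash(\RZ_{K_p}^{\nfs}\times\bG(\A_f^p)/K^p)$.

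It then remains to compute the right-hand side. By Theorem \ref{sgt1}, $\RZ_{K_p,\Lambda}=\BT_{K_p,\Lambda}$ for $\Lambda\in\VL(\L_0,1)$, and by Corollary \ref{psnh} the scheme $\RZ_{K_p,\Lambda}^{(0)}$ is irreducible and projective smooth of dimension $(t(\Lambda)-1)/2=0$, hence a single reduced $\Fpbar$-point; combined with the decomposition into connected components of Theorem \ref{jcis}, this shows that $\RZ_{K_p}^{\nfs}(\Fpbar)$ consists of exactly one point in each $\RZ_{K_p}^{(i)}$ for each $\Lambda\in\VL(\L_0,1)$. Feeding this description into the quotient and absorbing the bookkeeping of connected components into the $\bI(\Q)$-action — exactly as in the proof of Theorem \ref{ccl1}, with Theorem \ref{sgt1} playing the role there played by Theorem \ref{rzrd}(ii), using the identification $\pi_0(\RZ_{K_p})\cong\Z$ via the Kottwitz map (Proposition \ref{kott}) and the transitivity of $J(\Qp)$ on connected components from Theorem \ref{jcis} — yields the asserted bijection $\sS_{K,\Zpb}^{\nsm}\cong\bI(\Q)\backslash(\VL(\L_0,1)\times\bG(\A_f^p)/K^p)$.

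The main obstacle I expect is the transfer step: making precise that the crystalline tensor $x_0$, equivalently the special quasi-endomorphism it cuts out on the universal $p$-divisible group, is carried compatibly across the uniformization isomorphism, which requires comparing the rigidifications $(t_{\alpha,0,z})$ on the Shimura side with the tensors $t_{\alpha,y}^{\diamond}$ on the Rapoport--Zink side. Once this compatibility is in place, the remaining argument is a routine repetition of the bookkeeping already carried out for Theorems \ref{ccl1} and \ref{sgbs}.
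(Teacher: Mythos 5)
Your argument is correct and follows the same route as the paper: use Theorem \ref{sgbs}(iii) to reduce to the basic locus, transport via the $p$-adic uniformization of Theorem \ref{unif}, and conclude with Theorem \ref{sgt1}; the extra detail you supply about carrying the tensor $x_0$ (hence the non-formally-smooth condition) across the uniformization and about the $J(\Qp)$-equivariance of the stratification by type-$1$ vertex lattices is exactly what the paper's terse proof leaves implicit.
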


\begin{proof}
By Theorems \ref{sgbs} (iii), we have $\sS_{K}^{\nsm}\subset \sSbar_{K}^{\bs}$. On the other hand, Theorem \ref{unif} implies the isomorphism $\sS_{K}^{\nsm}\cong \bI(\Q)\backslash \RZ_{K_p}^{\nfs}\times \bG(\A_f^p)/K^p$. Hence the assertion follows from Theorem \ref{sgt1}. 
\end{proof}

\section{Arithmetic intersection on the Rapoport--Zink spaces}\label{aris}

In this section, we consider the GGP cycle associated to the closed immersion
\begin{equation*}
\delta_{G,G^{\sharp}} \colon \RZ_{K_p}\hookrightarrow \RZ_{K_p^{\sharp}}
\end{equation*}
as appeared in Proposition \ref{embd}. It induces a closed immersion
\begin{equation*}
(\id,\delta_{G,G^{\sharp}})\colon p^{\Z}\backslash \RZ_{K_p}\hookrightarrow p^{\Z}\backslash(\RZ_{K_p}\times_{\spf W} \RZ_{K_p^{\sharp}}). 
\end{equation*}

\begin{dfn}
We define the \emph{GGP cycle}, which is denoted by $\Delta_{G,G^{\sharp}}$, as the image of $\RZ_{K_p}$ under the closed immersion $(\id,\delta_{G,G^{\sharp}})$. 
\end{dfn}

For a subset $S$ of $\L_0^{\sharp}$, we define a closed formal subscheme $\cZ_0(S)$ of $\RZ_{K_p^{\sharp}}$ as the locus of $y\in p^{\Z}\backslash \RZ_{K_p^{\sharp}}$ where $(\rho_y^{\sharp})^{-1}\circ v\circ \rho_y^{\sharp}$ lifts to an isogeny of $X_y^{\sharp}$ for any $v\in S$. Note that we have $\cZ_0(x_0)=p^{\Z}\backslash \RZ_{K_p}$ by Proposition \ref{embd} (ii). 

\begin{prop}\label{trvn}
\emph{
\begin{enumerate}
\item The second projection $p^{\Z}\backslash (\RZ_{K_p}\times_{\spf \Zpb}\RZ_{K_p^{\sharp}})\rightarrow p^{\Z}\backslash \RZ_{K_p^{\sharp}}$ induces an isomorphism
\begin{equation*}
\Delta_{G,G^{\sharp}}\cap g\Delta_{G,G^{\sharp}} \cong \cZ_0(L_{x_0}(g))^{g}. 
\end{equation*}
\item If the formal scheme $\cZ_0(L_{x_0}(g))^g$ is an artinian scheme, then we have
\begin{equation*}
\O_{\Delta_{G,G^{\sharp}}} \otimes^{\bL}\O_{g\Delta_{G,G^{\sharp}}}=\O_{\Delta_{G,G^{\sharp}}} \otimes \O_{g\Delta_{G,G^{\sharp}}},
\end{equation*}
that is, the tensor product of sheaves $\O_{\Delta_{G,G^{\sharp}}} \otimes \O_{\Delta_{G,G^{\sharp}}}$ represents the derived tensor product $\O_{\Delta_{G,G^{\sharp}}} \otimes^{\bL}\O_{\Delta_{G,G^{\sharp}}}$ in the category of sheaves on $p^{\Z}\backslash (\RZ_{K_p}\times_{\spf \Zpb}\RZ_{K_p^{\sharp}})$. 
\end{enumerate}}
\end{prop}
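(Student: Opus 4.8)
The plan is to prove both parts by translating the intersection geometry into the language of special lattices via the bijection of Proposition~\ref{rphs}, and then to exploit the regularity of $\RZ_{K_p}$ established in Theorem~\ref{rzrg}. For part~(i), I would start from the fibre-product description: a point of $\Delta_{G,G^{\sharp}}\cap g\Delta_{G,G^{\sharp}}$ inside $p^{\Z}\backslash(\RZ_{K_p}\times_{\spf\Zpb}\RZ_{K_p^{\sharp}})$ is a pair $(y,z)$ with $z=\delta_{G,G^{\sharp}}(y)$ and also $z=g\cdot\delta_{G,G^{\sharp}}(y')$ for some $y'$. Pushing to the second factor via $\pr_2$ and using that $\delta_{G,G^{\sharp}}$ is a closed immersion (Proposition~\ref{embd}), this locus maps isomorphically onto the set of $z\in p^{\Z}\backslash\RZ_{K_p^{\sharp}}$ lying both in the image of $\delta_{G,G^{\sharp}}$ and in $g\cdot(\text{image of }\delta_{G,G^{\sharp}})$. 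By Proposition~\ref{embd}~(ii) the image of $\delta_{G,G^{\sharp}}$ is exactly $\cZ_0(x_0)$, and by the $J^{\sharp}(\Qp)$-equivariance of $\delta_{G,G^{\sharp}}$ (Proposition~\ref{embd}~(i)) together with the definition of $\cZ_0(S)$, the $g$-translate of $\cZ_0(x_0)$ is $\cZ_0(g^{-1}x_0)$, or equivalently the fixed-point condition can be rephrased so that the intersection is $\cZ_0(x_0,g^{-1}x_0,\ldots)$ cut out by the full orbit; one then recognizes that requiring all of $x_0,g^{-1}x_0,g^{-2}x_0,\dots$ to act by isogenies on $X_z^{\sharp}$ is the same as requiring the whole $\Zp$-module $L_{x_0}(g)=\Zp x_0\oplus\cdots\oplus\Zp(g^nx_0)$ to do so, which is precisely $\cZ_0(L_{x_0}(g))$, while the compatibility of the two descriptions of $z$ under $g$ imposes exactly the $g$-fixed-point condition. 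Care is needed to check that one really only needs finitely many translates (using $\L_0^{\sharp}$ has dimension $n+1$ and $L_{x_0}(g)$ is spanned by $x_0,\dots,g^nx_0$) and that the identification is an isomorphism of formal schemes, not merely a bijection on points; for the latter I would compare complete local rings using Theorem~\ref{hkrz}~(ii), since both sides are cut out inside $\RZ_{K_p^{\sharp}}$ by the same lifting-of-isogeny conditions.

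For part~(ii), the content is that the derived tensor product has no higher Tor terms once the intersection is artinian. Here I would invoke the regularity statement of Theorem~\ref{rzrg}~(i): $\RZ_{K_p}$ is regular of dimension $n-1$, hence (being a closed formal subscheme of the formally smooth $\RZ_{K_p^{\sharp}}$, which is regular of dimension $n$ by Remark~\ref{fsmt}) the closed immersion $\delta_{G,G^{\sharp}}\colon\RZ_{K_p}\hookrightarrow\RZ_{K_p^{\sharp}}$ is a regular immersion of codimension $1$, cut out locally by a single nonzerodivisor. Consequently, inside the regular ambient formal scheme $p^{\Z}\backslash(\RZ_{K_p}\times_{\spf\Zpb}\RZ_{K_p^{\sharp}})$, each of $\Delta_{G,G^{\sharp}}$ and $g\Delta_{G,G^{\sharp}}$ is the image of a regular immersion, and the local rings involved are all regular (or at least Cohen--Macaulay of the expected dimension). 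The standard commutative-algebra fact is: if $A$ is a regular (or Cohen--Macaulay) local ring and $M,N$ are cyclic $A$-modules $A/I$, $A/J$ with $M\otimes_A N$ of finite length and $\dim M+\dim N=\dim A$, then $\Tor_i^A(M,N)=0$ for $i>0$; this is Serre's vanishing criterion for the intersection multiplicity, whose hypotheses are met exactly when $\cZ_0(L_{x_0}(g))^g$ is artinian. So the plan is: reduce to a point $z$ of the intersection, pass to $\widehat{\mathcal O}$ at that point, verify the dimension count $\dim\Delta_{G,G^{\sharp}}+\dim g\Delta_{G,G^{\sharp}}=(n-1)+(n-1)=2n-2=\dim\big(p^{\Z}\backslash(\RZ_{K_p}\times\RZ_{K_p^{\sharp}})\big)$ after accounting for the $p^{\Z}$-quotient, and then apply Serre's criterion.

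I expect the main obstacle to be part~(i): pinning down precisely which finite system of "lift-to-isogeny" conditions is equivalent to the single lattice condition $\cZ_0(L_{x_0}(g))$, and verifying that the fixed-point locus $\cZ_0(L_{x_0}(g))^g$ (a priori a fixed locus under an automorphism that need not preserve each generator) is the correct object — in particular that the $g$-action on $\RZ_{K_p^{\sharp}}$ restricted to $\cZ_0(x_0)$ matches the diagonal-versus-$g$-translate bookkeeping on the product. The dimension and Tor-vanishing of part~(ii) should then be routine given Theorem~\ref{rzrg} and Serre's multiplicity criterion; the only subtlety there is keeping track of dimensions under the $p^{\Z}$-quotient, which merely shifts everything by the trivial one-dimensional factor coming from the similitude and does not affect the codimension-$1$ regular-immersion structure.
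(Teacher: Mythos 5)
Your proposal takes essentially the route the paper relies on (the paper simply cites the analogous Proposition 8.4 of the author's earlier quaternionic-unitary paper, which in turn follows Li--Zhu); the key moves — project $\Delta_{G,G^{\sharp}}\cap g\Delta_{G,G^{\sharp}}$ to the second factor and identify the image with a $g$-fixed locus, then use regularity of $\RZ_{K_p}$ plus Serre's Tor-vanishing for part (ii) — are the right ones, and you correctly put your finger on the one point that must be upgraded from the hyperspecial setting, namely that formal smoothness of $\RZ_{K_p}$ is replaced by regularity (Theorem~\ref{rzrg}), which still yields the Cohen--Macaulay input needed. A few imprecisions worth repairing before this could stand as a proof. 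In part~(i), a point of the fibre product already has a single first coordinate $y$, so membership in both $\Delta_{G,G^{\sharp}}$ and $g\Delta_{G,G^{\sharp}}$ forces $z=\delta_{G,G^{\sharp}}(y)=g\cdot\delta_{G,G^{\sharp}}(y)$ directly; there is no auxiliary $y'$, and the $g$-fixed-point condition is not a separate compatibility to be imposed at the end — it falls out immediately, and then $\cZ_0(x_0)^g=\cZ_0(L_{x_0}(g))^g$ because a $g$-fixed point at which $x_0$ lifts to an isogeny also has $g^kx_0$ lifting for all $k$, so the full lattice $L_{x_0}(g)$ acts by isogenies. In part~(ii), you conflate two different codimensions: $\delta_{G,G^{\sharp}}$ is a regular immersion of codimension $1$ into $\RZ_{K_p^{\sharp}}$, but $\Delta_{G,G^{\sharp}}$ sits inside $p^{\Z}\backslash(\RZ_{K_p}\times_{\spf\Zpb}\RZ_{K_p^{\sharp}})$ with codimension $n-1$; the reason $\Delta_{G,G^{\sharp}}$ is a regular immersion there is that it is the graph of a morphism into the formally smooth target $\RZ_{K_p^{\sharp}}$ (graphs of morphisms into smooth schemes are regular immersions), not because $\delta_{G,G^{\sharp}}$ itself has codimension one. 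Finally, the form of Serre's criterion you quote is misstated — the Cohen--Macaulay hypothesis belongs to the modules $A/I$, $A/J$ and the ring $A$ must be regular (not merely CM) to guarantee finite projective dimension; you do gesture at this parenthetically, and in the situation at hand both $\Delta_{G,G^{\sharp}}$ and $g\Delta_{G,G^{\sharp}}$ are regular (hence CM) and the ambient is regular with $\dim=2n-2=(n-1)+(n-1)$, so the criterion applies. With those corrections the argument is sound.
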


\begin{proof}
The proof is the same as that of \cite[Proposition 8.4]{Oki2019}. 
\end{proof}

\begin{dfn}
Let $g\in J^{\star}(\Qp)$ in which $\cZ(L_{x_0}(g))^{g}$ is an artinian scheme. Then we set
\begin{equation*}
\langle \Delta_{G,G^{\sharp}},g\Delta_{G,G^{\sharp}}\rangle:=\chi(\O_{\Delta_{G,G^{\sharp}}} \otimes^{\bL}\O_{g\Delta_{G,G^{\sharp}}}),
\end{equation*}
where $\chi$ is the Euler-Poincar{\'e} characteristic. 
\end{dfn}

In the sequel, we study $\langle \Delta_{G,G^{\sharp}},g\Delta_{G,G^{\sharp}}\rangle$ in a special case. 

\begin{dfn}
We say that $g\in J^{\sharp}(\Qp)$ is \emph{regular semisimple minuscule} if $L_{x_0}(g)$ is a vertex lattice in $\L^{\sharp}$. 
\end{dfn}

If $g$ is regular semisimple minuscule, then $g\bullet L_{x_0}(g)=L_{x_0}$. Hence $g$ induces an action on $\Omega^{\sharp}(g):=L_{x_0}(g)^{\cup}/L_{x_0}(g)$. We denote by $\gbar \in \GL(\Omega^{\sharp}(g))(\Fp)$ the element induced by $g$, and by $P_g\in \Fp[T]$ the characteristic polynomial of $\gbar$. 

\begin{dfn}
For a monic polynomial $R\in \Fp[T]$, set $R^{*}(T):=T^{\deg(R)}R(T^{-1})$. We say that $R$ is self-reciprocal if $R^{*}=R$. 
\end{dfn}

\begin{lem}\label{pgsr}
\emph{The element $\gbar \in \GL(\Omega^{\sharp}(g))(\Fp)$ lies in $\SO(\Omega^{\sharp}(g))(\Fpbar)$. In particular, $P_g$ is self-reciprocal. }
\end{lem}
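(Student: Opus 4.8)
The plan is to establish the membership $\gbar\in\SO(\Omega^{\sharp}(g))(\Fpbar)$ first; the self-reciprocity of $P_g$ will then be a formal consequence. Recall that $g\in J^{\sharp}(\Qp)=\GSpin(\L_0^{\sharp})(\Qp)$ acts on $\L_0^{\sharp}$ through the homomorphism $\GSpin(\L_0^{\sharp})\to\SO(\L_0^{\sharp})$ in the exact sequence $1\to\G_m\to\GSpin(\L_0^{\sharp})\to\SO(\L_0^{\sharp})\to 1$, so this action preserves $Q^{\sharp}$ exactly. Since $g$ is regular semisimple minuscule, $\Lambda:=L_{x_0}(g)$ is a vertex lattice, and (as used in the paragraph introducing $\Omega^{\sharp}(g)$) $g$ stabilizes $\Lambda$ and $\Lambda^{\cup}$; hence the induced $\gbar\in\GL(\Omega^{\sharp}(g))(\Fp)$ preserves the nondegenerate quadratic form on $\Omega^{\sharp}(g)$ coming from $Q^{\sharp}$ via Proposition \ref{qtvl}. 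In other words $\gbar$ already lies in the full orthogonal group $\mathrm{O}(\Omega^{\sharp}(g))(\Fp)$, and the only thing to prove is that $\det(\gbar)=1$.

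For this, the point is to use the $\GSpin$-structure rather than mere isometry. Since $g$ stabilizes $\Lambda$, its action on $\RZ_{K_p^{\sharp},\Lambda}$ is realized through the $\GSpin(\Omega^{\sharp}(g))$-action recalled in Section \ref{rzgs} (the one attached to $\Lambda$ via Proposition \ref{qtvl}); concretely, after replacing $g$ by $p^{k}g$ for a suitable $k$ — which alters neither $\Lambda$ nor $\gbar$ — we may assume $g$ lies in the parahoric group scheme of $\GSpin(\L_0^{\sharp})$ attached to $\Lambda$, the obstruction to such a lift of the corresponding orthogonal parahoric point being a class in $H^{1}(\Zp,\G_m)=\mathrm{Pic}(\Zp)=0$ (the fibres of the map between the two parahorics are $\G_m$-torsors). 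That parahoric is connected, so its special fibre is a connected group, and the induced action on $\Omega^{\sharp}(g)$ — which recovers $\gbar$ and factors through $\GSpin(\Omega^{\sharp}(g))\to\mathrm{O}(\Omega^{\sharp}(g))$ — lands in the identity component $\SO(\Omega^{\sharp}(g))$. Hence $\gbar\in\SO(\Omega^{\sharp}(g))(\Fpbar)$.

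Finally, write $m:=\dim_{\Fp}\Omega^{\sharp}(g)$, which equals the type of $\Lambda$ and is therefore even by Proposition \ref{vlhs}. Because $\gbar$ preserves a nondegenerate symmetric bilinear form, $\gbar^{-1}$ is conjugate to the transpose of $\gbar$, so $\gbar$ and $\gbar^{-1}$ have the same characteristic polynomial $P_g$; and comparing eigenvalues gives $T^{m}P_g(T^{-1})=P_g(0)\cdot(\text{characteristic polynomial of }\gbar^{-1})$ with $P_g(0)=(-1)^{m}\det(\gbar)$. As $\det(\gbar)=1$ and $m$ is even, this yields $P_g^{*}(T)=T^{m}P_g(T^{-1})=P_g(T)$, i.e. $P_g$ is self-reciprocal. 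The substantive step is the middle one — upgrading orthogonality to special orthogonality via connectedness of the $\GSpin$-parahoric — while Steps~1 and~3 are routine; up to the reduction removing the hyperspecial hypothesis this is the argument of \cite[\S 4]{Li2018}.
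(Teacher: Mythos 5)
Your argument is correct and coincides with the strategy behind the paper's one-line citation of \cite[Lemma 5.2.2]{He2019}: reduce to the connected $\GSpin$-parahoric attached to $L_{x_0}(g)$ to force $\det\gbar=1$, then deduce self-reciprocity formally from $\det\gbar=1$ and the evenness of $\dim\Omega^{\sharp}(g)$.

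One imprecision is worth fixing. Replacing $g$ by $p^{k}g$ only multiplies $\sml_{V^{\sharp}}(g)$ by $p^{2k}$, so ``we may assume $g$ lies in the parahoric'' silently requires $\ord_p(\sml_{V^{\sharp}}(g))$ to be even. This is \emph{not} a formal consequence of $g\bullet L_{x_0}(g)=L_{x_0}(g)$ (compare the element $g_0$ of Theorem~\ref{jcis}, which stabilizes $\bLambda^{\sharp}$ yet has $\ord_p(\sml)=-1$); rather it is forced by the standing hypothesis $\RZ_{K_p^{\sharp}}^{g}\neq\emptyset$, since by Proposition~\ref{kott} an element with odd similitude valuation permutes the connected components of $\RZ_{K_p^{\sharp}}$ without fixed points. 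You should invoke that hypothesis explicitly. Once it is in place, the $H^{1}(\Zp,\G_m)$ detour becomes unnecessary: $g$ is already a $\Qp$-point of $\GSpin(\L_0^{\sharp})$, fixes the vertex corresponding to $L_{x_0}(g)$, and has trivial Kottwitz invariant; since $\GSpin$ has simply connected derived group (Proposition~\ref{gspi}), the parahoric \emph{is} the full Kottwitz-kernel fixer, so $g$ itself lies in it and no lifting from the $\SO$-parahoric is required.
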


\begin{proof}
This follows from the same argument as \cite[Lemma 5.2.2]{He2019}. 
\end{proof}

We denote by $\SR(P_g)$ and $\NSR(P_g)$ the sets of irreducible monic $R$ satisfying $R^{*}=R$ and $R^{*}\neq R$ respectively. Moreover, let $\NSR(P_g)$ be the quotient of $\NSR(P_g)$ by the equivalence relation $R^{*}=R$. 

For $R\in \SR(P_g)\sqcup \NSR(P_g)$, we define $m_{R}(P_g)\in \Znn$ as the multiplicity of $R$ in $P_g$. Note that we have $m_{R}(P_g)=m_{R^{*}}(P_g)$ by the definition of $R^{*}$. 

\begin{thm}\label{itar}
\emph{Assume that $g\in J^{\sharp}(\Qp)$ is regular semisimple minuscule and $\RZ_{K_{p}^{\sharp}}^{g}$ is non-empty. 
\begin{enumerate}
\item We have $\Delta_{G,G^{\sharp}} \cap g\Delta_{G,G^{\sharp}} \neq \emptyset$ if and only if there is a unique $Q_g\in \SR(P_g)$ such that $m_{Q_g}(P_g)$ is odd. 
\item If the equivalent conditions in (i) hold, then the cardinality of $(\Delta_{G,G^{\sharp}} \cap g\Delta_{G,G^{\sharp}})(\Fpbar)$ equals
\begin{equation*}
\deg(Q_g)\cdot \prod_{[R]\in \NSR(P_g)}(1+m_{R}(P_g)). 
\end{equation*}
\end{enumerate}}
\end{thm}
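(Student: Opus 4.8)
The goal is to count $(\Delta_{G,G^{\sharp}} \cap g\Delta_{G,G^{\sharp}})(\Fpbar)$ when $g$ is regular semisimple minuscule and $\RZ_{K_p^{\sharp}}^{g}\neq\emptyset$. By Proposition~\ref{trvn}~(i), this intersection is isomorphic to $\cZ_0(L_{x_0}(g))^{g}$, so the whole problem becomes: count the $x_0$-special lattices $L$ in $\L_{0,W(\Fpbar)}^{\sharp}$ that are fixed by $g$ and satisfy $L_{x_0}(g)\circ \rho^{\sharp}$ lifts to an isogeny, i.e.\ (after Proposition~\ref{rphs} and the reformulation in Section~\ref{rzgs}) the special lattices $L$ with $\Lambda^{\sharp}(L)\subset L_{x_0}(g)$ and $x_0\in L$ and $g\bullet L = L$. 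Since $g$ preserves $L_{x_0}(g)$ and hence acts on the $\Fp$-quadratic space $\Omega^{\sharp}(g) = L_{x_0}(g)^{\cup}/L_{x_0}(g)$, I would translate the special-lattice condition into a condition on $\Fpbar$-subspaces of $\Omega^{\sharp}(g)\otimes_{\Fp}\Fpbar$ stable under $\gbar$: concretely, following the dictionary of Proposition~\ref{svhs} together with Proposition~\ref{qtvl}, a special lattice with $\Lambda^{\sharp}(L)=\Lambda$ corresponds to a point of $S^{\sharp}_{t(\Lambda)/2}(\Fpbar)$, i.e.\ a maximal isotropic $W\subset \Omega^{\sharp}(g)\otimes\Fpbar$ with $\rk(W\cap\sigma W)=t(\Lambda)/2-1$; the $g$-fixed condition becomes $\gbar$-stability of $W$, and $x_0\in L$ is automatic once we have restricted to $\cZ_0(L_{x_0}(g))$. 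So the count reduces to: the number of $\gbar$-stable maximal isotropic subspaces $W$ of the $\Fpbar$-quadratic space $\Omega^{\sharp}(g)\otimes_{\Fp}\Fpbar$ (where $\gbar\in\SO$ by Lemma~\ref{pgsr}) that additionally satisfy the Coxeter-type condition $\rk(W\cap\sigma W)=\dim W - 1$ — this is exactly the setup of \cite[\S5]{He2019}, and I would import their combinatorial analysis essentially verbatim.

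**Main steps, in order.** First, decompose $\Omega^{\sharp}(g)$ as an $\Fp[\gbar]$-module using the factorization $P_g = \prod_{R\in\SR}R^{m_R}\cdot\prod_{[R]\in\NSR}(R R^{*})^{m_R}$; because the form is $\gbar$-invariant, the generalized eigenspaces for $R$ and $R^{*}$ are dual under the pairing, so $\Omega^{\sharp}(g)$ splits orthogonally into pieces $\Omega_R$ ($R\in\SR$, each non-degenerate) and hyperbolic pieces $\Omega_{[R]} = \Omega_R\oplus\Omega_{R^{*}}$ ($[R]\in\NSR$). Second, analyze which $\gbar$-stable maximal isotropics exist on each piece and impose the $\rk(W\cap\sigma W)$-condition: on a hyperbolic piece $\Omega_{[R]}$ the $\gbar$-stable maximal isotropics are in bijection with the Jordan-type data for $R$, contributing a factor $1+m_R(P_g)$ (this is where the product $\prod_{[R]}(1+m_R)$ comes from), while on a self-reciprocal piece $\Omega_R$ a $\gbar$-stable maximal isotropic exists only when the relevant Jordan block structure forces $m_R$ to be even — hence the parity obstruction. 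Third, handle the ``defect'' of $\rk(W\cap\sigma W)$ being exactly $\dim W-1$ rather than $\dim W$: the genuinely non-split behaviour must occur on exactly one self-reciprocal block, which pins down a unique $Q_g\in\SR(P_g)$ with $m_{Q_g}$ odd, and on that block the count of admissible $W$ contributes the factor $\deg(Q_g)$ (coming from the $\deg(Q_g)$ choices of embedding of the corresponding Deligne--Lusztig / Coxeter datum, i.e.\ the Frobenius-conjugacy classes of the relevant torus). Assembling the product over all blocks gives $\deg(Q_g)\cdot\prod_{[R]\in\NSR(P_g)}(1+m_R(P_g))$, and the existence of a unique odd-multiplicity $Q_g\in\SR(P_g)$ is both necessary and sufficient for nonemptiness, proving~(i) alongside~(ii).

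**The main obstacle.** The hard part is the orthogonal/module-theoretic bookkeeping on the self-reciprocal blocks: one must show precisely when a $\gbar$-stable maximal totally isotropic subspace of a non-degenerate $\Fp$-quadratic space carrying a semisimple-up-to-unipotent $\SO$-element exists, and among those, which satisfy the Coxeter condition $\rk(W\cap\sigma W)=\dim W-1$. The parity phenomenon (an $\SR$ block of odd multiplicity obstructs a $\gbar$-stable Lagrangian unless it is ``the exceptional one'') and the emergence of the factor $\deg(Q_g)$ rather than $1$ are both subtle and are exactly the content of \cite[\S5.2--5.3]{He2019}; I would verify that their arguments — which are stated for hyperspecial level — carry over to our almost-self-dual situation with no essential change, since the only input is the structure of $\Omega^{\sharp}(g)$ as an $\Fp$-quadratic space with $\SO$-action, which is identical. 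The remaining steps (the reduction via Proposition~\ref{trvn}, the special-lattice dictionary, and the orthogonal block decomposition) are routine given the machinery already set up in Sections~\ref{dlso}--\ref{rznh}.
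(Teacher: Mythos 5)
Your proposal is correct and takes essentially the same route as the paper: the paper's proof of Theorem \ref{itar} is a one-line reference to \cite[Theorem 3.6.4]{Li2018}, whose argument is exactly the reduction you describe — pass through Proposition \ref{trvn} (i) to identify $\Delta_{G,G^{\sharp}}\cap g\Delta_{G,G^{\sharp}}$ with $\cZ_0(L_{x_0}(g))^g$, translate to $\gbar$-stable maximal isotropics in $\Omega^{\sharp}(g)\otimes\Fpbar$ satisfying the Coxeter-type rank condition via the special-lattice dictionary, and then run the orthogonal $\Fp[\gbar]$-module decomposition by self-reciprocal vs.\ non-self-reciprocal factors of $P_g$. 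The only discrepancy is bibliographic: you route the combinatorics through \cite[\S 5.2--5.3]{He2019}, while the paper cites the earlier \cite[Theorem 3.6.4]{Li2018} for this point-count statement (reserving \cite{He2019} for the intersection-multiplicity refinement in Theorem \ref{thai}); since He--Li--Zhu is a refinement of Li--Zhu built on the identical decomposition, this makes no mathematical difference.
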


\begin{proof}
The proof is the same as \cite[Theorem 3.6.4]{Li2018}. 
\end{proof}

\begin{prop}\label{isrd}
\emph{For $g\in J^{\sharp}(\Qp)$ is regular semisimple minuscule, then the formal scheme $\Delta_{G,G^{\sharp}} \cap g\Delta_{G,G^{\sharp}}$ is an artinian $\Fpbar$-scheme. }
\end{prop}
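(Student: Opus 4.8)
The plan is to reduce the statement to an analysis of special lattices, exactly parallel to the strategy already used for $\RZ_{K_p}$ and its Bruhat--Tits strata. First I would invoke Proposition \ref{trvn} (i) to replace $\Delta_{G,G^{\sharp}} \cap g\Delta_{G,G^{\sharp}}$ by the fixed-point locus $\cZ_0(L_{x_0}(g))^{g}$ inside $p^{\Z}\backslash \RZ_{K_p^{\sharp}}$. Since $g$ is regular semisimple minuscule, $L_{x_0}(g)\in \VL(\L_0^{\sharp})$, so $\cZ_0(L_{x_0}(g))$ is by definition the closed formal subscheme where all of $L_{x_0}(g)$ lifts to isogenies, which is precisely $p^{\Z}\backslash \RZ_{K_p^{\sharp},L_{x_0}(g)}$. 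By Proposition \ref{rdhs} this is already reduced, and by Proposition \ref{hprs} (i) it is isomorphic (after quotienting by $p^{\Z}$) to the projective $\Fpbar$-variety $S^{\sharp}_{t^{\sharp}(L_{x_0}(g))/2}$, which is of finite type over $\Fpbar$. Hence $\cZ_0(L_{x_0}(g))^{g}$ is a closed subscheme of a reduced projective $\Fpbar$-scheme; in particular it is a reduced $\Fpbar$-scheme of finite type.

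It remains to show this fixed-point locus is $0$-dimensional, i.e. finite, which then forces it to be artinian. Here I would pass to $k$-points via Proposition \ref{svhs} (ii): $k$-points of $p^{\Z}\backslash \RZ_{K_p^{\sharp},L_{x_0}(g)}$ correspond to special lattices $L\in \SpL(\L^{\sharp}_{0,W(k)})$ with $\Lambda^{\sharp}(L)\subset L_{x_0}(g)_{W(k)}$. The action of $g$ on $\RZ_{K_p^{\sharp}}$ is via $J^{\sharp}(\Qp)\subset \GSpin(\L_0^{\sharp})(\Qp)$, hence via its action on $\L_0^{\sharp}$, and a fixed point is a $g$-stable special lattice $L$ with $\Lambda^{\sharp}(L)\subset L_{x_0}(g)_{W(k)}$. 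The key point is that such an $L$ lies between $L_{x_0}(g)^{\cup}_{W(k)}$ and $L_{x_0}(g)_{W(k)}$, so it is determined by a totally isotropic, $\bar g$-stable subspace of the finite-dimensional $\Fp$-quadratic space $\Omega^{\sharp}(g)=L_{x_0}(g)^{\cup}/L_{x_0}(g)$ (extended to $k$), subject to the special-lattice condition cutting down one more dimension. Since $\bar g$ acts semisimply on $\Omega^{\sharp}(g)$ (its minimal polynomial is separable because $g$ is regular semisimple), there are only finitely many $\bar g$-stable subspaces over $\Fpbar$; combined with the isotropy and special conditions this gives a finite set of possibilities for $L\otimes k$, independent of $k$. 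Therefore $\cZ_0(L_{x_0}(g))^{g}(\Fpbar)$ is finite, so the scheme has dimension $0$, and being reduced and of finite type over $\Fpbar$ it is artinian.

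The main obstacle I anticipate is the bookkeeping in the last paragraph: one must check carefully that the $g$-fixed special lattices are genuinely constrained to lie in the finite ``sandwich'' between $L_{x_0}(g)^{\cup}$ and $L_{x_0}(g)$ over $W(k)$ — this uses that $\Lambda^{\sharp}(L)\supset \Phi$-fixed structure forces $\Lambda^{\sharp}(L)^{\cup}\supset L_{x_0}(g)^{\cup}$ together with self-duality of $L$ — and that the resulting combinatorial count of $\bar g$-stable isotropic subspaces of the prescribed dimension is finite; this is essentially the enumeration already carried out in the proof of Theorem \ref{itar} (via \cite[Theorem 3.6.4]{Li2018}), so I would cite that computation rather than redo it. Once finiteness of $\Fpbar$-points is in hand, the artinian conclusion is formal from reducedness (Proposition \ref{rdhs}) and finite type over $\Fpbar$.
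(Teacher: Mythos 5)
Your proposal is correct and follows essentially the same route as the paper: reduce via Proposition~\ref{trvn}(i) to $\cZ_0(L_{x_0}(g))^{g}=p^{\Z}\backslash\RZ_{K_p^{\sharp},L_{x_0}(g)}^{\,g}$, invoke Proposition~\ref{rdhs} for reducedness, use projectivity (Proposition~\ref{hprs}) for finite type, and cite Theorem~\ref{itar}(ii) for finiteness of the $\Fpbar$-points, which together give artinian. One small caveat about your heuristic sketch of finiteness: semisimplicity of $\overline{g}$ alone does not force finitely many invariant subspaces (the identity is a counterexample); what actually matters is that $x_0$ is a cyclic vector for $g$, so the characteristic and minimal polynomials of $\overline{g}$ on $\Omega^{\sharp}(g)$ coincide, which is what limits the $\overline{g}$-stable subspaces to a finite set — but since you defer to the computation in Theorem~\ref{itar}(ii) (equivalently \cite[Theorem 3.6.4]{Li2018}) for the actual count, this does not affect the argument.
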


\begin{proof}
We have $\cZ_0(L_{x_0}(g))=p^{\Z}\backslash \RZ_{K_p,L_{x_0}(g)}$ by definition, which is a reduced scheme by Proposition \ref{rdhs}. Hence $\Delta_{G,G^{\sharp}} \cap g\Delta_{G,G^{\sharp}}$ is an $\Fpbar$-scheme Proposition \ref{trvn} (i). The rest of the assertion is a consequence of the result as proved above and Theorem \ref{itar} (ii). 
\end{proof}

By Propositions \ref{isrd} and \ref{trvn} (ii), it turns out that the intersection multiplicity $\langle \Delta_{G,G^{\sharp}},g\Delta_{G,G^{\sharp}}\rangle$ is defined. We give an expression of $\langle \Delta_{G,G^{\sharp}},g\Delta_{G,G^{\sharp}}\rangle$ by means of the polynomial $P_g$. 

\begin{thm}\label{thai}
\emph{Let $g\in J^{\sharp}(\Qp)$ be regular semisimple minuscule. Assume that $\Delta_{G,G^{\sharp}} \cap g\Delta_{G,G^{\sharp}}$ is non-empty. Then we have
\begin{equation*}
\langle \Delta_{G,G^{\sharp}},g\Delta_{G,G^{\sharp}} \rangle =\deg(Q_g)\cdot \frac{m_{Q_g}(P_g)+1}{2}\prod_{[R]\in \NSR(P_g)/\sim}(1+m_{R}(P_g)),
\end{equation*}
where $Q_g\in \SR(P_g)$ is as in Theorem \ref{itar} (i). }
\end{thm}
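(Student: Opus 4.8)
The plan is to reduce Theorem \ref{thai} to a local length computation on the fixed locus of $\gbar$, and then to transplant the argument of \cite{He2019}. First, by Proposition \ref{trvn}(i) the second projection identifies $\Delta_{G,G^{\sharp}}\cap g\Delta_{G,G^{\sharp}}$ with the fixed scheme $\cZ_0(L_{x_0}(g))^{g}$, which is artinian over $\Fpbar$ by Proposition \ref{isrd}, hence a finite disjoint union of local artinian $\Fpbar$-schemes $\spec\O_{x}$. Combining this with Proposition \ref{trvn}(ii),
\begin{equation*}
\langle \Delta_{G,G^{\sharp}},g\Delta_{G,G^{\sharp}}\rangle=\chi\bigl(\O_{\Delta_{G,G^{\sharp}}}\otimes^{\bL}\O_{g\Delta_{G,G^{\sharp}}}\bigr)=\sum_{x\in \cZ_0(L_{x_0}(g))^{g}}\length(\O_{x}).
\end{equation*}
By Theorem \ref{itar}(i) the hypothesis $\cZ_0(L_{x_0}(g))^{g}\neq\emptyset$ provides a well-defined $Q_g\in\SR(P_g)$, and by Theorem \ref{itar}(ii) the number of closed points $x$ is exactly $\deg(Q_g)\cdot \prod_{[R]\in\NSR(P_g)/\sim}(1+m_{R}(P_g))$. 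Thus Theorem \ref{thai} is equivalent to the assertion that $\length(\O_{x})=(m_{Q_g}(P_g)+1)/2$ at every such point.

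Next I would make the scheme $\cZ_0(L_{x_0}(g))^{g}$ explicit. Since $L_{x_0}(g)=\Zp x_0\oplus\cdots\oplus\Zp(g^{n}x_0)$ is precisely the $\Zp$-module generated by the vectors defining $\cZ_0(L_{x_0}(g))$, and since $g$ regular semisimple minuscule preserves $L_{x_0}(g)$, one has $\cZ_0(L_{x_0}(g))=p^{\Z}\backslash\RZ_{K_p^{\sharp},L_{x_0}(g)}$, a Howard--Pappas stratum attached to the vertex lattice $L_{x_0}(g)\in\VL(\L_0^{\sharp})$ and stable under the action of $g$. By Proposition \ref{hprs}(i), the isomorphism $r_{L_{x_0}(g)}^{\sharp}$ carries it $\GSpin(\Omega^{\sharp}(g))$-equivariantly onto the Deligne--Lusztig variety $S_{d}^{\sharp}\subset\OGr(\Omega^{\sharp}(g))$ with $d=t^{\sharp}(L_{x_0}(g))/2$, intertwining the $g$-action with the action of $\gbar\in\SO(\Omega^{\sharp}(g))(\Fp)$ of Lemma \ref{pgsr}. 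I am therefore reduced to computing the length of the local ring of the fixed scheme $(S_{d}^{\sharp})^{\gbar}$ at each of its closed points.

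For this final step I would follow \cite[\S\S5--6]{He2019} essentially verbatim. One decomposes $\Omega^{\sharp}(g)$ into $\gbar$-isotypic orthogonal summands according to the factorization $P_g=Q_g^{m_{Q_g}(P_g)}\prod_{Q}Q^{m_{Q}(P_g)}\prod_{[R]}(RR^{*})^{m_{R}(P_g)}$; a $\gbar$-stable maximal isotropic subspace $L$ decomposes compatibly, the summands indexed by $[R]\in\NSR(P_g)/\sim$ contributing $1+m_{R}(P_g)$ \'etale (reduced) choices each, while the defect condition $\rk(L\cap\sigma(L))=d-1$ cutting out $S_{d}^{\sharp}$ concentrates all of the non-semisimple behaviour into the summand attached to the unique self-reciprocal factor $Q_g$ of odd multiplicity. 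A direct computation of the complete local ring of $(S_{d}^{\sharp})^{\gbar}$ at a point, which reduces to the Jordan-normal-form analysis of the nilpotent part of $\gbar$ on the $Q_g$-summand of $\Omega^{\sharp}(g)$ (with the factor $\deg(Q_g)$ accounting for the Frobenius orbit on the residue field), then yields $\length(\O_{x})=(m_{Q_g}(P_g)+1)/2$ at every point, and multiplying the contributions gives the stated formula. That last computation is the main obstacle, but it is precisely the content of \cite{He2019}, and it transfers to the present setting without modification: the non-hyperspecial level $K_p$ enters only through the closed immersion $\delta_{G,G^{\sharp}}$, whereas the relevant intersection $\cZ_0(L_{x_0}(g))$ is the Howard--Pappas stratum $\RZ_{K_p^{\sharp},L_{x_0}(g)}$, whose geometry is identical to the hyperspecial situation treated there.
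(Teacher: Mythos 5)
Your proposal is correct and follows the same route the paper takes: reduce to the artinian fixed scheme $\cZ_0(L_{x_0}(g))^{g}$ via Proposition \ref{trvn}, identify it with the Howard--Pappas stratum $p^{\Z}\backslash\RZ_{K_p^{\sharp},L_{x_0}(g)}$ and hence with $(S_d^{\sharp})^{\gbar}$ via $r^{\sharp}_{L_{x_0}(g)}$, and then import the local length computation of He--Li--Zhu (their Theorem 4.3.3(3)), with Lemma \ref{pgsr} supplying the needed hypothesis that $\gbar\in\SO(\Omega^{\sharp}(g))(\Fp)$. The key observation you make explicit -- that the non-hyperspecial level enters only through $\delta_{G,G^{\sharp}}$ while the relevant closed substratum lives entirely in $\RZ_{K_p^{\sharp}}$, whose geometry is unchanged from the hyperspecial setting -- is precisely why the paper can cite the He--Li--Zhu argument verbatim.
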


\begin{proof}
The proof is the same as that of \cite[Theorem 5.2.4 (3)]{He2019} by using Lemma \ref{pgsr} and \cite[Theorem 4.3.3 (3)]{He2019}. 
\end{proof}

\end{document}